\tikzset{->-/.style={decoration={  markings,  mark=at position #1 with
			{\arrow{>}}},postaction={decorate}}}
\tikzset{-<-/.style={decoration={  markings,  mark=at position #1 with
			{\arrow{<}}},postaction={decorate}}}
\def\nn{node{$\bullet$}}
\def\ww{node{$\circ$}}
\newcommand\Old{\bgroup\markoverwith{\textcolor{red}{\rule[0.5ex]{2pt}{0.4pt}}}\ULon}
\theoremstyle{plain}
\newtheorem{theorem}{Theorem}[section]
\newtheorem*{thm}{Theorem~A}
\newtheorem{lemma}[theorem]{Lemma}
\newtheorem{corollary}[theorem]{Corollary}
\newtheorem{proposition}[theorem]{Proposition}
\theoremstyle{definition}
\newtheorem{definition}[theorem]{Definition}
\newtheorem{remark}[theorem]{Remark}
\newtheorem{notations}[theorem]{Notations}
\numberwithin{equation}{section}
\newtheorem{assumption}[theorem]{Assumption}
\def\hh{\mathcal}
\def\kong{\mathbb}
\def\<{\langle}
\def\>{\rangle}
\def\ZZ{\mathbb{Z}}
\def\R{\mathbb{R}}
\def\CC{\operatorname{CC}}
\def\OC{\operatorname{OC}}
\def\OA{\operatorname{OA}}
\def\Aut{\operatorname{Aut}}
\def\Hom{\operatorname{Hom}}
\def\Ext{\operatorname{Ext}}
\def\diff{\operatorname{d}}
\def\pas{\operatorname{AS}^+}
\def\deg{\operatorname{deg}}
\renewcommand{\k}{\mathbf{k}}
\def\wc{\widetilde{c}}
\def\wg{\widetilde{\gamma}}
\def\we{\widetilde{\eta}}
\def\ws{\widetilde{\sigma}}
\def\wwe{\overset{\approx}{\eta}}
\def\wws{\overset{\approx}{\sigma}}
\def\wwt{\overset{\approx}{\tau}}
\def\lws{\overset{\approx}{\sigma}}
\def\lwt{\overset{\approx}{\tau}}
\def\lwe{\overset{\approx}{\eta}}
\def\lwa{\overset{\approx}{\alpha}}
\def\lwb{\overset{\approx}{\beta}}
\def\lwr{\overset{\approx}{\rho}}
\def\lwg{\overset{\approx}{\gamma}}
\def\wt{\widetilde{\tau}}
\def\wa{\widetilde{\alpha}}
\def\ue{\eta}
\def\us{\sigma}
\def\ut{\tau}
\def\ua{\alpha}
\def\ub{\beta}
\def\wr{\widetilde{\rho}}
\def\wss{\widetilde{s}}
\newcommand{\ind}{\operatorname{ind}}
\newcommand{\sind}{\operatorname{ind}^\XX}
\def\ii{\varrho} % intersection index
\def\sii{\varsigma} % intersection index
\newcommand{\id}{\operatorname{id}}
\newcommand{\D}{\operatorname{\hh{D}}}
\def\C{\operatorname{\hh{C}}}
\renewcommand{\H}{\operatorname{\hh{H}}}
\newcommand{\per}{\operatorname{per}}
\newcommand\Sph{\operatorname{Sph}}%^\circ}
\newcommand\Sphzz{\operatorname{Sph}^{\ZZ^2}}%^\circ}
\newcommand{\Tri}{\bigtriangleup}
\def\arrow{red}
\def\surf{\mathbf{S}}                       %FST's surface
\def\SS{\mathcal{S}}                       %HKK's
\def\MTS{\mathbb{R}T\surf^{\lambda}}
\def\MTSo{\mathbb{R}T\surfo}
\def\PTS{\mathbb{P}T\surf}
\def\PTSO{\mathbb{P}T(\surf\setminus\Tri)}
\newcommand{\PTSo}[1]{\mathbb{P}{T_{#1}}\surfo}
\def\cc{\mathbf{c}} 					% cut
\newcommand{\ST}{\operatorname{ST}}        %spherical twists
\newcommand{\BT}{\operatorname{BT}}        %braid twists
\newcommand{\MCG}{\operatorname{MCG}}
\newcommand{\Int}{\operatorname{Int}}
\newcommand{\qqInt}{\operatorname{Int}^{\qv}}
\newcommand{\qv}{\mathbf{q}}
\newcommand{\qdH}{\dim^{\qv}\Hom^{\ZZ^2}}
\newcommand\coho[1]{\operatorname{H}^{#1}}
\newcommand\ho[1]{\operatorname{H}_{#1}}
\def\M{\mathbf{M}}
\def\Y{\mathbf{Y}}
\def\jiantou{edge[->,>=stealth]}
\def\RHom{\operatorname{\mathbf{R}Hom}}
\def\EE{\operatorname{\hh{E}}}
\def\cov{\operatorname{\pi_\Tri}}
\def\<{\langle}
\def\>{\rangle}
\def\surfo{{\mathbf{S}}_\Tri}
\def\surfoi{\surfo^\circ}
\def\CA{\operatorname{CA}}
\def\ZL{\wACC^0}
\def\wCA{\widetilde{\CA}}
\def\wCC{\widetilde{\CC}}
\def\wOC{\widetilde{\OC}}
\def\wOA{\widetilde{\OA}}
\def\ACC{\operatorname{AC}}
\def\wACC{\widetilde{\ACC}}
\def\wXCA{\widetilde{\CA}^\XX}
\def\wXACC{\widetilde{\ACC}^\XX}
\def\Z{\mathbf{Z}}
\def\add{\operatorname{add}}
\def\XX{\kong{X}}
\newtheorem{construction}[theorem]{Construction}
\def\ac{ {\widetilde{\mathbf{A}}} }
\def\dac{\ac^*_{\Tri}}
\def\acps{{\ac}^{'*}_{\Tri}}
\def\udac{\mathbf{A}^*_{\Tri}}
\def\ic{\hh{L}}
\def\ex{\mathbf{im}_{\cut}}
\def\cut{\operatorname{\mathbf{c}}}
\def\DMS{\surfo^\Lambda}
\title[Graded DMS: CY-$\XX$ categories of gentle algebras]
{Graded decorated marked surfaces: Calabi-Yau-$\XX$ categories of graded gentle algebras}
\author{Akishi Ikeda}
\address{Department of Mathematics,
    Josai University,
    Saitama, Japan}
\email{akishi@josai.ac.jp}
\author{Yu Qiu}
\address{Yau Mathematical Science Center,
	Tsinghua University,
	Beijing, China}
\email{yu.qiu@bath.edu}
\author{Yu Zhou}
\address{Yau Mathematical Science Center,
	Tsinghua University,
	Beijing, China}
\email{yuzhoumath@gmail.com}
\begin{document}
%=========================================================
%=========================================================

\thanks{The work was supported by National Natural Science Foundation of China (Grant No. 11801297), Tsinghua University Initiative Scientific Research Program (2019Z07L01006), Beijing Natural Science Foundation (Z180003).}

%=========================================================
\begin{abstract}
Let $\surf$ be a graded marked surface.
We construct a string model for the Calabi-Yau-$\XX$ category $\D_\XX(\surf)$,
via the graded DMS (=decorated marked surface) $\surfo$.
We prove an isomorphism between the braid twist group of $\surfo$ and the spherical twist group of $\D_\XX(\surf)$,
and $\qv$-intersection formulas.
We also give a topological realization of the Lagrangian immersion $\D_\infty(\surf)\to\D_\XX(\surf)$,
where $\D_\infty(\surf)$ is the topological Fukaya category associated to $\surf$,
that is triangle equivalent to the bounded derived category of some graded gentle algebra.
This generalizes previous work \cite{QQ,QZ2} in the Calabi-Yau-3 case and
also unifies the `Calabi-Yau-$\infty$' case $\D_\infty(\surf)$ (cf. \cite{HKK,OPS}).
\end{abstract}

\keywords{Calabi-Yau-$\XX$ categories, topological Fukaya categories, graded gentle algebras, decorated marked surface, $\qv$-intersections}
	
\maketitle
\setcounter{tocdepth}{1}
\tableofcontents\addtocontents{toc}{\setcounter{tocdepth}{1}}

%=========================================================
	
\setlength\parindent{0pt}
\setlength{\parskip}{5pt}
	
\def\Z{\mathbf{Z}}
	
%=========================================================
\section*{Introduction}
%=========================================================
This paper generalizes the results of topological realization of Calabi-Yau-3 gentle type differential graded algebras in \cite{QQ,QZ2} to the Calabi-Yau-$\XX$ version, which also unifies the construction for the Calabi-Yau-$\infty$ case (i.e. the bounded derived categories of graded gentle algebras, cf. \cite{HKK,LP,OPS}) via Lagrangian immersions.
%=========================================================
\subsection*{Stability conditions on Fukaya type categories}
%=========================================================
In the seminal works \cite{BS,HKK},
Bridgeland-Smith (BS) and Haiden-Katzarkov-Kontsevich (HKK) show, respectively,
that the stability conditions on two Fukaya type categories $\D$ from a surface $\surf$
can be identified with meromorphic quadratic differentials with corresponding predescribed singularities.
Here, a stability condition consists of a heart
(an abelian category, that is equivalent to the heart of a bounded t-structure) and
a central charge (a group homomorphism) $Z\colon K(\D)\to\mathbb{C}$,
where $K(\D)$ is the Grothendieck group of $\D$.
The key ingredients on their approach are the following:
\begin{enumerate}
  \item A (simple closed) arc $\eta$ on $\surf$ corresponds to an (indecomposable) object $X_\eta$ in $\D$.
  \item A quadratic differential $\phi$ determines a family of special arcs $\eta_j$ on $\surf$
  (the saddle trajectories) and the corresponding objects $X_{\eta_j}$ determine a heart in $\D$.
  \item The length of $\eta_j$ with respect to $\phi$ gives the central charge of $X_{\eta_j}$, i.e.
  via the formula $$Z(X_{\eta_j})=\displaystyle\int_{\eta_j} \sqrt{\phi}.$$
\end{enumerate}
In fact, \cite{BS} dexterously bypassed $1^\circ$ to get $2^\circ$ using cluster theory
and \cite{QQ} filled the gap with further applications on understanding
the symmetries (i.e. fundamental groups) of spaces of stability conditions in \cite{KQ1}.

One interesting question is about the relation between \cite{BS} and \cite{HKK}.
In \cite{IQ1,IQ2}, we introduce the $q$-deformations of stability conditions and quadratic differentials
to give an answer.
On the categorical level, HKK's topological Fukaya category $\D_\infty(\surf)$
can be embedded into a Calabi-Yau-$\XX$ category $\D_\XX(\surf)$ with a distinguish automorphism $[\XX]$
as a grading shift functor.
When specifying $\XX$ to be 3, i.e. taking the orbit quotient of $\D_\XX(\surf)$
by the automorphism $[\XX-3]$, one recovers Calabi-Yau-3 category $\D_3(\surf)$ in the setting of BS
(which is the subcategory of a certain derived Fukaya category \cite{S}).

In this sequel, we focus on ingredient $1^\circ$ of the story, i.e. the topological realization of
(objects and morphisms in) categories, that we construct a string model for $\D_\XX(\surf)$.
%=========================================================
\subsection*{Previous works on string models}
%=========================================================
The previous related works on various categories of (graded) gentle algebras are summarized as follows:
\begin{itemize}
  \item The derived category of the Calabi-Yau-$\XX$ type $A_n$ algebra was constructed and investigated in \cite{KS} via a decorated disk, where it was shown that there is a faithful action of the (classical) braid group on the category and the $\qv$-dimension of the morphism space between two certain objects is equal to the $\qv$-intersection number between the corresponding arcs.
  \item The derived categories of Calabi-Yau-3 gentle type (dg) algebras were studied in \cite{QQ,QZ2} via decorated marked surfaces (without punctures), where it was shown that closed arcs correspond to reachable spherical objects (up to shift), the braid twist group is isomorphic to the spherical twist group and the intersection number between two closed arcs equals half of the dimension of the total homomorphism space between the corresponding spherical objects.
  \item The derived categories of graded gentle algebras appeared in \cite{HKK} as topological Fukaya categories of surfaces, where it was showed that the indecomposable objects correspond to admissible curves with local systems. \item Later, it was shown in \cite{OPS} (resp. \cite{LP}) that the derived categories of (ungraded) finite dimensional gentle algebras (resp. homological smooth graded gentle algebras) are obtained in this way. The formula connecting dimensions of homomorphism spaces between objects and graded intersection numbers was also given.
  \item The cluster categories of Jacobian gentle algebras were studied in \cite{BZ,ZZZ} via triangulated marked surfaces without punctures, where the correspondence between curves and valued closed curves and indecomposable objects, the interpretation of the Auslander-Reiten translation via rotation of marked points along the boundary, and the equality between the intersection of two curves and the dimension of $\Ext^1$ of the corresponding objects were given.
  \item Gentle algebras were realized as tiling algebras associated to partial triangulations of marked surfaces in \cite{BaS}, where it was shown that there is a bijection between indecomposable modules and permissible curves. An interpretation of the Auslander-Reiten translation via the pivot elementary move and a method on how to read morphisms from relative positions of curves were also given.
\end{itemize}

%=========================================================
\subsection*{String model with double grading}
%=========================================================
The input data in our story is a graded marked surface $\surf=(\surf,\lambda)$,
where $\surf=(\surf,\M,\Y)$ is a marked surface and $\lambda$ is a line field/grading. This is the model for the derived categories of graded gentle algebras.
We construct the graded DMS $\surfo$ from $\surf$ by pulling the marked points in $\Y$
into the interior of $\surf$,
which naturally introduces another grading, denoted by $\XX$ and known as the Adams grading in topology.
To realize this grading, one can use the log surface model (cf. \S~\ref{sec:log}).

Then we construct the double graded version of string model on
the Calabi-Yau-$\XX$ category $\D_\XX(\surf)$ associated to $\surf$ and prove the following main result
(cf. Theorem~\ref{thm:final}).

\begin{thm}
Let $\surfo$ be a graded DMS. There is a full formal arc system $\ac$ with an associated differential graded $\XX$-graded algebra $\Gamma_\ac$ and the Calabi-Yau-$\XX$ category $$\D_\XX(\surf)\colon=\D_{fd}(\Gamma_\ac),$$ such that the following hold.
\begin{itemize}
  \item There is a bijection
  $$X\colon \lwe\mapsto X_{\lwe}$$
%  from the set of double graded admissible curves on $\surfo$ to the set of objects in $\D_\XX(\surf)$, which restricts to a bijection
  from the set of double graded closed arcs on $\surfo$ to the set of reachable spherical objects in $\D_\XX(\surf)$.
  % (Theorem~\ref{thm:X:}).
  \item $X$ induces an isomorphism
  \begin{equation}
    \iota:\BT(\surfo)\cong \ST_\ast(\Gamma_\ac)
  \end{equation}
  between the braid twist group of $\surfo$ and the spherical twist group of $\Gamma_\ac$,
  sending a braid twist $b_{\eta}$ to the spherical twist $\phi_{X_{\eta}}$. %(Theorem~\ref{thm:st=bt}).
  \item For any pair of double graded closed arcs $\lws$ and $\lwt$ on $\surfo$, we have
  \begin{equation}
    \qqInt(\lws,\lwt)=\qdH (X_{\lws},X_{\lwt}),
  \end{equation}
  %(Corollary~\ref{cor:int=dim}),
  where $\qqInt$ is the $\ZZ^2$-graded $\qv$-intersection defined in \eqref{eq:q-int} and $\qdH$ is the $\qv$-dimension of double graded $\Hom$ defined in \eqref{eq:qHom}.
\end{itemize}
\end{thm}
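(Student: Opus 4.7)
The plan is to bootstrap from a fixed full formal arc system $\ac$, where arcs are identified with the simples of the canonical heart of $\D_\XX(\surf)=\D_{fd}(\Gamma_\ac)$, and to propagate the correspondence by matching flips of arcs with simple tilts (mutations) of hearts. I would first match each $\eta_i\in\ac$ with the corresponding simple $S_i$ of the heart determined by $\Gamma_\ac$, then upgrade this to a bijection between double graded arcs and doubly graded simples using the natural double grading on $\ac$ (surface winding from the line field $\lambda$ together with the Adams $\XX$-grading from winding around decorations), and finally check that the $\ZZ^2$-shift $[n+m\XX]$ on the categorical side corresponds to shifting the double grading on the arc side.

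The key technical step is local flip/mutation compatibility: a forward flip $\lwe\rightsquigarrow\lwe'$ of a double graded arc must match a simple tilt/mutation of the corresponding reachable spherical object, and both grading shifts must be computed by the same local rule at the two endpoints of the flipped arc. The distinguished triangle governing the mutation on the categorical side has $X_{\lwe'}$ as its third term, and one checks that the winding and Adams contributions read off the local picture on $\surfo$ exactly recover the $\ZZ^2$-degree of the connecting morphism. Iterating this and using that every double graded closed arc is reachable from $\ac$ by a finite sequence of flips establishes the bijection $X$ in statement (i).

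Statement (ii) then follows by exhibiting $\iota$ on generators, sending $b_{\eta_i}\mapsto\phi_{X_{\eta_i}}$: the braid/commutation relations among the $b_{\eta_i}$ match those among the $\phi_{X_{\eta_i}}$ because both are controlled by the intersection pattern of $\eta_i,\eta_j$, equivalently by the arrows of the quiver of $\Gamma_\ac$. The homomorphism $\iota$ is surjective since $\ST_\ast(\Gamma_\ac)$ is generated by the $\phi_{X_{\eta_i}}$, and injectivity is deduced from (iii) using that $\qv$-intersection detects nontrivial braid words. For (iii), I would induct on a complexity measure, for instance the minimal total number of flips needed to bring $\{\lws,\lwt\}$ into a standard position with $\lws\in\ac$. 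The base case is the direct computation of $\qdH(X_{\lws},X_{\lwt})$ from the explicit dg and $\XX$-graded structure of $\Gamma_\ac$, matched against $\qqInt(\lws,\lwt)$ read from the local picture. The inductive step uses that flip and mutation transform the two sides by the same linear (Ptolemy-type) rule coming from the long exact sequence attached to the mutation triangle.

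The main obstacle is the coherent handling of the two gradings simultaneously. In the CY-$\infty$ case \cite{HKK,OPS} only the surface grading is active, and in the CY-3 case \cite{QQ,QZ2} the Adams grading collapses; here both gradings are live and must transform compatibly under every flip/mutation. Once the local double-grading compatibility at a single flip has been verified---this is the one genuinely new calculation and is naturally carried out on the log surface model of $\surfo$---all three statements follow by the induction sketched above.
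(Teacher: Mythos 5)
Your proposal is built on a flip/mutation induction, and this is precisely the mechanism the paper points out is \emph{unavailable} here. The introduction states explicitly that ``the main difficulty on generalizing the previous work \cite{KS,QQ,QQ2,QZ2} lies on the lack of corresponding cluster theory (which is specific to the Calabi-Yau-3 case),'' and Section~\ref{sec:general} reiterates that ``there is no corresponding results from cluster theory that we can imported.'' In the CY-3 setting, forward flips of (closed) arcs genuinely match simple tilts of finite hearts via the cluster exchange graph, and that is what \cite{QQ,QZ2} exploit. In the CY-$\XX$ category $\D_{fd}(\Gamma_\ac)$ there is no such exchange-graph machinery to iterate: your ``key technical step'' (a single flip/mutation compatibility in double degree) presupposes a well-behaved mutation theory of hearts and a proof that ``every double graded closed arc is reachable from $\ac$ by a finite sequence of flips,'' neither of which you can appeal to. There is also a type confusion: $\ac$ consists of \emph{open} arcs, whereas the objects $X_{\lwe}$ are attached to \emph{closed} arcs; closed arcs are not reached from $\ac$ by flips at all.

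What the paper does instead is route everything through the braid twist group. The reachability statement is Lemma~\ref{lem:CA BT}, $\wXCA(\surfo)=\BT(\ac)\cdot(\dac)^{\ZZ}$, deduced from a curve-decomposition lemma (Lemma~\ref{lem:decomp2}): any closed arc meeting $\ac$ more than once decomposes as $B_\alpha(\lwb)$ with $\alpha,\lwb$ strictly simpler. This induction on $\Int_{\surfo}(\ac,\cdot)$, together with the explicit string model and the angle morphisms $\varphi(\lws,\lwt)$ of Construction~\ref{cons:mor}, replaces your flip induction. Compatibility of $B_\alpha$ with $\phi_{X_\alpha}$ (Propositions~\ref{cor:1}, \ref{lem:rel1}, \ref{cor:6.4.2}) then drives (i), and the intersection formula (iii) is again proved by the same decomposition induction (Lemma~\ref{lem:int=dim}, Theorem~\ref{cor:int=dim}). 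Your injectivity argument for $\iota$---that $\qv$-intersections ``detect nontrivial braid words''---is not what is used: the paper instead shows that any $b\in\ker\iota$ fixes every double graded closed arc, hence lies in the center of $\BT(\surfo)$, and invokes the triviality of that center from \cite{QQ} (Theorem~\ref{thm:st=bt}). Finally, you do not address the technical regularity hypotheses: the whole of Section~\ref{sec:int=dim} is carried out under Assumption~\ref{ass} (no self-folded $\ac$-polygons, any two $\ac$-polygons share at most one arc), and Section~\ref{sec:general} is needed to remove them by passing to an enlarged surface $\surfo^+$ and then deleting marked points via the slide operation. Without some replacement for that step, the statement would only hold for special full formal arc systems, not for the arbitrary graded DMS in the theorem.
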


The main difficulty on generalizing the previous work \cite{KS,QQ,QQ2,QZ2} lies on the lack of corresponding cluster theory (which is specific to the Calabi-Yau-3 case).

One of the applications is that we obtain the topological realization of Lagrangian immersion (Theorem~\ref{thm:L}), that our model contains the string model for the Calabi-Yau-$\infty$ category $\D_{\infty}(\surf)$ associated to a graded marked surface $\surf$, which is triangle equivalent to the bounded derived category of the graded gentle algebra $\Gamma^0_\ac$ associated to $\ac$.

%=========================================================
\subsection*{Contents}
%=========================================================
The paper is organized as follows.
In \S~\ref{sec:XX}, we review the basics of differential graded $\XX$-graded algebras and their derived categories.
In \S~\ref{sec:gDMS}, we introduce the graded decorated marked surfaces (DMS) as our topological model.
In \S~\ref{sec:string}, we describe the string model on graded DMS and in \S~\ref{sec:int=dim}
we prove the main result with technical assumptions.
Then we remove such assumptions in \S~\ref{sec:general}
and finish the paper with the description of a topological realization of the Lagrangian immersion in \S~\ref{sec:Lag}.

In Table~\ref{table}, we list our convention of notations of topological objects.
%=========================================================
\subsection*{Acknowledgments}\hfill \break
%=========================================================
Qy would like to thank Alastair King for explaining and pushing him to understand
the homological point of view on the grading of marked surfaces.

\def\grad{\lambda}

\begin{table}[t]
\caption{List of notations}\label{table}
\setlength{\extrarowheight}{2pt}
\begin{tabular}{ccc}
\hline
$\surf  $&& a topological surface with non-empty boundaries \\ \hline
$\M (\Y)  $&& a set of open (closed) marked points on $\partial\surf$ \\ \hline
$\grad  $&& a grading on $\surf$, or a class in $\coho{1}(\PTS)$ \\ \hline
$\surf^\grad  $&& a graded marked surface $(\surf,\grad)$ \\ \hline
$\Tri  $&& a set of decorations on $\surf$ \\ \hline
$\surfo  $&& a decorated marked surface (DMS) \\ \hline
$\cut  $&& a cut, i.e. a pairing between the set $\Y$ and the set $\Tri$ \\ \hline
$\Lambda  $&& a class in $\coho{1}(\PTSo{},\ZZ^2)$  \\ \hline
$\surfo^\Lambda$&& a graded DMS associated to $\surf^\lambda$ w.r.t. some cut $\cut$  \\ \hline
$\log\surfo  $&& the $\log$ surface associated to $\surfo^\Lambda$ \\ \hline
$\overline{\us}  $&&the inverse of a curve $\us$, i.e. $\overline{\us}(t)=\overline{\us}(1-t)$ \\ \hline
$\ws  $&& a $\ZZ$-graded curve as a lift of $\dot{\us}$ \\ \hline  \\[-14pt]
$\wws  $&& a $\ZZ\oplus\ZZ\XX$-graded curve as a lift of $\dot{\us}$ \\ \hline  \\[-12pt]
$\Int_?^?(a,b)  $&& (various) intersection number between $a$ and $b$ \\ \hline  \\[-12pt]
$\ind_p^?(a,b)  $&& the index of an intersection $p$ between $a$ and $b$   \\[2pt] \hline
$\ACC(?)  $&& the set of admissible closed curves in $?$ \\ \hline
$\CA(?)  $&& the set of closed arcs in $?$\\ \hline \\[-12pt]
$\ac  $&& an open full formal arc system of $\surfo^\Lambda$ (or $\surf^\grad$) \\  \hline \\[-12pt]
$\dac (\ac^*)$ && a closed full formal arc system dual to $\ac$ in $\surfo^\Lambda$ (or $\surf^\grad$)\\ \hline
\end{tabular}
\end{table}

%=========================================================
\section{Derived categories of differential graded $\XX$-graded algebras}\label{sec:XX}
%=========================================================
Throughout this paper, $\k$ denotes an algebraically closed field. We review the categorical preliminaries on differential graded $\XX$-graded stuffs.

%=========================================================	
\subsection{Differential graded $\XX$-graded algebras}
%=========================================================	
A \emph{$\ZZ\XX$-graded (or $\XX$-graded for short)  $\k$-module} is a $\k$-module $V$ which decomposes into a direct sum
\[V=\bigoplus_{i\in\ZZ} V_{i},\]
where each $V_i$ is a $\k$-module. The shift $V[\sii\XX]$, for $\sii\in\ZZ$, is defined to be the $\XX$-graded $\k$-module whose components $V[\sii\XX]_i=V_{i+\sii}$, $i\in\ZZ$. A \emph{morphism} $f:V\to V'$ between two $\XX$-graded $\k$-modules is a $\k$-linear map such that $f(V_i)\subset V'_i$ for any $i\in\ZZ$. A \emph{complex of $\XX$-graded $\k$-modules} is a collection of morphisms $\{f^j:V^j\to V^{j+1}\}_{j\in\mathbb{Z}}$ of $\XX$-graded $\k$-modules with $f^{j+1}\circ f^j=0$ for any $j\in\ZZ$.
	
A \emph{$\ZZ\oplus\ZZ\XX$-graded (or $\ZZ^2$-graded for short) $\k$-module} is a $\k$-module $V$ which decomposes into a direct sum
\[V=\bigoplus_{j\in\ZZ} V^j,\]
where each $$V^j=\bigoplus\limits_{i\in\ZZ} V_i^j$$ is an $\XX$-graded $\k$-module. Each element $x$ in $V_i^j$ is called to have \emph{bi-degree} $j+i\XX$. We also denote by $|x|_1=j$ and $|x|_2=i$.
The \emph{shift} $V[\ii+\sii\XX]$, for $\ii,\sii\in\mathbb{Z}$, is defined to be the $\ZZ^2$-graded $\k$-module whose components $V[\ii+\sii\XX]^j_i=V^{j+\ii}_{i+\sii}$, $i,j\in\ZZ$.
A \emph{morphism} $f:V\to V'$ of $\ZZ^2$-graded $\k$-modules of \emph{bi-degree} $\ii+\sii\XX$ is a $\k$-linear map such that $f(V^j_i)\subset (V')^{j+\ii}_{i+\sii}$ for all $i,j\in\ZZ$.
Thus, for any $\ZZ^2$-graded $\k$-modules $V$ and $V'$, we have a $\ZZ^2$-graded $\k$-module $$\Hom_{\ZZ^2\text{-gr}}(V,V')=\bigoplus_{\ii,\sii\in\ZZ}\Hom_{\ZZ^2\text{-gr}}(V,V')_\ii^\sii,$$
where $\Hom_{\ZZ^2\text{-gr}}(V,V')_\ii^\sii$ consists of the morphisms from $V$ to $V'$ of bi-degree $\ii+\sii\XX$.
	
A \emph{differential graded (=dg) $\XX$-graded $\k$-module} is a $\ZZ^2$-graded $\k$-module $V$ endowed with a \emph{differential} $d_V$, i.e. a morphism $d_V:V\to V$ of bi-degree $1$ such that $d_V\circ d_V=0$.
Equivalently, a dg $\XX$-graded $\k$-module is a complex of $\XX$-graded $\k$-modules.
The \emph{shift} $V[\ii+\sii\XX]$ of a dg $\XX$-graded $\k$-module $V$, for $\ii,\sii\in\ZZ$, is the shift endowed with the differential $(-1)^{\ii}d_V$.
For a dg $\XX$-graded $\k$-module $V$, denote by $H^*(V)$ the \emph{homology} of $V$ with respect to the differential $d_V$.
Note that $H^*(V)$ is a $\ZZ^2$-graded $\k$-module.
	
Let $\C^\XX_{dg}(\k)$ be the category whose objects are dg $\XX$-graded $\k$-modules and whose morphism space %$\Hom_{\C^\XX_{dg}(\k)}(V,V')$
from $V$ to $V'$ is a dg $\XX$-graded $\k$-module whose underlying $\ZZ^2$-graded $\k$-module is $\Hom_{\ZZ^2\text{-gr}}(V,V')$
and whose differential $d$ is given by
\[d(f)=d_{V'}\circ f-(-1)^{\ii}f\circ d_V\]
for $f$ a morphism of bi-degree $\ii+\sii\XX$.
% is a dg category.
	
A \emph{dg $\XX$-graded $\k$-algebra} is a dg $\XX$-graded $\k$-module $(\Gamma,d_\Gamma)$ endowed with a multiplication
\[\begin{array}{ccc}
	\Gamma_i^j\otimes\Gamma_{i'}^{j'}&\to& \Gamma_{i+i'}^{j+j'}\\
	x\otimes y&\mapsto & xy
\end{array}\]
such that the Leibniz rule holds:
\[d_\Gamma(xy)=d_\Gamma(x)y+(-1)^jxd_\Gamma(y) \]
for all $x\in \Gamma^{j}_i$ and all $y\in\Gamma$.
	
Let $(\Gamma,d_\Gamma)$ be a dg $\XX$-graded $\k$-algebra. A \emph{dg $\XX$-graded $\Gamma$-module} is a dg $\XX$-graded $\k$-module $(M,d_M)$ endowed with a $\Gamma$-action from the right
\[\begin{array}{ccc}
	M^{j}_i\otimes \Gamma^{v}_u&\to& M^{j+v}_{i+u}\\
	m\otimes a&\mapsto& ma
\end{array}\]
such that the Leibniz rule holds
\[d_M(ma)=d_M(m)a+(-1)^{j}d_\Gamma(a),\]
for all $m\in M_i^j$ and all $a\in\Gamma$. For two dg $\XX$-graded $\Gamma$-modules $M$ and $N$, we define $\mathcal{H}om_\Gamma(M,N)$ to be the dg $\XX$-graded $\k$-submodule of $\Hom_{\C^\XX_{dg}(\k)}(M,N)$ as follows: % whose degree $i$ component is the subspace
\[
\mathcal{H}om_\Gamma(M,N)=\{f\in \Hom_{\C^\XX_{dg}(\k)}(M,N)\mid f(ma)=f(m)a \text{ for any $m\in M, a\in \Gamma$}\}.
\]
	
The \emph{category $\C(\Gamma)$ of dg $\XX$-graded $\Gamma$-modules} is the category whose objects are the dg $\XX$-graded $\Gamma$-modules, and whose morphisms are $Z^0\mathcal{H}om_\Gamma(M,N)_0$, which consisting of the morphisms $f\in\mathcal{H}om_\Gamma(M,N)$ of bi-degree $0$ satisfying $d(f)=0$. The \emph{homotopy category} $\H(\Gamma)$ is the category whose objects are the dg $\XX$-graded $\Gamma$-modules, and whose morphisms are $H^0\mathcal{H}om_\Gamma(M,N)_0$. The homotopy category $\H(\Gamma)$ is a triangulated category whose suspension functor is the given by $M\mapsto M[1]$. The \emph{derived category} $\D(\Gamma)$ of dg $\XX$-graded $\Gamma$-modules is the localization of $\H(\Gamma)$ at the full subcategory of acyclic dg $\XX$-graded $\Gamma$-modules. Note that in each of the categories $\C(\Gamma)$, $H(\Gamma)$ and $\D(\Gamma)$, the map $M\mapsto M[\XX]$ induces an exact/triangle equivalence.

\begin{definition}
	For any $X_1,X_2\in\D(\Gamma)$, define the double graded Hom $$\Hom^{\ZZ^2}(X_1,X_2):=\bigoplus_{\ii,\sii\in\ZZ}\Hom_{\D(\Gamma)}(X_1,X_2[\ii+\sii\XX])$$
	and its $\qv$-dimension as
	\begin{gather}\label{eq:qHom}
	\qdH(X_1,X_2)\colon=\sum_{\ii,\sii\in\mathbb{Z}}
	\qv^{\ii+\sii\XX}\cdot\dim\Hom_{\D(\Gamma)}(X_1,X_2[\ii+\sii\XX]).
	\end{gather}
	When $X_1=X_2=X$, $\Hom^{\ZZ^2}(X,X)$ becomes a $\ZZ^2$-graded algebra, called the Ext-algebra of $X$ and denoted by $\Ext^{\ZZ^2}(X,X)$.
\end{definition}
By definition, we have
$$\dim\Hom^{\ZZ^2}(X_1,X_2)=\qdH(X_1,X_2)\mid_{\qv=1}.$$
	
\begin{remark}
Let $A=\bigoplus_{i\in\mathbb{Z}}A_i$ be an ordinary graded algebra. Regard it as a dg $\XX$-graded algebra with $A^0=A$ and $A^j=0$, for $j\neq 0$. Then the derived category of graded $A$-modules coincides with $\D(A)$.
\end{remark}

A morphism $s:L\to M$ in $\C(\Gamma)$ is called a \emph{quasi-isomorphism} if its induced map $H^\ast(s):H^\ast(L)\to H^\ast(M)$ is an isomorphism. A dg $\XX$-graded $\Gamma$-module $P$ is called \emph{cofibrant} if
$$\Hom_{\C(\Gamma)}(P,L)\xrightarrow{\Hom_{\C(\Gamma)}(P,s)}\Hom_{\C(\Gamma)}(P,M)$$
is surjective for each quasi-isomorphism $s:L\to M$ that is surjective in each component. Let $P$ be a cofibrant dg $\XX$-graded $\Gamma$-module. Then the canonical map
$$\Hom_{\H(\Gamma)}(P,N)\to \Hom_{\D(\Gamma)}(P,N)$$
is bijective for all dg $\XX$-graded $\Gamma$-module $N$. The canonical projection from $\H(\Gamma)$ to $\D(\Gamma)$ admits a left adjoint functor $\mathbf{p}$ which sends a dg $\XX$-graded $\Gamma$-module $M$ to a cofibrant dg $\XX$-graded $\Gamma$-module $\mathbf{p}M$ quasi-isomorphic to $M$. Thus, we have
$$\Hom_{\D(\Gamma)}(M,N)=\Hom_{\H(\Gamma)}(\mathbf{p}M,N)=H^0\mathcal{H}om_\Gamma(\mathbf{p}M,N)_0.$$
	
The \emph{perfect derived category} $\per(\Gamma)$ is the smallest full subcategory of $\D(\Gamma)$ containing $\Gamma$ and which is stable under taking shifts (i.e. $[\ii+\sii\XX]$, $\ii,\sii\in\ZZ$), extensions and direct summands. The \emph{finite dimensional derived category} $\D_{fd}(\Gamma)$ is the full subcategory of $\D(\Gamma)$ consisting of those dg $\XX$-graded $\Gamma$-modules whose homology is of finite total dimension.

For any dg $\XX$-graded $\Gamma$-modules $M$ and $N$, define $$\RHom(M,N)=\mathcal{H}om_\Gamma(\mathbf{p}M,\mathbf{p}N).$$
Taking homology, we have $$H^\ast(\RHom(M,N))=\Hom^{\ZZ^2}(M,N).$$
For any object $T\in\D(\Gamma)$, denote by $\<T\>$ the closure of $T$ in $\D(\Gamma)$ under shifts (i.e. $[\ii+\sii\XX], \ii,\sii\in\ZZ$), extensions and direct summands (e.g. $\<\Gamma\>=\per(\Gamma)$). We have the following derived Morita equivalence (an $\XX$ variation of \cite[Theorem in Section~4.3]{Ke94}).

\begin{theorem}\label{thm:dme}
	There is a $\k$-linear triangle equivalence
	\[\RHom(T,-):\<T\>\simeq\per(\RHom(T,T)).\]
\end{theorem}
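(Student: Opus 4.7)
The plan is to adapt Keller's classical proof of derived Morita equivalence to the dg $\XX$-graded setting. Set $\Lambda=\RHom(T,T)$, which inherits a natural dg $\XX$-graded $\k$-algebra structure via composition of morphisms on a cofibrant replacement of $T$. For any right dg $\XX$-graded $\Gamma$-module $M$, the complex $\RHom(T,M)$ carries a compatible $\Lambda$-action (on the appropriate side), giving a functor
$$F=\RHom(T,-)\colon \D(\Gamma)\longrightarrow \D(\Lambda).$$
First I would check that $F$ is $\k$-linear, triangulated, commutes with arbitrary direct sums, with direct summands, and with the bigraded shifts $[\ii+\sii\XX]$ for $\ii,\sii\in\ZZ$. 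This is formal from the construction of $\RHom$ via cofibrant replacement $\mathbf{p}$, once one verifies that $\mathbf{p}$ exists in the $\XX$-graded context and is compatible with the $\ZZ^2$-graded shifts; this follows by standard model-categorical arguments since the category $\C(\Gamma)$ of dg $\XX$-graded modules has the usual projective model structure (the $\XX$-grading simply provides an extra direct sum decomposition that every construction respects).

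Next I would perform the key calculation at the generator: by definition $F(T)\simeq \Lambda$ in $\D(\Lambda)$, and for all $\ii,\sii\in\ZZ$,
$$\Hom_{\D(\Gamma)}(T,T[\ii+\sii\XX])=H^{\ii}\bigl(\Lambda[\sii\XX]\bigr)=\Hom_{\D(\Lambda)}(\Lambda,\Lambda[\ii+\sii\XX]),$$
and these identifications agree with the map induced by $F$. Hence $F$ is fully faithful on the full subcategory $\add\{T[\ii+\sii\XX]\}$.

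Then I would use a standard devissage argument to propagate fully faithfulness. The full subcategory
$$\mathcal{E}=\{X\in\D(\Gamma)\mid F\text{ induces iso }\Hom(X,Y[\ii+\sii\XX])\xrightarrow{\sim}\Hom(FX,FY[\ii+\sii\XX])\text{ for all }Y,\ii,\sii\}$$
is triangulated (by the five-lemma applied to the long exact sequences on both sides), closed under the shifts $[\ii+\sii\XX]$, and closed under direct summands. Since $T\in\mathcal{E}$, we obtain $\<T\>\subseteq \mathcal{E}$, so $F|_{\<T\>}$ is fully faithful. Finally, the essential image of $F|_{\<T\>}$ contains $\Lambda=F(T)$ and is closed under $[\ii+\sii\XX]$, extensions and direct summands, so it contains, and in fact equals, $\<\Lambda\>=\per(\Lambda)$; the reverse inclusion holds because $\<T\>$ is generated from $T$ under the same operations that $F$ preserves.

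The main obstacle, in my view, is purely bookkeeping: ensuring that every construction (cofibrant replacement, internal Hom, composition product on $\Lambda$, triangulated structure, closure under summands) is compatible with the additional $\XX$-grading and with the full $\ZZ\oplus\ZZ\XX$-graded shift functor. There is no new homological content beyond Keller's argument; the nontrivial $\XX$-structure only affects the indexing of Hom spaces by $\ZZ^2$ rather than $\ZZ$, which is already anticipated in the very definition of $\Hom^{\ZZ^2}$ in \eqref{eq:qHom}. Once the compatibility is verified, the argument above goes through verbatim.
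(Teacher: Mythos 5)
The paper does not actually prove Theorem~\ref{thm:dme}: it states the result as ``an $\XX$ variation of'' Keller's derived Morita equivalence theorem and cites \cite{Ke94} without further argument. Your proposal is precisely the intended adaptation of Keller's proof — construct $F=\RHom(T,-)$, verify compatibility with the $\ZZ\oplus\ZZ\XX$-graded shifts, check full faithfulness at the compact generator via the identification $\Hom_{\D(\Gamma)}(T,T[\ii+\sii\XX])\cong\Hom_{\D(\Lambda)}(\Lambda,\Lambda[\ii+\sii\XX])$, and propagate by d\'evissage through shifts, cones, and direct summands to all of $\<T\>$, then identify the essential image with $\per(\Lambda)$ — so it is essentially the same approach the authors had in mind. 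One small cosmetic point: you need not invoke ``arbitrary direct sums''; your d\'evissage only uses triangles, the bigraded shifts, and retracts, which is exactly what closure of $\<T\>$ and $\per(\Lambda)$ require, and $T$ is not assumed compact in $\D(\Gamma)$. Your remark that the $\XX$-grading is purely bookkeeping (an extra direct sum decomposition preserved by $\mathbf{p}$, $\mathcal{H}om$, and the triangulated structure) is the correct justification that the argument carries over verbatim.
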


%=========================================================
\subsection{Dg $\XX$-graded quiver algebras}\label{subsec:Dgg}
%=========================================================
	
A \emph{quiver} $Q$ consists of a set $Q_0$ of \emph{vertices}, a set $Q_1$ of \emph{arrows},
and two functions $s,t: Q_1\to Q_0$ sending an arrow to its \emph{starting} and \emph{ending} vertices, respectively.
We denote an arrow by $a:s(a)\to t(a)$.

A (nontrivial) \emph{path} $\rho$ of \emph{length} $s>0$ in $Q$ is a sequence $a_1a_2\cdots a_s$ of arrows with $t(a_i)=s(a_{i+1})$ for $1\leq i<s$. The \emph{starting} and \emph{ending} of a path $\rho=a_1a_2\cdots a_s$ are $s(\rho)=s(a_1)$ and $t(\rho)=t(a_s)$, respectively. The \emph{composition} of paths $\rho$ and $\rho'$ is $\rho\rho'$ if it is again a path (i.e. $t(\rho)=s(\rho')$), or zero otherwise. To each vertex $v\in Q_0$, there is an associated \emph{trivial path} $e_v$ of length 0 with $s(e_v)=t(e_v)=v$. The path algebra $\k Q$ of $Q$ is the $\k$-algebra whose basis is the family of (trivial or nontrivial) paths and whose multiplication is given by the composition of paths.
	
A \emph{$\ZZ\oplus\ZZ\XX$-graded} (or \emph{$\ZZ^2$-graded} for short) quiver is a triple $(Q,|\cdot|_1,|\cdot|_2)$, where $Q$ is a quiver and $|\cdot|_1$ and $|\cdot|_2$ are maps from $Q_1$ to $\mathbb{Z}$. An arrow $a$ in $Q_1$ is called to have \emph{bi-degree} $\deg(a)=\ii+\sii\XX$ if $|a|_1=\ii$ and $|a|_2=\sii$. Any non-trivial path $\rho=a_1a_2\cdots a_s$ has bi-degree
$$\left(\sum_{i=1}^s|a_i|_1\right)+\left(\sum_{i=1}^s|a_i|_2\right)\XX$$
and any trivial path has bi-degree $0$. Thus, the path algebra $\k Q$ becomes a $\ZZ^2$-graded algebra.
	
A \emph{differential} on a $\ZZ^2$-graded quiver $Q$ is a map $d:Q_1\to \k Q$ such that for any $a\in Q_1$, $d(a)$ is a linear combination of paths $p$ of bi-degree $\deg(a)+1$ with $s(a)=s(p)$ and $t(a)=t(p)$, and such that if we extend $d$ to a map $\k Q\to \k Q$ linearly and by the Leibniz rule, then $d\circ d=0$. Thus, $\Gamma:=(\k Q,d)$ is a dg $\XX$-graded algebra.
	
Let $S_i$ be the simple $\Gamma$-module corresponding to a vertex $i$ in $Q_0$ and
$$\SS=\bigoplus_{i\in Q_0}S_i.$$
By \cite[Appendix~A.15]{K8}, we have the following result (cf. also \cite[Lemma~2.15]{KY}).

\begin{theorem}\label{thm:ain}
A basis of the Ext-algebra $\Ext^{\ZZ^2}(\SS,\SS)$ is formed by $$\pi_a:S_i\to S_j[\ii+\sii\XX],$$
where $a$ is an arrow of bi-degree $(1-\ii)-\sii\XX$ or a trivial path at $i$ with $i=j$ and $\ii=\sii=0$.
Moreover, there is an $\XX$-graded $A_\infty$ structure on $\Ext^{\ZZ^2}(\SS,\SS)$ given by $$m_r(\pi_{a_1},\pi_{a_2},\cdots,\pi_{a_r})=\pi_b$$
whenever $a_1a_2\cdots a_r$ appears in the expression of $d(b)$,
and such that there is a quasi-isomorphism
$$\RHom(\SS,\SS)\xrightarrow{\;\;\cong\;\;}\Ext^{\ZZ^2}(\SS,\SS)$$
of $\XX$-graded $A_\infty$ algebras.
\end{theorem}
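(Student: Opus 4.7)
The plan is to adapt Keller's construction from Appendix~A.15 of \cite{K8} to the bi-graded setting, the point being that the extra internal grading $|\cdot|_2$ is strictly preserved by the differential $d$ (which has bi-degree $1+0\cdot\XX$) and by all the homotopical machinery used below, so it can be carried along as a passive spectator. First I would build an explicit semi-free (hence cofibrant) resolution $\mathbf{p}\SS\to\SS$ over $\Gamma$ whose underlying $\ZZ^2$-graded $\Gamma$-module is freely generated by the paths of $Q$ (including trivial ones) sitting in an appropriate bi-degree; its differential has two pieces, the internal $d$ inherited from $\Gamma$ and the usual Koszul-bar piece that chops the leftmost arrow off a path. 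One verifies that this is a resolution of $\SS$ by the standard filtration argument.

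Next I would compute the Ext-algebra using this resolution. Applying $\mathcal{H}om_\Gamma(\mathbf{p}\SS,\mathbf{p}\SS)$ and taking cohomology, the resulting bi-graded vector space has a basis indexed by arrows and trivial paths of $Q$: for each arrow $a\colon i\to j$ of bi-degree $(1-\ii)-\sii\XX$ in $\Gamma$, the dual class $\pi_a$ sits in $\Hom_{\D(\Gamma)}(S_i,S_j[\ii+\sii\XX])$, and each trivial path $e_i$ contributes the identity of $S_i$ in bi-degree zero. The degree shift by $1$ in the cohomological direction is the standard Koszul shift between arrows (which contribute to the bar differential) and cohomology classes; the $\XX$-direction is preserved on the nose.

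Then I would apply Kadeishvili's homotopy transfer theorem to transport the dg algebra structure on $\RHom(\SS,\SS)=\mathcal{H}om_\Gamma(\mathbf{p}\SS,\mathbf{p}\SS)$ to a minimal $A_\infty$-structure on its cohomology, along a chosen splitting/contraction from the resolution onto $\SS$. The tree-sum formula for the transferred $m_r$, when evaluated on basis elements $\pi_{a_1},\ldots,\pi_{a_r}$, collapses because of the semi-free structure: the only nonzero contribution corresponds to reading back a monomial $a_1\cdots a_r$ inside $d(b)$ for some arrow $b$, yielding exactly $m_r(\pi_{a_1},\ldots,\pi_{a_r})=\pi_b$. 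The quasi-isomorphism $\RHom(\SS,\SS)\to\Ext^{\ZZ^2}(\SS,\SS)$ of $\XX$-graded $A_\infty$ algebras is the composition of the projection in the contraction with the identification above. Equivalently, one can phrase the whole thing as the statement that $\Gamma$ is the (completed) cobar construction of the $A_\infty$-coalgebra dual to $\Ext^{\ZZ^2}(\SS,\SS)$, which gives the $A_\infty$-structure tautologically.

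The main technical obstacle is \emph{sign and bi-degree bookkeeping}: the Koszul signs in the transfer formulas depend only on $|\cdot|_1$, but the shift conventions $[\ii+\sii\XX]$ introduce independent $\XX$-shifts in the Hom-complex, and one has to check that the identification $\pi_a\leftrightarrow a$ respects both gradings simultaneously, that cofibrant replacement can be taken in the $\ZZ^2$-graded setting, and that the cited ungraded results of \cite{K8} and \cite{KY} extend verbatim. Once one has checked that $d$ preserves $|\cdot|_2$ and that the contraction can be chosen to be homogeneous in both gradings, the rest of the proof is a mechanical bi-graded upgrade of the classical argument.
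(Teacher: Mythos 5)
The paper does not actually prove Theorem~\ref{thm:ain}: it cites Keller \cite[Appendix~A.15]{K8} and Keller--Yang \cite[Lemma~2.15]{KY} and declares the statement to be ``an $\XX$ variation'' of those results, leaving the upgrade to the reader. Your sketch reconstructs precisely the argument that justifies the upgrade, and it is the right one: semi-free (bar-type) resolution of $\SS$, Kadeishvili homotopy transfer to a minimal $A_\infty$-model of $\RHom(\SS,\SS)$, and the key observation that the differential has $\XX$-degree zero so that the second grading decomposes every step of the classical proof into independent $\XX$-homogeneous pieces. One small inaccuracy worth fixing, since you yourself flag bi-degree bookkeeping as the main hazard: you say the $\XX$-direction is ``preserved on the nose'' under $a\mapsto\pi_a$, but with the theorem's conventions $|a|_2=-\sii$ while $|\pi_a|_2=\sii$, so the $\XX$-degree is \emph{negated} (as one expects for a dual basis element), not preserved; the formula in your sketch is correct, only the phrasing is off.
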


We have the following useful consequence of Theorem~\ref{thm:dme} and Theorem~\ref{thm:ain}.

\begin{corollary}\label{cor:kd}
	There is a triangle equivalence
	$$\D_{fd}(\Gamma)\simeq\per(\Ext^{\ZZ^2}(\SS,\SS)),$$
	where $\Ext^{\ZZ^2}(\SS,\SS)$ is equipped with the $\XX$-graded $A_\infty$ structure in Theorem~\ref{thm:ain}.
\end{corollary}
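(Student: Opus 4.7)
The plan is to apply the derived Morita equivalence of Theorem~\ref{thm:dme} with the tilting object $T=\SS$, then transport the result along the $A_\infty$-quasi-isomorphism of Theorem~\ref{thm:ain}. So the argument splits naturally into three steps.

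\smallskip

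\emph{Step 1: identify $\D_{fd}(\Gamma)=\<\SS\>$.} Any dg $\XX$-graded $\Gamma$-module $M$ whose total homology is finite-dimensional has a finite filtration $0=M_0\subset M_1\subset\cdots\subset M_n=M$ whose subquotients are shifts $S_{i_k}[\ii_k+\sii_k\XX]$ of the simples, by the standard d\'evissage argument (iteratively split off a nonzero simple subquotient of top degree from the finite-dimensional homology). Each triangle $M_{k-1}\to M_k\to S_{i_k}[\ii_k+\sii_k\XX]$ shows $M\in\<\SS\>$, and conversely every object of $\<\SS\>$ has finite-dimensional homology because the simples $S_i$ do. The main subtlety here is the existence of a ``top'' subquotient; this uses that the arrows of $Q$ carry strictly positive first degree (equivalently, $\Gamma$ is non-positively graded in some direction relevant to constructing projective resolutions of the simples), which is the setting the paper will work in.

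\smallskip

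\emph{Step 2: apply derived Morita to $T=\SS$.} Since $\SS\in\D_{fd}(\Gamma)$, Theorem~\ref{thm:dme} with $T=\SS$ yields a $\k$-linear triangle equivalence
\[
\RHom(\SS,-)\colon \<\SS\>\;\xrightarrow{\;\simeq\;}\;\per\bigl(\RHom(\SS,\SS)\bigr),
\]
where the right-hand side is the perfect derived category of the dg $\XX$-graded endomorphism algebra $\RHom(\SS,\SS)$. Combining with Step~1 gives
\[
\D_{fd}(\Gamma)\;\simeq\;\per\bigl(\RHom(\SS,\SS)\bigr).
\]

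\smallskip

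\emph{Step 3: replace $\RHom(\SS,\SS)$ by its Ext-algebra.} By Theorem~\ref{thm:ain} there is an $A_\infty$-quasi-isomorphism
\[
\RHom(\SS,\SS)\;\xrightarrow{\;\cong\;}\;\Ext^{\ZZ^2}(\SS,\SS),
\]
where the target carries the transported $\XX$-graded $A_\infty$-structure. Since $A_\infty$-quasi-isomorphisms induce triangle equivalences between the corresponding perfect derived categories (this is the standard homotopy invariance of $\per$ for $A_\infty$-algebras, see Keller), we obtain
\[
\per\bigl(\RHom(\SS,\SS)\bigr)\;\simeq\;\per\bigl(\Ext^{\ZZ^2}(\SS,\SS)\bigr).
\]
Composing the equivalences of Steps~2 and~3 gives the desired triangle equivalence $\D_{fd}(\Gamma)\simeq\per(\Ext^{\ZZ^2}(\SS,\SS))$.

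\smallskip

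\emph{Main obstacle.} The only non-formal point is Step~1, the identification $\D_{fd}(\Gamma)=\<\SS\>$: one must ensure that any finite-dimensional dg module admits a finite composition series by shifted simples inside $\D(\Gamma)$. Steps~2 and~3 are then an essentially mechanical combination of the two theorems already proved.
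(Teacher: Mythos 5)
Your plan is essentially the argument the paper has in mind: the paper presents the corollary with no written proof, simply as a ``consequence of Theorem~\ref{thm:dme} and Theorem~\ref{thm:ain},'' and your Steps 2 and 3 are exactly that combination (derived Morita with $T=\SS$, then transport across the $A_\infty$ quasi-isomorphism). Your Step 1 -- the identification $\D_{fd}(\Gamma)=\<\SS\>$ -- is the genuinely nontrivial link that the paper leaves tacit, and you are right to flag it as the main obstacle. One caveat: the hypothesis you offer for it (``arrows of $Q$ carry strictly positive first degree'') is not actually satisfied by the relevant $Q_\ac$; by Remark~\ref{rmk:cy} the paired arrows satisfy $\deg(b_{[\rho]})+\deg(b_{[\rho]^*})=2-\XX$, so the first degree $|b|_1$ can take any integer value. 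The correct structural input is rather that $\Gamma_\ac$ is the Calabi-Yau-$\XX$ completion of a finite dg quiver algebra $\Gamma_\ac^0$ (cf.\ Remark~\ref{rmk:cy} and \cite{K8}), where a d\'evissage of finite-dimensional modules by the simples is available. With that substitution your three-step argument is complete and agrees with the paper's intended route.
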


%=========================================================
\section{Calabi-Yau-$\XX$ categories from graded decorated marked surfaces}\label{sec:gDMS}
In this section, we introduce the topological model for Calabi-Yau-$\XX$ version
of graded gentle algebras.
%=========================================================
\subsection{Marked surfaces with line field}
%=========================================================
We partially follow \cite{HKK,KS}. A \emph{marked surface} $\surf$ is a compact oriented surface with non-empty boundary $\partial\surf$ and with two finite sets $\M$ and $\Y$ of marked points on $\partial\surf$ satisfying that each connected component of $\partial\surf$ contains the same number (at least one) of marked points in $\M$ and $\Y$, and they are alternative. We call the points in $\M$ (resp. $\Y$) open (resp. closed) marked points.

Let $\PTS$ be the real projectivization of the tangent bundle of $\surf$.
Take a \emph{line filed, or grading} $\lambda$ of $\surf$, that is, a section $\lambda:\surf\to\PTS$. The projection $\PTS\to\surf$ with $\R\mathbb{P}^1\backsimeq \mathrm{S}^1$-fiber gives a short exact sequence
$$0\to\pi_1(\mathrm{S}^1)\to\pi_1(\PTS)\to\pi_1(\surf)\to0,$$
or
$$0\to\ho{1}(\mathrm{S}^1)\to\ho{1}(\PTS)\to\ho{1}(\surf)\to0,$$
or
\[
    0\to \coho{1}(\surf) \to \coho{1}(\PTS) \xrightarrow{\pi_\surf} \coho{1}(\mathrm{S}^1)=\ZZ \to0.
\]
The grading $\lambda$ is determined by a class in $\coho{1}(\PTS)$ (\cite[Lemma~1.2]{LP}), denoted by $[\lambda]$,
induced from a split of $\pi_\surf$ (i.e. $\pi_\surf([\lambda])=1$ in $\coho{1}(\mathrm{S}^1)$).
Such a class is equivalent to a split of $\ho{1}(\mathrm{S}^1)\to\ho{1}(\PTS)$
or a split of $\pi_1(\mathrm{S}^1)\to\pi_1(\PTS)$, as $\ho{1}$ is the abelianization of $\pi_1$.

\begin{definition}
A \emph{graded marked surface} $\surf^\lambda=(\surf,\lambda)$ consists of a marked surface $\surf$ and a line filed $\lambda$.
\end{definition}

Let $\MTS$ be the $\R$-bundle of $\surf$ constructed by gluing $\ZZ$ copies of $\PTS$ cut by $\lambda$. Thus, $\lambda$ determines a $\ZZ$-covering
\begin{equation}\label{eq:covering}
\operatorname{cov}\colon \MTS\to\PTS,
\end{equation}
which sends the $0$ in the fiber $\mathbb{R}T_p\surf^\lambda\cong\mathbb{R}$ to $\lambda(p)$ for any point $p$ in $\surf$.

A \emph{morphism} $(f,\widetilde{f})\colon(\surf,\lambda)\to(\surf',\lambda')$ between two graded marked surfaces is an orientation preserving local diffeomorphism $f\colon\surf\to\surf'$ such that it sends marked points to marked points and $[\grad]=f^*[\grad']$,
regarding $[\grad]\in \coho{1}(\PTS)$, together with
a map $\widetilde{f}\colon\MTS\to \mathbb{R}T{\surf'}^{\grad'}$ that fits into the commutative diagram
\[\xymatrix@C=3pc@R=3pc{
	\MTS \ar[r]^{\widetilde{f}} \ar[d]_{\operatorname{cov}} & \mathbb{R}T{\surf'}^{\grad'}\ar[d]^{\operatorname{cov}'}\\
	\PTS \ar[r]^{\diff f} & \mathbb{P}T\surf'.
}\]
There is a natural automorphism on $(\surf,\lambda)$, called the grading shift $[1]$,
which rotates $\lambda\colon\surf\to\PTS$ clockwise by an angle of $\pi$. Equivalently, $[1]$ is
(the identity map together with) the deck transformation of \eqref{eq:covering}.

%=========================================================
\subsection{Graded DMS}
%=========================================================
Let $(\surf,\lambda)$ be a graded marked surface. The \emph{decorated marked surface} (or \emph{DMS} for short) $\surfo$ of $\surf$ is obtained from $\surf$ by decorating a set $\Tri$ of points in the interior of $\surf$, such that $|\Tri|=|\Y|=|\M|$. The points in $\Tri$ are called \emph{decorations}. A \emph{cut} $\cut$ is a set of curves on $\surf$, pairing (connecting) points in $\Tri$ and $\Y$ without intersections nor self-intersections.

Let $\PTSO$ be the real projectivization of the tangent bundle of $\surf\setminus\Tri$.
By abuse of notation, we will write $\PTSo{}=\PTSO$.
A \emph{grading} $\Lambda$ on $\surfo$ is a class in $\coho{1}(\PTSo{},\ZZ^2)$,
with value $(1,0)$ on each clockwise loop $\{p\}\times\R\mathbb{P}^1$ on $\PTSo{p}$ for $p\notin\Tri$
and value $(-2,1)$ on each clockwise loop $l_Z\times\{x\}$ on $\surf$ around any $Z\in\Tri,x\in\R\mathbb{P}^1$.
For any simple loop $\alpha$ on $\surfo$,
we denote $\Lambda(\alpha)=(\Lambda_1(\alpha),\Lambda_2(\alpha))\in\ZZ^2$,
where $\Lambda_1$ is called the first grading and $\Lambda_2$ is called the second grading.

\begin{definition}
Let $(\surf,\lambda)$ be a graded marked surface with a cut $\cut$. A grading $\Lambda$ on $\surfo$ is called \emph{compatible} with $\lambda$ and $\cut$ if $\Lambda_1(\alpha)=\lambda(\alpha)$,
for any simple loop $\alpha$ on $\surfo$ that does not intersect with $\cut$. A \emph{graded DMS} $\DMS$ of $(\surf,\lambda,\cut)$ consists of the DMS $\surfo$ and a grading $\Lambda$ on $\surfo$ that is compatible with $\lambda$ and $\cut$.
For simplicity, we say $\DMS$ a graded DMS whenever it is a graded DMS of some $(\surf,\lambda,\cut)$.
\end{definition}

For any graded DMS $\DMS$, the first grading $\Lambda_1$ gives rise to a section $\Lambda_1:\surf\setminus\Tri\to \PTSo{}$, which determines a $\ZZ$-covering
\begin{equation}\label{eq:covT}
	\operatorname{cov}_\Tri\colon\MTSo^{\Lambda_1}\to\PTSo{}
\end{equation}
similar as \eqref{eq:covering}.

%=========================================================
\subsection{Graded curves and intersection index}
%=========================================================
Let $\DMS$ be a graded DMS. Denote by $\surfoi=\surf\setminus(\partial\surf\cup\Tri)$. For a curve $c:[0,1]\to \surf$, we always assume $c(t)\in\surfoi$ for any $t\in(0,1)$.
The \emph{inverse} $\overline{c}$ of a curve $c$ is defined by $\overline{c}(t)=c(1-t)$ for any $t\in[0,1]$.

A \emph{grading} $\widetilde{c}$ on \emph{a curve} $c$ is given by a family of (homotopy classes of) paths in $\PTSo{c(t)}$ from $\Lambda_1(c(t))$ to $\dot{c}(t)$, varying continuously with $t\in(0,1)$.
The pair $(c,\widetilde{c})$ is called a \emph{graded curve}, and will be simply denoted by $\widetilde{c}$ usually.
Alternatively, a graded curve $\widetilde{c}$ is a lift of the tangents $\dot{c}(t),0\leq t\leq 1$, in the covering  \eqref{eq:covT},
of a usual curve $c$ on $\surf$, . Note that there are $\ZZ$ lifts of $c$, which are related by shift gradings that $\wc[m](t)=\wc(t)+m$ for any $m\in\ZZ$.

For any graded curves $\ws$ and $\wt$ which are in minimal position with respect to each other, let $p=\us(t_1)=\ut(t_2)\in\surf\setminus\Tri$ be a point (which is possibly in $\M$) where $\us$ and $\ut$ intersect transversally. The \emph{intersection index} \cite{HKK} of $\ws$ and $\wt$ at $p$ is defined to be
\[\ind_p(\ws,\wt)=\ws(t_1)\cdot\kappa\cdot\wt^{-1}(t_2)\ \in\pi_1(\PTSo{p})\cong \mathbb{Z}\]
where $\kappa$ is (the homotopy class of) the path in $\PTSo{p}$ from $\dot{\sigma}(t_1)$ to $\dot{\tau}(t_2)$ given by clockwise rotation by an angle smaller than $\pi$. Equivalently, the intersection index $\ind_p(\ws,\wt)$ is the shift $[i]$ such that
the lift $\wt[i]\mid_p$ is in the interval
$$( \ws\mid_p , \ws[1]\mid_p )\subset \R T_p\surfo \cong \R .$$
By definition, we have the following equalities.

\begin{lemma}\label{lem:+}
	Let $\ws,\wt,\wa$ be graded curves which intersect transversely at $p\in\surfoi$ in clockwise order as in Figure~\ref{fig:3int1}. Then we have
	\begin{equation}\label{eq:3int1}
	\ind_{p}(\ws,\wt)+\ind_{p}(\wt,\wa)=\ind_p(\ws,\wa).
	\end{equation}
	In particular, we have (see \cite[Equation~(2.5)]{HKK})
	\begin{equation}\label{eq:hkk2.5}
	\ind_p(\ws,\wt)+\ind_p(\wt,\ws)=1.
	\end{equation}
\end{lemma}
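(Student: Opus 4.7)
The plan is to work directly from the $\pi_1$-definition of the intersection index,
$$\ind_p(\ws,\wt) \;=\; \ws(t_1)\cdot\kappa_{\ws\wt}\cdot\wt^{-1}(t_2)\;\in\;\pi_1\bigl(\PTSo{p}\bigr)\cong\ZZ,$$
where $\kappa_{\ws\wt}$ is the unique (up to homotopy rel endpoints) clockwise path in $\PTSo{p}$ from $\dot{\us}(t_1)$ to $\dot{\ut}(t_2)$ of angular length $<\pi$. The whole proof is then essentially a concatenation of paths in the fiber $\PTSo{p}\cong\R\mathbb{P}^1$, whose fundamental group is generated by one full clockwise loop identified with $1\in\ZZ$.

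For \eqref{eq:3int1}, I would first observe that if $\dot{\us}(t_1),\dot{\ut}(t_2),\dot{\ua}(t_3)$ are arranged in clockwise order on the circle $\PTSo{p}$, then the three clockwise arcs between consecutive tangents have total angular length equal to $\pi$, so each pairwise clockwise arc of length $<\pi$ exists and the concatenation $\kappa_{\ws\wt}\cdot\kappa_{\wt\wa}$ is homotopic (rel endpoints) to $\kappa_{\ws\wa}$ inside $\PTSo{p}$. Plugging this into the definition,
\begin{align*}
\ind_p(\ws,\wt)+\ind_p(\wt,\wa)
&=\bigl(\ws(t_1)\kappa_{\ws\wt}\wt^{-1}(t_2)\bigr)\cdot\bigl(\wt(t_2)\kappa_{\wt\wa}\wa^{-1}(t_3)\bigr)\\
&=\ws(t_1)\kappa_{\ws\wt}\kappa_{\wt\wa}\wa^{-1}(t_3)\\
&=\ws(t_1)\kappa_{\ws\wa}\wa^{-1}(t_3)\;=\;\ind_p(\ws,\wa),
\end{align*}
where the inner cancellation of $\wt^{-1}(t_2)\cdot\wt(t_2)$ uses that these paths are mutual inverses in $\pi_1$. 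The sign is additive in $\ZZ$ because the generator of $\pi_1(\PTSo{p})$ is the clockwise loop.

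For \eqref{eq:hkk2.5}, I apply the same calculation in the degenerate cyclic case $\wa=\ws$. Here $\kappa_{\ws\wt}$ and $\kappa_{\wt\ws}$ are the two complementary clockwise arcs of $\PTSo{p}\setminus\{\dot{\us}(t_1),\dot{\ut}(t_2)\}$, each of length $<\pi$, and their concatenation $\kappa_{\ws\wt}\cdot\kappa_{\wt\ws}$ is one full clockwise loop at $\dot{\us}(t_1)$, i.e.\ it represents $1\in\pi_1(\PTSo{p})\cong\ZZ$. Conjugating by $\ws(t_1)$ and $\ws^{-1}(t_1)$ does not change this class, whence $\ind_p(\ws,\wt)+\ind_p(\wt,\ws)=1$.

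I do not expect real obstacles; the only care needed is the orientation convention (identifying the clockwise generator of $\pi_1(\PTSo{p})$ with $+1\in\ZZ$ and verifying that under the alternative description the interval $(\ws\mid_p,\ws[1]\mid_p)\subset\R T_p\surfo$ is parametrized so that increasing the coordinate corresponds to clockwise rotation). Once this convention is fixed, both identities reduce to the combinatorics of arcs on $\R\mathbb{P}^1$. If one prefers the interval description over the $\pi_1$ description, the same statements follow by choosing lifts $a,b,c\in\R$ of $\ws\mid_p,\wt\mid_p,\wa\mid_p$ with $a<b+\ind_p(\ws,\wt)<c+\ind_p(\ws,\wa)<a+1$ and reading off $\ind_p(\wt,\wa)$ as the integer shift of $c$ into $(b,b+1)$.
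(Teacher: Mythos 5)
Your proof is correct and is essentially the intended argument: the paper offers no written proof and presents the lemma as immediate from the definition (prefacing it with ``By definition, we have the following equalities''), and your concatenation-of-$\kappa$'s computation in $\pi_1(\PTSo{p})\cong\ZZ$, together with the observation that for three tangent directions in clockwise cyclic position the arc lengths sum to $\pi$ so that $\kappa_{\ws\wt}\cdot\kappa_{\wt\wa}\simeq\kappa_{\ws\wa}$, is exactly the unwinding one needs. Your closing remark about the interval description is the paper's stated ``equivalent'' reformulation of the index, so both of your phrasings are compatible with the source.
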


\begin{figure}[h]\centering
	\begin{tikzpicture}[scale=.6]
	\foreach \j in {3,2}{
		\draw[red] (180+120*\j:3)to(120*\j:3);}
	\draw[red](120:3)to(-60:3)node[right]{$\wt$} (0,0)\nn (3,0)node[right]{$\ws$}(-120:3)node[left]{$\wa$};
	\draw[Emerald,thick,->-=.5,>=stealth]
	(0:1.2)to[bend left=60](-120:1.2);
	\draw[Green,thick,->-=.7,>=stealth]
	(0:.9)to[bend left=15](-60:.9);
	\draw[Emerald!50,thick,->-=.7,>=stealth]
	(-60:.7)to[bend left=15](-120:.7);
	\draw[red] (0,0)node[above]{$p$};
	\end{tikzpicture}
	\caption{Curves intersect at the same point in clockwise}\label{fig:3int1}
\end{figure}

The notion of intersection index can be generalized to the case $p\in\Tri$ as in \cite{KS}. Fix a small circle $l\subset\surf\setminus\Tri$ around $p$. Let $\alpha:[0,1]\to l$ be an embedded arc which moves clockwise around $l$, such that $\alpha$ intersect $\ws$ and $\wt$ at $\alpha(0)$ and $\alpha(1)$, respectively (cf. Figure~\ref{fig:iid}). The arc $\alpha$ is unique up to a change of parametrization. Fixing an arbitrary grading $\widetilde{\alpha}$ on $\alpha$, the \emph{intersection index} $\ind_p(\ws,\wt)$ is defined to be
\begin{equation}\label{eq:ks}
\ind_p(\ws,\wt):=\ind_{\alpha(0)}(\ws,\widetilde{\alpha})-\ind_{\alpha(1)}(\wt,\widetilde{\alpha}).
\end{equation}

\begin{figure}[htpb]
	\begin{tikzpicture}[scale=.6]
	\draw[red,thick,-<-=.5,>=stealth](45:1)arc(45:135:1) (135:1);
	\draw[red] (90:1)node[above]{$\wa$};
	\draw[red,thick](0,0)to node[below]{$\quad\wt$}(45:4.5);
	\draw[red,thick](0,0)to node[below]{$\ws\quad$}(135:4.5);
	\draw[red](0,0)node[white]{$\bullet$} \ww;
	\draw[red](0,0)node[below]{$p$};
	\end{tikzpicture}
	\caption{Intersection index at a decoration}\label{fig:iid}
\end{figure}

Note that formula \eqref{eq:ks} can be regraded as the decorated intersection version of \eqref{eq:3int1}. The decorated intersection version of \eqref{eq:hkk2.5} is the following.

\begin{lemma}
	Let $\ws,\wt$ be graded curves which intersect transversely at $p\in\Tri$. Then we have
	\begin{equation}\label{eq:hkkd}
	\ind_p(\ws,\wt)+\ind_p(\wt,\ws)=0.
	\end{equation}
\end{lemma}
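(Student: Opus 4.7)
The plan is to exploit that the two auxiliary arcs appearing in the definition \eqref{eq:ks}, applied to $\ind_p(\ws,\wt)$ and $\ind_p(\wt,\ws)$ respectively, can be chosen to fit together into a single small clockwise loop $l$ around $p$, whose tangent lift has trivial first grading; the sum will then telescope to $0$.

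First, I would set up the arcs. Taking the small circle $l\subset\surf\setminus\Tri$ around $p$, the two intersection points $l\cap\us$ and $l\cap\ut$ cut $l$ into two embedded arcs. Let $\alpha\colon[0,1]\to l$ be the clockwise arc from $\ws$ to $\wt$ used in \eqref{eq:ks} to define $\ind_p(\ws,\wt)$, and let $\beta\colon[0,1]\to l$ be the complementary clockwise arc from $\wt$ to $\ws$, which is the natural choice in \eqref{eq:ks} for $\ind_p(\wt,\ws)$. By construction $\beta(0)=\alpha(1)$, $\beta(1)=\alpha(0)$, and the concatenation $\alpha\ast\beta$ parametrizes $l$ traversed clockwise once.

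The key step, which I expect to be the only real subtlety, is to verify that $\Lambda_1(\dot l)=0$. The class of $\dot l$ in $\pi_1(\PTSo{})$ decomposes as one clockwise base loop around $p$ plus two clockwise fiber loops: as $l(t)$ moves once clockwise around $p$, the unit tangent $\dot l(t)$ rotates by $2\pi$ clockwise, which projects to two clockwise loops in the $\R\mathbb{P}^1$-fiber of $\PTSo{}$. Pairing with $\Lambda$ using its prescribed values $(-2,1)$ on the base clockwise loop and $(1,0)$ on a fiber clockwise loop gives
\[
\Lambda(\dot l)\;=\;(-2,1)+2\cdot(1,0)\;=\;(0,1),
\]
and in particular $\Lambda_1(\dot l)=0$. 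The delicate point is precisely the balance between the base contribution $-2$ and the fiber contribution $+2$.

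Finally, I would pick any grading $\wa$ on $\alpha$ and propagate it continuously along $\dot\beta$ via the cover \eqref{eq:covT} to a grading $\wb$ on $\beta$. Continuity at the junction gives $\wb|_{\beta(0)}=\wa|_{\alpha(1)}$, and because $\Lambda_1(\dot l)=0$ the joint lift of $\dot l$ in $\MTSo^{\Lambda_1}$ is a closed loop, so also $\wb|_{\beta(1)}=\wa|_{\alpha(0)}$. Substituting into \eqref{eq:ks} applied to both orderings yields
\[
\ind_p(\ws,\wt)+\ind_p(\wt,\ws)=\bigl[\ind_{\alpha(0)}(\ws,\wa)-\ind_{\alpha(1)}(\wt,\wa)\bigr]+\bigl[\ind_{\beta(0)}(\wt,\wb)-\ind_{\beta(1)}(\ws,\wb)\bigr],
\]
and the four terms cancel pairwise, producing \eqref{eq:hkkd}.
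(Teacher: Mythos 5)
Your proof is correct and takes essentially the same approach as the paper: both hinge on the observation that the tangent lift of the clockwise circle around the decoration has trivial first grading, with the base contribution $-2$ cancelling the $+2$ from the two fiber loops picked up by the rotating tangent. You simply spell out the telescoping cancellation via the two complementary arcs in the definition \eqref{eq:ks}, which the paper leaves implicit in the phrase ``this implies the equation.''
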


\begin{proof}
	The first grading $\Lambda_1$ takes value $-2$ for any loop around a decoration with a fixed direction as in the left picture of Figure~\ref{fig:x-2}.
	Thus the grading changes $0$ when going around a decoration with the tangent direction of the circle, as in the right picture of Figure~\ref{fig:x-2}. This implies the equation \eqref{eq:hkkd}.
	
	\begin{figure}[t]\centering
		\begin{tikzpicture}[scale=.3]
		\begin{scope}[shift={(0,0)}]
		\draw(0,0)[thick,red]   circle (6) \ww (0,-9)node[black]{Fixed direction as grading};
		\foreach \j in {1,...,10}{
			\draw[blue,very thick,->,>=stealth] (36*\j:6)to($(36*\j:6)+(19:4)$);}
		\end{scope}
		
		\begin{scope}[shift={(19,0)}]
		\draw(0,0)[thick,red]   circle (6) \ww (0,-9)node[black]{tangent direction as grading};
		\foreach \j in {1,...,10}{
			\draw[Green,very thick,->,>=stealth] (36*\j:6)to($(36*\j:6)+(36*\j-90:4)$);}
		\end{scope}
		\end{tikzpicture}
		\caption{Loops around decorations}\label{fig:x-2}
	\end{figure}
\end{proof}

%=========================================================
\subsection{Log DMS}\label{sec:log}
%=========================================================
Next, we introduce the log DMS to unwind the second grading $\Lambda_2$ of DMS.

%Denote by $\Lambda_1$ the section $\surf\setminus\Tri\to \mathbb{P}T\left(\surf\setminus\Tri\right)$ corresponding to the projection of $\Lambda(\alpha)$ on the first $\ZZ$.

\begin{definition}\label{def:logdms}
Let $\DMS$ be a graded DMS. The \emph{log DMS} $\log\surfo$ is obtained from $\surfo$ by
\begin{itemize}
\item taking $\ZZ$ copies of $\surfo$, denoted by $\surfo^m$, $m\in\mathbb{Z}$,
\item cutting each sheet along all arcs $c_i^m\in\cc^m$, where $\cc^m$ is the $m$-th copy of the cut $\cc$, denoting by $c_{i\pm}^m$ the cut marks, and
\item gluing $c_{i+}^m$ with $c_{i-}^{m+1}$ for any $m\in\ZZ$ and $c_i\in\cc$ (see Figure~\ref{fig:LogS}).
\end{itemize}
The log surface $\log\surfo$ inherits the grading from the first grading $\Lambda_1$. The copy $\surfo^m$ is called the \emph{$m$-th sheet} of $\log\surfo$.

\begin{figure}[h]\centering
	\begin{tikzpicture}[xscale=.8,yscale=.27]
	\draw[<->,>=stealth,orange,thick](225:2) to ($(243:2)+(0,-8.5)$);
	\draw[very thick](0,0)to (-120:4)arc(-120:225:4)to(0,0)node[right]{$\surfo^{m+1}$};
	\draw[Green,very thick](225:4) to node[above left]{$c^{m+1}_{i-}$} (0,0);
	\begin{scope}[shift={(0,-8.8)}]
	\draw[gray,thick](0,0)to(-120:4)arc(-120:225:4)to (0,0)node[right]{$\surfo^m$};
	\draw[Green,very thick](-120:4) to node[below]{$c^{m}_{i+}$} (0,0);
	\end{scope}
	\end{tikzpicture}
	\caption{Log surfaces via cuts}\label{fig:LogS}
\end{figure}

%such that the segment of $\we^m$ from $\we^m(0)$ to its first intersection with $\cut$ is in the sheet $\surfo^m$.
%We will use $[\XX]$ for the $\XX$-grading shift, i.e. $\we^m[m'\XX]=\we^{m+m'}$.

Denote by $\cov:\log\surfo\to\surfo$ the covering map. The deck transformation is denoted by $[\XX]$ and called the \emph{$\XX$-grading shift}.
For any graded curve $\we$ onn $\surfo$ in a minimal position with respect to the cut $\cut$, there are $\ZZ$ \emph{lifts} $\wwe$ of a graded curve $\we$ on $\log\surfo$ (i.e. $\cov(\wwe)=\we$), which are related by $\XX$-grading shifts $[\sii\XX]$, $\sii\in\ZZ$. Any lift $\wwe$ on $\log\surfo$ of a graded curve $\we$ on $\surfo$ is called a \emph{double graded curve} on $\log\surfo$. Similar to the $\MTS$ case, the double graded curve $\wwe$ really lives in $\mathbb{R}T\log\surfo^{\Lambda_1}$.
\end{definition}

For any lifts $\wws$ and $\wwt$ in $\log\surfo$ of graded curves $\ws$ and $\wt$, we call an intersection $p$ of $\ws$ and $\wt$ is an intersection of $\wws$ and $\wwt$ with \emph{bi-index}
\begin{equation}\label{eq:Z2-int}
\ind_p^{\ZZ^2}(\lws,\lwt)=\ind_p(\lws,\lwt)+\sind_p(\lws,\lwt)\XX
\end{equation}
where $\ind_p(\lws,\lwt):=\ind_p(\ws,\wt)$ and $\sind_p(\lws,\lwt)=\sii$ if
\begin{itemize}
\item
$p\notin\Tri$ and some lift of $p$ is an intersection between $\lws$ and $\lwt[\sii\XX]$;
\item
$p\in\Tri$ and
\begin{itemize}
	\item either the segments of $\lws$ and $\lwt[\sii\XX]$ near $p$ are in the same sheet of $\log\surfo$ and the angle from $\lws$ to $\lwt[\sii\XX]$ clockwise around $p$ does not cross the cut $\cut$ (see the left picture of Figure~\ref{fig:lr});
	\item or the segments of $\lws$ and $\lwt[(\sii-1)\XX]$ near $p$ are in the same sheet of $\log\surfo$ and the angle from $\lws$ to $\lwt[(\sii-1)\XX]$ clockwise around $p$ crosses the cut $\cut$
	(see the right picture of Figure~\ref{fig:lr}).
\end{itemize}
\end{itemize}
\begin{figure}[htpb]\centering
	\begin{tikzpicture}[scale=.6]
	\draw[gray!99,thick, dashed](0,0)to node[left]{$\cut$}($(-90+20:4)!.5!(-90-20:4)$);
	\draw[red]($(-90+20:4)!.5!(-90-20:4)$)node[white]{$\bullet$} \ww;
	\draw[blue,very thick,-<-=.5,>=stealth](45:1)arc(45:135:1) (135:1);
	\draw[red,thick](0,0)to node[below]{$\qquad\lwt[\sii\XX]$}(45:4.5);
	\draw[red,thick](0,0)to node[above]{$\quad\lws$}(135:4.5);
	\foreach \j in {1,0}{
		\draw[blue,very thick](45+90*\j+20:4)to(45+90*\j-20:4);}
	\draw[dashed,blue,thin](45+20:4)to[bend left=-15](135-20:4)
	(45-20:4)to[bend left=45](-90+20:4)(135+20:4)to[bend left=-45](-90-20:4);
	\draw[very thick](-90+20:4)to(-90-20:4);
	\draw[red](0,0)node[white]{$\bullet$} \ww;
	\end{tikzpicture}\quad
	\begin{tikzpicture}[scale=.6]
	\draw[gray!99,thick, dashed](0,0)to node[left]{$\cut$}($(-90+20:4)!.5!(-90-20:4)$);
	\draw[red]($(-90+20:4)!.5!(-90-20:4)$)node[white]{$\bullet$} \ww;
	\draw[red,thick](0,0)to node[above]{$\qquad\qquad\lwt[(\sii-1)\XX]$}(135:4.5);
	\draw[red,thick](0,0)to node[below]{$\quad\lws$}(45:4.5);
	\foreach \j in {1,0}{
		\draw[blue,very thick](45+90*\j+20:4)to(45+90*\j-20:4);}
	\draw[dashed,blue,thin](45+20:4)to[bend left=-15](135-20:4)
	(45-20:4)to[bend left=45](-90+20:4)(135+20:4)to[bend left=-45](-90-20:4);
	\draw[very thick](-90+20:4)to(-90-20:4);
	\draw[red](0,0)node[white]{$\bullet$} \ww;
	\draw[blue,very thick,->-=.55,>=stealth](45:1)arc(45:-225:1) (135:1);
	\end{tikzpicture}
	\caption{$\XX$-index $\sind_p(\lws,\lwt)$ at a decoration}\label{fig:lr}
\end{figure}

We generalize equations \eqref{eq:3int1}, \eqref{eq:hkk2.5}, \eqref{eq:ks} and \eqref{eq:hkkd} to the bi-index version, which will be used later.

\begin{lemma}
Let $\lws,\lwt$ be double graded curves in $\surfo$ with an intersection $p\in\surf\setminus\partial\surf$. If $p\notin\Tri$, we have
\begin{equation}\label{eq:hkkg}
	\ind^{\ZZ^2}_p(\lws,\lwt)+\ind^{\ZZ^2}_p(\lwt,\lws)=1.
\end{equation}
If $p\in\Tri$, we have
\begin{equation}\label{eq:hkkg1}
	\ind^{\ZZ^2}_p(\lws,\lwt)+\ind^{\ZZ^2}_p(\lwt,\lws)=\XX.
\end{equation}
\end{lemma}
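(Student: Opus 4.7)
The strategy is to split the bi-index into its two components via
\[
    \ind^{\ZZ^2}_p(\lws,\lwt)=\ind_p(\lws,\lwt)+\sind_p(\lws,\lwt)\,\XX,
\]
handle the first grading using the previously established identities \eqref{eq:hkk2.5} and \eqref{eq:hkkd}, and then focus all the work on the $\XX$-grading $\sind_p$. So the task reduces to showing
\[
    \sind_p(\lws,\lwt)+\sind_p(\lwt,\lws)=
    \begin{cases}
        0, & p\notin\Tri,\\
        1, & p\in\Tri.
    \end{cases}
\]

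\textbf{Case $p\notin\Tri$.} Near such a $p$ no cut is involved, so the definition of $\sind$ reduces to a pure sheet-comparison in $\log\surfo$. I would fix lifts and say $\lws$ meets the lift of $p$ in sheet $n_s$ and $\lwt$ meets it in sheet $n_t$. Since $[\sii\XX]$ shifts sheets by $\sii$, the only $\sii$ for which $\lws$ and $\lwt[\sii\XX]$ share a lift of $p$ in a common sheet is $\sii=n_s-n_t$, hence $\sind_p(\lws,\lwt)=n_s-n_t$. Swapping roles gives $\sind_p(\lwt,\lws)=n_t-n_s$, and these sum to $0$, proving \eqref{eq:hkkg}.

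\textbf{Case $p\in\Tri$.} Here the subtle point is the cut $c$ ending at $p$. The two rays $\lws,\lwt$ emanating from $p$ cut a punctured neighborhood of $p$ into two open sectors, and $c$ lies in exactly one of them (assuming, as is standard, generic position of $\lws,\lwt$ relative to $\cut$; the case where $\lws$ or $\lwt$ coincides with $c$ is handled by a small perturbation argument which I would spell out in a remark). Without loss of generality, assume $c$ lies in the clockwise sector from $\lws$ to $\lwt$. Then for the pair $(\lws,\lwt)$ case (ii) of the definition applies: I would check that writing $n_s,n_t$ for the sheets containing $\lws,\lwt$ near $p$, the relation $\lws$ and $\lwt[(\sii-1)\XX]$ in the same sheet forces $\sii=(n_s-n_t)+1$. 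For the opposite pair $(\lwt,\lws)$ the clockwise arc from $\lwt$ to $\lws$ avoids $c$, so case (i) applies and $\sind_p(\lwt,\lws)=n_t-n_s$. Adding gives $1$, which multiplied by $\XX$ and combined with \eqref{eq:hkkd} yields \eqref{eq:hkkg1}.

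\textbf{Expected obstacle.} The routine content is straightforward bookkeeping of sheets; the only real subtlety is verifying carefully that the "either/or" in the definition of $\sind_p$ at a decoration is internally consistent and that exactly one of the two clockwise arcs from $\lws$ to $\lwt$ (and back) crosses the cut $c$. In particular I would draw the two pictures of Figure~\ref{fig:lr} for both orderings $(\lws,\lwt)$ and $(\lwt,\lws)$ to make visible that swapping the order changes whether case (i) or case (ii) triggers, and that this swap is precisely what produces the extra $+1$ in the $\XX$-coefficient. Once that is nailed down, the equalities follow from the two sheet-counts above together with \eqref{eq:hkk2.5} and \eqref{eq:hkkd}.
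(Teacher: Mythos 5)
Your proposal is correct and follows the same route as the paper: split $\ind^{\ZZ^2}_p$ into its $\ZZ$-part and $\XX$-part, dispose of the $\ZZ$-part via \eqref{eq:hkk2.5} and \eqref{eq:hkkd}, and then verify that $\sind_p(\lws,\lwt)+\sind_p(\lwt,\lws)$ equals $0$ (resp.\ $1$) for $p\notin\Tri$ (resp.\ $p\in\Tri$). The paper simply asserts this last identity ``by definition''; you supply the sheet-counting that makes it explicit, including the key observation that at $p\in\Tri$ the cut lies in exactly one of the two clockwise sectors determined by $\lws$ and $\lwt$, so that swapping the pair flips between cases (i) and (ii) of the definition of $\sind_p$ and produces the extra $+1$.
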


\begin{proof}
	By definition, we have
	$$\ind^{\XX}_p(\lws,\lwt)+\ind^{\XX}_p(\lwt,\lws)=\begin{cases}
	0&\text{if $p\notin\Tri$,}\\
	\XX&\text{if $p\in\Tri$.}
	\end{cases}$$
	Combining this with \eqref{eq:hkk2.5} and  \eqref{eq:hkkd}, we have the required formulas.
\end{proof}

\begin{lemma}
	Let $\lws,\lwt,\lwa$ be double graded curves on $\surfo$. If they are in the case in Figure~\ref{fig:3int1}, we have
	\begin{equation}\label{eq:3int}
	\ind^{\ZZ^2}_p(\lws,\lwa)=\ind^{\ZZ^2}_{p}(\lws,\lwg)+\ind^{\ZZ^2}_{p}(\lwg,\lwa).
	\end{equation}
	If they are in the case in Figure~\ref{fig:iid}, we have
	\begin{equation}\label{eq:ksg}
		\ind^{\ZZ^2}_p(\lws,\lwt)=\ind^{\ZZ^2}_{\alpha(0)}(\lws,\lwa)-\ind^{\ZZ^2}_{\alpha(1)}(\lwt,\lwa).
	\end{equation}
\end{lemma}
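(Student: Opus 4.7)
The strategy is to decouple the two components of the bi-index. Writing $\ind^{\ZZ^2}_p=\ind_p+\sind_p\XX$ as in \eqref{eq:Z2-int}, the first-coordinate parts of \eqref{eq:3int} and \eqref{eq:ksg} are exactly the already-proved identities \eqref{eq:3int1} and \eqref{eq:ks}, so it remains to verify the analogues for the $\sind_p$-components.

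For $p\notin\Tri$, the covering $\log\surfo\to\surfo$ is trivially a disjoint union of sheets over a small neighborhood of $p$. Given a double graded curve $\lws$ through $p$, let $\sigma(\lws,p)\in\ZZ$ index the sheet of $\log\surfo$ carrying the lift of $p$ on $\lws$. Unwinding the definition of $\sind_p$ at a non-decoration point one obtains $\sind_p(\lws,\lwt)=\sigma(\lws,p)-\sigma(\lwt,p)$, and the required additivity
$$\sind_p(\lws,\lwa)=\sind_p(\lws,\lwg)+\sind_p(\lwg,\lwa)$$
follows as a telescoping identity.

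For $p\in\Tri$, the points $\alpha(0),\alpha(1)$ lie outside $\Tri$, so the formula above applies at them. For $\lws$ ending at $p$ write $\sigma(\lws,p_-)\in\ZZ$ for the sheet in which the short segment of $\lws$ just before $p$ lies; since this segment crosses no cut one has $\sigma(\lws,\alpha(0))=\sigma(\lws,p_-)$ (and similarly for $\lwt$). Rearranging gives
$$\sind_{\alpha(0)}(\lws,\lwa)-\sind_{\alpha(1)}(\lwt,\lwa)=\bigl(\sigma(\lws,p_-)-\sigma(\lwt,p_-)\bigr)+\Delta\sigma_\lwa,$$
where $\Delta\sigma_\lwa=\sigma(\lwa,\alpha(1))-\sigma(\lwa,\alpha(0))$ is the net sheet jump of $\lwa$ along $\alpha$. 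By the gluing rule $c_{i+}^m\sim c_{i-}^{m+1}$ of Definition~\ref{def:logdms}, $\Delta\sigma_\lwa$ equals $0$ when $\alpha$ avoids the cut and $1$ when $\alpha$ clockwise-crosses it once. On the other hand, unwinding the two cases in the definition of $\sind_p$ at a decoration yields $\sind_p(\lws,\lwt)=\sigma(\lws,p_-)-\sigma(\lwt,p_-)+\epsilon$ with $\epsilon=0$ in Case~1 (the clockwise angle from $\lws$ to $\lwt$ avoids the cut) and $\epsilon=1$ in Case~2 (it crosses the cut). Since $\alpha$ traces exactly this clockwise angle, the two dichotomies coincide and the identity follows.

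The sole technical subtlety is keeping sign conventions aligned: the direction in which $[\XX]$ shifts sheets of $\log\surfo$ must match the direction in which clockwise cut-crossing shifts sheets, so that the built-in $+1$ correction of Case~2 in the definition of $\sind_p$ at a decoration exactly offsets the jump of $\lwa$. This compatibility is encoded in Definition~\ref{def:logdms}, after which both identities reduce to the sheet bookkeeping above.
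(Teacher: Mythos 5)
Your proof is correct and takes essentially the same approach as the paper's: both reduce the first-coordinate parts to the already-established ungraded identities, and then verify the $\sind_p$-components by tracking which sheet of $\log\surfo$ each curve passes through near $p$ (or near $\alpha(0),\alpha(1)$), with the cut-crossing/$+1$ correction in the definition at a decoration exactly balancing the sheet jump of the auxiliary arc $\alpha$. The only difference is presentational: you introduce an explicit sheet-indexing function $\sigma(\cdot,\cdot)$ and phrase the argument as algebraic bookkeeping, whereas the paper argues directly in terms of which $[\sii\XX]$-shift of each curve meets which at the relevant lift; your note about the sign-convention alignment between the deck transformation and the gluing rule $c_{i+}^m\sim c_{i-}^{m+1}$ is a point the paper treats implicitly.
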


\begin{proof}
	For the case in Figure~\ref{fig:3int1}, by definition, we have
	$$\ind^{\XX}_p(\lws,\lwa)=\ind^{\XX}_{p}(\lws,\lwg)+\ind^{\XX}_{p}(\lwg,\lwa).$$
	So we deduce \eqref{eq:3int} by \eqref{eq:3int1}.
	
	For the case in Figure~\ref{fig:iid}, by \eqref{eq:ks}, we only need to show
	$$\ind^{\XX}_p(\lws,\lwt)=\ind^{\XX}_{\alpha(0)}(\lws,\lwa)-\ind^{\XX}_{\alpha(1)}(\lwt,\lwa).$$
	Assume $\ind^{\ZZ^2}_p(\lws,\lwt)=\sii$ and $\ind^{\XX}_{\alpha(0)}(\lws,\lwa)=\sii'$. By definition, $\lws$ and $\lwt$ are in one of the cases in Figure~\ref{fig:lr} and $\lwa[\sii'\XX]$ crosses $\lws$ at some lift of $\alpha(0)$. 	
	Then in both cases in Figure~\ref{fig:lr}, we have that $\lwa[\sii'\XX]$ crosses $\lwt[\sii\XX]$ at some lift of $\alpha(1)$. So we have
	$\ind_{\alpha(1)}(\lwt,\lwa)=\sii'-\sii,$ which implies the required equality.
%	So we have $\ind^{\XX}_{\alpha(0)}(\lws,\wa^{m''})-\ind_{\alpha(1)}(\lwt,\wa^{m''})=m''-m'=\ind^{\XX}_p(\lws,\lwt)$.
%	If  $\lws$ and $\lwt$ are in the case of the right picture of Figure~\ref{fig:lr}, assume $\lwa[\sii'\XX]$ crosses $\lws$ at some lift of $\alpha(0)$. Then $\lwa[\sii'\XX]$ crosses $\lwt$ at some lift of $\alpha(1)$. So we have
%	$\ind^{\XX}_{\alpha(0)}(\lws,\lwa)=\sii',\ \ind_{\alpha(1)}(\lwt,\lwa)=\sii'-\sii,$ which imply the required equality.
%	
%	we have
%	$$\ind^{\XX}_{\alpha(0)}(\lws,\wa^{m''})=m-m'',\ \ind_{\alpha(1)}(\lwt,\wa^{m''})=m'-m''-1.$$
%	So we have $\ind^{\XX}_{\alpha(0)}(\lws,\wa^{m''})-\ind_{\alpha(1)}(\lwt,\wa^{m''})=m''-m'+1=\ind^{\XX}_p(\lws,\lwt)$. Combining this with  we get the formula \eqref{eq:ksg}.
\end{proof}

For any $(\ii,\sii)\in\ZZ^2$, we denote by $\cap^{\ii+\sii\XX}(\lws,\lwt)$ the set of intersections between $\lws$ and $\lwt$ with bi-index $\ii+\sii\XX$. We will use the notations \begin{gather*}
\Int^{\ii+\sii\XX}_{\Tri}(\lws,\lwt)\colon=\left|\cap^{\ii+\sii\XX}(\lws,\lwt)\cap \Tri\right|,\\
\Int^{\ii+\sii\XX}_{\surfoi}(\lws,\lwt)\colon=\left|\cap^{\ii+\sii\XX}(\lws,\lwt)\cap \surfoi\right|,\\
\Int^{\ii+\sii\XX}_{\surfo}(\lws,\lwt)\colon=
\frac{1}{2}\cdot\Int^{\ii+\sii\XX}_{\Tri}(\lws,\lwt)+
\Int^{\ii+\sii\XX}_{\surfoi}(\lws,\lwt),
\end{gather*}
for the bi-index $(\ii+\sii\XX)$ intersection numbers at decorations, in the interior, and at all of $\surf\setminus\partial\surf$, respectively.
The \emph{total intersection}
$$
    \Int_{?}(\lws,\lwt)=\sum_{\ii,\sii\in\ZZ}\Int^{\ii+\sii\XX}_?(\lws,\lwt)
$$
is the sum over all indices, where $?=\Tri,\surfoi$ or $\surfo$.

The following notion is useful in the paper.

\begin{definition}[Extension of curves]\label{def:ext}
Let $\sigma,\tau$ be two curves in $\surfo$ with $\sigma(0)=\tau(0)\in\Tri$.
The (positive) extension $\tau\wedge\sigma$ of $\tau$ by $\sigma$ (with respect to the common starting point) is defined in Figure~\ref{fig:ext.},
which consists of the operation so-called smoothing out at such an intersection
and possibly (at most) one operation--the unknotting a non-admissible intersection.
	\begin{figure}[ht]\centering
		\begin{tikzpicture}[scale=1.2]
		\draw[NavyBlue,dashed,very thick](0,0)circle(2);
		\draw[thick,blue](-195:2)edge[bend left,-<-=.5,>=stealth](0,0)
		(0,0)edge[bend right,->-=.5,>=stealth](30:2)
		(1,0)node{$\tau$}(-1,.1)node[above]{$\sigma$};
		\draw[thick,red](160:2)to[bend left](0,.5)(0.3,.7)
		node[above]{\small{$\tau\wedge\sigma$}};
		\draw[thick,red](0,0.5)edge[bend right=40,->-=.1,>=stealth](36:2);
		\draw(0,0)node[white] {$\bullet$} node[red](a){$\circ$};
		\end{tikzpicture}
        \qquad
    \begin{tikzpicture}[xscale=.48,yscale=.6]\draw[gray, dashed](0,-.5)ellipse(4 and 1.8);
	\draw[thick,red,-<-=.85,>=stealth](-20:4) .. controls +(180:6.5) and +(180:2) .. (0,.7)
        .. controls +(0:2) and +(-0:6.5) .. (200:4);
	\draw[thick,red,-<-=.5,>=stealth](4,-5.5)to(0,-4.5);
	\draw[thick,red,-<-=.5,>=stealth](0,-4.5)to (-4,-5.5) (0,-2.7)node[gray]{$\Downarrow$};
	\draw(0,-4.5)node[white] {$\bullet$};
	\draw[red](0,0)\ww(0,0) (0,-4.5)\ww;
    \draw[gray, dashed](0,-5)ellipse(4 and 1.8);
	\end{tikzpicture}
		\caption{The extension as smoothing out and unkontting}
		\label{fig:ext.}
	\end{figure}
	
	Let $\lws$ and $\lwt$ be double graded curves on $\surfo$ with underlying curves $\us$ and $\ut$, respectively. The extension $\lwt\wedge\lws$ is the double graded curve whose underlying curves is $\us\wedge\wt$ and which inherits the double grading from $\lwt$.
\end{definition}

%=========================================================
\subsection{Types of curves and  $\qv$-intersections}
%=========================================================

\begin{definition}\label{not:CA}
	We have the following types of curves in $\surfo$.
	\begin{itemize}
		\item A curve $c$ is called \emph{open} (resp. \emph{closed}) if $c(0)$ and $c(1)$ are open marked points (resp. decorations), i.e., in $\M$ (resp. $\Tri$).
		\item An \emph{open arc} is an open curve without self-intersections in $\surfoi$. We call two open arcs do not cross each other if they do not have intersections in $\surfoi$.
		\item A \emph{closed arc} is a closed curve without self-intersections in $\surfoi$ and whose two endpoints are not the same decoration.
		\item A closed curve is called \emph{admissible} if it does not cut out a once-decorated monogon by one of its self-intersections in $\surfoi$. See the upper right picture in Figure~\ref{fig:ext.} for the failure of the condition.
	\end{itemize}
	We always consider open/closed curves up to taking inverse and homotopy relative to endpoints.
\end{definition}

By definition, any closed arc is an admissible closed curve. Denote by $\CA(\surfo)\subset\ACC(\surfo)$, respectively, the set of closed arcs and the set of admissible closed curves, and denote by $\wCA(\surfo)\subset\wACC(\surfo)$, respectively, the set of graded closed arcs and the set of graded admissible closed curves.

For any $\we\in\wCA(\surfo)$, we call any of its lifts in $\log\surfo$ a \emph{double graded closed arc}; for any $\we\in\wACC(\surfo)$, we call any of its lifts in $\log\surfo$ a \emph{double graded admissible closed curve} in $\log\surfo$. Denote by $\wXCA(\surfo)\subset\wXACC(\surfo)$, respectively, the set of double graded closed arcs and the set of double graded admissible closed curves.

\begin{definition}[$\qv$-intersections]
	The \emph{$\ZZ^2$-graded $\qv$-intersection} of $\lws,\lwt\in\wXACC(\surfo)$ is defined to be
	\begin{equation}\label{eq:q-int}
	\begin{array}{rcl}
	\qqInt(\lws,\lwt)&=&
	\displaystyle\sum_{\ii,\sii\in\mathbb{Z}}
	\qv^{\ii+\sii\XX}\cdot\Int_{\Tri}^{\ii+\sii\XX} (\lws,\lwt)\\
	&&+(1+\qv^{\XX-1})
	\displaystyle\sum_{\ii,\sii\in\mathbb{Z}}
	\qv^{\ii+\sii\XX}\cdot\Int_{\surfoi}^{\ii+\sii\XX} (\lws,\lwt).\end{array}
	\end{equation}
	Note that we have $\qqInt(-,-)\mid_{\qv=1}=2\Int_{\surfo}(-,-)$.

	Define the \emph{$\ZZ^2$-graded $\qv$-intersection} of a lift $\lwg$ of a graded open curve $\gamma$ and $\lwt\in\wXACC(\surfo)$ to be
	\begin{equation}\label{eq:q-int2}
	\qqInt(\lwg,\lwt)=
	\displaystyle\sum_{\ii,\sii\in\mathbb{Z}}
	\qv^{\ii+\sii\XX}\cdot\Int_{\surfoi}^{\ii+\sii\XX} (\lwg,\lwt).
	\end{equation}
\end{definition}

%=========================================================
\subsection{Dg $\XX$-graded quiver algebras from surfaces}
%=========================================================
\begin{definition}\label{def:f.f.a.s.}
	An \emph{(open) full formal arc system} $\ac$ of a graded DMS $\DMS$ is a collection of pairwise non-crossing graded open arcs
	which do not intersect the cut $\cc$, such that it divides the surface $\surf$ into polygons,
	called \emph{$\ac$-polygons}, satisfying that
	each $\ac$-polygon contains exactly one decoration in $\Tri$ (or equivalently, contains exactly one arc in the cut $\cut$).
\end{definition}

Let $\ac=\{\wg_i\mid 1\leq i\leq n \}$ be a full formal arc system of $\DMS$. Note that any arc $\wg_i$ in $\ac$ does not cross any arc in the cut $\cc$. So we have the lifts $\wg_i^m$, $m\in\ZZ$ in $\log\surfo$ of $\wg_i$, with each $\wg_i^m$ in the sheet $\surfo^m$. We denote by $\ac^\ZZ:=\cup_{m\in\mathbb{Z}}\ac^m$, where $\ac^m=\{\wg_i^m\mid 1\leq i\leq n,\ m\in\ZZ \}$.

%. Note that any edge of an $\ac$-polygon is either an arc in $A$ or a boundary arc of $\surf$. An open arc system $A$ is called  if any $\ac$-polygon has exactly  called

%The number of graded open arcs in any $\ac$ only depends on the numerical data of $\surf$,
%which is the analogue of the well-known fact that the number of arcs in a triangulation
%only depends on the numerical data of a marked surface.
%Such a number is called the rank of $\surf$, which is given by the following formula.
%We leave the proof to the readers.
%
%\begin{lemma}%\label[]{HKK}
%	Let $\ac$ be a full formal open arc system of $\DMS$. Then
%	\[|\ac|=2g+b+|\M|-2,\]
%	where $g$ is the genus of $\surf$ and $b$ is the number of components of $\partial\surf$.
%\end{lemma}

\begin{definition}[Arc segments]
Let $\ac$ be a full formal arc system of $\DMS$.
An \emph{arc segment} is a curve $\rho:[0,1]\to \surfoi$ without self-intersections and such that
its interior is in the interior of an $\ac$-polygon $P$ and whose endpoints are on the edges of $P$. Any arc segment $\rho$ will be considered up to isotopy with respect to the interior of the edges of $P$.
	
An arc segment $\rho$ in $P$ is called \emph{positive} (resp. \emph{negative}),
if the decoration in $P$ is on the right (resp. left) hand side of $\rho$ when going from $\rho(0)$ to $\rho(1)$.
Note that the inverse of an arc segment $\rho$ in $P$ has the opposite sign of $\rho$.
See the green arc segments in Figure~\ref{fig:st}.
%\Old{An arc segment $\rho$ in $P$ is called of \emph{type II} (resp. \emph{type I}) if $\rho$ does (resp. does not) intersect the cut (gray dashed arcs in Figure~\ref{fig:st}).}
\end{definition}
\begin{figure}[h]\centering
	\begin{tikzpicture}[scale=-.4]
	\foreach \j in {1,2,0,3}{
		\draw[blue,very thick](90*\j+20:4)to(90*\j-20:4);
		\draw[dashed,blue,thin](90*\j+20:4)to[bend left=-15](90*\j-20+90:4);}
	\draw[very thick, Emerald,->-=.5,>=stealth](-5:3.7)to[bend left=45]node[below]{$+$}(185:3.7);
	\draw[very thick](90+20:4)to(90-20:4);
	\draw[gray!99,very thick, dashed](0,0)to($(90+20:4)!.5!(90-20:4)$);
	\draw[red](0,0)node[white]{$\bullet$} \ww;
	\end{tikzpicture}\quad
	\begin{tikzpicture}[scale=-.4]
	\foreach \j in {1,2,0,3}{
		\draw[blue,very thick](90*\j+20:4)to(90*\j-20:4);
		\draw[dashed,blue,thin](90*\j+20:4)to[bend left=-15](90*\j-20+90:4);}
	\draw[very thick, Emerald,-<-=.4,>=stealth](-5:3.7)to[bend left=-45]node[below]{$+\qquad$}(185:3.7);
	\draw[very thick](90+20:4)to(90-20:4);\draw[Emerald](-90:2);
	\draw[gray!99,very thick, dashed](0,0)to($(90+20:4)!.5!(90-20:4)$);
	\draw[red](0,0)node[white]{$\bullet$} \ww;
	\end{tikzpicture}\quad
	\begin{tikzpicture}[scale=-.4]
	\foreach \j in {1,2,0,3}{
		\draw[blue,very thick](90*\j+20:4)to(90*\j-20:4);
		\draw[dashed,blue,thin](90*\j+20:4)to[bend left=-15](90*\j-20+90:4);}
	\draw[very thick, Emerald,-<-=.5,>=stealth](-5:3.7)to[bend left=45]node[below]{$-$}(185:3.7);
	\draw[very thick](90+20:4)to(90-20:4);
	\draw[gray!99,very thick, dashed](0,0)to($(90+20:4)!.5!(90-20:4)$);
	\draw[red](0,0)node[white]{$\bullet$} \ww;
	\end{tikzpicture}\quad
	\begin{tikzpicture}[scale=-.4]
	\foreach \j in {1,2,0,3}{
		\draw[blue,very thick](90*\j+20:4)to(90*\j-20:4);
		\draw[dashed,blue,thin](90*\j+20:4)to[bend left=-15](90*\j-20+90:4);}
	\draw[very thick, Emerald,->-=.65,>=stealth](-5:3.7)to[bend left=-45]node[below]{$\qquad-$}(185:3.7);
	\draw[very thick](90+20:4)to(90-20:4);\draw[Emerald](-90:2);
	\draw[gray!99,very thick, dashed](0,0)to($(90+20:4)!.5!(90-20:4)$);
	\draw[red](0,0)node[white]{$\bullet$} \ww;
	\end{tikzpicture}
	\caption{Signs of two pairs of arc segments}\label{fig:st}
\end{figure}
Define an equivalent relation on positive arc segments, that $\rho_1\sim\rho_2$ if and only if $\rho_1$ and $\rho_2$ are in two neighbor $\ac$-polygons and form a digon (see Figure~\ref{fig:digon}).

\begin{figure}[htpb]\centering
	\begin{tikzpicture}[xscale=.8,yscale=.55]
	\draw[thick,blue](0,3)to(2,2)to(4,3) (2,2)to(2,-2) (0,-3)to(2,-2)to(4,-3);
	\draw[thick,blue,dashed](0,3)to[bend left=-90,dashed](0,-3) (4,3)to[bend left=90,dashed](4,-3);
	\draw[red, thick](0,0)\ww(4,0)\ww (2,0) ellipse (1 and 1) (1,0)node[left]{$\rho_1$}(3,0)node[right]{$\rho_2$}
	(2,1)\nn(2,-1)\nn;
	\draw[red] (1,0) \jiantou (1,-.1);\draw[red] (3,0) \jiantou (3,.1);
	\end{tikzpicture}
	\begin{tikzpicture}[xscale=.8,yscale=.55]
	\draw[thick,blue](0,3)to(2,2)to(4,3) (2,2)to(2,-2) (0,-3)to(2,-2)to(4,-3);
	\draw[thick,blue,dashed](0,3)to[bend left=-90,dashed](0,-3) (4,3)to[bend left=90,dashed](4,-3);
	\draw[red, thick](0,0)\ww(4,0)\ww (2,0) ellipse (2.5 and 1.5) (1,1.6)node[left]{$\rho_1$}(3,1.6)node[right]{$\rho_2$}
	(2,1.5)\nn(2,-1.5)\nn;
	\draw[red] (-.5,0) \jiantou (-.5,0.1);
	\draw[red] (4.5,0) \jiantou (4.5,-.1);
	\end{tikzpicture}
	\caption{The equivalent relation on positive arc segments}\label{fig:digon}
\end{figure}

For any positive arc segment $\rho$, we denote by $[\rho]$ the class of double graded curves on $\surfo$ whose underlying curve is equivalent to $\rho$. Let $\pas(\ac)$ the set of classes $[\rho]$ of positive arc segments $\rho$. For any $[\rho_1],[\rho_2]\in\pas(\ac)$, if there are representatives $\overset{\approx}{\rho'_1}\in[\rho_1]$ and $\overset{\approx}{\rho'_2}\in[\rho_2]$ such that the composition $\rho'_1\cdot\rho'_2$ is again a positive arc segment, we define $[\rho_1]\cdot[\rho_2]=[\rho'_1\cdot\rho'_2]$ (see Figure~\ref{fig:com}).

\begin{figure}[htpb]
	\begin{tikzpicture}[xscale=.8,yscale=.6]
	\foreach \j in {0,1,2}{
		\draw[blue,very thick](120*\j+115:4)to(120*\j+65:4);
		\draw[dashed,blue,thin](120*\j+115:4)to[bend right=30](120*\j+185:4);}
	\draw[very thick, Emerald,->-=.5,>=stealth](-150:3.6)[out=60,in=180]to(90:1)[out=0,in=120]to(-30:3.6);
	\draw[very thick, Emerald,->-=.5,>=stealth](-170:3.8)to node[above]{$[\rho_1]\quad$}(100:3.65);
	\draw[very thick, Emerald,->-=.5,>=stealth](80:3.65)to node[above]{$\quad[\rho_2]$}(-10:3.8);
	\draw[Emerald] (90:1)node[above]{$[\rho_1]\cdot[\rho_2]$};
	\draw[red](0,0)node[white]{$\bullet$} \ww;
	\draw[blue] (0,4)node{$\wg_j$} (-30:4)node{$\wg_k$} (-150:4)node{$\wg_i$};
	\end{tikzpicture}
	\caption{Composition of positive arc segments}\label{fig:com}
\end{figure}

A class $[\rho]$ is called \emph{trivial} if $\rho$ is isotopy to a segment of an arc in $\ac$,
i.e. the ones in the left picture of Figure~\ref{fig:digon}.
A class $[\rho]$ is called \emph{loop-type} if its endpoints are in the same arc of $\ac$,
and together with that arc, it encloses a decoration as shown in the right picture of Figure~\ref{fig:digon}.

% with $[\gamma_1],[\gamma_2]$.
%We also consider an arc segment having a decoration as an endpoint.

%We regard $\wg_i\in\ac$ as $\wg_i^0\in\ac^\ZZ$ sometimes.

\begin{definition}\label{def:Z2 Q}
For a full formal arc system $\ac=\{\widetilde{\gamma}_i\mid 1\leq i\leq n \}$ of $\DMS$, there is an associated ($\ZZ\oplus\ZZ\XX$=)$\ZZ^2$-graded quiver $Q_\ac$ with a map $d_\ac:(Q_\ac)_1\to\k Q_\ac$ given by the following data.
\begin{itemize}
	\item The vertices of $Q_\ac$ are (indexed by) the open arcs in $\ac$: $i:=\widetilde{\gamma}_i$ for $1\leq i\leq n$.
	\item Each non-trivial $[\rho]\in\pas(\ac)$ with $\rho(0)\in\gamma_i$ and $\rho(1)\in\gamma_j$ induces a $\ZZ^2$-graded arrow $b_{[\rho]}:i\to j$ of bi-degree
	\begin{equation}\label{eq:2}
	   \deg(b_{[\rho]})\colon=\ind^{\ZZ^2}_{\rho(1)}(\wg_j^0,\lwr)-\ind^{\ZZ^2}_{\rho(0)}(\wg_i^0,\lwr),
	\end{equation}
	where $\lwr$ is an arbitrary representative in $[\rho]$.	
	\item The map $d_\ac$ is given by
	\begin{equation}\label{eq:diff}
	d_\ac(b_{[\rho]})=\sum_{[\rho]=[\rho_1]\cdot[\rho_2]}(-1)^{|b_{[\rho_1]}|_1}b_{[\rho_1]}b_{[\rho_2]},
	\end{equation}
	where the sum runs over all pairs $([\rho_1],[\rho_2])$ of non-trivial positive arc segments $[\rho_1],[\rho_2]\in\pas(\ac)$ satisfying $[\rho]=[\rho_1]\cdot[\rho_2]$.
	\end{itemize}
\end{definition}
	
\begin{lemma}\label{lem:2}
	For any nontrivial $[\rho_1],[\rho_2]\in\pas(\ac)$ such that $[\rho_1]\cdot[\rho_2]$ exists, we have
	$$|b_{[\rho_1]\cdot[\rho_2]}|_1=|b_{[\rho_1]}|_1+|b_{[\rho_2]}|_1-1.$$
\end{lemma}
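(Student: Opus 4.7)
My plan is to expand each of the three degrees via~\eqref{eq:2} and reduce the identity to a local index computation at the common meeting point. By the definition of $[\rho_1]\cdot[\rho_2]$, I can choose representatives $\rho_1,\rho_2$ inside a common $\ac$-polygon $P$ with $\rho_1(1)=\rho_2(0)=p$ on a shared edge $\wg_j$. Writing $\rho=\rho_1\cdot\rho_2$ for the smoothed concatenation and picking a grading $\widetilde{\rho}$ together with gradings $\widetilde{\rho_1},\widetilde{\rho_2}$ that agree with $\widetilde{\rho}$ away from the smoothing region near $p$ (a permissible choice, since $|b_{[\rho]}|_1$ is grading-shift invariant), substitution into~\eqref{eq:2} and cancellation of the index contributions at the two outer endpoints yields
\[
|b_{[\rho_1]}|_1+|b_{[\rho_2]}|_1-|b_{[\rho_1\cdot\rho_2]}|_1 \;=\; \ind_p(\wg_j,\widetilde{\rho_1})-\ind_p(\wg_j,\widetilde{\rho_2}).
\]

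The geometric heart of the argument is the behaviour of $\rho$ near $p$. Because the smoothed curve is pushed off $\wg_j$ into the interior of $P$, it attains a point of closest approach to $\wg_j$ at which its tangent line coincides with $\dot{\wg_j}|_p$ in $\PTSo{p}$. Following the continuous grading lift $\widetilde{\rho}$ through this grazing point, the graded value passes through a specific lift of $\dot{\wg_j}|_p$ that lies strictly between $\widetilde{\rho_1}|_p$ and $\widetilde{\rho_2}|_p$ in the fibre $\R T_p\surfo\cong\R$. Invoking the alternative description of the intersection index as the unique integer $i$ with $\widetilde{\wt}|_p+i\in(\widetilde{\ws}|_p,\widetilde{\ws}|_p+1)$, this ``between-ness'' forces exactly one of $\widetilde{\rho_1}|_p,\widetilde{\rho_2}|_p$ to lie in the open unit interval based at $\widetilde{\wg_j}|_p$ (up to the integer shift fixed by the grading on $\wg_j$), producing the required difference of $1$.

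Equivalently, I could package this through~\eqref{eq:3int1} applied to the triple $\widetilde{\wg_j},\widetilde{\rho_1},\widetilde{\rho_2}$ at $p$, reducing the statement to showing $\ind_p(\widetilde{\rho_2},\widetilde{\rho_1})=1$ or $\ind_p(\widetilde{\rho_1},\widetilde{\rho_2})=-1$ (depending on the local cyclic order), which again follows from the monotonic grading lift along the smoothing. The delicate point---and where I expect the main obstacle---is verifying that the smoothing always rotates the tangent \emph{through} $\dot{\wg_j}|_p$ (rather than in the opposite direction, which would dip $\rho$ out of $P$): this is forced by the ``decoration-on-the-right'' condition for positive arc segments, but it requires a careful local case analysis on the possible cyclic arrangements of $\dot{\wg_j},\dot{\rho_1},\dot{\rho_2}$ in $\PTSo{p}$ to check that the resulting sign is $+1$ uniformly.
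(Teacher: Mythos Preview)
Your approach is the paper's: expand the three degrees via~\eqref{eq:2}, fix gradings on $\rho_1,\rho_2$ compatible with a chosen grading on $\rho$ so that the outer-endpoint contributions cancel, and reduce everything to the single relation $\ind_{p}(\wg_j,\wr_1)=\ind_{p}(\wg_j,\wr_2)+1$ at the shared edge $\wg_j$. The paper asserts this relation in one line and moves on; your grazing/smoothing discussion is a correct explicit justification of what the paper leaves implicit, and your final worry about the sign is unfounded---positivity (decoration on the right of $\rho$) forces $\wg_j$ to lie on the \emph{left} of the smoothed curve near the grazing point, so the tangent rotates clockwise through $\dot{\gamma_j}$ exactly once, which under the convention $[1]=\text{clockwise rotation by }\pi$ yields $+1$ uniformly with no case analysis needed.
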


\begin{proof}
	Let $\wg_i,\wg_j,\wg_k$ be the graded arcs in $\ac$ where $\rho_1(0),\rho_1(1),\rho_2(1)$ are, respectively (cf. Figure~\ref{fig:com}). For any grading $\wr$ of $\rho$, we may take the gradings $\wr_1,\wr_2$ of $\rho_1$ and $\rho_2$ respectively such that $\ind_{\rho_2(1)}(\wg_k,\wr_2)=\ind_{\rho(1)}(\wg_k,\wr)$ and $\ind_{\rho_1(0)}(\wg_i,\wr_1)=\ind_{\rho(0)}(\wg_i,\wr)$. Then we have $\ind_{\rho_1(1)}(\wg_j,\wr_1)=\ind_{\rho_2(0)}(\wg_j,\wr_2)+1$. Hence
	$$\begin{array}{rcl}
	|b_{[\rho]}|_1&=&\ind_{\rho(1)}(\wg_k,\wr)-\ind_{\rho(0)}(\wg_i,\wr) \vspace{1ex}\\
	&=&\ind_{\rho_2(1)}(\wg_k,\wr_2)-\ind_{\rho_1(0)}(\wg_i,\wr_1) \vspace{1ex}\\
	&=&(\ind_{\rho_2(1)}(\wg_k,\wr_2)-\ind_{\rho_2(0)}(\wg_j,\wr_2))+(\ind_{\rho_1(1)}(\wg_j,\wr_1)-\ind_{\rho_1(0)}(\wg_i,\wr_1))-1 \vspace{1ex}\\
	&=&|b_{[\rho_1]}|_1+|b_{[\rho_2]}|_1-1.
	\end{array}
$$
\end{proof}

\begin{lemma}
	The map $d_\ac$ is a differential. That is, $d_\ac^2(b_{[\rho]})=0$ for any non-trivial $[\rho]\in\pas(\ac)$.
\end{lemma}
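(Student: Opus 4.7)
The plan is a pure sign bookkeeping argument: apply the Leibniz rule, reorganize the resulting sum by triples of composable arc segments, and show the two contributions per triple cancel by invoking Lemma~\ref{lem:2}.

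First, I would expand $d_\ac^2(b_{[\rho]})$ by the definition \eqref{eq:diff} and the Leibniz rule. For each factorization $[\rho]=[\rho_1]\cdot[\rho_2]$ the Leibniz rule gives two terms: one obtained by further factoring $[\rho_1]=[\rho_a]\cdot[\rho_b]$ (with $[\rho_2]$ kept intact), and one obtained by further factoring $[\rho_2]=[\rho_b]\cdot[\rho_c]$ (with $[\rho_1]$ kept intact). Reorganizing collects everything as a sum indexed by triples $([\rho_a],[\rho_b],[\rho_c])$ of non-trivial positive arc segments with $[\rho_a]\cdot[\rho_b]\cdot[\rho_c]=[\rho]$, where each such triple contributes exactly twice: once via the splitting $[\rho_1]=[\rho_a]\cdot[\rho_b],\ [\rho_2]=[\rho_c]$, and once via $[\rho_1]=[\rho_a],\ [\rho_2]=[\rho_b]\cdot[\rho_c]$.

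Next, I would compute the signs of these two contributions. For the first, the sign is
\[
(-1)^{|b_{[\rho_a]\cdot[\rho_b]}|_1}\cdot (-1)^{|b_{[\rho_a]}|_1},
\]
and by Lemma~\ref{lem:2} we have $|b_{[\rho_a]\cdot[\rho_b]}|_1=|b_{[\rho_a]}|_1+|b_{[\rho_b]}|_1-1$, so this simplifies to $-(-1)^{|b_{[\rho_b]}|_1}$. For the second contribution, the outer Leibniz sign $(-1)^{|b_{[\rho_a]}|_1}$ combines with the outer sign $(-1)^{|b_{[\rho_a]}|_1}$ from the original $d_\ac(b_{[\rho]})$ and the sign $(-1)^{|b_{[\rho_b]}|_1}$ from $d_\ac(b_{[\rho_b]\cdot[\rho_c]})$, giving $(-1)^{|b_{[\rho_b]}|_1}$. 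The two signs are opposite, so the coefficient of $b_{[\rho_a]}b_{[\rho_b]}b_{[\rho_c]}$ vanishes for every triple, yielding $d_\ac^2(b_{[\rho]})=0$.

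The only non-routine ingredient is the identity $|b_{[\rho_1]\cdot[\rho_2]}|_1=|b_{[\rho_1]}|_1+|b_{[\rho_2]}|_1-1$, which is exactly Lemma~\ref{lem:2} and is already available. I do not foresee a genuine obstacle: one only needs to verify that every pair of nested splittings of $[\rho]$ indeed corresponds bijectively to a choice of ordered triple, which is immediate from the fact that composition of positive arc segments is strictly associative wherever defined, and that no additional terms arise (trivial classes are excluded from the sum in \eqref{eq:diff} by hypothesis, but if a boundary term happened to hit such a class, the first-grading identity of Lemma~\ref{lem:2} still cancels the signs). Thus the argument reduces to the sign computation above.
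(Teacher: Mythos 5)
Your proof is correct and follows essentially the same route as the paper's: expand $d_\ac^2$ via the Leibniz rule, regroup the resulting sum by composable triples of non-trivial positive arc segments, and apply Lemma~\ref{lem:2} to the sign $(-1)^{|b_{[\rho_a]\cdot[\rho_b]}|_1+|b_{[\rho_a]}|_1}$ to see that it is the negative of $(-1)^{|b_{[\rho_b]}|_1}$, so the two contributions per triple cancel. The sign bookkeeping you give matches the paper's displayed computation, and your side remark about associativity of the composition of positive arc segments makes explicit the bijection between nested splittings and ordered triples that the paper's proof uses implicitly.
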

	
\begin{proof}
The statement follows from a direct calculation. Indeed,
\[\begin{array}{rl}
	&d_\ac^2(b_{[\rho]})\vspace{1ex} \\
    \xlongequal&d_\ac\left(\sum_{[\rho_1][\rho_2]=[\rho]}(-1)^{|b_{[\rho_1]}|_1}b_{[\rho_1]}b_{[\rho_2]}\right)
        \vspace{1ex} \\
    \xlongequal{}&\displaystyle\sum_{[\rho_1][\rho_2]=\rho}(-1)^{|b_{[\rho_1]}|_1}\left(d_\ac([b_{\rho_1}])b_{[\rho_2]}+(-1)^{|b_{[\rho_1]}|_1}b_{[\rho_1]}d_\ac(b_{[\rho_2]})\right)
        \vspace{1ex} \\
    =&\displaystyle\sum\limits_{\begin{smallmatrix}
			[\rho_1][\rho_2]=[\rho]\\
			[\rho'_1][\rho''_1]=[\rho_1]
			\end{smallmatrix}}(-1)^{|b_{[\rho'_1]}|_1+|b_{[\rho_1]}|_1}b_{[\rho'_1]}b_{[\rho''_1]}b_{[\rho_2]}
		+\displaystyle\sum\limits_{\begin{smallmatrix}
			[\rho_1][\rho_2]=[\rho]\\
			[\rho'_2][\rho''_2]=[\rho_2]
			\end{smallmatrix}}(-1)^{|b_{[\rho'_2]}|_1}b_{[\rho_1]}b_{[\rho'_2]}b_{[\rho''_2]}
        \vspace{1ex} \\
    =&\displaystyle\sum\limits_{[\rho_1][\rho_2][\rho_3]=[\rho]}(-1)^{|b_{[\rho_1]}|_1+|b_{[\rho_1][\rho_2]}|_1}b_{[\rho_1]}b_{[\rho_2]}b_{[\rho_3]}
        +\displaystyle\sum\limits_{[\rho_1][\rho_2][\rho_3]=[\rho]}(-1)^{|b_{[\rho_2]}|_1}b_{[\rho_1]}b_{[\rho_2]}b_{[\rho_3]}
        \vspace{1ex} \\
    \xlongequal{}&0,
\end{array}\]
where the second and the last equalities hold due to Leibniz rule and Lemma ~\ref{lem:2} respectively.
\end{proof}
	
So $(\k Q_\ac,d_\ac)$ gives a dg $\XX$-graded algebra $\Gamma_\ac$ (cf. Section~\ref{subsec:Dgg}), after extending $d_\ac$ linearly to a map from $\k Q_\ac$ to $\k Q_\ac$ of bi-degree $1$ via the Leibniz rule.

\begin{remark}\label{rmk:cy}
	Note that, for any $[\rho]\in\pas(\ac)$ of loop-type, we have  $\deg(b_{[\rho]})=1-\XX$.
	Moreover, (classes of) positive arc segments in $\pas(\ac)$ come in pairs, namely,
	\begin{itemize}
		\item a loop-type/trivial arc segment will be paired with a trivial/loop-type arc segment,
		where their endpoints are in the same arc of $\ac$.
		\item any other (non-trivial, non-loop-type) $[\rho]\in\pas(\ac)$ with $b_{[\rho]}\colon i\to j$
		can be uniquely paired with $[\rho]^*\in\pas(\ac)$ with $b_{[\rho]^*}\colon j\to i$, such that
		\[
		\deg(b_{[\rho]})+\deg(b_{[\rho]^*})=2-\XX.
		\]
		For example, the green arcs in the first two pictures in Figure~\ref{fig:st} are paired together.
	\end{itemize}
	Let $Q_\ac^0$ be the $\XX$-degree zero part of $Q_\ac$, i.e. the quiver with the same vertices
	and with arrows $b$ such that $\deg(b)\in\ZZ\subset\ZZ\oplus\ZZ\XX$.
	The property above basically means $Q_\ac$ is the Calabi-Yau-$\XX$ double of $Q_\ac^0$,
	in the sense of \cite[Def.~6.2]{KQ}.
	Equivalently, for any pair  $([\rho],[\rho]^*)$, where neither of them is trivial nor loop-type,
	exactly one of $b_{[\rho]}$ and $b_{[\rho]^*}$ is in $Q_\ac^0$.
\end{remark}

Let $\Gamma_\ac^0$ be the differential graded subalgebra of $\Gamma_\ac$
given by $\k Q_\ac^0$ with the induced differential $d^0_{\ac}$.
Then $\Gamma_\ac$ is the Calabi-Yau-$\XX$ completion of $\Gamma_\ac^0$ (see \cite{K8}). So $\D_{fd}(\Gamma_\ac)$ is Calabi-Yau-$\XX$, i.e., $\XX$ is the Serre functor on $\D_{fd}(\Gamma_\ac)$.

\begin{definition}
	An object $M$ in $\D_{fd}(\Gamma_\ac)$ is called \emph{$\XX$-spherical} if
	\[\qdH(M,M)=\qv^0+\qv^{\XX}.\]
\end{definition}
An $\XX$-spherical object $M$ in $\D_{fd}(\Gamma_\ac)$ induces an auto-equivalence $\phi_M\in\Aut\D_{fd}(\Gamma_\ac)$, called \emph{spherical twist} \cite{ST}, by
$$\RHom(M,X)\otimes M\to X\to \phi_M(X)\to (\RHom(M,X)\otimes M)[1]$$
for any $X\in\D_{fd}(\Gamma_\ac)$. By \cite[Lemma~2.11]{ST}, we have the formula
\begin{equation}\label{eq:st}
\phi_{\psi(M)}=\psi\circ\psi_{M}\circ\psi^{-1}
\end{equation}
for any spherical object $M$ and any $\psi\in\Aut\D_{fd}(\Gamma_\ac)$.

\begin{notations}
We denote by $S_i$ the simple $\Gamma_\ac$-module corresponding to $\wg_i\in\ac$, by $\EE_\ac$ the $\ZZ^2$-graded $\Ext$-algebra $\Ext^{\ZZ^2}(\SS,\SS)$, where $\SS=\oplus_{i=1}^n S_i$, and by $\add\EE_\ac$ the $\ZZ^2$-graded category consisting of the indecomposable direct summands of $\EE_\ac$. Note that $S_i,1\leq i\leq n$, can be regarded as the indecomposable direct summands of $\EE_\ac$.
\end{notations}

By the construction \eqref{eq:diff} of $d_\ac$,
the $A_\infty$ structure introduced in Theorem~\ref{thm:ain} is an ordinary associated multiplication.
So by Theorem~\ref{thm:ain} and Corollary~\ref{cor:kd}, we have the following result.

\begin{proposition}\label{prop:morsim}
There is a triangle equivalence
\begin{gather}\label{eq:d.e.}
    \D_{fd}(\Gamma_\ac)\simeq\per(\EE_\ac).
\end{gather}
The morphisms in the $\mathbb{Z}^2$-graded category $\add\EE_\ac$ can be described in the following way.	\begin{itemize}
    \item Each non-trivial $[\rho]\in\pas(\ac)$ with $\rho(0)\in\gamma_i$ and $\rho(1)\in\gamma_j$ induces a morphism $\pi_{[\rho]}:=\pi_{b_{[\rho]}}: S_i\to S_j$ of bi-degree
	\begin{equation}\label{eq:1}
		\deg(\pi_{[\rho]})=1-\ind^{\ZZ^2}_{\rho(1)}(\wg^0_j,\lwr)+\ind^{\ZZ^2}_{\rho(0)}(\wg^0_i,\lwr),
	\end{equation}
    where $\lwr$ is an arbitrary representative in $[\rho]$; each trivial $[\rho]\in\pas(\ac)$ with $\rho(0),\rho(1)\in\gamma_i$ induces the identity $\pi_{[\rho]}=\id_{S_i}: S_i\to S_i$ of bi-degree $0$. All of $\pi_{[\rho]}$, $[\rho]\in\pas(\ac)$ with $\rho(0)\in\gamma_i$ and $\rho(1)\in\gamma_j$, form a basis of the $\mathbb{Z}^2$-graded $\k$-vector space $\Hom_{\EE_\ac}^{\mathbb{Z}^2}(S_i,S_j)$.
	\item The compositions are given by
	\begin{equation}\label{eq:3}
        \pi_{[\rho_2]}\circ \pi_{[\rho_1]}=\begin{cases}\pi_{[\rho]} &\text{if $[\rho_1]\cdot[\rho_2]=[\rho]$}\\0&\text{otherwise.}
	\end{cases}
	\end{equation}
\end{itemize}
\end{proposition}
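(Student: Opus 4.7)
The strategy is direct given the machinery in Section~\ref{sec:XX}: the triangle equivalence \eqref{eq:d.e.} is an immediate consequence of Corollary~\ref{cor:kd} applied to $\Gamma=\Gamma_\ac$, so the substantive content is the explicit description of morphisms in $\add\EE_\ac$ via Theorem~\ref{thm:ain}.

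For the basis and the bi-degree formula, I would apply Theorem~\ref{thm:ain} to the $\ZZ^2$-graded quiver algebra $\Gamma_\ac$ of Definition~\ref{def:Z2 Q}. That theorem produces a basis $\{\pi_a\}$ of $\Ext^{\ZZ^2}(\SS,\SS)$ indexed by arrows and trivial paths of $Q_\ac$, and it prescribes that an arrow of bi-degree $(1-\ii)-\sii\XX$ corresponds to a basis element $\pi_a\colon S_i\to S_j[\ii+\sii\XX]$, i.e.\ of bi-degree $\ii+\sii\XX$. Substituting the bi-degree of $b_{[\rho]}$ from \eqref{eq:2} yields $\deg(\pi_{[\rho]}) = 1-\deg(b_{[\rho]})$, which is exactly formula \eqref{eq:1}. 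The trivial paths produce the identity morphisms $\id_{S_i}$ of bi-degree $0$.

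For the composition rule \eqref{eq:3}, the key observation is already recorded in the paragraph preceding the proposition: because the differential $d_\ac$ in \eqref{eq:diff} is expressed \emph{purely} as sums of products of exactly two arrows, there are no higher Massey-type terms in $d(b)$, so the $A_\infty$-operations $m_r$ on $\EE_\ac$ furnished by Theorem~\ref{thm:ain} vanish for $r\geq 3$. The $A_\infty$ structure thus collapses to an ordinary associative multiplication given by $m_2$. Applying the rule $m_r(\pi_{a_1},\ldots,\pi_{a_r})=\pi_b$ whenever $a_1\cdots a_r$ occurs in $d(b)$, one reads off that $m_2(\pi_{[\rho_1]},\pi_{[\rho_2]})=\pi_{[\rho]}$ precisely when the term $b_{[\rho_1]}b_{[\rho_2]}$ appears in $d_\ac(b_{[\rho]})$, which by \eqref{eq:diff} happens exactly when $[\rho]=[\rho_1]\cdot[\rho_2]$. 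Translating $m_2$ into composition in $\EE_\ac$ (with the Koszul sign convention) then yields $\pi_{[\rho_2]}\circ\pi_{[\rho_1]}=\pi_{[\rho]}$.

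The main technical point to watch is sign bookkeeping: the differential carries the sign $(-1)^{|b_{[\rho_1]}|_1}$, and the passage between $m_2$ and honest composition in the Ext-algebra introduces further Koszul signs depending on the first grading. Lemma~\ref{lem:2}, which expresses $|b_{[\rho_1]\cdot[\rho_2]}|_1$ in terms of $|b_{[\rho_1]}|_1$ and $|b_{[\rho_2]}|_1$, provides exactly the relation needed to absorb these signs into the chosen normalization of the basis $\{\pi_{[\rho]}\}$, so that \eqref{eq:3} holds in the clean sign-free form. Beyond this, the proof is essentially a direct citation of Corollary~\ref{cor:kd} and Theorem~\ref{thm:ain}.
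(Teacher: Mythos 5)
Your proposal is correct and follows essentially the same route as the paper: the paper's proof is the one-line remark that, because $d_\ac$ in \eqref{eq:diff} is purely quadratic, the $A_\infty$ structure of Theorem~\ref{thm:ain} degenerates to an ordinary associative product, and then the result follows from Theorem~\ref{thm:ain} and Corollary~\ref{cor:kd}. You simply unwind that remark more explicitly (the degree bookkeeping $\deg(\pi_{[\rho]})=1-\deg(b_{[\rho]})$ and the identification of $m_2$ with the composition rule \eqref{eq:3}), and you flag the Koszul-sign issue that the paper leaves implicit, which is a reasonable addition even though your appeal to Lemma~\ref{lem:2} for the sign normalization is more of a plausibility argument than a proof.
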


\begin{notations}
	For any representative $\lwr\in[\rho]\in\pas(\ac)$ with $\rho(0)\in\gamma_i$ and $\rho(1)\in\gamma_j$,
	we denote by
	\begin{equation}\label{prop:liftPAS}
	\pi_{\lwr}:S_i[-\ind^{\ZZ^2}_{\rho^(0)}(\wg^0_i,\lwr)]\to S_j[-\ind^{\ZZ^2}_{\rho(1)}(\wg^{0}_j,\lwr)]
	\end{equation}
	the morphism of bi-degree 1 induced by $\pi_{[\rho]}$.	
\end{notations}

%=========================================================
\subsection{The braid twists}
%=========================================================

The \emph{mapping class group} $\MCG(\surfo)$ of a DMS $\surfo$ consists of the isotopy classes of the homeomorphisms of $\surf$ that fix $\partial\surf$ pointwise and fix $\triangle$ setwise.
Recall from \cite[Section~3.3]{QQ} that for any $\alpha\in\CA(\surfo)$,
the associated \emph{braid twist} $B_\alpha\in\MCG(\surfo)$ is defined as in Figure~\ref{fig:bt}.
We have the formula
\begin{equation}\label{eq:bt}
B_{\Psi(\alpha)}=\Psi\circ B_\alpha\circ\Psi^{-1}
\end{equation}
for any $\alpha\in\CA(\surfo)$ and any $\Psi\in\MCG(\surfo)$.

\begin{figure}[ht]\centering
	\begin{tikzpicture}[scale=.3]
	\draw[very thick,dashed](0,0)circle(6)node[above,black]{$_\alpha$};
	\draw(-2,0)edge[red, very thick](2,0)  edge[cyan,very thick, dashed](-6,0);
	\draw(2,0)edge[cyan,very thick,dashed](6,0);
	\draw(-2,0)node[white] {$\bullet$} node[red] {$\circ$};
	\draw(2,0)node[white] {$\bullet$} node[red] {$\circ$};
	\draw(0:7.5)edge[very thick,->,>=latex](0:11);\draw(0:9)node[above]{$B_{\alpha}$};
	\end{tikzpicture}\;
	%=======================================================
	\begin{tikzpicture}[scale=.3]
	\draw[very thick, dashed](0,0)circle(6)node[above,black]{$_\alpha$};
	\draw[red, very thick](-2,0)to(2,0);
	\draw[cyan,very thick, dashed](2,0).. controls +(0:2) and +(0:2) ..(0,-2.5)
	.. controls +(180:1.5) and +(0:1.5) ..(-6,0);
	\draw[cyan,very thick,dashed](-2,0).. controls +(180:2) and +(180:2) ..(0,2.5)
	.. controls +(0:1.5) and +(180:1.5) ..(6,0);
	\draw(-2,0)node[white] {$\bullet$} node[red] {$\circ$};
	\draw(2,0)node[white] {$\bullet$} node[red] {$\circ$};
	\end{tikzpicture}
	\caption{The braid twist}
	\label{fig:bt}
\end{figure}

\begin{definition}\cite{QQ}
The \emph{braid twist group} $\BT(\surfo)$ is the subgroup of the mapping class group $\MCG(\surfo)$
generated by the braid twists $B_\alpha$ for $\alpha\in\CA(\surfo)$.
\end{definition}

For any $\alpha\in\CA(\surfo)$ and any $\lws\in\wXACC(\surfo)$,
the braid twist $B_\alpha(\lws)$ of $\lws$ along $\alpha$ is the double graded curve whose underlying curve is $B_\alpha(\us)$ and which inherits the double grading from $\lws$. Here, inherit means that we can apply the braid twist on each sheet of $\log\surfo$.

\begin{definition}\label{def:dual.a.s.}
	Let $\ac=\{\wg_1,\cdots,\wg_n \}$ be a full formal arc system of $\DMS$. Let $\ws\in\wACC(\surfo)$ without self-intersections in $\surfoi$. We call $\ws$ the \emph{dual} to $\wg_i\in\ac$ with respect to $\ac$ if $\wg_i$ intersects it once with intersection index $0$ and $\wg_j, j\neq i,$ does not intersect it. Denote by $\wss_i$ the dual to $\wg_i$ and by $\dac=\{\wss_i\mid1\leq i\leq n \}$.
\end{definition}

Note that any $\wss_i$ in $\dac$ does not cross any arc in the cut $\cc$ except for the endpoints. So we have the lifts $\wss_i^m$, $m\in\ZZ$ in $\log\surfo$ of $\wss_i$, with each $\wss_i^m$ in the sheet $\surfo^m$. We denote by $(\dac)^{\ZZ}=\{\wss_i^m\mid 1\leq i\leq n, m\in\ZZ \}$ and by $\udac=\{s_1,\cdots,s_n\}$ the ungraded version of $\dac$.

\begin{definition}
	We define $\BT(\ac)$ to be the subgroup of $\MCG(\surfo)$ generated by $B_\alpha$ for $\alpha\in\udac\cap\CA(\surfo)$.
\end{definition}

The following decomposition of a curve in $\wXACC(\surfo)$ will be used frequently (cf. \cite[Lemma~3.14]{QQ}).

\begin{lemma}\label{lem:decomp}
Let $\lws\in\wXACC(\surfo)$ and $Z\in\Tri$ in an $\ac$-polygon $P$ which $\us$ crosses.
Take any line segment $l$ in $P$ from $Z$ to a point $p$ in an interior segment of $\us$ such that the interior of $l$ does not cross $\sigma$.
Let $\alpha$ be the composition of $l$ and the segment of $\us$ from $p$ to $\sigma(0)$ and $\beta$ be the composition of $l$ and the segment of $\us$ from $p$ to $\sigma(1)$, see Figure~\ref{fig:decomposition}. Let $\lwa$ and $\lwb$ be the double graded curves whose underlying graded curves are $\ua$ and $\ub$ respectively and which inherit the double gradings from $\lws$. Then we have
	\begin{itemize}
		\item[(1)] $\qqInt(\wg_i^0,\lwa)+\qqInt(\wg_i^0,\lwb)=\qqInt(\wg_i^0,\lws)$ for any $1\leq i\leq n$, and
		\item[(2)] $\lws=\lwb\wedge\lwa$.
	\end{itemize}
	In the case where $\lws\in\wXCA(\surfo)$ and $Z$ is not an endpoint of $\lws$, we have further,
	\begin{itemize}
		\item[(3)] $\Int_{\surfo}(\lwa,\lwb)=\frac{1}{2}$,
		\item[(4)] $\alpha\in\CA(\surfo)$ and $\lws=B_\alpha(\lwb)$.
	\end{itemize}
\end{lemma}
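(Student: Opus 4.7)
My plan is to verify the four items by unpacking definitions; all of them are local around the decoration $Z$ and the $\ac$-polygon $P$ in which it sits. The key topological input I would use throughout is that the auxiliary segment $l$ lies in the interior of $P$, hence meets no arc of $\ac$ (which sit on $\partial P$), and in particular the splitting point $p\in l$ lies on no $\wg_i$.

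For (1), since $\wg_i^0$ is an open arc, the $\qv$-intersection \eqref{eq:q-int2} records only interior intersections. Any interior intersection of $\wg_i^0$ with $\sigma$ lies either on the sub-segment $\sigma|_{[\sigma(0),p]}$ or on $\sigma|_{[p,\sigma(1)]}$; these are precisely the non-$l$ halves of $\alpha$ and $\beta$, while the $l$-halves contribute nothing. Since $\lwa$ and $\lwb$ inherit the double grading of $\lws$ on these sub-segments, each intersection preserves its bi-index, and summing yields the identity. For (2), I would unpack Definition~\ref{def:ext}: $\lwb\wedge\lwa$ is the concatenation $\overline{\lwa}\cdot\lwb$ smoothed at the common starting decoration $Z$, possibly followed by one unknotting. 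In our case $\overline{\alpha}$ ends with $l$ traversed from $Z$ to $p$ in reverse and $\beta$ begins with $l$ traversed from $Z$ to $p$; after the smoothing at $Z$ these $l$-pieces cancel and the concatenation collapses to $\sigma$. No unknotting occurs because the resulting curve already sits in minimal position inside $P$, which contains only the decoration $Z$. The inherited double gradings combine to recover the grading of $\lws$.

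For (3), under the extra hypothesis $\lws\in\wXCA(\surfo)$ with $Z\notin\{\sigma(0),\sigma(1)\}$, both $\alpha$ and $\beta$ are closed arcs. After isotoping to minimal position (pushing the common $l$-part apart), they meet only at the decoration $Z$: no further interior intersection is introduced, since $\sigma$ itself is simple in $\surfoi$ and $l$ is embedded in the interior of $P$. By the definition of $\Int_\surfo$, the single decoration intersection at $Z$ contributes $\tfrac12$, yielding $\Int_\surfo(\lwa,\lwb)=\tfrac12$. For (4), $\alpha\in\CA(\surfo)$ is immediate, since it connects the distinct decorations $Z\neq\sigma(0)$ and inherits simplicity in $\surfoi$ from $\sigma$. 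The identity $\lws=B_\alpha(\lwb)$ is the topological content of Figure~\ref{fig:bt}: $B_\alpha$ swaps $Z$ and $\sigma(0)$ via a half-twist along $\alpha$, dragging the endpoint of $\beta$ at $Z$ to $\sigma(0)$ and turning $\beta$ into $\sigma$; the double grading is preserved because $B_\alpha$ lifts sheet-by-sheet to $\log\surfo$.

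The main obstacle I expect is the grading bookkeeping in (2), where one has to verify that the smoothing at $Z$ in the extension $\wedge$ interacts correctly with the gradings of $\lwa$ and $\lwb$ inherited from $\lws$. I would handle this by a local intersection-index calculation at $Z$ using \eqref{eq:hkkg1}, \eqref{eq:3int}, and \eqref{eq:ksg}. Everything else is routine once one observes that all the auxiliary data are confined to $P$.
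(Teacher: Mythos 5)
Your proof is correct and follows essentially the same route as the paper's, which simply observes that (1) and (2) are direct consequences of the inherited double grading and of Definition~\ref{def:ext}, that $Z$ is the only intersection of $\lwa$ and $\lwb$ for (3), and that (4) follows from the definition of the braid twist. Your version usefully spells out the details the paper leaves implicit (that $l$ lies in the interior of an $\ac$-polygon so it meets no $\wg_i$, that the $\sigma$-parts of $\ua$ and $\ub$ are complementary sub-segments of a simple arc, and the grading bookkeeping via \eqref{eq:hkkg1}, \eqref{eq:3int}, \eqref{eq:ksg}).
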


\begin{figure}[h]\centering
	\begin{tikzpicture}[scale=.7]\clip(-5,-3)rectangle(5,3);
	\foreach \j in {2,0}{ 		
		\draw[blue,very thick](90*\j+20:4)to(90*\j-20:4); \draw[dashed,blue,thin](90*\j+20:4)to[bend left=-15](90*\j-20+90:4);}
	\draw[dashed,blue,thin]
	(90*3+20:4)to[bend left=-15](90*3-20+90:4)
	(90*3+20:4)to[bend left=15](90*3-20:4);
	\draw[dashed,blue,thin](90+20:4)to[bend left=-15](90-20+90:4) (90+20:4)to[bend left=15](90-20:4);
	\draw[thick,blue](175:4.5)edge[bend left,-<-=.5,>=stealth](0,0)
	(0,0)edge[bend right,->-=.5,>=stealth](15:4.5)
	(2,-.5)node{$\beta$}(-2,.1)node[above]{$\alpha$};
	\draw[thick,red](170:4.5)to[bend left](0,1) (1.5,.7) 	node[above]{\small{$\sigma$}};
	\draw[thick,red](0,1)edge[bend right=40,->-=.3,>=stealth](20:4.5);
	\draw[Green] (0,0)to(0,1)node[above]{$p$}node{\tiny{$\bullet$}} (0,.6)node[left]{$l$};
	\draw[red](0,0)node[white]{$\bullet$} \ww;
	\draw[blue]	(0,0)node[red,below]{$Z$};
	\end{tikzpicture}\qquad
	\caption{A decompostion of a curve in $\wXACC(\surfo)$ }\label{fig:decomposition}
\end{figure}

\begin{proof}
	(1) follows from that $\lwa$ and $\lwb$ inherit the double gradings from $\lws$ and (2) follows from the definition of extension $\wedge$ in Definition~\ref{def:ext}. In the case where $\lws\in\wXCA(\surfo)$ and $Z$ is not an endpoint of $\lwe$, $Z$ is the only intersection of $\lwa$ and $\lwb$, so we have (3). (4) follows from the definition of braid twist.
\end{proof}

%=========================================================
\section{String model on log DMS}\label{sec:string}
%=========================================================
Throughout the rest of this paper, $\DMS$ is a graded DMS and $\ac=\{\wg_1,\cdots,\wg_n \}$ is a full formal arc system of $\DMS$ with its dual $\dac=\{\wss_i\mid1\leq i\leq n \}$. We still use the notations in the previous section. That is, $\Gamma_\ac$ is the dg $\XX$-graded quiver associated to $\ac$, defined by the $\ZZ^2$-graded quiver $Q_\ac$ and the differential $d_\ac$, $S_1,\cdots,S_n$ are the simple $\Gamma_\ac$ modules corresponding to $\wg_1,\cdots,\wg_n$, respectively, and $\EE_\ac$ is the $\ZZ^2$-graded algebra $\Ext^{\ZZ^2}(\SS,\SS)$, where $\SS=\oplus_{i=1}^nS_i$. Note that each $S_i$ can be regarded as an indecomposable direct summand of $\EE_\ac$. We have already shown that there is a triangle equivalence \eqref{eq:d.e.}
$$\D_{fd}(\Gamma_\ac)\simeq \per(\EE_\ac)$$

%=========================================================
\subsection{String model}
%=========================================================

In this subsection, we shall associate to each curve $\lws\in\wXACC(\surfo)$ an object $X_{\lws}$ in $\per(\EE_\ac)$, which then can be regarded as an object in $\D_{fd}(\Gamma_\ac)$.

A \emph{string} in $\EE_\ac$ is a sequence of morphisms $f_1,f_2,\cdots,f_p$ of bi-degree $1$ of shifts of indecomposable direct summands of $\EE_\ac$:
\begin{gather}\label{eq:1string}
	\xymatrix{
		\omega:\ S_{k_0}[\ii_0+\sii_0\XX]\ar@{-}[r]^{\quad f_1}&S_{k_1}[\ii_1+\sii_1\XX]\ar@{-}[r]^{\quad f_2}&\cdots\qquad
		\ar@{-}[r]^{\quad f_p}&S_{k_p}}[\ii_p+\sii_p\XX]
\end{gather}
such that
\begin{itemize}
	\item each $f_i$ is either from left to right, or from right to left;
	\item if both $f_i$ and $f_{i+1}$ point to the right, then $f_{i+1}\circ f_i=0$;
	\item if both $f_i$ and $f_{i+1}$ point to the left, then $f_i\circ f_{i+1}=0$.
\end{itemize}
The string $\omega$ gives rise to a dg $\XX$-graded $\EE_\ac$-module $X_\omega$ whose underlying $\ZZ^2$-graded $\EE_\ac$-module is
$$|X_\omega|=\bigoplus_{i=0}^pS_{k_i}[\ii_i+\sii_i\XX]$$
and whose differential is given by the $f_i, 1\leq i\leq p$. By definition, $X_\omega\in\per(\EE_\ac)$.

\begin{construction}\label{con:string}
Let $\lws\in\wXACC(\surfo)$ in a minimal position with respect to $\ac^\ZZ$. Suppose that $\lws$ intersects $\ac^\ZZ$ at $V_1,\cdots,V_{p}$ in order, which are in $\wg_0^{m_0},\wg_1^{m_1}\cdots,\wg_p^{m_p}$, respectively, and divide $\lws$ into segments $\lws_{-1,0},\lws_{0,1},\cdots,\lws_{p,p+1}$. Since the underlying $\us_{0,1},\cdots,\us_{p-1,p}$ of $\lws_{0,1},\cdots,\lws_{p-1,p}$ in $\surfo$ are (positive or negative) arc segments (which are called the arc segments of $\us$), we can define
\begin{gather}\label{eq:string}
	\omega(\lws):\xymatrix@C=3pc{
			S_{k_0}[\chi_0]\ar@{-}[r]^{f_1}&
            S_{k_1}[\chi_1]\ar@{-}[r]^{\quad f_2}&\cdots\ar@{-}[r]^{f_p}&
            S_{k_p}}[\chi_p],
\end{gather}
where
$$f_i=\begin{cases}
\pi_{\lws_{i-1,i}}&\text{if $\us_{i-1,i}$ is positive,}\\
\pi_{\overline{\lws_{i-1,i}}}&\text{if $\us_{i-1,i}$ is negative,}
\end{cases}$$
$1\leq i\leq p$, and $\chi_j=-\ind_{V_j}^{\ZZ^2}(\wg_{k_j}^0,\lws)$, $0\leq j\leq p$, see Figure~\ref{fig:wsm}, where $V_{-1}=\us(0)$ and $V_{p+1}=\us(1)$.
\end{construction}

\begin{figure}[htpb]\centering
	\begin{tikzpicture}[scale=1.6]
	\draw[red,thick,->-=.6,>=stealth](0,0)node[blue!50,left]{$\lws\colon\quad V_{-1}$} to
	(7,0)node[blue!50,right]{$V_{p+1}$};
	\draw[blue, thick](1,1)to(1,-1)node[below]{$\wg_{k_0}^{m_0}$} (1,0)\nn node[blue!50,above right]{$V_0$};
	\draw[blue, thick](2,1)to(2,-1)node[below]{$\wg_{k_1}^{m_1}$} (2,0)\nn node[blue!50,above right]{$V_1$};
	\draw[blue, thick](6,1)to(6,-1)node[below]{$\wg_{k_p}^{m_p}$} (6,0)\nn node[blue!50,above right]{$V_p$}
	(4,.5)node{$\cdots$}(4,-.5)node{$\cdots$};
	\draw[Green,thick,->-=.6,>=stealth](1,-.3)to[bend left=45] (1-.3,0);
	\draw[Green](1-.4,-.4)node{\small{$-\chi_0$}};
	\draw[Green,thick,->-=.6,>=stealth](2,-.3)to[bend left=45] (2-.3,0);
	\draw[Green](2-.4,-.4)node{\small{$-\chi_1$}};
	\draw[Green,thick,->-=.6,>=stealth](6,-.3)to[bend left=45] (6-.3,0);
	\draw[Green](6-.4,-.4)node{\small{$-\chi_p$}};
	\draw[white](0,0)\nn(7,0)\nn;
	\draw[red](0,0)\ww(7,0)\ww;
	\end{tikzpicture}
	\caption{Segments of $\lws$, cut out by $\ac^\ZZ$}\label{fig:wsm}
\end{figure}

\begin{lemma}
	$\omega(\lws)$ is a string.
\end{lemma}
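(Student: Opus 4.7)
The plan is to verify the three defining properties of a string as in \eqref{eq:1string}: (a) each $f_i$ is a bi-degree $1$ morphism whose source and target are $S_{k_{i-1}}[\chi_{i-1}]$ and $S_{k_i}[\chi_i]$ in the appropriate order; (b) each $f_i$ is unidirectional; (c) two consecutive arrows pointing in the same direction compose to zero.

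For (a), I would invoke \eqref{prop:liftPAS} applied to the representative $\lws_{i-1,i}$ when $\us_{i-1,i}$ is positive, respectively $\overline{\lws_{i-1,i}}$ when it is negative. In either case the source/target shifts take the form $-\ind^{\ZZ^2}_{V_{i-1}}(\wg^0_{k_{i-1}},\cdot)$ and $-\ind^{\ZZ^2}_{V_i}(\wg^0_{k_i},\cdot)$. Since the bi-index at an intersection $V_j$ depends only on the local grading of the second argument at $V_j$, and since the local gradings of $\lws$, $\lws_{i-1,i}$, and $\overline{\lws_{i-1,i}}$ at $V_{i-1},V_i$ all coincide (reversing a curve does not change its tangent class projectively), these shifts equal $\chi_{i-1}$ and $\chi_i$. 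Part (b) is then immediate: by \eqref{prop:liftPAS}, $\pi_{\lws_{i-1,i}}$ maps $S_{k_{i-1}}$ to $S_{k_i}$, so $f_i$ points rightward precisely when $\us_{i-1,i}$ is positive.

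The main work is (c). Assume first that $f_i$ and $f_{i+1}$ both point rightward, so the underlying arc segments $\us_{i-1,i}\subset P_{i-1}$ and $\us_{i,i+1}\subset P_i$ are both positive. By \eqref{eq:3} of Proposition~\ref{prop:morsim},
\[ f_{i+1}\circ f_i=\pi_{[\lws_{i-1,i}]\cdot[\lws_{i,i+1}]}, \]
which vanishes unless $[\lws_{i-1,i}]\cdot[\lws_{i,i+1}]$ lies in $\pas(\ac)$. Existence of this product requires, as in Figure~\ref{fig:com}, equivalent representatives that sit in a common $\ac$-polygon and whose concatenation across the intermediate edge forms another positive arc segment. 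But $P_{i-1}$ and $P_i$ are distinct polygons on opposite sides of $\wg_{k_i}^{m_i}$, and the digon equivalence can only transport a positive arc segment across a shared edge $\wg$ when both of its endpoints already lie on $\wg$. Since $\lws_{i-1,i}$ has its other endpoint on $\wg_{k_{i-1}}$, and symmetrically $\lws_{i,i+1}$ on $\wg_{k_{i+1}}$, neither segment admits a representative in the other's polygon. Hence the product does not belong to $\pas(\ac)$ and $f_{i+1}\circ f_i=0$. The case of both arrows pointing leftward is identical after replacing each segment by its reverse and writing the composition as $\pi_{[\overline{\lws_{i,i+1}}]\cdot[\overline{\lws_{i-1,i}}]}$.

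The main (mild) obstacle is purely combinatorial: one must confirm that the digon equivalence cannot carry a positive arc segment into a foreign polygon unless the shared edge already meets both of its endpoints. This is transparent from the definition of the equivalence relation, which only identifies arcs via digons formed in adjacent polygons sharing both endpoints of the arc on their common edge.
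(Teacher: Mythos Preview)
Your approach is exactly the paper's: both argue that when $f_i$ and $f_{i+1}$ point the same way, the underlying positive arc segments admit no composition in $\pas(\ac)$, so \eqref{eq:3} forces the composite to vanish. The paper states this without further justification, while you try to explain \emph{why} no composition exists.

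Your justification has a small gap. You claim neither segment can be transported into the other's polygon because ``$\lws_{i-1,i}$ has its other endpoint on $\wg_{k_{i-1}}$'' and similarly for $\lws_{i,i+1}$. But nothing rules out $k_{i-1}=k_i$ (a loop-type segment, as in the right picture of Figure~\ref{fig:digon}) or $P_{i-1}=P_i$ (a self-folded edge). In the loop-type case the digon equivalence \emph{does} transport the segment across $\wg_{k_i}$; the composition still fails, but for a different reason: concatenating the transported loop-type segment with $\us_{i,i+1}$ forces the curve to wind more than once around the decoration, hence to self-intersect, so it is not an arc segment. The cleaner invariant is that $\us_{i-1,i}$ and $\us_{i,i+1}$ lie on \emph{opposite sides} of $\wg_{k_i}$ near $V_i$ (since $\us$ crosses transversally there), and any composable pair of representatives must approach the intermediate edge from the \emph{same} side; the only equivalences that change sides produce loop-type segments, which compose with nothing nontrivial. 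With this patch your argument is complete.
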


\begin{proof}
	This follows from that if both $f_i$ and $f_{i+1}$ points to the right (resp. left), then the corresponding positive arc segments $\sigma_{i-1,i}$ and $\sigma_{i,i+1}$ (resp. $\overline{\sigma_{i,i+1}}$ and $\overline{\sigma_{i-1,i}}$) have no composition, which implies $f_{i+1}\circ f_i=0$ (resp. $f_i\circ f_{i+1}=0$) by \eqref{eq:3}.
\end{proof}

\begin{notations}
	For any $\lws\in\wXACC(\surfo)$, we denote by $X_{\lws}=X_{\omega(\lws)}$.
\end{notations}
By construction, we have $X_{\lws}=X_{\overline{\lws}}$ and $X_{\lws[\ii+\sii\XX]}=X_{\lws}[\ii+\sii\XX]$, for any $\lws\in\wXACC(\surfo)$ and $\ii,\sii\in\ZZ$.

We have the following result as a graded version of \cite[Lemma~5.7 and Corolary~5.8]{QQ}. Here $\Gamma^i_\ac=e_i\Gamma_\ac$ with $e_i$ the trivial path at $i$ in $Q_\ac$.

\begin{lemma}\label{lem:5.7}
	For any $\lws\in\wXACC(\surfo)$ and $\wg_i\in\ac$, we have
\begin{gather}
  \qdH(\Gamma_{\ac}^i,X_{\lws})=
    \qqInt(\wg_i^0,\lws).
\end{gather}
\end{lemma}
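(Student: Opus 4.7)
The plan is to compute $\qdH(\Gamma_\ac^i,X_{\lws})$ directly from the string description in Construction~\ref{con:string}, exploiting that $\Gamma_\ac^i$ is projective to reduce the $\Hom$ calculation to bookkeeping on the simple summands $S_{k_j}[\chi_j]$ of type $S_i$. Since $\Gamma_\ac^i$ is a direct summand of $\Gamma_\ac$ it is cofibrant as a dg $\Gamma_\ac$-module, so $\Hom^{\ZZ^2}_{\D(\Gamma_\ac)}(\Gamma_\ac^i,Y)=H^\ast(e_iY)$ for every dg module $Y$. Specialising to a simple shift yields $\qdH(\Gamma_\ac^i,S_j[\chi])=\delta_{ij}\qv^{-\chi}$. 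By Construction~\ref{con:string}, the underlying $\ZZ^2$-graded $\EE_\ac$-module of $X_{\lws}$ is $\bigoplus_{j=0}^p S_{k_j}[\chi_j]$; the functor $e_i(\cdot)$ kills every summand with $k_j\ne i$ and leaves the $\k$-complex $\bigoplus_{j:k_j=i}\k[\chi_j]$ with a differential induced from $d=\sum f_i$. The heart of the proof is to show this induced differential vanishes identically, so that
$$\qdH(\Gamma_\ac^i,X_{\lws})=\sum_{j:k_j=i}\qv^{-\chi_j}.$$

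A nontrivial induced differential would come from a pair of consecutive intersections $V_j,V_{j+1}$ with $k_j=k_{j+1}=i$ and no intervening arc of $\ac^\ZZ$. Geometrically this forces $\us_{j,j+1}$ to be a loop-type arc segment around a decoration in a polygon adjacent to $\wg_i$. By Remark~\ref{rmk:cy}, the corresponding arrow $b_{[\rho]}$ in $Q_\ac$ has bi-degree $1-\XX$ rather than $1+0\XX$, so $\chi_{j+1}-\chi_j$ carries a nonzero $\XX$-part; hence $\pi_{\lwr}$ cannot realise a bi-degree $1+0\XX$ morphism between $S_i[\chi_j]$ and $S_i[\chi_{j+1}]$ and is forced to vanish as a $\k$-linear map. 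This bi-degree obstruction is the one substantive point of the argument; everything else is either already packaged in Proposition~\ref{prop:morsim} or purely combinatorial.

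To finish, each $j$ with $k_j=i$ projects to a unique $q\in\us\cap\wg_i\cap\surfoi$, and conversely; by \eqref{eq:Z2-int} the sheet index $m_j$ of $\wg_i^{m_j}$ equals $\sind_q(\wg_i^0,\lws)$, so $-\chi_j=\ind^{\ZZ^2}_q(\wg_i^0,\lws)$. Substituting and comparing with~\eqref{eq:q-int2} then gives
$$\qdH(\Gamma_\ac^i,X_{\lws})=\sum_{q\in\us\cap\wg_i\cap\surfoi}\qv^{\ind^{\ZZ^2}_q(\wg_i^0,\lws)}=\qqInt(\wg_i^0,\lws),$$
which is the desired equality.
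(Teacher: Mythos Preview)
Your approach is exactly the paper's: use that $\Gamma_\ac^i$ is projective so $\qdH(\Gamma_\ac^i,S_k)=\delta_{ik}$, show that every $f_j$ induces the zero map under $\Hom^{\ZZ^2}(\Gamma_\ac^i,-)$, and then read off the intersections from the surviving summands. The paper compresses the vanishing into one sentence (``$\Hom(\Gamma_\ac^i,f_j)=0$ \dots\ as each $f_j$ corresponds to a non-trivial arc segment''), while you unpack the case $k_j=k_{j+1}=i$ in more detail.

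There is one small oversight in your detailed case analysis. Your claim that two consecutive intersections on $\gamma_i$ force $\us_{j,j+1}$ to be loop-type is only valid when $\gamma_i$ occurs once among the edges of the ambient $\ac$-polygon; since Lemma~\ref{lem:5.7} is stated before Assumption~\ref{ass}, self-folded edges are allowed, and then $\us_{j,j+1}$ can run from one copy of $\gamma_i$ to the other without encircling the decoration. In that situation Remark~\ref{rmk:cy} does not pin down the bi-degree as $1-\XX$, so your degree argument as written does not apply. The conclusion you need, however, is unchanged: what matters is only that for any \emph{non-trivial} $[\rho]$ the induced map on the one-dimensional space $\Hom^{\ZZ^2}(\Gamma_\ac^i,S_i)\cong\k$ (sitting in bi-degree $0$) vanishes. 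Either $\deg(\pi_{[\rho]})\neq 0$, so the target $\Hom(\Gamma_\ac^i,S_i[\deg(\pi_{[\rho]})])$ is zero; or $\deg(\pi_{[\rho]})=0$, in which case $\pi_{[\rho]}$ is a non-unit in the local ring $\End_{\D}(S_i)$, hence nilpotent, and still kills the generator $p_i\colon\Gamma_\ac^i\to S_i$. This is precisely what the paper's terse phrase is encoding. Replacing your loop-type paragraph with this observation closes the gap and aligns your proof with the paper's.
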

\begin{proof}
	Let $\omega(\lws)$ be the one in \eqref{eq:string}. Since $\qdH(\Gamma_\ac^i,S_{k})=\delta_{i,k}$, we have $$\Hom(\Gamma_\ac^i,f_j)=0$$ for any $1\leq i\leq n$ and $1\leq j\leq p$, as each $f_j$ corresponds to a non-trivial arc segment. Hence we have
	$$\begin{array}{cl}
	&\sum\limits_{\ii,\sii\in\mathbb{Z}}
	\qv^{\ii+\sii\XX}\cdot\dim\Hom_{\D(\Gamma_\ac)}(\Gamma_{\ac}^i,X_{\lws}[\ii+\sii\XX])\\
	=&\sum\limits_{\ii,\sii\in\mathbb{Z}}
	\qv^{\ii+\sii\XX}\cdot\left(\sum\limits_{j=0}^p\dim\Hom_{\D(\Gamma_\ac)}(\Gamma_{\ac}^i,S_{k_j}[-\ind_{V_j}^{\ZZ^2}(\wg_{k_j}^0,\lws)+(\ii+\sii\XX)])\right)\\
	=&\sum\limits_{\ii,\sii\in\mathbb{Z}}
	\qv^{\ii+\sii\XX}\cdot\Int^{\ii+\sii\XX}_{\surfoi}(\wg^0,\lws),
	\end{array}$$
	which gives the required formula.
\end{proof}

By Construction~\ref{con:string}, we have $X_{\wss_i^0}=S_i$ for any $1\leq i\leq n$. Conversely, we have the following.

\begin{lemma}\label{lem:5.8}
	Let $\lws\in\wXACC(\surfo)$. If $X_{\lws}=S_j$ for some $1\leq j\leq n$, then $\lws=\wss_j^0$.
\end{lemma}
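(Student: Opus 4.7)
The plan is to use Lemma~\ref{lem:5.7} to translate the categorical hypothesis $X_{\lws}=S_j$ into topological intersection data, and then to identify $\wss_j^0$ as the unique curve realising that data.

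First I would compute $\qdH(\Gamma_{\ac}^i, S_j)$. Since $S_j$ is the simple module concentrated in bi-degree $0$ at vertex $j$ and $\Gamma_{\ac}^i=e_i\Gamma_\ac$ is the $i$-th indecomposable projective, the standard adjunction yields $\Hom_{\D(\Gamma_\ac)}(\Gamma_{\ac}^i, S_j[\ii+\sii\XX])=\delta_{ij}\delta_{\ii,0}\delta_{\sii,0}\,\k$, so $\qdH(\Gamma_{\ac}^i, S_j)=\delta_{ij}$. Substituting into Lemma~\ref{lem:5.7} gives
\[
  \qqInt(\wg_i^0,\lws)=\delta_{ij}\qquad\text{for every }1\le i\le n.
\]
Because $\qqInt$ has non-negative integer coefficients (one per interior bi-indexed crossing), this produces two concrete facts: (i) $\lws$ has no interior intersection with any $\wg_i^0$ for $i\ne j$, and (ii) $\lws$ meets $\wg_j^0$ transversally at a single interior point $V$, with bi-index $\ind^{\ZZ^2}_V(\wg_j^0,\lws)=0$.

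Next I would reconstruct the underlying closed arc. The arcs of $\ac$ cut $\surf$ into simply connected $\ac$-polygons, each containing exactly one decoration. Since the ungraded curve $\sigma$ underlying $\lws$ has endpoints in $\Tri$ and meets only $\gamma_j$ (and only once) among the arcs of $\ac$, it must split into two segments, one in each of the two $\ac$-polygons adjacent to $\gamma_j$, each joining the decoration in that polygon to the crossing point on $\gamma_j$. Hence $\sigma=s_j$ as ungraded closed arcs. Double-graded lifts of $s_j$ to $\log\surfo$ form a $\ZZ^2$-torsor under bi-grading shifts $[\ii+\sii\XX]$, and the bi-index $\ind^{\ZZ^2}_V(\wg_j^0,-)$ transforms equivariantly under this action. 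Since both $\lws$ and $\wss_j^0$ yield bi-index $0$ at $V$ (by (ii) and by the definition of the dual arc system respectively), they must coincide, so $\lws=\wss_j^0$.

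The main subtlety I foresee is ruling out that $\sigma$ winds around its endpoint decorations inside the $\ac$-polygons, as such windings do not affect the intersection count with the $\wg_i$. However, each extra winding around a decoration $Z$ contributes a loop of $\Lambda$-value $(-2,1)$ and therefore shifts $\ind^{\ZZ^2}_V(\wg_j^0,\lws)$ by a nonzero amount; the bi-index normalisation (ii) thus simultaneously pins down the homotopy class of $\sigma$ in $\surfo\setminus\Tri$ (equivalently, the sheet in $\log\surfo$) together with the overall double grading shift, completing the torsor argument.
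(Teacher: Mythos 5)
Your approach is essentially the paper's: translate $X_{\lws}=S_j$ into $\qqInt(\wg_i^0,\lws)=\delta_{ij}$ via Lemma~\ref{lem:5.7}, read off that $\lws$ meets only $\wg_j^0$, once, with bi-index $0$, and then conclude $\lws=\wss_j^0$ by uniqueness of such a double-graded closed arc; that is exactly the paper's argument.

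The closing paragraph of your proposal, however, does not hold up. You try to exclude windings of $\sigma$ around its endpoint decorations by appealing to the bi-index constraint, but the double-graded lifts of a fixed underlying curve form a torsor under $[\ii+\sii\XX]$-shifts, and $\ind^{\ZZ^2}_V(\wg_j^0,-)$ is equivariant for those shifts. Hence for \emph{any} underlying $\sigma$ crossing $\gamma_j$ exactly once there exists a lift $\lws$ with $\ind^{\ZZ^2}_V(\wg_j^0,\lws)=0$, regardless of winding: the bi-index at $V$ fixes which lift you take but is blind to the isotopy class of $\sigma$. What actually forbids such windings is the admissibility condition from Definition~\ref{not:CA}: a segment of $\sigma$ inside an $\ac$-polygon $P$ that winds around the decoration $Z\in P$ (an endpoint of $\sigma$) must self-intersect and cuts out a once-decorated monogon around $Z$, contradicting $\lws\in\wXACC(\surfo)$. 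With admissibility (and a minimal-position representative), the segment of $\sigma$ in each of the two polygons adjacent to $\gamma_j$ is unique up to isotopy rel endpoints, giving $\sigma=s_j$; the bi-index at $V$ then pins down the specific lift $\wss_j^0$. So the structure is right, but the justification for $\sigma=s_j$ needs to invoke admissibility rather than the bi-index.
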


\begin{proof}
	By Lemma~\ref{lem:5.7}, we have
	$$\begin{array}{rcl}
	\qqInt(\wg_i,\lws)&=&\qdH(\Gamma_{\ac}^i,X_{\lws})\\
	&=&\qdH(\Gamma_{\ac}^i,S_j)\\
	&=&\delta_{i,j}
	\end{array}$$
	So $\wg_j$ crosses $\ws$ once and the intersection index is $0$, while any other $\wg_i, i\neq j$, does not cross $\ws$. This implies that $\lws=\wss_j^m$ for some $m\in\ZZ$. Moreover, since the $\XX$-index of the intersection between $\wg_j$ and $\lws$ is zero, we have $m=0$.
\end{proof}

\begin{definition}\label{def:reachable}
	For any $\lws\in\wXCA(\surfo)$, if $X_{\lws}$ is $\XX$-spherical, we call it \emph{reachable $\XX$-spherical}. Denote by $\Sph^{\ZZ^2}(\Gamma_\ac)$ the set of reachable $\XX$-spherical objects.
	Let $$\Sph(\Gamma_\ac)=\Sph^{\ZZ^2}(\Gamma_\ac)/\<[1],[\XX]\>.$$
	Define $\ST(\Gamma_\ac)$ to be the subgroup of $\Aut\D_{fd}(\Gamma_\ac)$
	generated by $\phi_S$ for any $S \in\Sph^{\ZZ^2}(\Gamma_\ac)$.
\end{definition}

As in \cite{QQ}, in this paper we also will consider $\ST(\Gamma_\ac)$ as a subgroup of $\Aut^{o}\D_{fd}(\Gamma_0)$, which is the quotient of $\Aut\D_{fd}(\Gamma_\ac)$ by the subgroup consisting of the auto-equivalences that fix every object in $\Sph^{\ZZ^2}(\Gamma_\ac)$.

\begin{notations}
	Note that for any reachable $\XX$-spherical object $X_{\lws}$, the induced twist functor $\phi_{X_{\lws}}$ only depend on $\us\in\CA(\surf)$. Thus we will write $\phi_{X_{\us}}$ instead.
\end{notations}

%Let $Z_0^{\ST}=\ST(\Gamma_\ac)\cap\<[1],\XX\>$ and
%$$\ST_\ast(\Gamma_\ac)=\ST(\Gamma_\ac)/Z_0^{\ST}\subset\Aut^{o}\D_{fd}(\Gamma_0)/\<[1],\XX\>,$$
%where $\Aut^{o}\D_{fd}(\Gamma_0)/\<[1],\XX\>$ denotes the quotient of the auto-equivalence group $\Aut\D_{fd}(\Gamma_0)/\<[1],\XX\>$ of the orbit category $\D_{fd}(\Gamma_\ac)/\<[1],\XX\>$ by the subgroup consisting of the auto-equivalences that fix the spherical objects.

\subsection{Morphisms induced by oriented angles at decorations}

Now we take another $\lwt\in\wXACC(\surfo)$ whose underlying arc $\ut$ is in a minimal position with respect to $\ac$ and $\us$. Note that $\ut$ may coincide with $\us$. By Construction~\ref{con:string}, there is a string associated to $\lwt$:
\begin{gather}\label{eq:string2}
\omega(\lwt):\xymatrix@C=3pc{
	S_{l_0}[\chi_0']\ar@{-}[r]^{g_1}&
	S_{l_1}[\chi_1']\ar@{-}[r]^{\quad g_2}&\cdots\ar@{-}[r]^{g_q}&
	S_{l_q}}[\chi_q'],
\end{gather}
where
$$g_i=\begin{cases}
\pi_{\lwt_{i-1,i}}&\text{if $\ut_{i-1,i}$ is positive,}\\
\pi_{\overline{\lwt_{i-1,i}}}&\text{if $\ut_{i-1,i}$ is negative,}
\end{cases}$$
and $\chi_j'=-\ind_{W_j}^{\ZZ^2}(\wg_{l_j},\lwt)$, see Figure~\ref{fig:wtm}. Then we have an object $X_{\lwt}:=X_{\omega(\lwt)}\in\D_{fd}(\Gamma_\ac)$.
\begin{figure}[h]\centering
	\begin{tikzpicture}[scale=1.6]
	\draw[red,thick,->-=.6,>=stealth](0,0)node[blue!50,left]{$\lwt\colon\quad W_{-1}$} to
	(7,0)node[blue!50,right]{$W_{q+1}$};
	\draw[blue, thick](1,1)to(1,-1)node[below]{$\wg_{l_0}^{m_0'}$} (1,0)\nn node[blue!50,above right]{$W_0$};
	\draw[blue, thick](2,1)to(2,-1)node[below]{$\wg_{l_1}^{m_1'}$} (2,0)\nn node[blue!50,above right]{$W_1$};
	\draw[blue, thick](6,1)to(6,-1)node[below]{$\wg_{l_q}^{m_q'}$} (6,0)\nn node[blue!50,above right]{$W_q$}
	(4,.5)node{$\cdots$}(4,-.5)node{$\cdots$};
	\draw[Green,thick,->-=.6,>=stealth](1,-.3)to[bend left=45] (1-.3,0);
	\draw[Green](1-.49,-.49)node{\small{$-\chi_0'$}};
	\draw[Green,thick,->-=.6,>=stealth](2,-.3)to[bend left=45] (2-.3,0);
	\draw[Green](2-.49,-.49)node{\small{$-\chi_1'$}};
	\draw[Green,thick,->-=.6,>=stealth](6,-.3)to[bend left=45] (6-.3,0);
	\draw[Green](6-.49,-.49)node{\small{$-\chi_q'$}};
	\draw[white](0,0)\nn(7,0)\nn;
	\draw[red](0,0)\ww(7,0)\ww;
	\end{tikzpicture}
	\caption{Segments of $\lwt$, cut out by $\ac^\ZZ$}\label{fig:wtm}
\end{figure}
%
%suppose that $\lwt$ intersects $\ac^\ZZ$ at $W_1,\cdots,W_q$ in order, which are in $\wg_0^{m'_0},\wg_1^{m'_1}\cdots,\wg_q^{m'_q}$, respectively, and divide $\lwt$ into segments $\lwt_{-1,0},\lwt_{0,1},\cdots,\lwt_{q,q+1}$.
%
%
%Suppose that $\wt$ intersects $\ac$ at $W_0,W_1,\cdots, W_q$ in order, where $W_j$ is in $\wg_{l_j}\in\ac$.
%Denote by $W_{-1}$ and $W_{q+1}$ its starting and ending points, respectively,
%and $\wt_{i,j}$ the segment of $\wt$ from $W_i$ to $W_j$ for any $-1\le i<j\le q+1$. Denote by $\lwt_{-1,0},\lwt_{0,1},\cdots,\lwt_{q,q+1}$ the segments of $\lwt$ divided by $\ac^\ZZ$ in order. Then

%We denote by $\us_{i,j}$ the segment of $\us$ from $V_i$ to $V_j$ for any $-1\le i<j\le p+1$.
%Then $\ac$ divides $\us$ into arc segments $\us_{i-1,i}$, $0\leq i\leq p+1$. We call $\us_{i-1,i}$ an \emph{interior} arc segment, if $1\leq i\leq p$.

\begin{construction}\label{cons:mor}
Assume that $\us(0)=\ut(0)$.
There is an angle $\theta(\us,\ut)$ from $\us$ to $\ut$ clockwise at the decoration $\us(0)=\ut(0)$. We construct a morphism $$\varphi(\lws,\lwt)\in\mathcal{H}om_{\EE_\ac}(X_{\lws},X_{\lwt}[\nu(\lws,\lwt)])$$ of bi-degree $0$, for
\begin{equation}\label{eq:nu}
\nu(\lws,\lwt)=
\deg(\pi_{[\tau(-1,0)\wedge\sigma(-1,0)]})+\chi_0-\chi_0',
\end{equation}
(see \eqref{eq:1} for the notation $\deg(\pi_{[\rho]})$) induced by $\theta(\us,\ut)$ via the sequence of morphisms
$$\{\varphi_{s}:S_{k_s}[\chi_s]\to S_{l_s}[\chi_s'+\nu(\lws,\lwt)]\}_{s\geq 0},$$
where $$\varphi_{s}=\varphi(\lws,\lwt)_{s}=\begin{cases}
\pi_{[\tau(-1,s)\wedge\sigma(-1,s)]}[\chi_s]& \text{if $\tau_{-1,s+1}\wedge\sigma_{-1,s+1}$ is a positive arc segment,}\\ 0 & \text{otherwise.}
\end{cases}$$
Here $\us(-1,s)$ (resp. $\ut(-1,s)$) denotes the segment of $\us$ (resp. $\ut$) from $V_{-1}$ to $V_{s}$ (resp. from $W_{-1}$ to $W_{s}$).
\end{construction}

We now give explicit descriptions of $\varphi(\lws,\lwt)$ in different cases. By construction, if $\sigma_{-1,0}$ is not isotopic to $\tau_{-1,0}$ (i.e. $\sigma$ and $\tau$ separate at the beginning), the only non-zero component in $\varphi(\lws,\lwt)$ is $\varphi_0=\pi_{[\tau(-1,0)\wedge\sigma(-1,0)]}[\chi_0]$. See Figure~\ref{fig:cases0}, where the green arc segment $\rho=\tau_{-1,0}\wedge\sigma_{-1,0}$ is positive.
\begin{figure}[htpb]\centering
	\begin{tikzpicture}[scale=.7]\clip(-5,-3)rectangle(5,3);
	\foreach \j in {2,0}{ 		
    \draw[blue,very thick](90*\j+20:4)to(90*\j-20:4); \draw[dashed,blue,thin](90*\j+20:4)to[bend left=-15](90*\j-20+90:4);}
	\draw[dashed,blue,thin]
        (90*3+20:4)to[bend left=-15](90*3-20+90:4)
        (90*3+20:4)to[bend left=15](90*3-20:4);
	\draw[dashed,blue,thin](90+20:4)to[bend left=-15](90-20+90:4) (90+20:4)to[bend left=15](90-20:4);
	\draw[very thick, orange,->-=.5,>=stealth](0,0)to[bend left=-5](5:4.5)node[above]{$\sigma$};
	\draw[very thick, red,->-=.5,>=stealth](0,0)to[bend left=5](175:4.5)node[above]{$\tau$};
	\draw[very thick, Emerald,->-=.5,>=stealth](-5:3.7)to[bend right=-20]node[below]{$\rho$}(185:3.7);
	\draw[red](0,0)node[white]{$\bullet$} \ww;
    \draw[blue](0:4)node{\scriptsize{$\quad V_0$}}(180:4)node{\scriptsize{$W_0\quad$}}
    (3.75,.25)\nn(-3.75,.25)\nn (0,0)node[red,above]{$p$}
    (-22.5:4)node[below]{$\quad \gamma_{k_0}$}(180+22.5:4)node[below]{$\gamma_{l_0} \quad$};
	\end{tikzpicture}\qquad
	\caption{Cases of $\sigma_{-1,0}\nsim\tau_{-1,0}$}\label{fig:cases0}
\end{figure}

If $\sigma_{-1,s}$ is isotopic to $\tau_{-1,s}$ for some $s\geq 0$ then $\nu(\lws,\lwt)=\chi_0-\chi_0'$
and $\varphi_0,\cdots,\varphi_s$
are the identities; and if in addition $\sigma_{-1,s+1}$ is not isotopic to $\tau_{-1,s+1}$ any more,
then there are the following cases, where $P$ denotes the $\ac$-polygon containing $\sigma_{s,s+1}$ and $\tau_{s,s+1}$.
\begin{figure}[h]\centering
	\begin{tikzpicture}[scale=.5]\clip(-4.5,-4.5) rectangle (4.5,6);
	\foreach \j in {1,2,0}{
		\draw[blue,very thick](90*\j+20:4)to(90*\j-20:4);
		\draw[dashed,blue,thin](90*\j+20:4)to[bend left=-15](90*\j-20+90:4);}
	\draw[dashed,blue,thin](90*3+20:4)to[bend left=-15](90*3-20+90:4)
	(90*3+20:4)to[bend left=15](90*3-20:4);
	\draw[thick,orange,->-=.5,>=stealth](90-5:4.5)node[above]{$\sigma$}to[bend left=-45](5:4.5);
	\draw[thick,red,->-=.5,>=stealth](90+5:4.5)node[above]{$\tau$}
	.. controls +(-90:2) and +(45:2) .. (.7,-.7) .. controls +(225:2) and +(0:2) .. (175:4.5);
	
	\draw[very thick, Emerald,->-=.5,>=stealth](-5:3.7)to[bend left=45]node[below]{$+$}(185:3.7);
	\draw[red](0,0)\ww;
	\end{tikzpicture}\qquad
%    \begin{tikzpicture}[scale=.45]
%    \foreach \j in {1,2,0}{
%    \draw[blue,very thick](90*\j+20:4)to(90*\j-20:4);
%    \draw[dashed,blue,thin](90*\j+20:4)to[bend left=-15](90*\j-20+90:4);}
%    \draw[dashed,blue,thin](90*3+20:4)to[bend left=-15](90*3-20+90:4)
%        (90*3+20:4)to[bend left=15](90*3-20:4);
%    \draw[very thick, red,->-=.5,>=stealth](90-5:4.5)node[above]{$\sigma$}to[bend left=-45](5:4.5);
%    \draw[very thick, orange,->-=.5,>=stealth](90+5:4.5)node[above]{$\tau$}
%        .. controls +(-90:2) and +(45:2) .. (.7,-.7) .. controls +(225:2) and +(0:2) .. (175:4.5);
%
%    \draw[very thick, Emerald,->-=.5,>=stealth](-5:3.7)to[bend left=45]node[below]{$+$}(185:3.7);
%    \draw[red](0,0)\ww;
%    \end{tikzpicture}\qquad
\begin{tikzpicture}[scale=.5]
\foreach \j in {1,2,0}{
	\draw[blue,very thick](90*\j+20:4)to(90*\j-20:4);
	\draw[dashed,blue,thin](90*\j+20:4)to[bend left=-15](90*\j-20+90:4);}
\draw[dashed,blue,thin](90*3+20:4)to[bend left=-15](90*3-20+90:4)
(90*3+20:4)to[bend left=15](90*3-20:4);
\draw[thick, red,->-=.5,>=stealth](90+5:4.5)node[above]{$\tau$}to[bend right=-45](175:4.5);
\draw[thick,orange,->-=.5,>=stealth](90-5:4.5)node[above]{$\sigma$}
.. controls +(-90:2) and +(135:2) .. (-.7,-.7) .. controls +(-45:2) and +(180:2) .. (5:4.5);

\draw[very thick, Emerald,->-=.5,>=stealth](-5:3.7)to[bend left=45]node[below]{$+$}(185:3.7);
\draw[red](0,0)\ww;
\end{tikzpicture}
%    \begin{tikzpicture}[xscale=-.45,yscale=.45]
%    \foreach \j in {1,2,0}{
%    \draw[blue,very thick](90*\j+20:4)to(90*\j-20:4);
%    \draw[dashed,blue,thin](90*\j+20:4)to[bend left=-15](90*\j-20+90:4);}
%    \draw[dashed,blue,thin](90*3+20:4)to[bend left=-15](90*3-20+90:4)
%        (90*3+20:4)to[bend left=15](90*3-20:4);
%    \draw[very thick, orange,->-=.5,>=stealth](90-5:4.5)node[above]{$\tau$}to[bend left=-45](5:4.5);
%    \draw[very thick, red,->-=.5,>=stealth](90+5:4.5)node[above]{$\sigma$}
%        .. controls +(-90:2) and +(45:2) .. (.7,-.7) .. controls +(225:2) and +(0:2) .. (175:4.5);
%
%    \draw[very thick, Emerald,-<-=.5,>=stealth](-5:3.7)to[bend left=45]node[below]{$+$}(185:3.7);
%    \draw[red](0,0)\ww;
%    \end{tikzpicture}
	
%    \begin{tikzpicture}[xscale=.45,yscale=.45]
%    \foreach \j in {1,2,0}{
%    \draw[blue,very thick](90*\j+20:4)to(90*\j-20:4);
%    \draw[dashed,blue,thin](90*\j+20:4)to[bend left=-15](90*\j-20+90:4);}
%    \draw[dashed,blue,thin](90*3+20:4)to[bend left=-15](90*3-20+90:4)
%        (90*3+20:4)to[bend left=15](90*3-20:4);
%    \draw[thick, red,->-=.5,>=stealth](90-5:4.5)node[above]{$\sigma$}to[bend left=-45](5:4.5);
%    \draw[very thick, orange,->-=.5,>=stealth](90+5:4.5)node[above]{$\tau$}to[bend left=45](175:4.5);
%
%    \draw[Emerald,->-=.5,>=stealth](-5:3.7)to[bend left=-30]node[above]{$-$}(185:3.7);
%    \draw[red](0,0)\ww;
%    \end{tikzpicture}\qquad

    \caption{Cases of $\tau_{-1,s+1}\wedge\sigma_{-1,s+1}$ as positive arc segments}\label{fig:cases}
\end{figure}
\begin{itemize}
	\item The decoration in $P$ is on the same hand side of both $\sigma_{s,s+1}$ and $\tau_{s,s+1}$.
    See Figure~\ref{fig:cases}, where the green arc segment $\tau_{-1,s+1}\wedge\sigma_{-1,s+1}$ is positive.
%	\item The decoration in $P$ is on the left hand side of both $\sigma_{s,s+1}$ and $\tau_{s,s+1}$.
%	See the second picture of Figure~\ref{fig:cases},
%	where the green arc segment $\tau_{-1,s+1}\wedge\sigma_{-1,s+1}$ is positive.
	\item The decoration in $P$ is on the different hand sides of $\sigma_{s,s+1}$ and $\tau_{s,s+1}$. There are the following subcases.
	\begin{itemize}
		\item $\sigma_{s,s+1}$ does not intersect $\tau_{s,s+1}$, and none of $\sigma_{s,s+1}$ and $\tau_{s,s+1}$ has endpoints in the same edge of $P$. See the the first picture in the first row of Figure~\ref{fig:intersects}, where the green arc segment $\tau_{-1,s+1}\wedge\sigma_{-1,s+1}$ is negative.
		\item $\sigma_{s,s+1}$ intersects $\tau_{s,s+1}$ in $P$. Then the green curve $\tau_{-1,s+1}\wedge\sigma_{-1,s+1}$ is not an arc segment. See the second picture in the first row of Figure~\ref{fig:intersects}.
			\item $\sigma_{s,s+1}$ does not intersect $\tau_{s,s+1}$, and (at least) one of $\sigma_{s,s+1}, \tau_{s,s+1}$ satisfies
		that its endpoints are in the same edge of $P$. See the pictures in the second row of Figure~\ref{fig:intersects},
		where the green curve $\tau_{-1,s+1}\wedge\sigma_{-1,s+1}$ is not an arc segment.
	\end{itemize}
	\item At least one of $\sigma_{s,s+1}$ and $\tau_{s,s+1}$ connects to the decoration in $P$. Then the green curve $\tau_{-1,s+1}\wedge\sigma_{-1,s+1}$ is not an arc segment.
    The possible cases are shown in the figures in the third row of Figure~\ref{fig:intersects}.
\end{itemize}
So in each case in Figure~\ref{fig:cases}, we have $\varphi_s=\pi_{[\tau(-1,s)\wedge\sigma(-1,s)]}[\chi_s]$, while in each case in Figure~\ref{fig:intersects}, we have $\varphi_s=0$.
\begin{figure}[hb]\centering
	\begin{tikzpicture}[xscale=.45,yscale=.45]
	\foreach \j in {1,2,0}{
		\draw[blue,very thick](90*\j+20:4)to(90*\j-20:4);
		\draw[dashed,blue,thin](90*\j+20:4)to[bend left=-15](90*\j-20+90:4);}
	\draw[dashed,blue,thin](90*3+20:4)to[bend left=-15](90*3-20+90:4)
	(90*3+20:4)to[bend left=15](90*3-20:4);
	\draw[thick, orange,->-=.5,>=stealth](90-5:4.5)node[above]{$\sigma$}to[bend left=-45](5:4.5);
	\draw[thick, red,->-=.5,>=stealth](90+5:4.5)node[above]{$\tau$}to[bend left=45](175:4.5);
	
	\draw[Emerald,->-=.5,>=stealth](-5:3.7)to[bend left=-30]node[above]{$-$}(185:3.7);
	\draw[red](0,0)\ww;
	\end{tikzpicture}\qquad
	\begin{tikzpicture}[scale=.45]\clip(-4.5,-4) rectangle (4.5,5.5);
	\foreach \j in {1,2,0}{
		\draw[blue,very thick](90*\j+20:4)to(90*\j-20:4);
		\draw[dashed,blue,thin](90*\j+20:4)to[bend left=-15](90*\j-20+90:4);}
	\draw[dashed,blue,thin](90*3+20:4)to[bend left=-15](90*3-20+90:4)
	(90*3+20:4)to[bend left=15](90*3-20:4);
	\draw[thick,orange!0!red,->-=.3,>=stealth](90+5:4.5)node[above]{$\tau$} .. controls +(-90:2) and +(135:2) .. (-.7,-.7) .. controls +(-45:2) and +(180:2) .. (5:4.5);
	\draw[thick,orange!100!red,->-=.3,>=stealth](90-5:4.5)node[above]{$\sigma$} .. controls +(-90:2) and +(45:2) .. (.7,-.7) .. controls +(225:2) and +(0:2) .. (175:4.5);
	\draw[thick, Emerald](-5:3.7) .. controls +(210:6.5) and +(180:2) .. (0,.7) .. controls +(0:2) and +(-30:6.5) .. (185:3.7);
	\draw[red](0,0)\ww;
	\end{tikzpicture}\qquad
	
    \begin{tikzpicture}[scale=.45]\clip(-4.5,-4) rectangle (4.5,5.5);
    \foreach \j in {1,0}{
    \draw[blue,very thick](90*\j+20:4)to(90*\j-20:4);
    \draw[dashed,blue,thin](90*\j+20:4)to[bend left=-15](90*\j-20+90:4);}
    \foreach \j in {2,3}{
    \draw[dashed,blue,thin](90*\j+20:4)to[bend left=-15](90*\j-20+90:4)
        (90*\j+20:4)to[bend left=15](90*\j-20:4);}
    \draw[thick,orange!0!red,->-=.3,>=stealth](90+9:4.5)node[above]{$\tau$}
        .. controls +(-90:2) and +(165:3) .. (0,-1) .. controls +(-15:3) and +(-90:1) .. (90-3:3.8);
    \draw[thick,orange!100!red,->-=.3,>=stealth](90-9:4.5)node[above]{$\sigma$}to[bend right=30](5:4.5);
    \draw[thick, Emerald](-5:3.7)
        .. controls +(155:7) and +(165:1) .. (0,-.7) .. controls +(-15:2) and +(-90:1) .. (90+3:3.7);
    \draw[red](0,0)\ww;
    \end{tikzpicture}\qquad
    \begin{tikzpicture}[xscale=-.45,yscale=.45]\clip(-4.5,-4) rectangle (4.5,5.5);
    \foreach \j in {1,0}{
    	\draw[blue,very thick](90*\j+20:4)to(90*\j-20:4);
    	\draw[dashed,blue,thin](90*\j+20:4)to[bend left=-15](90*\j-20+90:4);}
    \foreach \j in {2,3}{
    	\draw[dashed,blue,thin](90*\j+20:4)to[bend left=-15](90*\j-20+90:4)
    	(90*\j+20:4)to[bend left=15](90*\j-20:4);}
    \draw[thick,orange!100!red,->-=.3,>=stealth](90+9:4.5)node[above]{$\sigma$}
    .. controls +(-90:2) and +(165:3) .. (0,-1) .. controls +(-15:3) and +(-90:1) .. (90-3:3.8);
    \draw[thick,orange!0!red,->-=.3,>=stealth](90-9:4.5)node[above]{$\tau$}to[bend right=30](5:4.5);
    \draw[thick, Emerald](-5:3.7)
    .. controls +(155:7) and +(165:1) .. (0,-.7) .. controls +(-15:2) and +(-90:1) .. (90+3:3.7);
    \draw[red](0,0)\ww;
    \end{tikzpicture}

    \begin{tikzpicture}[xscale=.45,yscale=.45]\clip(-4.5,-4) rectangle (4.5,5.5);
    \foreach \j in {1,2,0}{
    \draw[blue,very thick](90*\j+20:4)to(90*\j-20:4);
    \draw[dashed,blue,thin](90*\j+20:4)to[bend left=-15](90*\j-20+90:4);}
    \draw[dashed,blue,thin](90*3+20:4)to[bend left=-15](90*3-20+90:4)
        (90*3+20:4)to[bend left=15](90*3-20:4);
    \draw[thick, orange!100!red,->-=.5,>=stealth](90-5:4.5)node[above]{$\sigma$}to[bend left=0](0,0);
    \draw[thick, orange!0!red,->-=.5,>=stealth](90+5:4.5)node[above]{$\tau$}to[bend left=45](175:4.5);

    \draw[Emerald](0,0)to[bend left=0]node[above]{}(185:3.7);
    \draw[white](0,0)\nn;\draw[red](0,0)\ww;
    \end{tikzpicture}\qquad
    \begin{tikzpicture}[xscale=-.45,yscale=.45]\clip(-4.5,-4) rectangle (4.5,5.5);
    \foreach \j in {1,2,0}{
    \draw[blue,very thick](90*\j+20:4)to(90*\j-20:4);
    \draw[dashed,blue,thin](90*\j+20:4)to[bend left=-15](90*\j-20+90:4);}
    \draw[dashed,blue,thin](90*3+20:4)to[bend left=-15](90*3-20+90:4)
        (90*3+20:4)to[bend left=15](90*3-20:4);
    \draw[thick, orange!0!red,->-=.5,>=stealth](90-5:4.5)node[above]{$\tau$}to[bend left=0](0,0);
    \draw[thick, orange!100!red,->-=.5,>=stealth](90+5:4.5)node[above]{$\sigma$}to[bend left=45](175:4.5);

    \draw[Emerald](0,0)to[bend left=0]node[above]{}(185:3.7);
    \draw[white](0,0)\nn;\draw[red](0,0)\ww;
    \end{tikzpicture}\qquad
    \begin{tikzpicture}[xscale=-.45,yscale=.45]\clip(-4.5,-4) rectangle (4.5,5.5);
    \foreach \j in {1,2,0}{
    	\draw[blue,very thick](90*\j+20:4)to(90*\j-20:4);
    	\draw[dashed,blue,thin](90*\j+20:4)to[bend left=-15](90*\j-20+90:4);}
    \draw[dashed,blue,thin](90*3+20:4)to[bend left=-15](90*3-20+90:4)
    (90*3+20:4)to[bend left=15](90*3-20:4);
    \draw[thick, orange!0!red,->-=.5,>=stealth](90-5:4.5)node[above]{$\tau$}to[bend left=0](0,0);
    \draw[thick, orange!100!red,->-=.5,>=stealth](90+5:4.5)node[above]{$\sigma$}to[bend right=0](0,0);
    \draw[white](0,0)\nn;\draw[red](0,0)\ww;
    \end{tikzpicture}
\caption{Cases of $\tau_{-1,s+1}\wedge\sigma_{-1,s+1}$ that are negative or not arc segments}\label{fig:intersects}
\end{figure}

\begin{lemma}
	$\varphi(\lws,\lwt)\in Z^0\mathcal{H}om_{\EE_\ac}(X_{\lws},X_{\lwt}[\nu(\lws,\lwt)])_0$.
\end{lemma}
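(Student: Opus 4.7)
The claim splits into three items: (a) $\EE_\ac$-linearity, (b) bi-degree zero, and (c) the cocycle condition $d(\varphi(\lws,\lwt))=0$. Part~(a) is immediate: every non-zero component $\varphi_s$ is either an identity of a simple or of the form $\pi_{[\rho]}$ for a positive arc segment $\rho$, and all such morphisms are $\EE_\ac$-linear by Proposition~\ref{prop:morsim}.

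For (b), fix $s$ with $\varphi_s\neq 0$ and set $\rho_s=\tau(-1,s)\wedge\sigma(-1,s)$. Viewing $\varphi_s=\pi_{[\rho_s]}[\chi_s]$ as a morphism $S_{k_s}[\chi_s]\to S_{l_s}[\chi_s'+\nu(\lws,\lwt)]$, vanishing of its bi-degree amounts to the identity $\chi_s+\deg\pi_{[\rho_s]}=\chi_s'+\nu(\lws,\lwt)$. Substitute (\ref{eq:1}) into the left side and choose a representative $\lwr_s\in[\rho_s]$ adapted to $\lws,\lwt$, so that the grading of $\lwr_s$ matches that of $\lws$ near $V_s$ and that of $\overline{\lwt}$ near $W_s$; then via (\ref{eq:hkkg}) at $W_s$, the bi-indices $\ind^{\ZZ^2}_{V_s}(\wg_{k_s}^0,\lwr_s)$ and $\ind^{\ZZ^2}_{W_s}(\wg_{l_s}^0,\lwr_s)$ can be rewritten in terms of $\chi_s,\chi_s'$. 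The $s=0$ instance is exactly the definition (\ref{eq:nu}) of $\nu(\lws,\lwt)$, and the general identity follows by the additivity (\ref{eq:ksg}) (or (\ref{eq:3int})) of bi-indices at the common decoration $\sigma(0)=\tau(0)$.

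For (c), the $(s,s+1)$-component of $d(\varphi)$ has the shape $g_{s+1}\circ\varphi_s\pm\varphi_{s+1}\circ f_{s+1}$, with sign dictated by the Leibniz rule in $\mathcal{H}om_{\EE_\ac}$ and the directions of $f_{s+1},g_{s+1}$. I would split into three regimes. Inside the common prefix of $\us$ and $\ut$ (where $\sigma(-1,s+1)$ is isotopic to $\tau(-1,s+1)$), one has $f_{s+1}=g_{s+1}$ and both $\varphi_s,\varphi_{s+1}$ are identities, so the component cancels trivially; past the first separation index every $\varphi_s=0$, so nothing remains to check. At the separation index $s_0$, the ambient $\ac$-polygon configuration is one of those in Figures~\ref{fig:cases} or~\ref{fig:intersects}. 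In the positive-arc-segment cases of Figure~\ref{fig:cases}, $\varphi_{s_0}$ is an identity, $\varphi_{s_0+1}=\pi_{[\tau(-1,s_0+1)\wedge\sigma(-1,s_0+1)]}$, and the composition rule (\ref{eq:3}) yields $g_{s_0+1}\circ\varphi_{s_0}=\varphi_{s_0+1}\circ f_{s_0+1}$, with Leibniz signs matching thanks to the additivity in Lemma~\ref{lem:2}. In the cases of Figure~\ref{fig:intersects}, the defining criterion on $\varphi_s$ (via the one-step extension) forces both $\varphi_{s_0}$ and $\varphi_{s_0+1}$ to vanish, so the component is trivially zero. The main obstacle is this configuration-by-configuration inspection at the separation index: one must verify for each sub-case of Figures~\ref{fig:cases} and~\ref{fig:intersects} that (\ref{eq:3}) glues the two adjacent arc segments into the required extension on the nose, with the Leibniz-rule signs agreeing via Lemma~\ref{lem:2}.
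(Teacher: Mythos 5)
Your overall strategy follows the paper: establish $\EE_\ac$-linearity, check the bi-degree via~\eqref{eq:1} and~\eqref{eq:ksg}, and verify $d(\varphi(\lws,\lwt))=0$ string-position by string-position, splitting into the common-prefix region, the tail, and the separation polygon, with Figures~\ref{fig:cases0}, \ref{fig:cases}, \ref{fig:intersects} supplying the local configurations. The handling of the Figure~\ref{fig:cases0} and Figure~\ref{fig:cases} configurations via the composition rule~\eqref{eq:3} is what the paper does as well.

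The gap is in the Figure~\ref{fig:intersects} configurations. You assert that ``the defining criterion on $\varphi_s$ (via the one-step extension) forces both $\varphi_{s_0}$ and $\varphi_{s_0+1}$ to vanish, so the component is trivially zero.'' But the vanishing of $\varphi$ at the rungs $s_0$ and $s_0+1$ does not, on its own, close the cocycle check: it ignores the matrix entry of $d(\varphi)$ that straddles the last nonvanishing rung and the first vanishing one. That entry has the shape $g\circ(\text{identity rung})\pm(\text{vanishing rung})\circ f$, and its surviving term $\pm g\circ\operatorname{id}$ is a priori a nonzero morphism between simples; nothing about the vanishing of $\varphi_{s_0}$ and $\varphi_{s_0+1}$ kills it. What actually saves the argument in every Figure~\ref{fig:intersects} configuration is a directional constraint on the string arrows adjacent to the separation: $f_{s_0+1}$ points to the right (or does not exist) while $g_{s_0+1}$ points to the left (or does not exist). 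Because these point in opposite directions, the offending matrix entry simply does not occur as a component of $d(\varphi)$. This opposite-direction observation --- which is exactly what the paper's proof invokes, following \cite[Lemma~A.6]{QZ2} --- is the ingredient missing from your argument; the vanishing of the two $\varphi$'s is neither sufficient nor the right reason.
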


\begin{proof}
The proof of \cite[Lemma~A.6]{QZ2} works here.
That is, in the cases in Figure~\ref{fig:cases0} and \ref{fig:cases},
this follows from \eqref{eq:3};
in the cases in Figure~\ref{fig:intersects}, this follows from that
$f_s$ points to the right (or does not exist) while $g_s$ points to the left (or does not exist).
%The maps do not exists precisely correspond to the case when there is a non-admissible intersection
%(i.e. second picture in the first row and both pictures in the second row of Figure~\ref{fig:cases}).
\end{proof}

\begin{lemma}
	$\varphi(\lws,\lwt)$ is not zero in $\per(\EE_\ac)$.
\end{lemma}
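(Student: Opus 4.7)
The strategy is to show that $\varphi(\lws,\lwt)$ is not a boundary in $\mathcal{H}om_{\EE_\ac}(X_{\lws},X_{\lwt}[\nu])$; combined with the previous lemma's statement that it lies in $Z^0$, this will place it in a nonzero class of $H^0$, hence by the equivalence \eqref{eq:d.e.} yield a nonzero morphism in $\per(\EE_\ac)$.

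First, I would record that the component $\varphi_0$ is manifestly nonzero. In the generic case $\sigma_{-1,0}\not\sim\tau_{-1,0}$, it equals the basis element $\pi_{[\tau_{-1,0}\wedge\sigma_{-1,0}]}[\chi_0]$ by Proposition~\ref{prop:morsim}; in the degenerate case $\sigma_{-1,0}\sim\tau_{-1,0}$, it is the identity on $S_{k_0}[\chi_0]$.

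Next, I would suppose for contradiction that $\varphi(\lws,\lwt)=d(h)$ for some $h$ of bi-degree $-1$. Decompose $h$ into matrix components $h_{s,t}\colon S_{k_s}[\chi_s]\to S_{l_t}[\chi_t'+\nu-1]$; by Proposition~\ref{prop:morsim} each nonzero $h_{s,t}$ equals $c_{s,t}\,\pi_{[\rho_{s,t}]}$ for some non-trivial positive arc segment $\rho_{s,t}$ of the appropriate bi-degree (identity components are ruled out since $|h|_1+|h|_2\XX=-1$). The composition rule \eqref{eq:3} then writes the $(0,0)$-block of $d(h)$ as a signed sum of morphisms $\pi_{[\rho']}$, where each class $[\rho']$ is a concatenation of some $[\rho_{s,t}]$ with an arc segment of $\us$ incident to $V_0$ (from $h\circ d_{X_{\lws}}$) or of $\ut$ incident to $W_0$ (from $d_{X_{\lwt}[\nu]}\circ h$). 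Every such representative $\rho'$ necessarily crosses an $\ac$-arc other than $\wg_{k_0}$ and $\wg_{l_0}$ at some intermediate $V_\ast$ or $W_\ast$, whereas $[\tau_{-1,0}\wedge\sigma_{-1,0}]$ is represented by an arc segment lying entirely in the single $\ac$-polygon holding the decoration $\sigma(0)=\tau(0)$, with interior crossings only at its two endpoints $V_0$ and $W_0$. Since the $\pi_{[\cdot]}$'s form a basis of the Hom space, the $(0,0)$-block of $d(h)$ cannot equal $\varphi_0$, the desired contradiction.

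The degenerate case $\sigma_{-1,0}\sim\tau_{-1,0}$ is handled by passing to the smallest $s^\ast\geq 0$ with $\sigma_{-1,s^\ast+1}\not\sim\tau_{-1,s^\ast+1}$: the preceding identity components $\varphi_0,\ldots,\varphi_{s^\ast}$ propagate the nontriviality rigidly, and the same non-decomposability argument applied at position $s^\ast+1$ produces the contradiction. The main obstacle will be this case-chasing combined with careful sign bookkeeping in the differential of the morphism complex; conceptually, however, $\varphi(\lws,\lwt)$ is a \emph{graph map} in the string-algebra sense, and the mechanism above is the standard reason why graph maps between string modules are linearly independent modulo homotopy (compare the Calabi-Yau-3 treatment of \cite{QZ2}).
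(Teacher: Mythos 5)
Your overall strategy is the right one and matches the paper's intent: show that $\varphi(\lws,\lwt)$ is a cocycle (the previous lemma) that is not a coboundary, focusing on the leading component $\varphi_0$. Your handling of the degenerate case $\sigma_{-1,0}\sim\tau_{-1,0}$, where $\varphi_0=\id$, reproduces the paper's short argument exactly: the identity never arises from the composition rule \eqref{eq:3} applied to two non-trivial arc segments, so $\varphi_0$ cannot equal a nonzero term of $(d(h))_{0,0}=\pm g_1 h^{(1)}_{(0)}\pm h^{(0)}_{(1)} f_1$. For the generic case, the paper simply defers to \cite[Lemma~A.7]{QZ2}.

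The argument you offer for the generic case has a gap, however. You claim that any class $[\rho']$ arising in $(d(h))_{0,0}$ ``necessarily crosses an $\ac$-arc other than $\wg_{k_0}$ and $\wg_{l_0}$ at some intermediate $V_\ast$ or $W_\ast$,'' in contrast to $[\tau_{-1,0}\wedge\sigma_{-1,0}]$ which lies in one $\ac$-polygon. That is not correct. By construction the product $[\rho_1]\cdot[\rho_2]$, whenever it is defined, is \emph{again} a class of positive arc segments (see Figure~\ref{fig:com}), so any representative of $[\rho']$ is a single arc segment sitting inside one $\ac$-polygon whose interior crosses no arc of $\ac$ — exactly the same kind of object as $\tau_{-1,0}\wedge\sigma_{-1,0}$. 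So the distinction you draw is not the obstruction. What actually has to be checked is that no composition $[\mu]\cdot[\overline{\tau_{0,1}}]$ (resp.\ $[\sigma_{0,1}]\cdot[\mu]$) can represent the same class as $[\tau_{-1,0}\wedge\sigma_{-1,0}]$; this is a geometric comparison of arc segments inside the polygon containing $Z=\sigma(0)=\tau(0)$, using the fact that the prefix arc segment $\sigma_{0,1}$ (resp.\ $\tau_{0,1}$) of the required sign lies on the opposite side of $\gamma_{k_0}$ (resp.\ $\gamma_{l_0}$) from $Z$, so the composite winds around the wrong decoration. This is exactly the case-analysis carried out in \cite[Lemma~A.7]{QZ2} that you gesture at but do not actually execute; as written your generic-case step is false, and the proof is incomplete there.
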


\begin{proof}
	The proof of \cite[Lemma~A.7]{QZ2} works here. That is, for the case $\ws_{-1,0}\sim\wt_{-1,0}$ (i.e. in Figures~\ref{fig:cases} and \ref{fig:intersects}), $\varphi_0$ is the identity by definition, which does not factor through $f_1$ and $g_1$. So $\varphi(\lws,\lwt)$ is not zero. The case $\ws_{-1,0}\nsim\wt_{-1,0}$ (i.e. in Figure~\ref{fig:cases0}) is a little complicated, we refer to the proof of \cite[Lemma~A.7]{QZ2}.
\end{proof}

Thus, we have the following result.
	
\begin{proposition}\label{prop:wd-deg}
Let $\lws,\lwt\in\wXACC(\surfo)$ with the common staring point $p$. Then $\varphi(\lws,\lwt)$ is a well-defined and non-zero morphism in $\per(\EE_\ac)$ from $X_{\lws}$ to $X_{\lwt}[\ind_{p}^{\ZZ^2}(\lws,\lwt)]$.
\end{proposition}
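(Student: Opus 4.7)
Since the previous two lemmas show that $\varphi(\lws,\lwt)$ is a nonzero element of $\mathcal{H}om_{\EE_\ac}(X_{\lws},X_{\lwt}[\nu(\lws,\lwt)])$ that is closed of bi-degree zero, the only remaining point is to verify the degree identity
\begin{equation*}
\nu(\lws,\lwt)=\ind_{p}^{\ZZ^2}(\lws,\lwt).
\end{equation*}

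The plan is as follows. Let $\rho:=\tau(-1,0)\wedge\sigma(-1,0)$ be the positive arc segment from $V_0\in\wg_{k_0}$ to $W_0\in\wg_{l_0}$ appearing in the definition \eqref{eq:nu} of $\nu$. First I would pick a canonical double grading $\lwr$ on $\rho$ whose restriction to the $\sigma(-1,0)$--piece (resp.\ the $\tau(-1,0)$--piece) agrees with $\lws$ (resp.\ $\lwt$) off a small neighbourhood of the decoration $p$. The two admissible lifts of this grading across $p$ into $\log\surfo$ correspond precisely to the two cases of Figure~\ref{fig:lr}, distinguished by whether the smoothing at $p$ sweeps across the cut $\cc$; either choice can be made consistently and will be fixed once and for all.

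Next, substitute \eqref{eq:1} for $\deg(\pi_{[\rho]})$ into \eqref{eq:nu} to obtain
\begin{equation*}
\nu(\lws,\lwt)=1-\ind^{\ZZ^2}_{W_0}(\wg_{l_0}^0,\lwr)+\ind^{\ZZ^2}_{V_0}(\wg_{k_0}^0,\lwr)-\ind^{\ZZ^2}_{V_0}(\wg_{k_0}^0,\lws)+\ind^{\ZZ^2}_{W_0}(\wg_{l_0}^0,\lwt).
\end{equation*}
At $V_0$ the graded curves $\lws$ and $\lwr$ traverse $\sigma(-1,0)$ in opposite directions (since $\rho$ leaves $V_0$ towards $p$ while $\lws$ arrives at $V_0$ from $p$), and similarly for $\lwt,\lwr$ at $W_0$. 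The tangent-reversal principle underlying \eqref{eq:hkkg} then pairs each ``$\lwr$''-term with its ``$\lws$''- or ``$\lwt$''-counterpart, contributing a compensating $+1$ that kills the constant in \eqref{eq:1}. What survives can be reorganised exactly as the right-hand side of the decoration formula \eqref{eq:ksg} applied to a small clockwise arc $\alpha$ around $p$, equipped with an auxiliary grading $\lwa$ induced from $\lwr$ in a neighbourhood of $p$.

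Finally, invoking \eqref{eq:ksg} yields $\nu(\lws,\lwt)=\ind^{\ZZ^2}_{\alpha(0)}(\lws,\lwa)-\ind^{\ZZ^2}_{\alpha(1)}(\lwt,\lwa)=\ind_{p}^{\ZZ^2}(\lws,\lwt)$, completing the proof. The main obstacle is the bookkeeping of the second ($\XX$-)grading at the decoration $p$: its reversal satisfies \eqref{eq:hkkg1} rather than \eqref{eq:hkkg}, and one must separately analyse the two possibilities of Figure~\ref{fig:lr} to confirm the $\XX$-component of the identity, since the chosen lift of $\lwr$ across the cut determines which case of the $\sind_p$-rule applies. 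Once this local case analysis is carried out, the underlying $\ZZ$-component of the identity reduces to the Calabi-Yau-$3$ computation of \cite[Appendix~A]{QZ2}, and the two components assemble into the required equality.
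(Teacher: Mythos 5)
Your proposal is correct and follows essentially the same route as the paper: reduce to proving $\nu(\lws,\lwt)=\ind^{\ZZ^2}_p(\lws,\lwt)$, then substitute \eqref{eq:1} into \eqref{eq:nu} and verify the identity by applying \eqref{eq:ksg} at the decoration $p$ (with the smoothing segment $\rho=\ut_{-1,0}\wedge\us_{-1,0}$ playing the role of the auxiliary arc $\alpha$) together with \eqref{eq:3int} and \eqref{eq:hkkg} at the non-decoration crossings $V_0,W_0$ to recognise $\chi_0$, $\chi_0'$, and $\deg(\pi_{[\rho]})$.

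One clarification worth flagging: you single out the decorated tangent-reversal formula \eqref{eq:hkkg1} as the ``main obstacle'', but the paper never invokes \eqref{eq:hkkg1} in this proof. After the single application of \eqref{eq:ksg} at $p$, every remaining index manipulation takes place at $V_0,W_0\in\surfoi$, where the ordinary \eqref{eq:hkkg} suffices. The case analysis of Figure~\ref{fig:lr} that you anticipate is real, but it is absorbed once and for all into the statement and proof of \eqref{eq:ksg}; the proof of Proposition~\ref{prop:wd-deg} itself does not need to revisit it, and leaning on \eqref{eq:hkkg1} here would send a reader down the wrong path.
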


\begin{proof}
By the above two lemmas, we only need to show $\ind_{p}^{\ZZ^2}(\lws,\lwt)=\nu(\lws,\lwt)$. Let $\rho=\ut_{-1,0}\wedge\us_{-1,0}$. Take an arbitrary representative $\lwr\in[\rho]$. Then we have
$$\begin{array}{rcl}
	\ind^{\ZZ^2}_p(\lws,\lwt)
    &\xlongequal{\eqref{eq:ksg}}&\ind^{\ZZ^2}_{V_0}(\lws,\lwr)-\ind^{\ZZ^2}_{W_0}(\lwt,\lwr) \vspace{1ex}\\
    &\xlongequal{\eqref{eq:3int}}&(\ind^{\ZZ^2}_{V_0}(\lws,\wg_{k_0}^0)+\ind^{\ZZ^2}_{V_0}(\wg_{k_0}^0,\lwr))-(\ind^{\ZZ^2}_{W_0}(\lwt,\wg_{l_0}^0)-\ind^{\ZZ^2}_{W_0}(\lwr,\wg_{l_0}^0))\vspace{1ex}\\
    &\xlongequal{\eqref{eq:hkkg}}&(1+\chi_0)-(1+\chi'_0)+(1+\ind^{\ZZ^2}_{W_0}(\wg_{l_0}^0,\lwr)-\ind^{\ZZ^2}_{V_0}(\wg_{k_0}^0,\lwr)) \vspace{1ex}\\
	&\xlongequal{\eqref{eq:1}}&\chi_0-\chi_0'+\deg(\pi_{[\rho]})\vspace{1ex}\\
	&\xlongequal{\eqref{eq:nu}}&\nu(\lws,\lwt)
\end{array}$$
\end{proof}

We have the following special case.
Recall that $\overline{?}$ denotes the inverse of a curve/arc $?$.

\begin{lemma}
	If $\lws=\lwt$, then $$\varphi(\lws,\lwt)=\varphi(\overline{\lwt},\overline{\lws}),\ \varphi(\lwt,\lws)=\varphi(\overline{\lws},\overline{\lwt}).$$
	In particular, one of the above pairs is the identity of $X_{\lws}=X_{\lwt}$.% and the other one is its Calabi-Yau dual.
\end{lemma}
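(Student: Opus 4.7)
The plan is to show that when $\lws=\lwt$, both $\varphi(\lws,\lws)$ and $\varphi(\overline{\lws},\overline{\lws})$ coincide with the identity morphism of $X_{\lws}$. Since $\overline{\lwt}=\overline{\lws}$ in this case, the two claimed equalities collapse to the single assertion $\varphi(\lws,\lws)=\varphi(\overline{\lws},\overline{\lws})$, and the ``in particular'' clause is immediate once both sides are shown to be identities.

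To compute $\varphi(\lws,\lws)$ I would apply Construction~\ref{cons:mor} with $\us=\ut$. The intersection data then coincide: $V_j=W_j$, $k_j=l_j$, $\chi_j=\chi_j'$, and $\us_{-1,s}=\ut_{-1,s}$ for every $s$. Each extension $\ut_{-1,s}\wedge\us_{-1,s}$ is therefore the self-extension of an arc segment at the common starting decoration $V_{-1}$. The smoothing at $V_{-1}$ produces a once-decorated monogon enclosing $V_{-1}$, and the single unknotting move in Definition~\ref{def:ext} removes it, leaving a curve isotopic to a segment of $\gamma_{k_s}$. Thus $[\ut_{-1,s}\wedge\us_{-1,s}]$ is the trivial class in $\pas(\ac)$; by Proposition~\ref{prop:morsim} the associated map $\pi_{[\mathrm{triv}]}$ is $\id_{S_{k_s}}$ of bi-degree $0$. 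Since a trivial class is a positive arc segment class, the non-zero branch in Construction~\ref{cons:mor} is active for every $s$, so each component $\varphi_s=\id_{S_{k_s}[\chi_s]}$, and the total shift is $\nu(\lws,\lws)=\deg(\pi_{[\mathrm{triv}]})+\chi_0-\chi_0=0$. Summing over $s$ gives $\varphi(\lws,\lws)=\id_{X_{\lws}}$. The identical argument applied to $\overline{\lws}$ (starting at $\us(1)$, with the reversed sequence of arc segments) yields $\varphi(\overline{\lws},\overline{\lws})=\id_{X_{\overline{\lws}}}=\id_{X_{\lws}}$, since $X_{\lws}=X_{\overline{\lws}}$.

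The main obstacle is justifying the claim that $\ut_{-1,s}\wedge\us_{-1,s}$ really reduces to the trivial class in $\pas(\ac)$, because a priori this loop winds once around $V_{-1}$ and may traverse several $\ac$-polygons, while Definition~\ref{def:ext} permits at most one unknotting move. The content to verify is that the single once-decorated monogon produced by smoothing two identical curves at their shared decoration is precisely the configuration the unknotting operation removes, and that the residual loop at $V_s$ is isotopic, relative to its endpoints on $\gamma_{k_s}$, to a segment of $\gamma_{k_s}$; one should also check that the induced class is indeed the trivial equivalence class of positive arc segments (the digon relation of Figure~\ref{fig:digon}), and track the grading indices $\chi_s$ carefully to ensure that the identities on each $S_{k_s}[\chi_s]$ assemble into the identity of the full dg $\XX$-graded module $X_{\lws}$.
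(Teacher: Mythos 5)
Your proposal contains a genuine error: you claim that \emph{both} $\varphi(\lws,\lws)$ and $\varphi(\overline{\lws},\overline{\lws})$ are the identity, and that the two displayed equalities ``collapse'' to a single assertion. Neither is what the lemma asserts, and the claim contradicts the paragraph immediately after it in the paper, which states that $\varphi(\lws,\lws)=\varphi(\overline{\lws},\overline{\lws})$ is a \emph{non-identity} morphism (the Calabi--Yau dual of $\id_{X_{\lws}}$, of bi-degree $\XX$). The lemma says ``\emph{one} of the above pairs is the identity,'' not both.

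The root of the gap is that Construction~\ref{cons:mor} implicitly places $\us$ and $\ut$ in general position (slightly offset parallel copies in a minimal configuration), so even when $\lws=\lwt$ as isotopy classes, the two clockwise angles $\theta(\us,\ut)$ and $\theta(\ut,\us)$ at the shared starting decoration differ: one is close to $0$ and the other close to $2\pi$. Consequently $\varphi(\lws,\lwt)\neq\varphi(\lwt,\lws)$ in general, and the ``collapse'' you invoke is false. Concretely, for one of the two orderings the class $[\ut_{-1,0}\wedge\us_{-1,0}]$ is \emph{loop-type} rather than trivial; by Remark~\ref{rmk:cy} a loop-type class has $\deg(b_{[\rho]})=1-\XX$, so $\deg(\pi_{[\rho]})=\XX$ and therefore $\nu(\lws,\lwt)=\XX\neq 0$ in \eqref{eq:nu}. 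Your computation of $\nu$ silently assumes the trivial case. The paper's (terse) proof is doing exactly this bookkeeping: it is the last configuration in Figure~\ref{fig:intersects} (both $\sigma$ and $\tau$ ending at the decoration of the final polygon), and switching $\lws$ and $\lwt$ exchanges the trivial and loop-type classes, hence exchanges the identity with its Calabi--Yau dual. To repair your argument you would need to track the side of the offset and show that exactly one of the two orderings produces the trivial class (and hence the identity), while the other produces the loop-type class (and hence the degree-$\XX$ self-morphism).
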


\begin{proof}
	This is the last case in Figure~\ref{fig:intersects}, where $\lws$ and $\lwt$ might be switched.
\end{proof}

The above result shows that for any $\lws\in\wXACC(\surfo)$, the $2\pi$ angle from $\us$ to itself at its starting point induces a non-identity morphism $\varphi(\lws,\lws)$, which coincides with $\varphi(\overline{\lws},\overline{\lws})$, induced by the $2\pi$ angle from $\us$ to itself at its ending point.
Moreover, this morphism is in fact the Calabi-Yau dual of the identity map of $X_{\lws}$. From now on, $\varphi(?,?)$ always denote the non-identify angle-$2\pi$ morphism instead of the angle-$0$ identify.

For any $\us\in\ACC(\surfo)$, by the construction, lifts of gradings of $\us$ are in the same $\<[\XX],[1]\>$-orbit. So we denote by $X_\us$ the isoclass of the object $X_{\lws}$ in the orbit category $\D_{fd}(\Gamma_\ac)/\<[1],[\XX]\>$ for an arbitrary lift $\lws$ of a grading of $\us$. So for any $\lws,\lwt\in\wXACC(\surf)$, we have
\begin{equation}
\Hom_{\D_{fd}(\Gamma_\ac)/\<[\XX],[1]\>}(X_\sigma,X_\tau)=\Hom^{\ZZ^2}(X_{\lws},X_{\lwt})
\end{equation}
We denote by $\varphi(\us,\ut)$ the morphism in the orbit category $\D_{fd}(\Gamma_\ac)/\<[1],\XX\>$ corresponding to $\varphi(\lws,\lwt)$. Whenever we say a triangle in $\D_{fd}(\Gamma_\ac)/\<[\XX],[1]\>$, we always mean the image of a triangle in $\D_{fd}(\Gamma_\ac)$. We also simply denote
$$\Hom(X_\sigma,X_\tau):=\Hom_{\D_{fd}(\Gamma_\ac)/\<[\XX],[1]\>}(X_\sigma,X_\tau)$$
and
$$\Hom(X_{\lws},X_{\lwt}):=\Hom_{\D_{fd}(\Gamma_\ac)}(X_{\lws},X_{\lwt}).$$

\begin{proposition}\label{prop:clock}
	Let $\us_1,\us_2,\us_3\in\ACC(\surfo)$ with $\us_1(0)=\us_2(0)=\us_3(0)$.
	If the start segments of $\sigma_1,\sigma_2$ and $\sigma_3$ are in clockwise order at the starting point, see the left picture in Figure~\ref{fig:order}, then we have
%	$$\varphi(\ws_2^{m_2},\ws_3^{m_3})%[\nu(\ws_1^{m_1},\ws_2^{m_2})]
%	\circ\varphi(\ws_1^{m_1},\ws_2^{m_2})=\varphi(\ws_1^{m_1},\ws_3^{m_3})$$
	$$\varphi(\us_2,\us_3)\circ\varphi(\us_1,\us_2)=\varphi(\us_1,\us_3)$$
	in the orbit category $\D_{fd}(\Gamma_\ac)/\<[1],\XX\>$.
\end{proposition}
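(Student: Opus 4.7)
The plan is to prove the identity componentwise along the strings defining $X_{\sigma_i}$, reducing the equality to the multiplication rule \eqref{eq:3} for the $\pi_{[\rho]}$-morphisms attached to positive arc segments.

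First, I would choose lifts $\lws_i \in \wXACC(\surfo)$ of $\sigma_i$ so that the clockwise angles at the common decoration $p = \sigma_i(0)$ decompose compatibly, namely so that the clockwise angle from $\lws_1$ to $\lws_3$ factors through $\lws_2$. By splitting a small clockwise arc around $p$ into two subarcs $\alpha_{12}$ and $\alpha_{23}$ with a common grading and applying the decoration-intersection formula \eqref{eq:ksg}, I get the additivity
\[
\ind_p^{\ZZ^2}(\lws_1, \lws_3) = \ind_p^{\ZZ^2}(\lws_1, \lws_2) + \ind_p^{\ZZ^2}(\lws_2, \lws_3),
\]
which via \eqref{eq:nu} translates into $\nu(\lws_1, \lws_3) = \nu(\lws_1, \lws_2) + \nu(\lws_2, \lws_3)$. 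This matches the total shift on both sides of the claimed identity, so it suffices to check equality componentwise in $\D_{fd}(\Gamma_\ac)$ and then descend to the orbit category.

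Next, I would compare $\varphi(\lws_1, \lws_3)_s$ with the composite $\varphi(\lws_2, \lws_3)_s \circ \varphi(\lws_1, \lws_2)_s$ at each index $s$. The key geometric lemma is that the clockwise order at $p$ forces the composition of extensions
\[
[\sigma_3(-1,s) \wedge \sigma_2(-1,s)] \cdot [\sigma_2(-1,s) \wedge \sigma_1(-1,s)] = [\sigma_3(-1,s) \wedge \sigma_1(-1,s)]
\]
in $\pas(\ac)$, whenever the left-hand side is a composable pair of non-trivial positive arc segments in the sense of Figure~\ref{fig:com}. The geometric content is that smoothing $p$ to go from $\sigma_1$ to $\sigma_3$ clockwise can be carried out in one step or in two clockwise steps through $\sigma_2$, yielding isotopic positive arc segments inside the ambient $\ac$-polygon. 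Combined with \eqref{eq:3}, this gives the desired equality of $s$-th components in the generic case where none of the three components vanish.

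The main obstacle I expect is handling the vanishing components, where one of the extensions $\sigma_j(-1, s+1) \wedge \sigma_i(-1, s+1)$ fails to be a positive arc segment, as enumerated in Figure~\ref{fig:intersects}. For these I would run a case analysis in the $\ac$-polygon containing the diverging segments, showing that the clockwise ordering of $\sigma_1, \sigma_2, \sigma_3$ propagates the failure: if $\varphi(\lws_1, \lws_2)_s$ or $\varphi(\lws_2, \lws_3)_s$ vanishes, then $\sigma_3(-1, s+1) \wedge \sigma_1(-1, s+1)$ also fails the positive-arc-segment test, so $\varphi(\lws_1, \lws_3)_s = 0$. Collecting all matched components yields the identity in $\D_{fd}(\Gamma_\ac)$, and passing to the orbit category $\D_{fd}(\Gamma_\ac)/\langle [1], [\XX]\rangle$ gives the stated equation.
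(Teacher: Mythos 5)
Your strategy is essentially the same as the paper's: reduce the identity to a componentwise check along the strings, invoke the index-additivity from \eqref{eq:ksg} for the common decoration, then apply the composition rule \eqref{eq:3} together with a case analysis in the $\ac$-polygons where the curves diverge (the paper's own proof simply says "case by case, using the construction and \eqref{eq:3}, similar to \cite{QZ2}"). One small slip to fix: your "key geometric lemma" writes the arc-segment product as $[\sigma_3(-1,s)\wedge\sigma_2(-1,s)]\cdot[\sigma_2(-1,s)\wedge\sigma_1(-1,s)]$, but under the convention $\pi_{[\rho_2]}\circ\pi_{[\rho_1]}=\pi_{[\rho_1]\cdot[\rho_2]}$ from \eqref{eq:3} the composite $\varphi(\lws_2,\lws_3)_s\circ\varphi(\lws_1,\lws_2)_s$ corresponds to $[\sigma_2(-1,s)\wedge\sigma_1(-1,s)]\cdot[\sigma_3(-1,s)\wedge\sigma_2(-1,s)]$, so the factors should be swapped for the clockwise concatenation around the decoration to match $[\sigma_3(-1,s)\wedge\sigma_1(-1,s)]$.
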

	
\begin{proof}
	This can be checked case by case, using the construction and the composition formula \eqref{eq:3},
which is similar to the situation in \cite{QZ2}.
\end{proof}

Let $\lws,\lwt\in\wXACC(\surfo)$ with $\us(0)=\ut(0)$. Let $\lwe=\lwt\wedge\lws$. To describe the mapping cone of $\varphi(\lws,\lwt)$, we need the following notation. When $\lwe$ is the union of two curves $\lwe_1$ and $\lwe_2$ (c.f. the pictures on the right in Figure~\ref{fig:ext.}, where $\lwe_1$ denotes the left curve in the lower picture, while $\lwe_2$ denotes the right one), we denote
$$X_{\lwe}=X_{\lwe_1}\oplus X_{\lwe_2},\
\begin{cases}
    \varphi(\overline{\lwt},\overline{\lwe})=
    \begin{pmatrix}0&\varphi(\overline{\lwt},\overline{\lwe_2})\end{pmatrix}^T,\\
    \varphi(\lwe,\overline{\lws})=\begin{pmatrix}\varphi(\lwe_1,\overline{\lws})&0\end{pmatrix}.&
\end{cases}$$
Similarly, in the orbit category $\D_{fd}(\Gamma_\ac)/\<[1],\XX\>$, we denote
$$X_{\ue}=X_{\ue_1}\oplus X_{\ue_2},\
\begin{cases}
\varphi(\overline{\ut},\overline{\ue})=
    \begin{pmatrix}0&\varphi(\overline{\ut},\overline{\ue_2})\end{pmatrix}^T,&\\
\varphi(\ue,\overline{\us})=\begin{pmatrix}\varphi(\ue_1,\overline{\us})&0\end{pmatrix}.&
\end{cases}
$$
Here $(-)^T$ denotes the transpose of a matrix.
	
\begin{proposition}\label{prop:tri1}
Let $\us,\ut\in\ACC(\surfo)$ with $\us(0)=\ut(0)$. Let $\eta=\tau\wedge\sigma$. Then we have a triangle
$$X_{\ue}\xrightarrow{\varphi(\ue,\overline{\us})}
    X_{\us}\xrightarrow{\varphi(\us,\ut)}
    X_{\ut}\xrightarrow{\varphi(\overline{\ut},\overline{\ue})}X_{\ue}
$$
in the orbit category $\D_{fd}(\Gamma_\ac)/\<[1],\XX\>$.
In particular, we have
$$\varphi(\overline{\ut},\overline{\ue})\circ\varphi(\us,\ut)=
    \varphi(\us,\ut)\circ\varphi(\ue,\overline{\us})=
    \varphi(\ue,\overline{\us})\circ\varphi(\overline{\ut},\overline{\ue})=0.$$
\end{proposition}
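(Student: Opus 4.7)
The plan is to realize $X_\ut$ as the mapping cone of $\varphi(\us,\ut)\colon X_\us\to X_\ut$ (with an appropriate shift), and then read off the connecting morphism. Since triangles in the orbit category $\D_{fd}(\Gamma_\ac)/\<[1],[\XX]\>$ are by definition images of triangles in $\D_{fd}(\Gamma_\ac)$, it suffices to work with fixed double graded lifts $\lws$ and $\lwt$, produce a distinguished triangle
\[
X_{\lwe}\xrightarrow{\varphi(\lwe,\overline{\lws})}X_{\lws}\xrightarrow{\varphi(\lws,\lwt)}X_{\lwt}\xrightarrow{\varphi(\overline{\lwt},\overline{\lwe})[\cdot]}X_{\lwe}[\ii+\sii\XX]
\]
in $\D_{fd}(\Gamma_\ac)$ for some $\ii+\sii\XX$, and then apply the projection to the orbit category. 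The vanishing statements in the ``in particular'' part will follow formally from the axioms of a triangulated category once the triangle is established.

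The main combinatorial input is a comparison of the strings $\omega(\lws)$, $\omega(\lwt)$, and $\omega(\lwe)$ near the common starting decoration $p=\us(0)=\ut(0)$. First I would apply Definition~\ref{def:ext}: away from a small disk around $p$, the curve $\eta=\tau\wedge\sigma$ coincides with $\bar\tau$ on one end and with $\sigma$ on the other end, so the list of arc segments of $\eta$ is obtained by concatenating (the reverse of) those of $\tau$ with those of $\sigma$, together with a controlled modification in the $\ac$-polygon incident to $p$ that is dictated by whether the extension is just a smoothing or also involves an unknotting. Using the explicit formula \eqref{eq:string} for the string and the bi-degree computation \eqref{eq:2}--\eqref{eq:1}, this identifies the underlying $\ZZ^2$-graded module of $X_{\lwe}$ (up to a single overall shift $[\ii+\sii\XX]$) with $X_{\lws}\oplus X_{\lwt}[-1]$. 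The new piece of differential in $X_{\lwe}$, linking the $\lws$-part to the $\lwt$-part, is exactly the component arising from the extension arc segment at $p$, which matches the nonzero components of $\varphi(\lws,\lwt)$ computed in Construction~\ref{cons:mor}.

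Once this identification is in place, the inclusion of the $\lws$-part into the underlying module of $X_{\lwe}$ and the projection onto the $\lwt$-part provide two morphisms in $\C(\EE_\ac)$ that, by the last case of Figure~\ref{fig:intersects} (both $\sigma$ and $\tau$ abutting $p$) and Proposition~\ref{prop:wd-deg}, are precisely $\varphi(\lwe,\overline{\lws})$ and $\varphi(\overline{\lwt},\overline{\lwe})$ (possibly with a matrix decomposition of the form prescribed before the statement when $\eta=\eta_1\sqcup\eta_2$). Standard homological algebra then gives the required distinguished triangle as the canonical triangle of a twisted two-term complex, and the projection to the orbit category yields the stated triangle, where the ambiguity $[\ii+\sii\XX]$ is absorbed.

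The main obstacle I foresee is the honest case analysis needed to match differentials across the junction at $p$. The extension $\tau\wedge\sigma$ has to be treated in both the one-curve (pure smoothing) case and the two-curve (smoothing plus unknotting) case of Figure~\ref{fig:ext.}; in the latter, one must verify that the matrix forms
$\varphi(\overline{\lwt},\overline{\lwe})=\bigl(\begin{smallmatrix}0\\ \varphi(\overline{\lwt},\overline{\lwe_2})\end{smallmatrix}\bigr)$ and $\varphi(\lwe,\overline{\lws})=\bigl(\varphi(\lwe_1,\overline{\lws}),\,0\bigr)$
agree with the inclusion/projection coming from the decomposition of $X_{\lwe}$, and that the surviving angle-$2\pi$ component at the isolated starting/ending decoration of $\lwe_1,\lwe_2$ does not interfere. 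In each configuration (positive/negative initial arc segments, and whether $\lws_{-1,0}$ and $\lwt_{-1,0}$ are isotopic or separate immediately), one reads off the arc segments of $\lwe$ from those of $\lws$ and $\lwt$ and checks compatibility using the composition rule \eqref{eq:3} and the sign convention in \eqref{eq:diff}; this is largely parallel to the Calabi-Yau-$3$ argument in \cite[Appendix~A]{QZ2}, but bi-degrees must be tracked using \eqref{eq:hkkg}--\eqref{eq:ksg} and Proposition~\ref{prop:wd-deg} throughout.
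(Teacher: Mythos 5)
Your proposal is correct and takes essentially the same approach as the paper's (very terse) proof, which simply says to check the mapping cone of $\varphi(\us,\ut)$ case by case; you flesh out exactly that computation, comparing the strings of $\lws$, $\lwt$, $\lwe$ near the common decoration, identifying $X_{\lwe}$ with the cone (up to shift, including the split-into-two-curves case), reading off the inclusion/projection morphisms as $\varphi(\lwe,\overline{\lws})$ and $\varphi(\overline{\lwt},\overline{\lwe})$, and deducing the vanishing composites from the triangle axioms. One small imprecision: you write the underlying module as $X_{\lws}\oplus X_{\lwt}[-1]$ up to a single overall shift, but there is in fact a relative bi-degree shift of $[\nu-1]$ (with $\nu=\ind^{\ZZ^2}_p(\lws,\lwt)$) between the two summands, as recorded in Corollary~\ref{cor:realtri}; this does not affect the argument since you pass to the orbit category, but it should be tracked if one wants the lifted triangle in $\D_{fd}(\Gamma_\ac)$.
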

	
\begin{proof}
	This follows from checking the mapping cone of $\varphi(\us,\ut)$ case by case.
\end{proof}

\begin{corollary}\label{cor:realtri}
	Let $\lws, \lwt\in\wXACC(\surfo)$ with $\us(0)=\ut(0)$. Let $\lwe=\lwt\wedge\lws$. Then we have a triangle
	$$X_{\lwe}[-\nu'']\xrightarrow{\varphi(\lwe,\overline{\lws})[-\nu'']} X_{\lws}\xrightarrow{\varphi(\lws,\lwt)}X_{\lwt}[\nu]\xrightarrow{\varphi(\overline{\lwt},\overline{\lwe})[\nu]}X_{\lwe}[\nu+\nu']$$
	 in $\D_{fd}(\Gamma_\ac)$, where
	$\nu=\ind_{\sigma(0)}^{\ZZ^2}(\lws,\lwt),\ \nu'=\ind_{\tau(1)}^{\ZZ^2}(\overline{\lwt},\overline{\lwe}),\ \nu''=\ind_{\eta(0)}^{\ZZ^2}(\lwe,\overline{\lws}).$
\end{corollary}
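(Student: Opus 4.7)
The plan is to promote the triangle of Proposition~\ref{prop:tri1}, which lives in the orbit category $\D_{fd}(\Gamma_\ac)/\langle[1],[\XX]\rangle$, to an honest triangle in $\D_{fd}(\Gamma_\ac)$ by fixing the double gradings on $\lws, \lwt, \lwe$ and then reading off the three degree shifts from the intersection-index formulas. The orbit-category triangle tells us both that the cone of $\varphi(\us,\ut)$ is (isomorphic to) $X_{\ue}$ up to some double shift and that the connecting morphism, in orbit-shift-free form, is $\varphi(\overline{\ut},\overline{\ue})$. Lifting is just a matter of deciding in which $\ZZ^2$-shift each object lands.

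First, by Proposition~\ref{prop:wd-deg} applied at $p=\sigma(0)=\tau(0)$, the morphism $\varphi(\lws,\lwt)$ is a well-defined non-zero morphism $X_{\lws}\to X_{\lwt}[\nu]$ in $\D_{fd}(\Gamma_\ac)$, with $\nu=\ind^{\ZZ^2}_{\sigma(0)}(\lws,\lwt)$. Completing this arrow to a triangle
\[
C\xrightarrow{\;a\;}X_{\lws}\xrightarrow{\varphi(\lws,\lwt)}X_{\lwt}[\nu]\xrightarrow{\;b\;}C[1]
\]
in $\D_{fd}(\Gamma_\ac)$ and projecting to the orbit category, Proposition~\ref{prop:tri1} identifies the cone with $X_{\ue}$ in the orbit category and identifies $a$, $b$ with $\varphi(\ue,\overline{\us})$, $\varphi(\overline{\ut},\overline{\ue})$ up to a shift in $\langle[1],[\XX]\rangle$. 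Thus $C=X_{\lwe}[-\nu'']$ for a unique $\nu''\in\ZZ\oplus\ZZ\XX$, and $b$ is $\varphi(\overline{\lwt},\overline{\lwe})$ up to a shift determined by the degree.

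Next, I would pin down $\nu''$ and the shift on $b$ by applying Proposition~\ref{prop:wd-deg} twice more. The morphism $\varphi(\lwe,\overline{\lws})$ lives at the common starting point $\eta(0)=\sigma(0)$ of $\ue$ and $\overline{\us}$ (note $\overline{\sigma}(0)$ is what is relevant here: the convention $\overline{\sigma}(t)=\sigma(1-t)$ and the extension construction mean that $\eta$ starts at $\sigma(0)$ and $\overline{\sigma}$ starts at $\sigma(1)$; more precisely one uses the intersection at $\eta(0)$ where the two curves meet). The lift $\varphi(\lwe,\overline{\lws})$ has target $X_{\overline{\lws}}[\nu'']=X_{\lws}[\nu'']$ with $\nu''=\ind^{\ZZ^2}_{\eta(0)}(\lwe,\overline{\lws})$, exactly as in the statement. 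Analogously, $\varphi(\overline{\lwt},\overline{\lwe})$ is a morphism from $X_{\lwt}$ to $X_{\lwe}[\nu']$ with $\nu'=\ind^{\ZZ^2}_{\tau(1)}(\overline{\lwt},\overline{\lwe})$, so after the prescribed shift by $[\nu]$ it becomes a morphism $X_{\lwt}[\nu]\to X_{\lwe}[\nu+\nu']$, as required.

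Finally, I would verify that the two candidate triangles (the one just lifted, and the one written in the statement) coincide. For this it suffices to check the two pairwise compositions $\varphi(\overline{\lwt},\overline{\lwe})\circ\varphi(\lws,\lwt)=0$ and $\varphi(\lws,\lwt)\circ\varphi(\lwe,\overline{\lws})=0$ already hold in $\D_{fd}(\Gamma_\ac)$ (not just in the orbit category): these follow from Proposition~\ref{prop:tri1} together with the fact that the orbit projection $\D_{fd}(\Gamma_\ac)\to\D_{fd}(\Gamma_\ac)/\langle[1],[\XX]\rangle$ is faithful on morphisms of a fixed bi-degree. Then uniqueness of the third vertex in a distinguished triangle up to isomorphism, plus the degree match $\nu''=\ind^{\ZZ^2}_{\eta(0)}(\lwe,\overline{\lws})$ that we already extracted, forces the two triangles to agree.

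The main obstacle is the case where $\ue=\ut\wedge\us$ is not a single curve but splits into two components $\ue_1\sqcup\ue_2$ (the unknotting case in Definition~\ref{def:ext}). There the matrix conventions for $\varphi(\lwe,\overline{\lws})$ and $\varphi(\overline{\lwt},\overline{\lwe})$ introduced before Proposition~\ref{prop:tri1} must be lifted with care, checking that the single shift $\nu''$ (resp.\ $\nu+\nu'$) is consistent across both components, i.e.\ that $\ind^{\ZZ^2}_{\eta_1(0)}(\lwe_1,\overline{\lws})=\ind^{\ZZ^2}_{\eta_2(0)}(\lwe_2,\overline{\lws})$ and similarly for $\nu'$. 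This can be read off by applying Lemma~\ref{lem:decomp} style reasoning at the decoration produced by the unknotting, together with \eqref{eq:3int} and \eqref{eq:ksg}, which relate the two indices through the intersection index at that decoration; all other steps are routine bookkeeping with the indices.
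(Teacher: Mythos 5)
Your overall approach matches the paper's one-line proof: lift the orbit-category triangle of Proposition~\ref{prop:tri1} and use Proposition~\ref{prop:wd-deg} to determine the shifts $\nu,\nu',\nu''$. Two points.

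First, a slip: you write that ``$\eta$ starts at $\sigma(0)$'', but $\sigma(0)=\tau(0)$ is the decoration being smoothed in Definition~\ref{def:ext}; with the orientation conventions there the relevant identity is $\eta(0)=\sigma(1)=\overline{\sigma}(0)$, which is exactly why the index $\nu''$ is computed at $\eta(0)$, the common endpoint of $\eta$ and $\overline{\sigma}$. You self-correct by the end of the parenthetical, so the conclusion stands, but the intermediate claim is wrong.

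Second, your final verification paragraph (completing the cone, projecting, then matching ``uniqueness of the third vertex'') is both a detour and not rigorous as written: vanishing of the two consecutive compositions plus isomorphism of the cone do not by themselves guarantee that the particular connecting morphisms in the statement fit into a distinguished triangle. But you do not need this step. The paper fixes the convention --- stated just after Construction~\ref{cons:mor} --- that a triangle in the orbit category $\D_{fd}(\Gamma_\ac)/\langle[1],[\XX]\rangle$ \emph{means} the image of a triangle in $\D_{fd}(\Gamma_\ac)$; hence Proposition~\ref{prop:tri1} already delivers a distinguished triangle in $\D_{fd}(\Gamma_\ac)$ whose morphisms project to the given $\varphi$'s, and the only remaining task is to determine the $\ZZ\oplus\ZZ\XX$-shifts on the three vertices, which Proposition~\ref{prop:wd-deg} does exactly as you describe in your second and third paragraphs. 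Your concern about the split case $\eta=\eta_1\sqcup\eta_2$ is in the right spirit, but note that by the matrix conventions stated before Proposition~\ref{prop:tri1}, $\varphi(\lwe,\overline{\lws})$ involves only the $\eta_1$-component while $\varphi(\overline{\lwt},\overline{\lwe})$ involves only the $\eta_2$-component; the consistency to check is the rotation relation $\nu+\nu'+\nu''=1$ rather than the equality of two $\nu''$-type indices computed from $\eta_1$ and $\eta_2$ separately.
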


\begin{proof}
This follows directly from Proposition~\ref{prop:tri1} by
carefully writing down the bi-degrees, using Proposition~\ref{prop:wd-deg}.
\end{proof}

%For any general closed arcs $\sigma$ and $\tau$, denote by $I(\sigma,\tau)=\{(i,j)\in [0,1]\times[0,1] \mid \sigma(i)=\tau(j)\}$ and by $I(\sigma,\tau)\cap\Tri=I(\sigma,\tau)\cap\{(i,j)\mid i,j\in\{0,1\} \}$. Then we define (nonzero) morphisms $\varphi(\sigma,\tau;p)$, $p=(i,j)\in  I(\sigma,\tau)\cap\Tri$, to be $\varphi(\sigma_{i\to 1-i},\tau_{j\to 1-j})$, where $\sigma_{0\to 1}(i)=\sigma(i)$ and $\sigma_{1\to 0}(i)=\sigma(1-i)$ for any $i\in[0,1]$.

The following two corollaries care about composition of morphisms of form $\varphi(-,-)$, where Corollary~\ref{cor:conterclock} is a generalization of \cite[Lemma~3.3]{QZ2}, with a different approach.

\begin{corollary}\label{cor:conterclock}
Let $\us_1,\us_2,\us_3\in\ACC(\surfo)$
with $\us_1(0)=\us_2(0)=\us_3(0)$.
If the start segments of $\sigma_1,\sigma_2$ and $\sigma_3$ are in counterclockwise order at the starting point, see the right picture in Figure~\ref{fig:order}, then
%	$$\varphi(\ws_2^{m_2},\ws_3^{m_3})%[\nu(\ws_1^{m_1},\ws_2^{m_2})]
%	\circ\varphi(\ws_1^{m_1},\ws_2^{m_2})=0$$
$$\varphi(\us_2,\us_3)\circ\varphi(\us_1,\us_2)=0$$
in the orbit category $\D_{fd}(\Gamma_\ac)/\<[1],\XX\>$.
\end{corollary}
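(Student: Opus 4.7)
The plan is to compute the composition $\varphi(\sigma_2,\sigma_3)\circ\varphi(\sigma_1,\sigma_2)$ explicitly at the component level, following Construction~\ref{cons:mor}, and to show that the single non-zero component vanishes by the composition formula~\eqref{eq:3}. Since $\sigma_1,\sigma_2,\sigma_3$ all start at the common decoration $Z$ and have pairwise distinct start segments in the $\ac$-polygon $P$ around $Z$, each pair $(\sigma_i,\sigma_{i+1})$ separates at position $0$. Hence, by the case description immediately after Construction~\ref{cons:mor}, each morphism $\varphi(\sigma_i,\sigma_{i+1})$ has only one potentially non-zero component, namely
$$\varphi_0=\pi_{[\sigma_{i+1,-1,0}\wedge\sigma_{i,-1,0}]}[\chi_0],$$
the positive (clockwise) smoothing of $\sigma_i,\sigma_{i+1}$ at $Z$.

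The composition in question is then a single product of two $\pi$'s, which by~\eqref{eq:3} equals either $\pi_{[\rho_1\cdot\rho_2]}$ (if the concatenation of the two positive extensions forms a class in $\pas(\ac)$) or zero. The geometric crux is that in the counterclockwise configuration, the positive smoothing from $\sigma_1$ to $\sigma_2$ at $Z$ is the \emph{long} clockwise one sweeping past $\sigma_3$, and the positive smoothing from $\sigma_2$ to $\sigma_3$ is the long one sweeping past $\sigma_1$. Their concatenation at the common endpoint on $\gamma_{l_0}$ therefore winds around $Z$ by more than a full turn inside $P$, so the concatenated curve has a self-intersection and cannot be realized as a positive arc segment; by~\eqref{eq:3} the composition vanishes in $\per(\EE_\ac)$, and hence in the orbit category $\D_{fd}(\Gamma_\ac)/\<[1],[\XX]\>$.

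The main obstacle I anticipate is verifying rigorously that the ``winding more than $2\pi$'' picture coincides exactly with the failure of being a positive arc segment in $\pas(\ac)$, since a priori the equivalence classes in $\pas(\ac)$ allow representatives that could be straightened; this requires checking that no straightening can remove the extra loop around $Z$ while remaining within a single polygon up to the chosen equivalence. Once that is clear, the bi-degree shifts track automatically via Proposition~\ref{prop:wd-deg}, and the entire argument is a graded $\XX$-analogue of the local case analysis in \cite[Lemma~3.3]{QZ2}, parallel in spirit to the proof sketch of Proposition~\ref{prop:clock}.
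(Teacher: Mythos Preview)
Your direct component-level computation is correct, and the obstacle you anticipate is not actually an obstacle: since the three first arc segments $\sigma_{i,-1,0}$ exit the polygon $P$ through pairwise distinct edges (this is exactly what ``pairwise distinct start segments'' amounts to in the simply connected polygon $P$), the classes $[\rho_{12}]$ and $[\rho_{23}]$ are singletons --- the digon equivalence of Figure~\ref{fig:digon} only relates arc segments whose \emph{both} endpoints lie on a single shared edge. Hence there is no alternative representative to try, and the concatenation is forced to wind by more than $2\pi$ around $Z$, so it is not an arc segment and~\eqref{eq:3} gives zero. One caveat: your argument tacitly uses that the three curves separate already at position~$0$. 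If two of them share several initial arc segments (distinct germs at $Z$ but same first exit edge), the morphisms $\varphi$ acquire identity components and your single-term computation no longer applies; a full case analysis as in the proof of Proposition~\ref{prop:clock} would then be needed.

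The paper takes a genuinely different route. Rather than computing locally, it repeatedly applies Proposition~\ref{prop:clock} (the clockwise composition law) to rewrite
\[
\varphi(\sigma_2,\sigma_3)\circ\varphi(\sigma_1,\sigma_2)
=\varphi(\sigma_1,\sigma_3)\circ\varphi(\sigma_1,\sigma_1)
=\varphi(\sigma_1,\sigma_3)\circ\varphi(\sigma_3\wedge\sigma_1,\overline{\sigma_1})\circ\varphi(\overline{\sigma_1},\sigma_3\wedge\sigma_1),
\]
and then observes that $\varphi(\sigma_1,\sigma_3)\circ\varphi(\sigma_3\wedge\sigma_1,\overline{\sigma_1})=0$ because these are two consecutive maps in the triangle of Proposition~\ref{prop:tri1}. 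This is more structural: it reduces the counterclockwise vanishing to the triangle identity plus the already-established clockwise law, and it is insensitive to how far along the curves the separation occurs. Your approach is more elementary and self-contained (it does not invoke the triangle), but it trades that for the case restriction above and for the local geometric reasoning you flagged.
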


\begin{proof}
%	We may assume that the angle $\theta(\us_1,\us_3)$ does not cross the starting segment of $\overline{\us_1}$ or $\overline{\us_3}$. This is because otherwise one can replace $\us_i$ by $\overline{\us_i}$, $i=1,3$, and we have
%%	$$\begin{array}{rcl}
%%	&&\varphi(\ws_2^{m_2},\ws_3^{m_3})%[\nu(\ws_1^{m_1},\ws_2^{m_2})]
%%	\circ\varphi(\ws_1^{m_1},\ws_2^{m_2})\\
%%	&=&\left(\overline{\varphi(\ws_3^{m_3}},\ws_3^{m_3})\circ\varphi(\ws_2^{m_2},\overline{\ws_3^{m_3}})\right)%[\nu(\ws_1^{m_1},\ws_2^{m_2})]
%%	\circ\left(\varphi(\ws_1^{m_1},\ws_2^{m_2})%[\nu(\ws_1^{m_1},\overline{\ws_1^{m_1}})]
%%	\circ\varphi(\ws_1^{m_1},\overline{\ws_1^{m_1}})\right).
%%	\end{array}$$
%	$$\varphi(\us_2,\us_3)\circ\varphi(\us_1,\us_2)=\left(\varphi(\overline{\us_3},\us_3)\circ\varphi(\us_2,\overline{\us_3})\right)	\circ\left(\varphi(\overline{\us_1},\us_2)	\circ\varphi(\us_1,\overline{\us_1})\right).$$
	
	By using repeatedly Proposition~\ref{prop:clock}, we have
%	$$\begin{array}{rcl}
%	&&\varphi(\ws_2^{m_2},\ws_3^{m_3})%[\nu(\ws_1^{m_1},\ws_2^{m_2})]
%	\circ\varphi(\ws_1^{m_1},\ws_2^{m_2})\\
%	&=&\varphi(\ws_1^{m_1},\ws_3^{m_3})%[\nu(\ws_1^{m_1},\ws_1^{m_1})]
%	\circ\varphi(\ws_2^{m_2},\ws_1^{m_1})%[\nu(\ws_1^{m_1},\ws_2^{m_2})]
%	\circ\varphi(\ws_1^{m_1},\ws_2^{m_2})\\
%	&=&\varphi(\ws_1^{m_1},\ws_3^{m_3})%[\nu(\ws_1^{m_1},\ws_1^{m_1})]
%	\circ\varphi(\ws_1^{m_1},\ws_1^{m_1})\\
%	&=&\varphi(\ws_1^{m_1},\ws_3^{m_3})%[\nu(\ws_1^{m_1},\ws_1^{m_1})]
%	\circ\varphi(\overline{\ws_1^{m_1}},\overline{\ws_1^{m_1}})\\
%	&=&\varphi(\ws_1^{m_1},\ws_3^{m_3})\circ\varphi(\ws_3^{m_3}\wedge\ws_{1}^{m_1},\overline{\ws_1^{m_1}})\circ\varphi(\overline{\ws_1^{m_1}},\ws_3^{m_3}\wedge\ws_{1}^{m_1})
%	\end{array}$$
	$$\begin{array}{rcl}
	\varphi(\us_2,\us_3)	\circ\varphi(\us_1,\us_2)
	&=&\varphi(\us_1,\us_3)	\circ\varphi(\us_2,\us_1)	\circ\varphi(\us_1,\us_2)\\	&=&\varphi(\us_1,\us_3)	\circ\varphi(\us_1,\us_1)\\	&=&\varphi(\us_1,\us_3)	\circ\varphi(\overline{\us_1},\overline{\us_1})\\	&=&\varphi(\us_1,\us_3)\circ\varphi(\us_3\wedge\us_{1},\overline{\us_1})\circ\varphi(\overline{\us_1},\us_3\wedge\us_{1})
	\end{array}$$
	where the last one is zero due to Proposition~\ref{prop:tri1} and $\us_3\wedge\us_1$ might be the union of two curves.
\end{proof}

\begin{corollary}\label{cor:diff}
Let $\us_1,\us_2,\us_3\in\ACC(\surfo)$
with $\sigma_1(0)=\sigma_2(0)$ and $ \sigma_2(1)=\sigma_3(1)$, see Figure~\ref{fig:order2}. Then
%	$$\varphi(\overline{\ws_2^{m_2}},\overline{\ws_3^{m_3}})\circ\varphi(\ws_1^{m_1},\ws_2^{m_2})=0.$$
	$$\varphi(\overline{\us_2},\overline{\us_3})\circ\varphi(\us_1,\us_2)=0.$$
\end{corollary}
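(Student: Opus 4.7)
The plan is to realize $\varphi(\overline{\us_2},\overline{\us_3})\circ\varphi(\us_1,\us_2)$ as the composition of consecutive arrows in an exact triangle coming from Proposition~\ref{prop:tri1}, and therefore as zero.

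First I would form the extension $\eta' = \overline{\us_3}\wedge\overline{\us_2}$ at the common starting point $q = \sigma_2(1)=\sigma_3(1)$ of $\overline{\us_2}$ and $\overline{\us_3}$. By Definition~\ref{def:ext} the curve $\eta'$ (possibly a union of two curves) runs from $p = \sigma_2(0)$ to $\sigma_3(0)$, and near $p$ it traces $\overline{\overline{\us_2}} = \us_2$, so $\eta'$ and $\us_2$ share their initial arc segments from $p$ up to the last crossing of $\us_2$ with $\ac^\ZZ$. Proposition~\ref{prop:tri1} applied to the pair $(\overline{\us_2},\overline{\us_3})$ then supplies, in $\D_{fd}(\Gamma_\ac)/\<[1],[\XX]\>$, an exact triangle
\[
X_{\eta'}\xrightarrow{\varphi(\eta',\us_2)} X_{\us_2}\xrightarrow{\varphi(\overline{\us_2},\overline{\us_3})} X_{\us_3}\xrightarrow{\varphi(\us_3,\overline{\eta'})} X_{\eta'}
\]
(using $X_{\overline{?}}=X_?$), so $\varphi(\overline{\us_2},\overline{\us_3})\circ\varphi(\eta',\us_2)=0$, and the task reduces to establishing the factorization
\[
\varphi(\us_1,\us_2) = \varphi(\eta',\us_2)\circ\varphi(\us_1,\eta').
\]

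Next I would verify this factorization by unwinding Construction~\ref{cons:mor} componentwise on both sides. Since $\eta'$ coincides with $\us_2$ on arc segments from $p$ up through $V_q$, the divergence index of $(\us_1,\eta')$ agrees with that of $(\us_1,\us_2)$, and the recipe $\varphi_s = \pi_{[\tau(-1,s)\wedge\sigma(-1,s)]}[\chi_s]$ produces the same map of simple modules at every non-zero component, with matching shifts. On the other hand, $\varphi(\eta',\us_2)_s$ is the identity for $0 \le s \le q$ and zero on the tail of $X_{\eta'}$ coming from $\overline{\us_3}$. Because the $\varphi_s$ are diagonal in the string indexing, the componentwise composition reproduces $\varphi(\us_1,\us_2)$ exactly. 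Combining,
\[
\varphi(\overline{\us_2},\overline{\us_3})\circ\varphi(\us_1,\us_2) = \varphi(\overline{\us_2},\overline{\us_3})\circ\varphi(\eta',\us_2)\circ\varphi(\us_1,\eta') = 0.
\]

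The main obstacle is the factorization in the degenerate-tangent situation: $\eta'$ and $\us_2$ share their initial tangent at $p$, so Proposition~\ref{prop:clock}'s strict-clockwise-order hypothesis does not apply, and the equality must be checked directly from Construction~\ref{cons:mor}. The boundary position $s=q$ requires extra care, since the segment $\us_{2,q,q+1}$ terminates at the decoration $q$ while $\eta'_{q,q+1}$ continues into the $\overline{\us_3}$-tail, forcing one to track the ``positive arc segment'' condition on $\tau_{-1,s+1}\wedge\sigma_{-1,s+1}$ on both sides. A secondary nuisance is the case where $\eta'$ decomposes as a union of two curves (Definition~\ref{def:ext}); then the morphisms become block matrices in the conventions fixed just before Proposition~\ref{prop:tri1}, and the componentwise argument goes through on each summand.
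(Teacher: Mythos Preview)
Your approach is correct and is essentially the mirror image of the paper's proof. The paper forms the extension at the \emph{other} endpoint: it sets $\eta=\us_2\wedge\us_1$ at $p=\us_1(0)=\us_2(0)$, so that Proposition~\ref{prop:tri1} gives $\varphi(\overline{\us_2},\overline{\eta})\circ\varphi(\us_1,\us_2)=0$ directly; then it factors the \emph{other} morphism at $q$ via Proposition~\ref{prop:clock}, writing $\varphi(\overline{\us_2},\overline{\us_3})=\varphi(\overline{\eta},\overline{\us_3})\circ\varphi(\overline{\us_2},\overline{\eta})$. You instead form $\eta'=\overline{\us_3}\wedge\overline{\us_2}$ at $q$, get $\varphi(\overline{\us_2},\overline{\us_3})\circ\varphi(\eta',\us_2)=0$ from the triangle, and need to factor $\varphi(\us_1,\us_2)$ at $p$. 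The two routes are symmetric.

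Your worry about the degenerate-tangent situation is unnecessary. In the paper's factorization the same phenomenon occurs: $\overline{\us_2}$ and $\overline{\us_2\wedge\us_1}$ share their initial segment at $q$, yet the paper simply invokes Proposition~\ref{prop:clock}. The phrase ``start segments in clockwise order'' in that proposition is meant to cover curves that agree on initial arc segments and only later separate; this is exactly the situation handled by the case analysis behind Proposition~\ref{prop:clock} (the cases in Figures~\ref{fig:cases} and~\ref{fig:intersects} after the initial identity components). So your factorization $\varphi(\us_1,\us_2)=\varphi(\eta',\us_2)\circ\varphi(\us_1,\eta')$ is an instance of Proposition~\ref{prop:clock}, and the componentwise verification you sketch, while correct, is just reproving that instance by hand. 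Citing Proposition~\ref{prop:clock} would shorten your argument to match the paper's two-line proof.
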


\begin{proof}
By  Proposition~\ref{prop:tri1}, we have $\varphi(\overline{\sigma_2},\overline{\sigma_2\wedge\sigma_1})\circ\varphi(\sigma_1,\sigma_2)=0$. Using Proposition~\ref{prop:clock}, we have
%	$$\begin{array}{rcl}
%	&&\varphi(\overline{\ws_2^{m_2}},\overline{\ws_3^{m_3}})\circ\varphi(\ws_1^{m_1},\ws_2^{m_2})\\
%	&=&\varphi(\overline{\sigma_2\wedge\sigma_1},\overline{\sigma_3})\circ\varphi(\overline{\sigma_2},\overline{\sigma_2\wedge\sigma_1})\circ\varphi(\sigma_1,\sigma_2)\\
%	&=&0
%	\end{array}$$ $$
\[\varphi(\overline{\us_2},\overline{\us_3})\circ\varphi(\us_1,\us_2)=
\varphi(\overline{\sigma_2\wedge\sigma_1},\overline{\sigma_3})\circ\varphi(\overline{\sigma_2},\overline{\sigma_2\wedge\sigma_1})
\circ\varphi(\sigma_1,\sigma_2)=0,
\]
where $\us_2\wedge\us_1$ might be the union of two curves.
\end{proof}

\begin{figure}[h]\centering
    \begin{tikzpicture}[scale=1]
    \foreach \j in {2,1,3}{
    \draw[red,very thick,-<-=.5,>=stealth] (90+120*\j:3)to(0,0);
    \draw[white](90+120*\j:3)\nn(0,0)\nn;}
    \foreach \j in {2,1,3}{
    \draw[blue!50,very thick,->-=.8,>=stealth](120*\j-30:.5)to[bend left=60](120*\j-150+2:.5);
    \draw[red,very thick](90+120*\j:3)\ww(0,0)\ww;}
	\draw[red](120*1+210+15:1.5)node{\tiny{$\sigma_1$}};
	\draw[red](120*2+210+15:1.5)node{\tiny{$\sigma_3$}};
	\draw[red](120*3+210+15:1.5)node{\tiny{$\sigma_2$}};
    \end{tikzpicture}
\qquad
    \begin{tikzpicture}[scale=1]
\draw[blue!50,,very thick](-30:3)circle(.3);
\draw[white,very thick]($(-30:3)+(170:0.3)$)to[bend left=5]($(-30:3)+(165:0.3)$);
\draw[blue!50,->,>=stealth,very thick]($(-30:3)+(0.3,0)$)to[bend left=5]($(-30:3)+(0.3+.002,-.01)$);

    \draw[orange,-<-=.5,>=stealth,very thick](-150:3)to[bend left=10]node[below]{\tiny{$\sigma_3\wedge\sigma_1$}}(-30:3);
%    \draw[orange,-<-=.6,>=stealth,very thick](-30:3).. controls +(165:4) and +(-120:3.5) ..(90:3);
%    \draw[orange,very thick](-.8,.2)node[left]{\tiny{$\sigma_2\wedge\sigma_1$}};

    \foreach \j in {2,1,3}{
    \draw[red,,very thick,-<-=.5,>=stealth] (90+120*\j:3)to(0,0);
    \draw[white](90+120*\j:3)\nn(0,0)\nn;\draw[red](90+120*\j:3)\ww(0,0)\ww;
    \draw[red](120*\j+210+13:1)node{\tiny{$\sigma_\j$}};}

    \draw[blue!50,->,>=stealth,very thick](-30:.3)to[bend left=60](-150:.3);
    \draw[blue!50,very thick](240-30:.3)to[bend left=60](240-150:.3);

    \draw[blue!50,->,>=stealth,very thick](240-30:.5)to[bend left=60](240-150:.5);
    \draw[blue!50,very thick](120-30:.5)to[bend left=60](120-150:.5);

    \draw[blue!50,very thick](120-30:.5)to[bend left=60](120-150+2:.5);
    \end{tikzpicture}
\caption{Admissible closed curves intersect at a decoration}\label{fig:order}
\end{figure}

\begin{figure}[h]\centering
    \begin{tikzpicture}[scale=1]
\draw[blue!50,->,>=stealth,very thick](-30:.5)to[bend left=30](-152+2:.5);
\draw[blue!50,->,>=stealth,very thick]($(-150:3)+(30:.4)$)to[bend left=30]($(-150:3)+(-30:.4)$);
    \draw[orange,very thick,-<-=.5,>=stealth](-150:3)to[bend left=10]node[below]{\tiny{$\sigma_2\wedge\sigma_1$}}(-30:3);
    \draw[red,,very thick,->-=.5,>=stealth](0,-3)to(-150:3);
    \foreach \j in {2,1}{
    \draw[red,,very thick,-<-=.5,>=stealth] (-120*\j+90:3)to(0,0);
    \draw[white](-120*\j+90:3)\nn(0,0)\nn(0,-3)\nn;\draw[red](-120*\j+90:3)\ww(0,0)\ww(0,-3)\ww;}
    \draw[red,very thick](-120*1+90+12:1.5)node{\tiny{$\sigma_1$}}(-120*2+90-12:1.5)node{\tiny{$\sigma_2$}}
        (-120:2.2)node{\tiny{$\sigma_3$}};
    \end{tikzpicture}
    \caption{Composition of morphisms induced from different endpoints}\label{fig:order2}
\end{figure}

%=========================================================
\section{Intersection formulas}\label{sec:int=dim}
In this section,
we prove our results under some assumptions first,
and will remove such assumptions in Section~\ref{sec:general}.
\begin{assumption}\label{ass}
	We impose the following assumptions in this section:
	\begin{enumerate}
		\item For any $\ac$-polygon $P$, there is no self-folded edges, i.e. when going around its edges,
		no arc in $\ac$ will be count twice.
		\item Any two $\ac$-polygons share at most one arc in $\ac$.
	\end{enumerate}
\end{assumption}

Assumption~\ref{ass} ensures the following consequence of Lemma~\ref{lem:decomp}, a log DMS version of \cite[Lemma~3.14]{QQ}.

\begin{lemma}\label{lem:decomp2}
	For any $\lws\in\wXCA(\surfo)$ with $\Int_{\surfo}(\ac^\ZZ,\lws)>1$, i.e. $$\lws\in\wXCA(\surfo)\setminus\{\wss_i^0\mid 1\leq i\leq n \},$$
there are $\lwa,\lwb\in\wXCA(\surfo)$ such that
	$$\qqInt(\wg_i^0,\lwa)+\qqInt(\wg_i^0,\lwb)=\qqInt(\wg_i^0,\lws)$$ for any $1\leq i\leq n,$ $\Int_{\surfo}(\lwa,\lwb)=\frac{1}{2}$ and $\lws=B_\alpha(\lwb)$.
\end{lemma}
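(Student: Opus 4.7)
The strategy is to reduce Lemma~\ref{lem:decomp2} to a direct application of Lemma~\ref{lem:decomp}, by locating a decoration $Z\in\Tri$ lying in an $\ac$-polygon $P$ that the underlying curve $\us$ crosses (enters and exits) and such that $Z\notin\{\us(0),\us(1)\}$. Once such a $Z$ is fixed, Lemma~\ref{lem:decomp} produces $\lwa,\lwb$ satisfying conditions (1), (3), (4) of that lemma, which are exactly the three conclusions required here.

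First I would establish the existence of such a $Z$. Since $\Int_{\surfo}(\ac^\ZZ,\lws)>1$, the curve $\us$ traverses a sequence of $\ac$-polygons $P_{0},P_{1},\ldots,P_{k}$ with $k\geq 2$, where $\us(0)\in P_{0}$, $\us(1)\in P_{k}$, consecutive polygons are distinct, and each boundary crossing contributes one intersection with $\ac^\ZZ$. Because each $\ac$-polygon contains exactly one decoration, an intermediate $P_{i}$ has decoration different from $\us(0)$ (resp.\ $\us(1)$) precisely when $P_{i}\neq P_{0}$ (resp.\ $P_{i}\neq P_{k}$). If, to the contrary, every intermediate polygon were equal to $P_{0}$ or $P_{k}$, then $\us$ would alternate between $P_{0}$ and $P_{k}$, crossing a shared boundary arc each time; by Assumption~\ref{ass}(2) there is at most one such arc, and simplicity of $\us$ inside the topological discs $P_{0}\cup P_{k}$ (together with Assumption~\ref{ass}(1) ruling out self-folded edges) then forces $\us$ to cross that arc at most once, contradicting $k\geq 2$. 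Therefore some intermediate $P_{i}$ is distinct from both $P_{0}$ and $P_{k}$, and I take $Z$ to be its decoration.

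Next I would apply Lemma~\ref{lem:decomp} with this $Z$, obtaining curves $\lwa,\lwb$ with $\lws=\lwb\wedge\lwa$. Part (1) of Lemma~\ref{lem:decomp} supplies the $\qv$-intersection additivity
\[
    \qqInt(\wg_i^0,\lwa)+\qqInt(\wg_i^0,\lwb)=\qqInt(\wg_i^0,\lws),
\]
and parts (3), (4) give $\Int_{\surfo}(\lwa,\lwb)=\tfrac12$ and $\lws=B_\alpha(\lwb)$, under the hypotheses $\lws\in\wXCA(\surfo)$ (given) and $Z\notin\{\us(0),\us(1)\}$ (our choice). Lemma~\ref{lem:decomp}(4) asserts only $\alpha\in\CA(\surfo)$, so I would verify separately that $\lwb\in\wXCA(\surfo)$: by construction (Figure~\ref{fig:decomposition}) $\beta$ is the concatenation of an interior line segment $l\subset P$ with a sub-arc of $\us$, whence it has no self-intersections in $\surfoi$ (since $\us$ is simple and the interior of $l$ does not meet $\us$), and its endpoints $Z$ and $\us(1)$ are distinct decorations by our choice of $Z$.

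The main obstacle is the combinatorial step of showing that an intermediate polygon with decoration different from both endpoints of $\us$ exists; it is here that Assumption~\ref{ass} is essential, ruling out the pathological scenario in which $\us$ shuttles only between $P_{0}$ and $P_{k}$. Everything else is a direct bookkeeping translation of Lemma~\ref{lem:decomp}, exactly parallel to \cite[Lemma~3.14]{QQ} in the Calabi-Yau-$3$ setting.
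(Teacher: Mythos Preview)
Your proposal is correct and follows exactly the paper's approach: locate an interior arc segment of $\us$ whose polygon's decoration is distinct from both endpoints (using Assumption~\ref{ass}), then invoke Lemma~\ref{lem:decomp}. The paper's proof is a terse two-sentence version of your argument; you additionally spell out the combinatorial reason such a $Z$ exists and explicitly verify $\lwb\in\wXCA(\surfo)$, a point the paper leaves implicit since Lemma~\ref{lem:decomp}(4) only asserts $\alpha\in\CA(\surfo)$.
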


\begin{proof}
	Since $\Int_{\surfo}(\ac^\ZZ,\lws)>1$, by Assumption~\ref{ass}, there is a decoration $Z$ living in the same $\ac$-polygon $P$ with an arc segment of $\us$, such that $Z$ is not an endpoint of $\us$. Take a line segment $l$ in $P$ from $Z$ to a point in an arc segment of $\eta$ such that its interior does not cross $\eta$. Then by Lemma~\ref{lem:decomp}, we get the required $\lwa$ and $\lwb$.
\end{proof}

The first assumption in Assumption~\ref{ass} is equivalent to that any close curve in $\udac$ has different endpoints, i.e. $\udac\subset\CA(\surfo)$.

\begin{lemma}\label{lem:4.2}
	$\CA(\surfo)=\BT(\ac)\cdot\udac$ and $\BT(\surfo)=\BT(\ac)$.
\end{lemma}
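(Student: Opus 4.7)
The plan is to prove both identities in tandem by strong induction on the crossing number $N(\alpha) := |\alpha \cap \ac|$ (disregarding grading) of a closed arc $\alpha \in \CA(\surfo)$ with the full formal arc system $\ac$. The two key ingredients are the conjugation formula \eqref{eq:bt} and (the underlying-curve incarnation of) Lemma~\ref{lem:decomp2}; the second identity $\BT(\surfo) = \BT(\ac)$ will follow routinely from the first.

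For the base case $N(\alpha) = 1$, the arc $\alpha$ crosses some $\wg_i \in \ac$ exactly once; by Assumption~\ref{ass}(2) the two $\ac$-polygons meeting along $\wg_i$ are distinct, so $\alpha$ is forced (up to isotopy) to coincide with the dual arc $s_i \in \udac$. For the inductive step $N(\alpha) > 1$, lift $\alpha$ to any $\lws \in \wXCA(\surfo)$ and apply Lemma~\ref{lem:decomp2} to obtain double graded closed arcs $\lwa, \lwb$ whose underlying arcs $\alpha', \beta' \in \CA(\surfo)$ satisfy $\alpha = B_{\alpha'}(\beta')$ together with $N(\alpha') + N(\beta') = N(\alpha)$. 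Since each of $\alpha', \beta'$ is a closed arc and hence crosses $\ac$ at least once, we deduce $N(\alpha'), N(\beta') < N(\alpha)$. By induction we may write $\alpha' = \Psi_1(s_{i_1})$ and $\beta' = \Psi_2(s_{i_2})$ for some $\Psi_1, \Psi_2 \in \BT(\ac)$, and the conjugation formula \eqref{eq:bt} yields $B_{\alpha'} = \Psi_1 B_{s_{i_1}} \Psi_1^{-1} \in \BT(\ac)$. Therefore $\alpha = (B_{\alpha'} \Psi_2)(s_{i_2}) \in \BT(\ac) \cdot \udac$, closing the induction.

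The inclusion $\BT(\ac) \subseteq \BT(\surfo)$ is immediate since $\udac \subseteq \CA(\surfo)$ under Assumption~\ref{ass}(1). For the reverse, the first identity lets us write any generator $B_\alpha$ of $\BT(\surfo)$ as $B_{\Psi(s_j)} = \Psi B_{s_j} \Psi^{-1}$ with $\Psi \in \BT(\ac)$, so $B_\alpha \in \BT(\ac)$. The main obstacle is the inductive step, which relies crucially on Assumption~\ref{ass}: one must locate a decoration $Z$ lying in the same $\ac$-polygon as an interior arc segment of $\alpha$ while not being an endpoint of $\alpha$, and both the absence of self-folded edges and the uniqueness of shared arcs between polygons are what make this selection always possible whenever $N(\alpha) > 1$. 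Removing these two restrictions will be the task of Section~\ref{sec:general}, presumably by first passing to a refined arc system satisfying Assumption~\ref{ass} and then transporting the result back.
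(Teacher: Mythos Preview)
Your proof is correct and follows essentially the same route as the paper's: induction on the number of intersections of $\alpha$ with $\ac$, the base case landing in $\udac$, the inductive step via Lemma~\ref{lem:decomp2}, and the second identity deduced from the first by the conjugation formula~\eqref{eq:bt}. One small slip: the fact that the two $\ac$-polygons meeting along $\wg_i$ are distinct is Assumption~\ref{ass}(1) (no self-folded edges), not~(2); otherwise your argument matches the paper's.
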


\begin{proof}
	A similar proof with \cite[Lemma~4.2]{QQ} works here as follows.
	
	Since $\udac\subset\CA(\surfo)$, we have $\BT(\ac)\cdot\udac\subseteq\CA(\surfo)$. So for the first equality, we need to show  each $\eta\in\CA(\surfo)$ belongs to $\BT(\ac)\cdot\udac$. Using the induction on $l(\eta)=\Int_{\surfo}(\ac,\eta)$, when $l=1$, by definition, we have $\eta\in\udac$. Now we consider the case $l(\eta)=r$ and suppose that any closed arc with length smaller than $r$ with $r\geq 1$ is in $\BT(\ac)\cdot\udac$. Since $l(\eta)\geq 2$, by Lemma~\ref{lem:decomp2}, there are $\alpha,\beta\in\CA(\surfo)$ such that $\eta=B_\alpha(\beta)$ and $l(\eta)=l(\alpha)+l(\beta)$. So $\alpha=b(s)$ and $\beta=b'(s')$ for some $b,b'\in\BT(\ac)$ and $s,s'\in\udac$. So $$\eta=B_\alpha(\beta)\xlongequal{\eqref{eq:bt}}b\circ B_s\circ b'\circ b^{-1}(s')\in\BT(\ac)\cdot\udac$$ as required.
	
	For the second equality, by the first equality, for any $\eta\in\CA(\surfo)$, there is a $b\in\BT(\ac)$ and an $s\in\udac$ such that $\eta=b(s)$. Then we have $$B_\eta=B_{b(s)}\xlongequal{\eqref{eq:bt}}b\circ B_s\circ b^{-1}\in\BT(\ac).$$
\end{proof}

The lifted graded version of Lemma~\ref{lem:4.2} is the following.

\begin{lemma}\label{lem:CA BT}
	$\wXCA(\surfo)=\BT(\ac)\cdot(\dac)^{\ZZ}$.
\end{lemma}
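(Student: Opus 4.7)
The proof proceeds by induction on $l(\lws) \colon= \Int_{\surfo}(\ac^\ZZ,\lws)$, closely following the pattern of Lemma~\ref{lem:4.2}. The inclusion $\BT(\ac)\cdot(\dac)^\ZZ \subseteq \wXCA(\surfo)$ is immediate, since each $\wss_j^m$ lies in $\wXCA(\surfo)$ and braid twists in $\BT(\ac)$ lift from $\surfo$ to $\log\surfo$ preserving $\wXCA(\surfo)$. I focus on the reverse inclusion.

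For the base case $l(\lws)=1$, the underlying closed arc $\us$ crosses $\ac^\ZZ$ exactly once, so it must coincide with some $s_j \in \udac$ (by the defining property of the dual system). Thus $\lws$ is some double graded lift of $s_j$, and all such lifts form a single free $\<[1],[\XX]\>$-orbit. The subset $(\dac)^\ZZ = \{\wss_j^m\}_{m\in\ZZ}$ captures only the $[\XX]$-shifts of $\wss_j^0$; the missing $[1]$-shifts must come from $\BT(\ac)$. The key input is the topological avatar of the Calabi-Yau-$\XX$ spherical-twist identity $\phi_{S_j}(S_j)=S_j[1-\XX]$, which at the level of arcs reads
$$B_{s_j}(\wss_j^0) \;=\; \wss_j^0[1-\XX] \;=\; \wss_j^{-1}[1].$$
Since $B_{s_j}$ commutes with the deck transformation $[\XX]$ of $\log\surfo \to \surfo$, iterating gives $B_{s_j}^k(\wss_j^m) = \wss_j^{m-k}[k]$ for all $k,m\in\ZZ$, and this exhausts the full $\<[1],[\XX]\>$-orbit of $\wss_j^0$. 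Since $B_{s_j} \in \BT(\ac)$ by definition, $\lws \in \BT(\ac)\cdot(\dac)^\ZZ$.

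For the inductive step $l(\lws) > 1$, the doubly-graded decomposition of Lemma~\ref{lem:decomp2} produces $\lwa,\lwb \in \wXCA(\surfo)$ with underlying $\alpha \in \CA(\surfo)$, with $l(\lwb) < l(\lws)$, and with $\lws = B_\alpha(\lwb)$. By induction, $\lwb = b\cdot\wss_j^m$ for some $b \in \BT(\ac)$ and some $\wss_j^m \in (\dac)^\ZZ$. Lemma~\ref{lem:4.2} ensures $\BT(\surfo) = \BT(\ac)$, so $B_\alpha \in \BT(\ac)$; therefore $\lws = (B_\alpha\circ b)\cdot\wss_j^m \in \BT(\ac)\cdot(\dac)^\ZZ$, completing the induction.

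The main obstacle lies in the base case, specifically in pinning down $B_{s_j}$'s action on its own graded axis as the double shift $[1-\XX]$. The ungraded counterpart in Lemma~\ref{lem:4.2} is trivial there (one already has $\eta = s_j$ on the nose), whereas here one must genuinely test the second grading. This should be verified by a direct local computation in a tubular neighborhood of $s_j$, tracking how the line field $\Lambda_1$ winds under a full Dehn-type twist of the bigon around $s_j$ and how this lifts through the covering $\cov_\Tri$; this is the only step that draws substantively on the $\XX$-grading rather than on the underlying mapping-class data.
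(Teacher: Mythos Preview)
Your inductive skeleton matches the paper's one-line proof (``follows directly from the action of the braid twist group on double graded curves and Lemma~\ref{lem:4.2}''), but you have unpacked it and, in doing so, correctly isolated a point the paper leaves implicit: the set $(\dac)^\ZZ$ records only the $[\XX]$-shifts $\wss_j^m$ of each dual arc, not the $[1]$-shifts $\wss_j^m[k]$, so in the base case one genuinely needs $\BT(\ac)$ to supply the missing $\ZZ$-direction. Your proposed mechanism $B_{s_j}(\wss_j^0)=\wss_j^{-1}[1]$ is the right one, and the inductive step via Lemma~\ref{lem:decomp2} together with $\BT(\surfo)=\BT(\ac)$ is clean.

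Two comments on the base case. First, your deferred ``direct local computation'' only mentions tracking the line field $\Lambda_1$ through the covering $\operatorname{cov}_\Tri$; that handles the $[1]$-shift, but the $[-\XX]$-shift (moving from sheet $0$ to sheet $-1$) is a statement about the lift of $B_{s_j}$ to $\log\surfo$, which is governed by how the half-twist interacts with the cuts emanating from the endpoints of $s_j$. Both need to be checked, and the second is the more delicate one since the support of $B_{s_j}$ necessarily contains those cut-endpoints. Second, there is a non-circular algebraic shortcut available in the paper: Proposition~\ref{lem:rel1} applied with $\alpha=s_j$ and $\lwb=\wss_j^0$ (legitimate since $\Int_{\surfoi}(s_j,s_j)=0$) gives $X_{B_{s_j}(\wss_j^0)}=\phi_{S_j}(S_j)=S_j[1-\XX]$, and then Lemma~\ref{lem:5.8} forces $B_{s_j}(\wss_j^0)=\wss_j^{-1}[1]$. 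Neither of those results depends on the present lemma, so this route is sound, though it does require reordering relative to the paper's presentation.
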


\begin{proof}
	This follows directly from the action of the braid twist group on double graded admissible closed curves and Lemma~\ref{lem:4.2}.
\end{proof}

%\begin{proof}
%	For any $\wss_i^m\in(\dac)^{\ZZ}$ and any $b\in\BT(\ac)$, we have $b(\wss_i^m)$ is a lift of a grading of $b(s_i)\in\CA(\surfo)$. So we have $\BT(\ac)\cdot(\dac)^{\ZZ}\subseteq\wXCA(\surfo)$.
%	
%	Conversely, for any $\lwe\in\wXCA(\surfo)$, by Lemma~\ref{lem:4.2}, there is $b\in\BT(\ac)$ and $s\in\udac$ such that $\eta=b(s)$. Since lifts of gradings $\wss$ of $s$ are related by shifts $\ii+\sii\XX$, $\ii,\sii\in\ZZ$, so are $b(\wss)$.
%\end{proof}

%=========================================================
%\subsection{The case without interior intersections}
%=========================================================

In the case that curves do not intersect in $\surfoi$, we show that the morphisms constructed in Construction~\ref{cons:mor} form a basis of the morphism space of the corresponding objects.

\begin{proposition}\label{prop:first}
	For any $\ue_1,\ue_2\in\ACC(\surfo)$ satisfying $\Int_{\surfoi}(\ue_1,\ue_2)=0$, the morphisms
	$$\{\varphi(\us,\ut)\mid \us\in\{\ue_1,\overline{\ue_1}\}, \ut\in\{\ue_2,\overline{\ue_2}\},\us(0)=\ut(0)\}$$
	form a basis of $\Hom(X_{\eta_1},X_{\eta_2})$. In particular, we have
	$$\Int_\Tri(\eta_1,\eta_2)=\dim\Hom(X_{\eta_1},X_{\eta_2}).$$
\end{proposition}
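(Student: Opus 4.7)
The plan is to compute $\Hom(X_{\eta_1},X_{\eta_2})$ directly from the string presentations
$\omega(\eta_1)\colon S_{k_0}[\chi_0]\!-\!\cdots\!-\!S_{k_p}[\chi_p]$ and $\omega(\eta_2)\colon S_{l_0}[\chi'_0]\!-\!\cdots\!-\!S_{l_q}[\chi'_q]$
using the equivalence $\D_{fd}(\Gamma_\ac)\simeq\per(\EE_\ac)$ of Proposition~\ref{prop:morsim}. Under this equivalence, a bi-degree $\mu$ morphism $F\colon X_{\eta_1}\to X_{\eta_2}[\mu]$ is represented by a matrix whose entries $F_{ij}\colon S_{k_i}[\chi_i]\to S_{l_j}[\chi'_j+\mu]$ are $\k$-linear combinations of the basis morphisms $\pi_{[\rho]}$ of Proposition~\ref{prop:morsim} (including identities when $k_i=l_j$ and $\chi_i=\chi'_j+\mu$), subject to cochain-commutativity with the string differentials $f_\bullet$ and $g_\bullet$, modulo homotopy.

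The geometric heart of the argument is to read each nonzero entry $\pi_{[\rho]}$ as a positive arc segment $\rho$ sitting inside some $\ac$-polygon, with endpoints on the arc segments $\sigma_{i-1,i}$ of $\eta_1$ and $\tau_{j-1,j}$ of $\eta_2$. The hypothesis $\Int_{\surfoi}(\eta_1,\eta_2)=0$ forces the strips of $\eta_1$ and $\eta_2$ cut out by $\ac^\ZZ$ to lie disjointly (up to touching at the $\ac$-arcs), so no such $\rho$ can be ``absorbed'' in the interior by crossing $\eta_1$ against $\eta_2$. The commutativity relations $g_{j+1}\circ F_{ij}=\pm F_{i,j+1}\circ f_{i+1}$ then force any nonzero $F_{ij}$ to propagate into a chain of consecutive $\ac$-polygons; the admissible local configurations are exactly those in Figures~\ref{fig:cases0}--\ref{fig:intersects}, and Assumption~\ref{ass} guarantees this propagation is unambiguous. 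Because $\eta_1,\eta_2\in\ACC(\surfo)$ have their endpoints at decorations, every such chain must terminate at a shared decoration $Z\in\Tri$ of $\eta_1$ and $\eta_2$, and tracing the terminating end through Construction~\ref{cons:mor} recovers precisely one of the morphisms $\varphi(\us,\ut)$ with $\us\in\{\eta_1,\overline{\eta_1}\}$, $\ut\in\{\eta_2,\overline{\eta_2}\}$, and $\us(0)=\ut(0)=Z$.

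Linear independence of the $\varphi(\us,\ut)$ then follows because distinct shared endpoints yield morphisms whose leading nonzero matrix entries $\pi_{[\tau(-1,0)\wedge\sigma(-1,0)]}[\chi_0]$ sit at distinct row/column indices of the matrix, preventing cancellation; spanning is what the propagation argument establishes. Counting both sides matches: shared decorations between $\eta_1$ and $\eta_2$ are in bijection with pairs $(\us,\ut)$ in the indexing set, which equals $\Int_\Tri(\eta_1,\eta_2)$. The main obstacle, and what drives the case analysis, will be propagations that pass through a decoration at which $\eta_1$ (or $\eta_2$) has an interior contact with itself or with the other curve; in particular one must verify that the $2\pi$ angle-morphism $\varphi(\lws,\lws)$ is correctly counted exactly once, which is guaranteed by the Calabi--Yau duality remark after Corollary~\ref{cor:realtri} and by the convention that $\varphi(?,?)$ always denotes the non-identity angle-$2\pi$ morphism.
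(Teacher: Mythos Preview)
Your approach is genuinely different from the paper's. The paper argues by induction on $I=\Int_{\surfo}(\ac,\eta_1)+\Int_{\surfo}(\ac,\eta_2)$: the base case $\eta_1,\eta_2\in\udac$ is read off from the structure of $\EE_\ac$, and for the inductive step one uses Lemma~\ref{lem:decomp} to write (say) $\eta_1=\beta\wedge\alpha$ with strictly smaller $\Int_{\surfo}(\ac,-)$, applies $\Hom(-,X_{\eta_2})$ to the triangle of Proposition~\ref{prop:tri1}, and tracks the basis elements through the resulting long exact sequence using the composition rules of Proposition~\ref{prop:clock}. The hypothesis $\Int_{\surfoi}(\eta_1,\eta_2)=0$ is preserved because the line segment $l$ used in the decomposition misses $\eta_2$. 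This route never needs a classification of cochain maps between string complexes; it only needs Propositions~\ref{prop:tri1} and~\ref{prop:clock}, which were already established.

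Your direct matrix/graph-map computation is a reasonable alternative and is in the spirit of the morphism classification for string complexes over gentle algebras, but as written it has a genuine gap: you do not handle the homotopy relation. A single cocycle entry $F_{ij}=\pi_{[\rho]}$ need not propagate---it may instead be null-homotopic via a factorisation $[\rho]=[\rho_1]\cdot[\rho_2]$ through an adjacent $f_\bullet$ or $g_\bullet$, and conversely a propagating chain that does \emph{not} reach an endpoint can still be a cocycle that must then be shown null-homotopic. Your sentence ``every such chain must terminate at a shared decoration'' conflates these two phenomena. To make your approach rigorous you would need to classify cocycles modulo coboundaries in $\mathcal{H}om_{\EE_\ac}(X_{\eta_1},X_{\eta_2})$, which is doable but is substantial extra combinatorics not present in the paper; the inductive argument sidesteps this entirely by reducing to the known Hom-structure of $\EE_\ac$ at the base of the induction.
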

	
\begin{proof}
	% We follow and simplify the proof of \cite[Proposition~5.9]{QQ}.
		
	Use the induction on
	$$I=\Int_{\surfo}(\ac,\eta_1)+\Int_{\surfo}(\ac,\eta_2).$$
	The starting case is $I=2$, where both $\eta_1$ and $\eta_2$ are in $\udac$. So the formula follows directly from the structure of $\EE_\ac$.
	
Now suppose that the proposition holds for any pair $(\eta_1,\eta_2)$ with $I\leq r$ for some $r\geq 2$
and consider the case when $I = r + 1$.
The arcs in $\ac$ divide $\eta_1$ and $\eta_2$ into the segments $\eta_1(-1,0), \cdots, \eta_1(p,p+1)$ and $\eta_2(-1,0),$ $\cdots,$ $\eta_2(q,q+1)$, respectively in order.
Since $r\geq 2$, we have $p+1>1$ or $q+1>1$.
Then there is a decoration $Z$ which is in the same $\ac$-polygon as an arc segment of $\eta_1$ or $\eta_2$.
Take a line segment $l$ from $Z$ to a point $p$ in an arc segment of $\eta_1$ or $\eta_2$ such that its interior does not cross any of $\eta_1$ and $\eta_2$. Without loss of generality, we assume $l$ intersects $\eta_1$. Then by Lemma~\ref{lem:decomp}, there are $\alpha,\beta\in\ACC(\surfo)$ with $\us=\ub\wedge\ua$.
Then by Proposition~\ref{prop:tri1}, there is a triangle in $\D_{fd}(\Gamma_\ac)/\<[1],[\XX]\>$:
	$$X_\alpha\xrightarrow{\varphi(\alpha,\beta)} X_\beta\xrightarrow{\varphi(\overline{\beta},\overline{\eta_1})} X_{\eta_1}\xrightarrow{\varphi(\eta_1,\overline{\alpha})} X_{\alpha}$$
	Applying $\Hom(-,X_{\eta_2})$ to the triangle, we have an exact sequence
	$$\begin{array}{cccc}
		&\Hom(X_\alpha,X_{\eta_2})&\xrightarrow{\circ\varphi(\eta_1,\overline{\alpha})}&\Hom(X_{\eta_1},X_{\eta_2})\\\xrightarrow{\circ\varphi(\overline{\beta},\overline{\eta_1})}&\Hom(X_\beta,X_{\eta_2})&
		\xrightarrow{\circ\varphi(\alpha,\beta)}&\Hom(X_\alpha,X_{\eta_2})
	\end{array}$$
	
	Since the line segment $l$ does not cross $\eta_2$ and $\Int_{\surfoi}(\eta_1,\eta_2)=0$, we have $\Int_{\surfoi}(\alpha,\eta_2)=0$ and $\Int_{\surfoi}(\beta,\eta_2)=0$.
	So using Corollary~\ref{prop:clock}, we have that
	\begin{itemize}
		\item $\eta_2(0)=\eta_1(0)$ $\Longleftrightarrow$ $\eta_2(0)=\alpha(1)$, and in this case,  $\varphi(\overline{\alpha},\eta_2)\circ\varphi(\eta_1,\overline{\alpha})=\varphi(\eta_1,\eta_2)$;
		\item $\eta_2(1)=\eta_1(0)$ $\Longleftrightarrow$ $\eta_2(1)=\alpha(1)$, and in this case,  $\varphi(\overline{\alpha},\overline{\eta_2})\circ\varphi(\eta_1,\overline{\alpha})=\varphi(\eta_1,\overline{\eta_2})$;
		\item $\eta_2(0)=\eta_1(1)$ $\Longleftrightarrow$ $\eta_2(0)=\beta(1)$, and in this case,  $\varphi(\overline{\eta_1},\eta_2)\circ\varphi(\overline{\beta},\overline{\eta_1})=\varphi(\overline{\beta},\eta_2)$;
		\item $\eta_2(1)=\eta_1(1)$ $\Longleftrightarrow$ $\eta_2(1)=\beta(1)$, and in this case,  $\varphi(\overline{\eta_1},\overline{\eta_2})\circ\varphi(\overline{\beta},\overline{\eta_1})=\varphi(\overline{\beta},\overline{\eta_2})$;
		\item $\eta_2(0)=\beta(0)$ $\Longleftrightarrow$ $\eta_2(0)=\alpha(0)$, and in this case,  $\varphi(\beta,\eta_2)\circ\varphi(\alpha,\beta)=\varphi(\alpha,\eta_2)$;
		\item $\eta_2(1)=\beta(0)$ $\Longleftrightarrow$ $\eta_2(1)=\alpha(0)$; and in this case,  $\varphi(\beta,\overline{\eta_2})\circ\varphi(\alpha,\beta)=\varphi(\alpha,\overline{\eta_2})$.
	\end{itemize}
	By inductive assumption, the proposition holds for $\alpha$ and $\beta$. Then by the above exact sequence, the above correspondences and formulas, the proposition holds for $(\eta_1,\eta_2)$.
\end{proof}

The lifted graded version of Proposition~\ref{prop:first} is the following.

\begin{proposition}\label{cor:relfirst}
	For any $\lwe_1,\lwe_2\in\wXACC(\surfo)$ satisfying $\Int_{\surfoi}(\lwe_1,\lwe_2)=0$, we have
	\[
	\qqInt(\lwe_1,\lwe_2)=\qdH(X_{\lwe_1},X_{\lwe_2})
	\]
	and each $\Hom(X_{\lwe_1},X_{\lwe_2}[{\ii+\sii\XX}])$ has a basis $$\{\varphi(\lws,\lwt)\mid \lws\in\{\lwe_1,\overline{\lwe_1}\},\lwt\in\{\lwe_2,\overline{\lwe_2}\},\lws(0)=\lwt(0),\ind^{\ZZ^2}_{\sigma(0)}(\lws,\lwt)=\ii+\sii\XX\}.$$
\end{proposition}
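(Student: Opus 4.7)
The plan is to bootstrap Proposition~\ref{prop:first} via the orbit quotient. The key identification is
\[
\Hom_{\D_{fd}(\Gamma_\ac)/\<[1],[\XX]\>}(X_{\eta_1},X_{\eta_2})
   \;=\; \bigoplus_{\ii,\sii\in\ZZ}\Hom(X_{\lwe_1},X_{\lwe_2}[\ii+\sii\XX]),
\]
together with the fact that any orbit-level morphism $\varphi(\us,\ut)$ comes from a derived-level morphism $\varphi(\lws,\lwt)$, where the lifts $\lws,\lwt$ are taken inside $\{\lwe_1,\overline{\lwe_1}\}$ and $\{\lwe_2,\overline{\lwe_2}\}$ respectively (using $X_{\overline{\lwe_i}}=X_{\lwe_i}$, the source and target are independent of the orientation choice).

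First, I would invoke Proposition~\ref{prop:first} to get that the $\varphi(\us,\ut)$ form a basis of the orbit-level Hom, indexed by the intersections of $\eta_1,\eta_2$ in $\Tri$. Then by Proposition~\ref{prop:wd-deg}, each derived lift $\varphi(\lws,\lwt)$ is a nonzero morphism $X_{\lwe_1}\to X_{\lwe_2}[\ind^{\ZZ^2}_{\sigma(0)}(\lws,\lwt)]$, so it lies entirely in the single bi-graded summand with $\ii+\sii\XX=\ind^{\ZZ^2}_{\sigma(0)}(\lws,\lwt)$. Each orbit basis vector has a unique derived lift among the $\varphi(\lws,\lwt)$ in the prescribed set; therefore, grouping by bi-degree, the collection
\[
\bigl\{\varphi(\lws,\lwt)\,\bigm|\,\lws\in\{\lwe_1,\overline{\lwe_1}\},\ \lwt\in\{\lwe_2,\overline{\lwe_2}\},\ \lws(0)=\lwt(0),\ \ind^{\ZZ^2}_{\sigma(0)}(\lws,\lwt)=\ii+\sii\XX\bigr\}
\]
is a basis of $\Hom(X_{\lwe_1},X_{\lwe_2}[\ii+\sii\XX])$, which is precisely the basis asserted in the proposition.

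For the dimension/intersection identity I would proceed bi-degree by bi-degree. At each $p\in\Tri$ which is a common endpoint of $\eta_1$ and $\eta_2$, there is exactly one admissible pair $(\lws,\lwt)$ with $\lws(0)=\lwt(0)=p$, so I need the bi-index invariance
\[
\ind^{\ZZ^2}_p(\lws,\lwt)=\ind^{\ZZ^2}_p(\lwe_1,\lwe_2).
\]
This follows from formula \eqref{eq:ksg} applied on a small circle around $p$: reversing $\lwe_i$ does not change its unoriented tangent line at the circle nor its graded lift there, so the non-decoration indices $\ind_{\alpha(j)}(\cdot,\widetilde{\alpha})$ entering \eqref{eq:ks} and its $\XX$-graded refinement are unchanged. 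Consequently
\[
\dim\Hom(X_{\lwe_1},X_{\lwe_2}[\ii+\sii\XX])=\Int^{\ii+\sii\XX}_{\Tri}(\lwe_1,\lwe_2),
\]
and summing with weights $\qv^{\ii+\sii\XX}$, using $\Int_{\surfoi}(\lwe_1,\lwe_2)=0$ to kill the interior-intersection contribution in \eqref{eq:q-int}, gives $\qdH(X_{\lwe_1},X_{\lwe_2})=\qqInt(\lwe_1,\lwe_2)$.

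The main obstacle should be the orientation-invariance step for the bi-index at a decoration: everything else is bookkeeping once Proposition~\ref{prop:first}, Proposition~\ref{prop:wd-deg}, and the orbit quotient decomposition are in hand. In particular, the mild degenerate cases (when $\lwe_i$ is a loop so $\lwe_i=\overline{\lwe_i}$, or when $\eta_1$ and $\eta_2$ share both endpoints) need only the careful interpretation that the index set of pairs $(\lws,\lwt)$ biject with the intersection points of $\lwe_1$ and $\lwe_2$ in $\Tri$, which is already implicit in the ungraded statement of Proposition~\ref{prop:first}.
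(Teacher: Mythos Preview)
Your proposal is correct and follows essentially the same approach as the paper, which simply states that the result follows directly from Proposition~\ref{prop:first} and Proposition~\ref{prop:wd-deg}. You have spelled out in more detail how the orbit-level basis lifts to a bi-graded basis via the decomposition $\Hom(X_{\eta_1},X_{\eta_2})=\bigoplus_{\ii,\sii}\Hom(X_{\lwe_1},X_{\lwe_2}[\ii+\sii\XX])$ and how Proposition~\ref{prop:wd-deg} pins down the bi-degree, but this is exactly the content the paper is leaving implicit.
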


\begin{proof}
	This follows directly from Proposition~\ref{prop:first} and Proposition~\ref{prop:wd-deg}.
\end{proof}

Since any double graded closed arc does not intersect itself in $\surfoi$, we can describe its endomorphism space by the above result, which gives the following consequence.

\begin{corollary}
	For any $\lwe\in\wXCA(\surfo)$,  we have that $X_{\lwe}$ is (reachable) $\XX$-spherical.
\end{corollary}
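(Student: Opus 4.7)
The plan is to apply Proposition~\ref{cor:relfirst} with $\lwe_1 = \lwe_2 = \lwe$. Since $\lwe$ is a closed arc, Definition~\ref{not:CA} forces $\Int_{\surfoi}(\lwe, \lwe) = 0$, so the hypothesis of that proposition is automatic and I immediately obtain $\qdH(X_\lwe, X_\lwe) = \qqInt(\lwe, \lwe)$. Moreover, because all interior bi-graded intersection numbers vanish, formula~\eqref{eq:q-int} collapses to a sum over $\Tri$ only, so the entire computation reduces to counting bi-graded self-intersections of $\lwe$ at decorations.

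Next I would identify those self-intersections. The only points of $\Tri$ where $\lwe$ meets itself are the two distinct endpoints $Z_1 = \lwe(0)$ and $Z_2 = \lwe(1)$. By the basis description in Proposition~\ref{cor:relfirst}, $\End^{\ZZ^2}(X_\lwe)$ is spanned by the morphisms $\varphi(\lws, \lwt)$ with $\lws, \lwt \in \{\lwe, \overline{\lwe}\}$ sharing a starting point, which forces the pair to be either $(\lwe, \lwe)$ at $Z_1$ or $(\overline{\lwe}, \overline{\lwe})$ at $Z_2$. The remark preceding Proposition~\ref{prop:clock} tells me that these two formal expressions give the same morphism, namely the Calabi--Yau dual of $\id_{X_\lwe}$; by formula~\eqref{eq:hkkg1} a clockwise $2\pi$-rotation around a decoration has bi-index $\XX$, so this CY dual lives in bi-degree $\XX$. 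The identity $\id_{X_\lwe}$, corresponding to the angle-$0$ interpretation, supplies a second basis element in bi-degree $0$.

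Assembling these contributions gives $\Int_\Tri^0(\lwe, \lwe) = \Int_\Tri^\XX(\lwe, \lwe) = 1$ with all other bi-indices vanishing, hence $\qqInt(\lwe, \lwe) = \qv^0 + \qv^\XX$ and therefore $\qdH(X_\lwe, X_\lwe) = \qv^0 + \qv^\XX$. This is precisely the $\XX$-sphericity condition, and reachability is automatic since $\lwe \in \wXCA(\surfo)$.

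The main subtlety I anticipate is the endpoint bookkeeping: a naive count of the four combinations in $\{\lwe, \overline{\lwe}\}^2$ together with the two angle choices ($0$ and $2\pi$) suggests more independent morphisms than one should have, and only the identification $\varphi(\lws, \lws) = \varphi(\overline{\lws}, \overline{\lws})$ from the lemma preceding Proposition~\ref{prop:clock} collapses the count to exactly one identity and one CY dual. I would pay explicit attention to this reconciliation between the topological bi-index count at each endpoint and the algebraic basis of $\End^{\ZZ^2}(X_\lwe)$, since everything in the argument downstream turns on that single numerical matching.
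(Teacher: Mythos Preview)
Your proposal is correct and follows essentially the same route as the paper: both apply Proposition~\ref{cor:relfirst} with $\lwe_1=\lwe_2=\lwe$ (using that a closed arc has no interior self-intersections) and obtain $\qdH(X_{\lwe},X_{\lwe})=\qqInt(\lwe,\lwe)=\qv^0+\qv^{\XX}$. The paper's two-line proof simply asserts $\qqInt(\lwe,\lwe)=\qv^0+\qv^{\XX}$ and invokes the formula; you supply the justification via the basis half of Proposition~\ref{cor:relfirst} together with the identification $\varphi(\lws,\lws)=\varphi(\overline{\lws},\overline{\lws})$ and formula~\eqref{eq:hkkg1}, which is sound but more than the paper writes out. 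One small redundancy: once you have identified the two basis morphisms and their bi-degrees, $\qdH=\qv^0+\qv^{\XX}$ is immediate, so the detour back through $\Int_\Tri^0=\Int_\Tri^\XX=1$ and $\qqInt$ is unnecessary.
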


\begin{proof}
	Since $\qqInt(\lwe,\lwe)=\qv^0+\qv^\XX$, by Proposition~\ref{cor:relfirst}, we have that the $\qv$-dimension $\qdH(X_{\lwe},X_{\lwe})=\qv^0+\qv^\XX$, which implies that $X_{\lwe}$ is (reachable) $\XX$-spherical.
\end{proof}

Now we start to investigate the compatibility between the action of braid twist and the action of spherical twist.

\begin{proposition}\label{cor:1}
	Let $\alpha\in\CA(\surfo)$ and $\beta\in\ACC(\surfo)$ with  $\Int_{\surfoi}(\alpha,\beta)=0$. Then
	$$X_{B_{\alpha}(\beta)}=\phi_{X_\alpha}(X_\beta)$$
\end{proposition}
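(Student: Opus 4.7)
The plan is to work in the orbit category $\D_{fd}(\Gamma_\ac)/\langle[1],[\XX]\rangle$, where both sides of the claimed equality live, and to perform a case analysis on the number of shared endpoints between $\alpha$ and $\beta$. Since $\Int_{\surfoi}(\alpha,\beta)=0$, every intersection of $\alpha$ and $\beta$ must occur at a decoration and hence at a shared endpoint, so there are essentially three cases.

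In the trivial case where $\alpha$ and $\beta$ share no endpoint, Proposition~\ref{cor:relfirst} yields $\qdH(X_\alpha,X_\beta)=\qqInt(\alpha,\beta)=0$, hence $\RHom(X_\alpha,X_\beta)=0$ and the defining triangle of the spherical twist collapses to $\phi_{X_\alpha}(X_\beta)=X_\beta$. Simultaneously, $B_\alpha$ has support in a tubular neighborhood of $\alpha$ disjoint from $\beta$, so $B_\alpha(\beta)=\beta$ and the equality is immediate.

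The key case is when $\alpha$ and $\beta$ share exactly one endpoint, say $\alpha(0)=\beta(0)$. Here Proposition~\ref{cor:relfirst} gives $\Hom(X_\alpha,X_\beta)$ one-dimensional in the orbit category, spanned by $\varphi(\alpha,\beta)$. Proposition~\ref{prop:tri1} applied to $\sigma=\alpha$ and $\tau=\beta$ produces the triangle
\[ X_{\beta\wedge\alpha}\to X_\alpha\xrightarrow{\varphi(\alpha,\beta)} X_\beta\to X_{\beta\wedge\alpha}[1], \]
which, in the orbit category, identifies the cone of the evaluation map $\RHom(X_\alpha,X_\beta)\otimes X_\alpha\to X_\beta$ with $X_{\beta\wedge\alpha}$. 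A direct local topological check at the shared decoration, comparing Figure~\ref{fig:bt} with Figure~\ref{fig:ext.}, shows that $B_\alpha(\beta)=\beta\wedge\alpha$ as ungraded admissible closed curves, yielding $\phi_{X_\alpha}(X_\beta)=X_{B_\alpha(\beta)}$. The remaining subcases of one shared endpoint (e.g.\ $\alpha(1)=\beta(0)$) reduce to this by replacing $\alpha$ or $\beta$ by its inverse and using $X_{\overline{\alpha}}=X_\alpha$.

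The main obstacle is the case where both endpoints of $\beta$ coincide with endpoints of $\alpha$, so that $\Hom(X_\alpha,X_\beta)$ has a two-dimensional orbit-category piece and the evaluation map has two components. The plan is to decompose $\beta=\beta_2\wedge\beta_1$ via Lemma~\ref{lem:decomp} applied at a suitable decoration in an $\ac$-polygon crossed by $\beta$, chosen so that each $\beta_i$ shares at most one endpoint with $\alpha$; the previous case then applies to each $\beta_i$, and the octahedral axiom together with the conjugation formulas \eqref{eq:bt} and \eqref{eq:st}, ensuring $B_\alpha(\beta)=B_\alpha(\beta_2)\wedge B_\alpha(\beta_1)$ and the corresponding identity for $\phi_{X_\alpha}$, assembles the two triangles into the spherical twist triangle for $\phi_{X_\alpha}(X_\beta)$. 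The principal technical difficulty is verifying that the topological extension commutes on the nose with $B_\alpha$ and that the octahedral diagram matches the spherical twist triangle precisely, not merely up to some ambient isomorphism.
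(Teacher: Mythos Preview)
Your first two cases are fine and match the paper's treatment (the paper delegates the $\beta\in\CA(\surfo)$ situation to \cite{QZ2}, but your direct use of Proposition~\ref{prop:tri1} for the one-shared-endpoint case is exactly the right idea). The problem lies in your Case~3.

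Your plan there is to split $\beta=\beta_2\wedge\beta_1$ at some auxiliary decoration $Z$, apply the one-endpoint case to each piece, and then reassemble via the octahedral axiom. This does not close. First, there is no guarantee such a $Z$ exists with the required properties: if $\beta$ only traverses the two $\ac$-polygons adjacent to the endpoints of $\alpha$, the only decorations available are $\alpha(0)$ and $\alpha(1)$, and splitting there produces pieces each sharing \emph{two} endpoints with $\alpha$, so the induction does not terminate. Second, and more seriously, even when the split is available you need that the connecting map in the triangle $\phi_{X_\alpha}(X_{\beta_1})\to\phi_{X_\alpha}(X_{\beta_2})$ agrees with $\varphi(B_\alpha(\beta_1),B_\alpha(\beta_2))$ under the identifications from Case~2; equivalently, that $\phi_{X_\alpha}$ carries angle-morphisms to angle-morphisms. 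Nothing proved so far gives this, and without it the two triangles with the same outer terms need not have isomorphic third terms. You flag this as a ``technical difficulty'', but it is the whole point.

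The paper's route in the two-morphism case (specifically when $\beta(0)=\beta(1)=\alpha(0)$) avoids this entirely: rather than decomposing $\beta$, it stacks the two cone triangles from Proposition~\ref{prop:tri1} for $\varphi(\alpha,\beta)$ and $\varphi(\alpha,\overline\beta)$ using the factorization $\varphi(\alpha,\overline{\beta\wedge\alpha})=\varphi(\overline\beta,\overline{\beta\wedge\alpha})\circ\varphi(\alpha,\overline\beta)$ from Proposition~\ref{prop:clock}, and the octahedral axiom then produces a single triangle
\[
X_{B_\alpha(\beta)}\longrightarrow X_\alpha\oplus X_\alpha\xrightarrow{\;(\varphi(\alpha,\beta),\,\varphi(\alpha,\overline\beta))\;}X_\beta\longrightarrow X_{B_\alpha(\beta)}.
\]
Since $\varphi(\alpha,\beta)$ and $\varphi(\alpha,\overline\beta)$ form a basis of $\Hom(X_\alpha,X_\beta)$ by Proposition~\ref{prop:first}, the middle map is precisely the evaluation map in the definition of $\phi_{X_\alpha}$, and the identification $\phi_{X_\alpha}(X_\beta)=X_{B_\alpha(\beta)}$ follows immediately. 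This is the argument you should use; it requires no control over how $\phi_{X_\alpha}$ acts on individual angle-morphisms.
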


\begin{proof}
	For the case $\Int_{\surfo}(\alpha,\beta)=0$, by Proposition~\ref{prop:first}, we have $\Hom(X_{\alpha},X_{\beta})=0$. So $B_\alpha(\beta)=\beta$ and $\phi_{X_\alpha}(X_\beta)=X_\beta$, which imply the required equality.
	
	For the case that $\beta\in\CA(\surfo)$, the proof of \cite[Proposition~3.1]{QZ2} works here.
	
	The last case is that $\beta(0)=\beta(1)$ and it is one endpoint of $\alpha$. Without loss of generality, we assume that $\beta(0)=\beta(1)=\alpha(0)$ and the starting segments of $\ua$, $\ub$ and $\overline{\ub}$ are in clockwise order, see Figure~\ref{fig:0011}. So we have that $\beta\wedge\alpha$ is an admissible closed curve instead of a union of two admissible closed curves. By Proposition~\ref{prop:clock}, we have $\varphi(\overline{\ub},\overline{\ub\wedge\ua})\circ\varphi(\ua,\overline{\ub})=\varphi(\ua,\overline{\ub\wedge\ua})$. So by the octahedral axiom, we have the following commutative diagram of triangles
	$$\xymatrix@R=3pc@C=3pc{
		&X(\ub\wedge\ua)\ar@{=}[r]\ar[d]&X_{\ub\wedge\ua}\ar[d]\\
		X_{\overline{\beta}\wedge\alpha}\ar[r]\ar@{=}[d]&X_{B_\alpha(\beta)}\ar[r]\ar[d]&X_{\ua}\ar[r]\ar[d]^{\varphi(\ua,\ub)}&X_{\overline{\beta}\wedge\alpha}\ar@{=}[d]\\
		X_{\overline{\beta}\wedge\alpha}\ar[r]&X_{\ua}\ar[r]^{\varphi(\ua,\overline{\ub})}\ar[d]_{\varphi(\ua,\overline{\ub\wedge\ua})}&X_{\ub}\ar[r]\ar[d]^{\varphi(\overline{\ub},\overline{\ub\wedge\ua})}&X_{\overline{\beta}\wedge\alpha}\\
		&X_{\ub\wedge\ua}\ar@{=}[r]&X_{\ub\wedge\ua}
	}$$
	Then there is a triangle
	$$X_{B_\alpha(\beta)}\to X_\alpha\oplus X_\alpha\xrightarrow{\left(\varphi(\ua,\ub),\varphi(\ua,\overline{\ub}) \right)} X_{\beta}\xrightarrow{}X_{B_\alpha(\beta)},$$
	which implies the required formula.
\begin{figure}[h]\centering
\begin{tikzpicture}[scale=.65]

\draw[very thick,orange,->-=.8,>=stealth](-90:4) .. controls +(100:1) and +(-90:3) .. ($(150:5)+(-.5,0)$)node[left]{$\ub\wedge\ua$};
\draw[very thick,orange,-<-=.8,>=stealth](0,0) .. controls +(10:3) and +(-90:2) .. ($(30:5)+(.5,0)$)node[right]{$\ub\wedge\ua$};

\draw[very thick,red,->-=.8,>=stealth](0,0) .. controls +(150:3) and +(-90:1) .. (150:5)node[right]{$\ub$};
\draw[very thick,red,-<-=.8,>=stealth](0,0) .. controls +(30:3) and +(-90:1) .. (30:5)node[left]{$\ub$};

\draw[very thick,orange,dashed]($(30:5)+(.5,0)$).. controls +(90:2.5) and +(90:2.5) ..($(150:5)+(-.5,0)$);
\draw[very thick,red,dashed](150:5).. controls +(90:2) and +(90:2) ..(30:5);
\draw[very thick,red,->-=.7,>=stealth](0,0) to node[right]{$\ua$}(-90:4);

\draw[blue,-<-=.7,>=stealth,thick](9:1.8)arc(9:27:1.8) (27:1.8);
\draw[blue,-<-=.2,>=stealth,thick](30:.5)arc(30:270:.5) (270:.5);
\draw[blue,-<-=.5,>=stealth,thick](150:.7)arc(150:270:.7) (270:.7);
\draw[blue,-<-=.4,>=stealth,thick](9:1.6)arc(9:270:1.6) (270:1.6);
\draw[Emerald]
    (20:1.8)node[right]{\footnotesize{$\varphi(\overline{\ub},\overline{\ub\wedge\ua})$}}
    (90:.5)node[above]{\footnotesize{$\varphi(\ua,\overline{\ub})$}}
    (225:.7)node[left]{\footnotesize{$\varphi(\ua,\ub)$}}
    (90:1.6)node[above]{\footnotesize{$\varphi(\ua,\overline{\ub\wedge\ua})$}};

\draw[white](0,0)\nn(-90:4)\nn;
\draw[red](0,0)\ww(-90:4)\ww;
\end{tikzpicture}
\caption{}\label{fig:0011}
\end{figure}	
\end{proof}

The lifted graded version of Proposition~\ref{cor:1} is the following.

\begin{proposition}\label{lem:rel1}
	Let $\alpha\in\CA(\surfo)$ and $\lwb\in\wXACC(\surfo)$ with $\Int_{\surfoi}(\alpha,\beta)=0$. Then
	$$\phi_{X_{\alpha}}(X_{\lwb})=X_{B_{\alpha}(\lwb)}.$$
\end{proposition}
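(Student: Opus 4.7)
The plan is to lift the proof of Proposition~\ref{cor:1} from the orbit category $\D_{fd}(\Gamma_\ac)/\<[1],[\XX]\>$ to $\D_{fd}(\Gamma_\ac)$ by tracking bi-degree shifts carefully, using Corollary~\ref{cor:realtri} as the graded version of Proposition~\ref{prop:tri1} and Proposition~\ref{cor:relfirst} to compute $\RHom(X_\alpha,X_{\lwb})$ in terms of endpoint intersection indices. The argument splits into the same three cases as in Proposition~\ref{cor:1}.

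First I would dispose of the trivial case $\Int_{\surfo}(\alpha,\beta)=0$. Here Proposition~\ref{cor:relfirst} gives $\qdH(X_\alpha,X_{\lwb})=0$, so $\RHom(X_\alpha,X_{\lwb})=0$ and hence $\phi_{X_\alpha}(X_{\lwb})=X_{\lwb}$; on the topological side, $B_\alpha$ fixes $\beta$ up to isotopy and the grading is transported verbatim, giving $X_{B_\alpha(\lwb)}=X_{\lwb}$.

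Next, in the case where $\beta\in\CA(\surfo)$ and shares exactly one endpoint $p$ with $\alpha$, I would pick a double graded lift $\lwa$ of $\alpha$ and apply Proposition~\ref{cor:relfirst} to conclude that $\RHom(X_\alpha,X_{\lwb})$ is one-dimensional, concentrated in bi-degree $\ind^{\ZZ^2}_p(\lwa,\lwb)$, generated by the morphism $\varphi(\lwa,\lwb)$. The defining triangle of the spherical twist then reads
\[
X_{\lwa}[-\ind^{\ZZ^2}_p(\lwa,\lwb)]\xrightarrow{\varphi(\lwa,\lwb)[-\ind^{\ZZ^2}_p(\lwa,\lwb)]}X_{\lwb}\to\phi_{X_\alpha}(X_{\lwb})\to X_{\lwa}[-\ind^{\ZZ^2}_p(\lwa,\lwb)+1].
\]
On the other hand, Lemma~\ref{lem:decomp} (applied with the relevant decoration) writes $\lwb=\lwe\wedge\overline{\lwa}$ for a double graded curve $\lwe$ whose underlying curve is $B_\alpha(\beta)$ with the correct transported grading, i.e.\ $\lwe=B_\alpha(\lwb)$. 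Feeding this decomposition into Corollary~\ref{cor:realtri} produces a triangle with the same first two terms and with third term $X_{B_\alpha(\lwb)}$ placed in the correct bi-degree. Comparing the two triangles yields $\phi_{X_\alpha}(X_{\lwb})\cong X_{B_\alpha(\lwb)}$.

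Finally, the case $\beta(0)=\beta(1)$ equal to an endpoint of $\alpha$ requires the octahedral diagram from the proof of Proposition~\ref{cor:1}. Here Proposition~\ref{cor:relfirst} gives a two-dimensional $\RHom(X_\alpha,X_{\lwb})$, spanned by $\varphi(\lwa,\lwb)$ and $\varphi(\lwa,\overline{\lwb})$ in their respective bi-degrees, and the evaluation map becomes a $1\times 2$ matrix with these two entries. I would lift the octahedron to $\D_{fd}(\Gamma_\ac)$ by invoking Corollary~\ref{cor:realtri} twice, once for the extension $\lwb\wedge\lwa$ and once for the subsequent decomposition producing $B_\alpha(\lwb)$, verifying with \eqref{eq:2}, \eqref{eq:3int} and \eqref{eq:hkkg} that every bi-degree appearing on the arrows of the octahedron agrees on both sides. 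The outer triangle then collapses to the defining spherical-twist triangle for $\phi_{X_\alpha}(X_{\lwb})$ with third term $X_{B_\alpha(\lwb)}$.

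The main obstacle I anticipate is the bookkeeping in this last case: each edge of the octahedron carries a bi-degree shift that is computed a priori in two different ways, once from the intersection-index formula \eqref{eq:2}--\eqref{eq:ksg} and once from the spherical twist construction, and one must check these match on the nose (rather than only up to the $\<[1],[\XX]\>$-action available in the orbit-category version already proved in Proposition~\ref{cor:1}).
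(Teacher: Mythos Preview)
Your approach is correct and is in essence the paper's own argument, but you are doing far more work than the paper does. The paper's proof is a single sentence: since $B_\alpha(\lwb)$ is \emph{defined} to inherit the double grading from $\lwb$, the triangles appearing in the proof of Proposition~\ref{cor:1} (which are already triangles in $\D_{fd}(\Gamma_\ac)$ via Corollary~\ref{cor:realtri}, merely stated in the orbit category) automatically carry the correct bi-degree shifts, so the formula lifts to $\D_{fd}(\Gamma_\ac)$ for free. In other words, the bookkeeping you identify as the main obstacle is absorbed into the grading convention rather than checked by hand. Your explicit verification via Proposition~\ref{cor:relfirst} and the index identities \eqref{eq:3int}, \eqref{eq:hkkg}, \eqref{eq:ksg} is a legitimate way to confirm that this convention really does make the shifts match, and would be a useful sanity check, but it is not logically required once one accepts that the string model and the extension operation $\wedge$ are compatible with the inherited grading by construction.
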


\begin{proof}
	Since $B_{\alpha}(\lwb)$ inherits the double grading from $\lwb$, the required formula follows from Proposition~\ref{cor:1}.
\end{proof}

%\subsection{Main results}\label{sec:main}

Focusing on closed arcs, we have the following lemma, which will be generalized in Proposition~\ref{cor:6.4.2}.

\begin{lemma}\label{lem:phi.x}
	For any $s\in\udac$ and any $\lwe\in\wXCA(\surfo)$, we have
	\begin{equation}\label{eq:phi.x}
	\phi^\varepsilon_{X_s}(X_{\lwe})=X_{B^\varepsilon_s(\lwe)},\ \varepsilon=\pm1.
	\end{equation}
\end{lemma}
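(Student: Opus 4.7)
The plan is to induct on the length $\ell(\lwe):=\Int_{\surfo}(\ac^\ZZ,\lwe)$. For the base case $\ell(\lwe)=1$, every such double graded closed arc is of the form $\wss_j^m$ for some $1\le j\le n$ and $m\in\ZZ$. By the construction of the dual arc system $\udac$, the underlying arcs $s_i$ are pairwise non-crossing in $\surfoi$, so $\Int_{\surfoi}(s,\wss_j^m)=0$ and Proposition~\ref{lem:rel1} applies directly to give the desired formula.

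For the inductive step, assuming $\ell(\lwe)>1$, I would invoke Lemma~\ref{lem:decomp2} to obtain $\lwa,\lwb\in\wXCA(\surfo)$ with $\alpha\in\CA(\surfo)$, $\lwe=B_\alpha(\lwb)$, $\Int_{\surfoi}(\alpha,\lwb)=0$, and $\ell(\lwa)+\ell(\lwb)=\ell(\lwe)$, so both curves have strictly smaller length than $\lwe$. Proposition~\ref{lem:rel1} first produces $X_{\lwe}=\phi_{X_\alpha}(X_{\lwb})$. Then applying $\phi_{X_s}^\varepsilon$, invoking the spherical-twist conjugation formula \eqref{eq:st} with $\psi=\phi_{X_s}^\varepsilon$ and $M=X_\alpha$, and using the inductive hypothesis on the shorter curves $\lwa$ and $\lwb$, I would rewrite
\[
\phi_{X_s}^\varepsilon(X_{\lwe})
=\phi_{\phi_{X_s}^\varepsilon(X_\alpha)}\bigl(\phi_{X_s}^\varepsilon(X_{\lwb})\bigr)
=\phi_{X_{B_s^\varepsilon(\alpha)}}(X_{B_s^\varepsilon(\lwb)}).
\]
Since the homeomorphism $B_s^\varepsilon$ preserves interior intersections, $\Int_{\surfoi}(B_s^\varepsilon(\alpha),B_s^\varepsilon(\lwb))=0$ still holds, so Proposition~\ref{lem:rel1} applies once more to convert the right-hand side to $X_{B_{B_s^\varepsilon(\alpha)}(B_s^\varepsilon(\lwb))}$. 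Finally, the braid-twist conjugation formula \eqref{eq:bt} with $\Psi=B_s^\varepsilon$ collapses this to $X_{B_s^\varepsilon(B_\alpha(\lwb))}=X_{B_s^\varepsilon(\lwe)}$, closing the induction.

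The main technical obstacle is the careful verification of the base case, which rests on the non-crossing property of the dual system $\udac$ in $\surfoi$; under Assumption~\ref{ass} this holds for the standard choice of duals, but a separate check may be needed when two arcs in $\ac$ bound a common $\ac$-polygon. The inductive step itself is essentially formal, relying on the precise parallelism between the two conjugation formulas \eqref{eq:st} and \eqref{eq:bt}---and this parallelism is really the heart of the braid/spherical dictionary being developed in the paper. The case $\varepsilon=-1$ then follows from the $\varepsilon=+1$ case by substituting $\lwe\mapsto B_s^{-1}(\lwe)$ and applying $\phi_{X_s}^{-1}$ to both sides.
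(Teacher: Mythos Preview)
Your proof is correct and follows essentially the same route as the paper's: induction on the length $\Int_{\surfo}(\ac,\eta)$, the base case handled by Proposition~\ref{lem:rel1} since dual arcs in $\udac$ do not cross in $\surfoi$, and the inductive step via Lemma~\ref{lem:decomp2} combined with the parallel conjugation formulas \eqref{eq:st} and \eqref{eq:bt}. Your worry about the base case is unfounded: by construction the arcs in $\udac$ are pairwise non-crossing in $\surfoi$ regardless of how the $\ac$-polygons share edges, so no separate check is required under Assumption~\ref{ass}.
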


\begin{proof}
	Without loss of generality, we only deal the case for $\varepsilon=1$. Use the induction on $l(\eta)=\Int_{\surfo}(\ac,\eta)$. When $l(\eta)=1$, we have $\eta\in\udac$. Then $\Int_{\surfoi}(s,\eta)=0$, which implies \eqref{eq:phi.x} by Proposition~\ref{lem:rel1}.
	
	Suppose \eqref{eq:phi.x} holds for $l(\eta)<r$, for some $r\geq 2$. Consider the case $l(\eta)=r$. By Lemma~\ref{lem:decomp2}, there are $\lwa,\lwb\in\wXCA(\surfo)$ with $\Int_{\surfoi}(\lwa,\lwb)=0$ and $\lwe=B_\alpha(\lwb)$. By Proposition~\ref{lem:rel1}, we have
	\begin{equation}\label{eq:01}
	X_{\lwe}=\phi_{X_{\alpha}}(X_{\lwb}).
	\end{equation}
	Twisted by $B_s$, we have $\Int_{\surfoi}(B_s(\lwa),B_s(\lwb))=0$ and $B_s(\lwe)=B_{B_s(\alpha)}(B_s(\lwb))$. By Proposition~\ref{lem:rel1}, we have
	\begin{equation}\label{eq:02}
	X_{B_{s}(\lwe)}=\phi_{X_{B_s(\alpha)}}(X_{B_s(\lwb)}).
	\end{equation}
	By the inductive assumption, we have
	\begin{equation}\label{eq:03}
	X_{B_s(\lwa)}=\phi_{X_s}(X_{\lwa}),\ \text{and}\ X_{B_s(\lwb)}=\phi_{X_s}(X_{\lwb}).
	\end{equation}
	So
	$$\begin{array}{rcl}
	X_{B_{s}(\lwe)}&\xlongequal{\eqref{eq:02}}&\phi_{X_{B_s(\alpha)}}(X_{B_s(\lwb)}) \vspace{1ex}\\
	&\xlongequal{\eqref{eq:03}}&\phi_{\phi_{X_s}(X_{\lwa})}\left(\phi_{X_s}(X_{\lwb})\right) \vspace{1ex}\\
	&\xlongequal{\eqref{eq:st}}&(\phi_{X_s}\circ\phi_{X_{\alpha}}\circ\phi_{X_s}^{-1})\left(\phi_{X_s}(X_{\lwb})\right)\vspace{1ex}\\
	&\xlongequal{\quad}&(\phi_{X_s}\circ\phi_{X_{\alpha}})(X_{\lwb})\vspace{1ex}\\
	&\xlongequal{\eqref{eq:01}}&\phi_{X_s}(X_{\lwe})
	\end{array}$$
\end{proof}

Denote by $b_i=B_{s_i}$ for any $s_i\in\udac$ and by $\phi_i=\phi_{S_i}$ for any simple $S_i$.

\begin{proposition}\label{lem:bt->st}
There is a canonical group homomorphism
\begin{gather}\label{eq:bt->st}
    \iota:\BT(\surfo)\to \ST(\Gamma_\ac)
\end{gather}
sending the generator $b_i$ to the generator $\phi_i$. Moreover, for any $\eta\in\CA(\surfo)$, we have
\begin{gather}\label{eq:bt=st+}
\iota(B^\varepsilon_\eta)=\phi^\varepsilon_{X_\eta}.
\end{gather}
\end{proposition}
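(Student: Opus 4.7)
The plan is to define $\iota$ on the generators of $\BT(\surfo)=\BT(\ac)$ (via Lemma~\ref{lem:4.2}) by $\iota(b_i):=\phi_i$, verify the assignment is well-defined as a group homomorphism by using the equivariance of $\lws\mapsto X_{\lws}$, and then deduce \eqref{eq:bt=st+} by a conjugation computation. The heavy lifting is already done by Lemma~\ref{lem:phi.x}; the remaining argument is a straightforward iteration plus a standard quotient argument using the definition of $\ST(\Gamma_\ac)\subset\Aut^o\D_{fd}(\Gamma_\ac)$.

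For well-definedness: given any word $w=b_{i_1}^{\varepsilon_1}\cdots b_{i_k}^{\varepsilon_k}$ in the generators, I will iterate Lemma~\ref{lem:phi.x}. Since $b_j^\varepsilon(\lwe)\in\wXCA(\surfo)$ whenever $\lwe\in\wXCA(\surfo)$, applying the lemma $k$ times yields
\[
\phi_{i_1}^{\varepsilon_1}\circ\cdots\circ\phi_{i_k}^{\varepsilon_k}(X_{\lwe})=X_{w(\lwe)}
\]
for every $\lwe\in\wXCA(\surfo)$. If $w=1$ in $\BT(\ac)$, then $w(\lwe)=\lwe$, so the composite fixes every such $X_{\lwe}$. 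By Lemma~\ref{lem:CA BT}, every reachable $\XX$-spherical object has the form $X_{\lwe}$ for some $\lwe\in\wXCA(\surfo)$, hence $\phi_{i_1}^{\varepsilon_1}\cdots\phi_{i_k}^{\varepsilon_k}$ lies in $\ker(\Aut\to\Aut^o)$ and is trivial in $\ST(\Gamma_\ac)$. This shows the assignment $b_i\mapsto\phi_i$ extends (uniquely) to a well-defined group homomorphism $\iota$.

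For the formula \eqref{eq:bt=st+}: by Lemma~\ref{lem:4.2}, any $\eta\in\CA(\surfo)$ can be written as $\eta=b(s_i)$ for some $b\in\BT(\ac)$ and $s_i\in\udac$. The surface-level conjugation rule \eqref{eq:bt} gives $B_\eta=b\circ B_{s_i}\circ b^{-1}$, so
\[
\iota(B_\eta)=\iota(b)\circ\phi_i\circ\iota(b)^{-1}.
\]
On the categorical side, the iterated Lemma~\ref{lem:phi.x} shows that the underlying curve of $\iota(b)(X_{\wss_i^0})=X_{b(\wss_i^0)}$ is precisely $\eta$. Since $\phi_{X_\alpha}$ depends only on the underlying curve $\alpha\in\CA(\surfo)$, the formula \eqref{eq:st} gives
\[
\phi_{X_\eta}=\phi_{\iota(b)(X_{\wss_i^0})}=\iota(b)\circ\phi_{X_{\wss_i^0}}\circ\iota(b)^{-1}=\iota(b)\circ\phi_i\circ\iota(b)^{-1}.
\]
Comparing the two expressions yields $\iota(B_\eta)=\phi_{X_\eta}$, and the $\varepsilon=-1$ case follows by taking inverses.

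There is no real obstacle in this proof: the non-trivial categorical input, namely the compatibility between the generating braid twists $B_{s_i}$ and the generating spherical twists $\phi_i$ on reachable spherical objects, was already established in Lemma~\ref{lem:phi.x}. The only place where care is needed is in passing from ``fixes every reachable spherical object'' to ``trivial in $\ST(\Gamma_\ac)$,'' and this is precisely what the quotient $\Aut^o\D_{fd}(\Gamma_\ac)$ was designed to make automatic.
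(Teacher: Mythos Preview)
Your proof is correct and follows essentially the same approach as the paper's: iterate Lemma~\ref{lem:phi.x} to establish well-definedness of $\iota$ via the $\Aut^o$ quotient, then use Lemma~\ref{lem:4.2} together with the conjugation formulas \eqref{eq:bt} and \eqref{eq:st} to deduce \eqref{eq:bt=st+}. One small note: your appeal to Lemma~\ref{lem:CA BT} is unnecessary, since by Definition~\ref{def:reachable} reachable $\XX$-sphericals are precisely the objects $X_{\lwe}$ with $\lwe\in\wXCA(\surfo)$.
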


\begin{proof}
	Let $b=b^{\varepsilon_1}_{{i_1}}\circ\cdots\circ b^{\varepsilon_k}_{{i_k}}$, for some $i_j\in\{1,\cdots,n\}$, $\varepsilon_j\in\{\pm1\}$, $1\leq j\leq k$ and $k\in\mathbb{N}$. Let $\phi=\phi^{\varepsilon_1}_{i_1}\circ\cdots\circ \phi^{\varepsilon_k}_{i_k}$. If $b=1$, then $b(\wss_i^0)=\wss_i^0$, for any $1\leq i\leq n$. By (repeatedly using) Lemma~\ref{lem:phi.x}, we have
	$$S_i=X_{\wss_i^0}=X_{b(\wss_i^0)}=\phi(X_{\wss_i^0})=\phi(S_i).$$
	So $\phi=1$ in  $\ST(\Gamma_\ac)$. This implies the existence of the required group homomorphism $\iota$.
	
	To show \eqref{eq:bt=st+}, let $\eta\in\CA(\surfo)$. By Lemma~\ref{lem:4.2}, there is a $b^{\varepsilon_1}_{{i_1}}\circ\cdots\circ b^{\varepsilon_k}_{{i_k}}\in\BT(\surfo)$ and an $s\in\udac$ such that $\eta=b^{\varepsilon_1}_{{i_1}}\circ\cdots\circ b^{\varepsilon_k}_{{i_k}}(s)$.
	So 	
	\[\begin{array}{rcl}
\iota(B_\eta)&\xlongequal{\qquad}&\iota(B_{b^{\varepsilon_1}_{{i_1}}\circ\cdots\circ b^{\varepsilon_k}_{{i_k}}(s)})\\
	&\xlongequal{\eqref{eq:bt}}&\iota\left(b^{\varepsilon_1}_{{i_1}}\circ\cdots\circ b^{\varepsilon_k}_{{i_k}}\circ B_s\circ(b^{\varepsilon_1}_{{i_1}}\circ\cdots\circ b^{\varepsilon_k}_{{i_k}})^{-1}\right)\\
	&\xlongequal{Thm.~\ref{thm:st=bt}}&\phi^{\varepsilon_1}_{{i_1}}\circ\cdots\circ \phi^{\varepsilon_k}_{{i_k}}\circ \phi_{X_s}\circ(\phi^{\varepsilon_1}_{{i_1}}\circ\cdots\circ \phi^{\varepsilon_k}_{{i_k}})^{-1}\\
	&\xlongequal{\eqref{eq:st}}&\phi_{\phi^{\varepsilon_1}_{{i_1}}\circ\cdots\circ \phi^{\varepsilon_k}_{{i_k}}(X_s)}\\
	&\xlongequal{Lem.~\ref{lem:phi.x}}&\phi_{X_\eta}.
	\end{array}\]
\end{proof}

%\begin{corollary}\label{cor:bt=st+}
%For any
%\end{corollary}
%\begin{proof}
%	
%\end{proof}

The action of a braid twist on a double graded closed arc is compatible with the action of the corresponding spherical twist on the corresponding object as follows.

\begin{proposition}\label{cor:6.4.2}
	For any $b\in\BT(\surfo)$ and any $\lwe\in\wXCA(\surfo)$, we have
	$$X_{b(\lwe)}=\iota(b)(X_{\lwe}).$$
	In particular, when $b=B_\alpha$ for  $\alpha\in\CA(\surfo)$, we have
	$$X_{B_\alpha(\lwe)}=\phi_{X_\alpha}(X_{\lwe}).$$
\end{proposition}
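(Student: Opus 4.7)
The plan is to prove the main equality by induction on the length of a word expressing $b$ in the generators of $\BT(\surfo)$, with Lemma~\ref{lem:phi.x} providing the base case, and then to derive the ``In particular'' statement from Proposition~\ref{lem:bt->st}.

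By Lemma~\ref{lem:4.2} we have $\BT(\surfo)=\BT(\ac)$, so any $b\in\BT(\surfo)$ admits a factorization
\[
b=b_{i_1}^{\varepsilon_1}\circ b_{i_2}^{\varepsilon_2}\circ\cdots\circ b_{i_k}^{\varepsilon_k},
\]
with $b_{i_j}=B_{s_{i_j}}$, $s_{i_j}\in\udac$, and $\varepsilon_j\in\{\pm 1\}$. Under $\iota$ this maps to $\iota(b)=\phi_{i_1}^{\varepsilon_1}\circ\cdots\circ\phi_{i_k}^{\varepsilon_k}$. I will run the induction on $k$.

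For $k=1$, the equality $X_{b_i^{\varepsilon}(\lwe)}=\phi_i^{\varepsilon}(X_{\lwe})$ is precisely Lemma~\ref{lem:phi.x}. For the inductive step, write $b=b_{i_1}^{\varepsilon_1}\circ b'$ where $b'$ has length $k-1$. Since braid twists preserve $\CA(\surfo)$ and the double grading is transported along them, $b'(\lwe)\in\wXCA(\surfo)$, so Lemma~\ref{lem:phi.x} applies to the pair $(s_{i_1},b'(\lwe))$. Then
\[
X_{b(\lwe)}=X_{b_{i_1}^{\varepsilon_1}(b'(\lwe))}
    =\phi_{i_1}^{\varepsilon_1}\bigl(X_{b'(\lwe)}\bigr)
    =\phi_{i_1}^{\varepsilon_1}\bigl(\iota(b')(X_{\lwe})\bigr)
    =\iota(b)(X_{\lwe}),
\]
where the second equality uses Lemma~\ref{lem:phi.x}, the third uses the inductive hypothesis, and the last uses that $\iota$ is a homomorphism (Proposition~\ref{lem:bt->st}).

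For the ``In particular'' statement, take $b=B_\alpha$ with $\alpha\in\CA(\surfo)$. Combining the general statement just proved with the formula $\iota(B_\alpha)=\phi_{X_\alpha}$ from \eqref{eq:bt=st+} in Proposition~\ref{lem:bt->st} yields $X_{B_\alpha(\lwe)}=\phi_{X_\alpha}(X_{\lwe})$. Since all the substantial categorical work (the compatibility between the basic twist generators and the spherical twists, and the well-definedness of $\iota$) has already been done, there is no serious obstacle here; the only point to be careful about is that $b'(\lwe)$ remains in $\wXCA(\surfo)$ so that Lemma~\ref{lem:phi.x} is applicable at each inductive step, which follows from the fact that the braid twist action preserves both admissibility/closed-arc conditions and the double grading.
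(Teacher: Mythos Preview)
Your proof is correct and follows essentially the same approach as the paper: write $b$ as a word in the generators via Lemma~\ref{lem:4.2} and then apply Lemma~\ref{lem:phi.x} repeatedly (your induction on $k$ simply makes this explicit), and deduce the ``In particular'' statement from \eqref{eq:bt=st+} in Proposition~\ref{lem:bt->st}. Your explicit remark that $b'(\lwe)$ stays in $\wXCA(\surfo)$ so that Lemma~\ref{lem:phi.x} applies at each step is a detail the paper leaves implicit.
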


\begin{proof}
	By Lemma~\ref{lem:4.2},  $b=b^{\varepsilon_1}_{{i_1}}\circ\cdots\circ b^{\varepsilon_k}_{{i_k}}$ for some $1\leq i_j\leq n$ and $\varepsilon_j\in\{\pm 1\}$. By using repeatedly \eqref{eq:phi.x}, we get the first formula.

	For the second formula, by Lemma~\ref{lem:4.2}, there is a $b'\in\BT(\surfo)$ and an $s\in\udac$ such that $\alpha=b'(s)$. By Proposition~\ref{lem:bt->st}, we have $\iota(B_\alpha)=\phi_{X_\alpha}$. So $$X_{B_\alpha(\lwe)}=\iota(B_\alpha)(X_{\lwe})=\phi_{X_\alpha}(X_{\lwe}).$$
\end{proof}

Now we process to prove Theorem~A (under Assumption~\ref{ass}), the Calabi-Yau-$\XX$ version of the main results in \cite{QQ, QZ2}.

\begin{theorem}\label{thm:X:}
The map $\lwe\mapsto X_{\lwe}$ gives a bijection
\begin{gather}\label{eq:X:}
    X\colon\wXCA(\surfo)\longrightarrow \Sphzz(\Gamma_\ac).
\end{gather}
\end{theorem}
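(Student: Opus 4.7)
The plan is to establish the two directions separately. Surjectivity will be essentially by definition; injectivity will follow by reducing to the base case of simple objects via the braid-spherical equivariance already proved.

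Surjectivity is built into the setup: $\Sphzz(\Gamma_\ac)$ was defined in Definition~\ref{def:reachable} as the set of $\XX$-spherical objects of the form $X_{\lwe}$ for $\lwe\in\wXCA(\surfo)$, and the corollary after Proposition~\ref{cor:relfirst} shows that every such $X_{\lwe}$ is indeed $\XX$-spherical. Hence $X$ is well-defined and surjective onto $\Sphzz(\Gamma_\ac)$.

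For injectivity, suppose $X_{\lws}=X_{\lwt}$ with $\lws,\lwt\in\wXCA(\surfo)$. Invoking Lemma~\ref{lem:CA BT}, write $\lws=b(\wss_i^m)$ for some $b\in\BT(\ac)$, $1\leq i\leq n$, and $m\in\ZZ$. From $X_{\wss_i^0}=S_i$ (Construction~\ref{con:string}) and the compatibility $X_{\lws[\ii+\sii\XX]}=X_{\lws}[\ii+\sii\XX]$ we get $X_{\wss_i^m}=S_i[m\XX]$. Then Proposition~\ref{cor:6.4.2} together with the group homomorphism $\iota$ from Proposition~\ref{lem:bt->st} yields
\[
X_{b^{-1}(\lwt)}
=\iota(b^{-1})(X_{\lwt})
=\iota(b)^{-1}(X_{\lws})
=\iota(b)^{-1}\iota(b)(S_i[m\XX])
=S_i[m\XX].
\]
Shifting by $[-m\XX]$ gives $X_{b^{-1}(\lwt)[-m\XX]}=S_i$, so the rigidity Lemma~\ref{lem:5.8} forces $b^{-1}(\lwt)[-m\XX]=\wss_i^0$, equivalently $b^{-1}(\lwt)=\wss_i^m$. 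Applying $b$ then gives $\lwt=\lws$, proving injectivity.

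The proof is thus essentially an assembly of previously established ingredients: Lemma~\ref{lem:5.8} as the rigidity base case, Lemma~\ref{lem:CA BT} as the orbit decomposition $\wXCA(\surfo)=\BT(\ac)\cdot(\dac)^{\ZZ}$, and Proposition~\ref{cor:6.4.2} as the braid-spherical equivariance $X_{b(\lwe)}=\iota(b)(X_{\lwe})$. Under Assumption~\ref{ass} I do not foresee any additional obstacle; the only small check is that $b^{-1}(\lwt)[-m\XX]$ remains in $\wXACC(\surfo)$ so that Lemma~\ref{lem:5.8} applies, which is automatic since $\wXCA(\surfo)$ is closed under both the $\BT(\ac)$-action and $[\XX]$-shifts.
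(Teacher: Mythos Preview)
Your proof is correct and follows essentially the same route as the paper's: surjectivity by Definition~\ref{def:reachable}, and injectivity via Lemma~\ref{lem:CA BT}, Proposition~\ref{cor:6.4.2}, and Lemma~\ref{lem:5.8}. You are in fact slightly more careful than the paper, which applies Lemma~\ref{lem:5.8} directly to $X_{\wss^{m}}=X_{b^{-1}(\lwe')}$ without explicitly noting the $[-m\XX]$-shift needed to land on a bare simple $S_i$.
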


\begin{proof}
	The surjectivity follows from the Definition~\ref{def:reachable}. For the injectivity, assume $\lwe,\lwe'\in\wXCA(\surfo)$ with $X_{\lwe}=X_{\lwe'}$. By Lemma~\ref{lem:CA BT}, there is $b\in\BT(\surfo)$ and $\wss^m\in(\dac)^{\ZZ}$ such that $\lwe=b(\wss^{m})$. So by Corollary~\ref{cor:6.4.2}, we have $$X_{\wss^{m}}=\iota(b)^{-1}(X_{\lwe})=\iota(b)^{-1}(X_{\lwe'})=X_{b^{-1}(\lwe')}.$$
	By Lemma~\ref{lem:5.8}, we have $\wss^{m}=b^{-1}(\lwe')$, which implies that $\lwe=\lwe'$ as required.
\end{proof}

\begin{theorem}\label{thm:st=bt}
	The group homomorphism in \eqref{eq:bt->st} is an isomorphism.
\end{theorem}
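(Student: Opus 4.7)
The plan is to deduce the isomorphism from the already-established bijection $X$ of Theorem~\ref{thm:X:} and the equivariance formula in Proposition~\ref{cor:6.4.2}. By Proposition~\ref{lem:bt->st} we have a well-defined homomorphism $\iota\colon\BT(\surfo)\to \ST(\Gamma_\ac)$, so what remains is to establish surjectivity and injectivity separately.

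For surjectivity, I would argue as follows. The group $\ST(\Gamma_\ac)$ is generated by $\phi_S$ with $S\in\Sph^{\ZZ^2}(\Gamma_\ac)$. By Theorem~\ref{thm:X:}, every such $S$ is of the form $X_{\lwe}$ for some $\lwe\in\wXCA(\surfo)$. Since the spherical twist $\phi_{X_{\lwe}}$ depends only on the underlying $\eta\in\CA(\surfo)$ (as noted after Definition~\ref{def:reachable}), we may apply \eqref{eq:bt=st+} from Proposition~\ref{lem:bt->st} to obtain $\phi_{X_{\lwe}}=\phi_{X_\eta}=\iota(B_\eta)$, so every generator of $\ST(\Gamma_\ac)$ lies in the image.

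For injectivity, suppose $b\in\BT(\surfo)$ satisfies $\iota(b)=1$ in $\ST(\Gamma_\ac)\subseteq\Aut^o\D_{fd}(\Gamma_\ac)$. By the definition of $\Aut^o$, this means $\iota(b)$ fixes every object of $\Sph^{\ZZ^2}(\Gamma_\ac)$. Using Proposition~\ref{cor:6.4.2}, for every $\lwe\in\wXCA(\surfo)$ we then have
\[
X_{b(\lwe)}=\iota(b)(X_{\lwe})=X_{\lwe}.
\]
The injectivity of $X$ provided by Theorem~\ref{thm:X:} forces $b(\lwe)=\lwe$ for every $\lwe\in\wXCA(\surfo)$. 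Projecting to underlying (ungraded) arcs shows that $b$ fixes every $\eta\in\CA(\surfo)$ setwise.

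The main obstacle is the final step: showing that an element $b\in\BT(\surfo)\subseteq\MCG(\surfo)$ that fixes every $\eta\in\CA(\surfo)$ must be trivial. This is a standard faithfulness statement for surface mapping class groups acting on the set of isotopy classes of simple closed arcs; I would invoke the analogous result already proved for DMS in \cite{QQ} (the present setting differs only in that an additional line field/grading is carried along, which does not affect the combinatorial action on arcs). Concretely, one picks a full formal arc system and argues that a mapping class fixing a sufficiently rich family of arcs (in particular containing $\udac$ together with enough further closed arcs obtained by braid twists) must be isotopic to the identity. Combining these steps yields $b=1$ and completes the proof.
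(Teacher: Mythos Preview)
Your argument is essentially correct and follows the same overall structure as the paper's proof: surjectivity from the generators lying in the image, and injectivity via Proposition~\ref{cor:6.4.2} combined with the bijection of Theorem~\ref{thm:X:} to deduce that $b$ fixes every $\eta\in\CA(\surfo)$.

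The difference lies in the final step. The paper does not appeal to a general faithfulness statement for the action of $\MCG(\surfo)$ on $\CA(\surfo)$; instead it observes that $b(\eta)=\eta$ implies $B_\eta=B_{b(\eta)}=b\circ B_\eta\circ b^{-1}$ via \eqref{eq:bt}, so $b$ commutes with every generator of $\BT(\surfo)$ and hence lies in its center. The result from \cite{QQ} actually invoked is that the center of $\BT(\surfo)$ is trivial (when $\surf$ is not a polygon), with the polygon case handled separately by \cite{KS}. Your ``standard faithfulness'' claim would also work, but the center argument is more self-contained and matches exactly what is available in \cite{QQ}; you should also note the polygon exception.
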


\begin{proof}
	We have that $\iota$ in \eqref{eq:bt->st} is surjective because the generators $\phi_i$, $1\leq i\leq n$, are in its image. So we only need to show that it is injective. Let $b\in\BT(\surfo)$ with $\iota(b)=1$. By Proposition~\ref{cor:6.4.2}, we have
	$$X_{b(\lwe)}=\iota(b)(X_{\lwe})=X_{\lwe}$$
	for any $\lwe\in\wXCA(\surfo)$. So by Theorem~\ref{thm:X:}, we have $b(\lwe)=\lwe$. Hence $$B_{\eta}=B_{b(\eta)}\xlongequal{\eqref{eq:bt}}b\circ B_{\eta}\circ b^{-1},$$
	which implies $b\circ B_\eta=B_\eta\circ b$, i.e. $b$ is in the center of $\BT(\surfo)$. But by \cite{QQ}, the center of $\BT(\surfo)$ is trivial when $\surf$ is not a polygon. So $b=1$ as required. In the case $\surf$ is a polygon, the isomorphism $\iota$ is shown in \cite{KS} where the groups are isomorphic to the classical braid/Artin group.
\end{proof}

To prove the $\qv$-intersection formula, we need the following lemma.

\begin{lemma}\label{lem:1}
Let $s\in\udac$ and $\eta\in\ACC(\surfo)$ such that $\eta$ is contained
in the two $\ac$-polygons that are incident with $s$.
Then $\Int_{\surfoi}(s,\eta)=0$.
\end{lemma}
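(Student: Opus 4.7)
The plan is to work inside the topological disc $D := P_1 \cup P_2$, where $P_1, P_2$ are the two $\ac$-polygons incident to $s$, sharing the arc $\gamma_i\in \ac$ to which $s$ is dual. By Assumption~\ref{ass}, $P_1\neq P_2$ and they share only $\gamma_i$, so $D$ is a disc whose only interior decorations are the endpoints $Z_1 = s(0)$ and $Z_2 = s(1)$ of $s$, and $s$ is a simple arc in $D$ from $Z_1$ to $Z_2$ crossing $\gamma_i$ exactly once. The strategy is to classify, up to isotopy in $\surfo$, the admissible closed curves $\eta$ contained in $D$ with endpoints in $\{Z_1, Z_2\}$, and then exhibit for each isotopy class a representative meeting $s$ only inside $\Tri$.

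For the classification, I would use that $\pi_1(D\setminus\{Z_1,Z_2\})$ is free on two generators, the small loops around $Z_1$ and around $Z_2$. If the homotopy class of $\eta$ corresponds to a reduced word involving a non-trivial power of a single generator, any minimal-position realisation carries a self-intersection whose inner subloop bounds a once-decorated monogon around that decoration, contradicting admissibility. Ruling such curves out, the only remaining admissible possibilities (up to isotopy and reversal) should be: (i) a constant loop at $Z_1$ or $Z_2$; (ii) a simple arc from $Z_1$ to $Z_2$, which is unique up to isotopy rel endpoints since $D$ is a disc containing no special interior points outside $\{Z_1,Z_2\}$; and (iii) a simple loop based at $Z_k$ bounding a subdisc of $D$ containing $Z_{k'}$, for $\{k,k'\}=\{1,2\}$.

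In case (i), $\eta$ may be shrunk into a small neighbourhood of its endpoint that is disjoint from the interior of $s$. In case (ii), $\eta$ and $s$ are simple arcs between the same two points in the disc $D$ with no other special interior points, so they cobound an empty disc and are isotopic rel endpoints; thus one may take $\eta = s$. In case (iii), I would draw $\eta$ so as to hug $s$ on both sides, making the subdisc bounded by $\eta$ contain $s$ entirely and meet $\eta$ only at the shared decoration $Z_k$. In each case the chosen representative meets $s$ only inside $\Tri$, so $\Int_{\surfoi}(s,\eta)=0$.

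The main obstacle is the classification step, which demands control over minimal realisations of arbitrary elements of $\pi_1(D\setminus\{Z_1,Z_2\})=F_2$. An alternative route that sidesteps the explicit enumeration is a direct bigon-elimination argument: assume $\eta$ and $s$ are placed in minimal position with a crossing in $\surfoi$, take an innermost complementary region of $\eta\cup s$ in $D$, and observe that by the topology of $D$ this region is either a bigon between $\eta$ and $s$ (contradicting minimality) or a monogon cut out by a self-intersection of $\eta$ and enclosing exactly one of $Z_1, Z_2$ (contradicting admissibility).
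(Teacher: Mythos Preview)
Your primary approach (A), classifying all admissible closed curves in $D = P_1 \cup P_2$ and checking each type, has a genuine gap: the classification is incomplete. Ruling out words containing a square of a single generator does not reduce the possibilities to your cases (i)--(iii), because admissible closed curves in $D$ need not be simple. For instance, an arc from $Z_1$ to $Z_2$ that first runs once around the outside of \emph{both} decorations before heading to $Z_2$ has exactly one essential self-intersection, and the subloop it cuts out encloses both $Z_1$ and $Z_2$; this is a twice-decorated monogon, so the curve is admissible, yet it lies in none of your three cases. In fact there are infinitely many homotopy classes of admissible closed curves in $D$, so a classification route is far heavier than your sketch suggests. Your alternative (B) is closer in spirit to a valid argument but is not a proof as stated: ``innermost complementary region'' is undefined, and nothing you say forces such a region to be a bigon or a once-decorated monogon rather than a polygon with more sides, a region partly bounded by $\partial D$, or a monogon containing zero or two decorations.

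The paper avoids classification entirely by following $\eta$ directly. Assuming an interior crossing $p$, it tracks the two branches of $\eta$ out of $p$: they cannot meet each other before returning to $s$ (that would cut out a once-decorated monogon), and they cannot both reach $\Tri$ directly (then $\eta$ would be isotopic to $s$, contradicting the existence of $p$ in minimal position). Hence one branch meets $s$ again, and continuing along $\eta$ from there one finds the curve trapped in a pocket bounded by pieces of $s$ and of $\eta$ already traversed; it is then forced to wind around the enclosed decoration, contradicting minimal position with $s$. This trapping argument handles all admissible $\eta$ at once.
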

\begin{proof}
Note that $s$ and $\eta$ will be represented by curves in their isotopy class that are in general position and have minimal intersections with each other and with $\ac$.
Suppose that there is an intersection $p$ between $s$ and $\eta$ such that $p\notin\Delta$.
Starting from $p$, there are two paths following $\eta$. These two paths can not intersect
before hitting $s$ (cf. green circle in Figure~\ref{fig:1}) --otherwise they form a once-decorated monogon that is not admissible. Then they can either hit $s\setminus\Delta$ or $\Delta$. If they both hit $\Delta$ first, $\eta$ is isotopy to $s$, which is a contradiction. Then one of them have to hit $s\setminus\Delta$ first, cf. Figure~\ref{fig:1}.
After that, such a path can not hit the previous footprint or $s$ (cf. blue circle in Figure~\ref{fig:1}),
which forces it to wind the corresponding decoration before hitting it.
However, this is a contradiction as it is not in a minimal position with respect to $s$.
\begin{figure}[htpb]\centering
\begin{tikzpicture}[scale=.7]
    \draw[cyan,very thick](3,0)arc(0:-180:3)(-3,0)node[above]{$\eta$}
         (3,0)arc(180:0:3)(9,0)arc(0:-180:2.5);
    \draw[cyan]
         (4,0)arc(180:0:2)(8,0)arc(0:-180:1.5)(5,0)arc(180:0:1);
    \draw[cyan,very thick,dashed](6.5,-2.5)to(1,-2.5)
        (4,0)to(4,3);
    \draw[Green, very thick] (1.7,-2.5)circle(.2);
    \draw[blue, very thick] (4,2.2)circle(.2);
    \draw[red, thick](6,0)to node[above]{$s\qquad\qquad\qquad$}(0,0);
    \draw[white](0,0)\nn(6,0)\nn;
    \draw[red](0,0)\ww(6,0)\ww;\draw(3,0)\nn node[below right]{$p$};
\end{tikzpicture}
\caption{}\label{fig:1}
\end{figure}
\end{proof}

\begin{lemma}\label{lem:int=dim}
	For any $\wss^{m}\in(\dac)^{\ZZ}$ and any $\lwe\in\wXACC(\surfo)$, we have
	\begin{equation}\label{eq:int=dim}
	\qqInt(\wss^{m},\lwe)=\qdH (X_{\wss^{m}},X_{\lwe}).
	\end{equation}
\end{lemma}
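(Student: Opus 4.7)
The plan is to prove the identity by induction on $N:=\Int_{\surfoi}(s_i,\ue)$, after first reducing to the case $m=0$. For the reduction, observe that the lift $\wss_i^m=\wss_i^0[m\XX]$ satisfies $X_{\wss_i^m}=S_i[m\XX]$ by Construction~\ref{con:string} (since the $\XX$-shift on $\log\surfo$ is the deck transformation and $X_{\wss_i^0}=S_i$). Consequently $\qdH(S_i[m\XX],X_\lwe)=\qv^{m\XX}\qdH(S_i,X_\lwe)$. On the topological side, shifting a double-graded curve by $[m\XX]$ shifts the $\XX$-index of each of its intersections by $+m$ (read off directly from the defining condition of $\sind_p$), so $\qqInt(\wss_i^m,\lwe)=\qv^{m\XX}\qqInt(\wss_i^0,\lwe)$. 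Both sides scale by the same factor, so it suffices to prove $\qqInt(\wss_i^0,\lwe)=\qdH(S_i,X_\lwe)$.

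The base case $N=0$ is immediate from Proposition~\ref{cor:relfirst} applied to $\lwe_1=\wss_i^0$ and $\lwe_2=\lwe$, since then $\Int_{\surfoi}(\wss_i^0,\lwe)=0$.

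For the inductive step, assume $N\geq 1$ and pick an interior intersection $p\in s_i\cap\ue\cap\surfoi$ lying in the $\ac$-polygon $P$ containing one endpoint $Z$ of $s_i$. Taking $l$ to be the sub-segment of $s_i$ from $Z$ to $p$ (perturbed slightly to one side of $s_i$ so that only $Z$ remains on $s_i$), Lemma~\ref{lem:decomp} decomposes $\lwe=\lwb\wedge\lwa$ at $Z$ along $l$, with $\lwa,\lwb\in\wXACC(\surfo)$ both starting at $Z$. A direct count gives $\Int_{\surfoi}(s_i,\lwa)+\Int_{\surfoi}(s_i,\lwb)=N-1$, so the inductive hypothesis applies to both. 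Corollary~\ref{cor:realtri} then produces a triangle
\[
X_{\lwe}[-\nu'']\to X_{\lwa}\to X_{\lwb}[\nu]\to X_{\lwe}[-\nu''+1]
\]
in $\D_{fd}(\Gamma_\ac)$. Applying $\RHom(S_i,-)$ and combining the inductive hypothesis on $\lwa,\lwb$ with the explicit topological bookkeeping of $\qqInt(s_i,\lwa)+\qqInt(s_i,\lwb)-\qqInt(s_i,\lwe)$---which differs by exactly the $-(1+\qv^{\XX-1})\qv^{\ind^{\ZZ^2}_p(s_i,\lwe)}$ contribution of the now-absorbed interior intersection $p$, plus the two $\qv^{\ind^{\ZZ^2}_Z(s_i,\lwa)}$ and $\qv^{\ind^{\ZZ^2}_Z(s_i,\lwb)}$ contributions of the newly created decoration intersections at $Z$---should yield the desired identity.

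The principal difficulty will be matching the algebraic bi-degree shifts $\nu,\nu',\nu''$ from the triangle against the topological contributions at $p$ and $Z$, which requires the long exact sequence coming from $\RHom(S_i,-)$ to split into short exact sequences. The approach will be to describe each basis element of $\Hom^{\ZZ^2}(S_i,X_{\lwa})$ and $\Hom^{\ZZ^2}(S_i,X_{\lwb})$ supplied by the inductive hypothesis as an angle-morphism $\varphi(\wss_i^0,-)$ or $\varphi(\overline{\wss_i^0},-)$---extended at the interior intersection $p$ via a Calabi-Yau dual construction that accounts for the extra $\qv^{\XX-1}$ factor---and then to show, via Corollaries~\ref{cor:conterclock} and \ref{cor:diff}, that the compositions with the connecting maps $\varphi(\overline{\lwb},\overline{\lwe})$ and $\varphi(\lwe,\overline{\lwa})$ vanish on these basis elements. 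This together with the bi-degree matching $\nu''+\nu+\nu'=\ind^{\ZZ^2}_p(s_i,\lwe)+(\XX-1)$ (or its appropriate analogue) inherent in the triangle will produce the short exact sequence and conclude the induction.
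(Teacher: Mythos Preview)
Your overall strategy---decompose $\lwe$ at an endpoint $Z$ of $s_i$ via a sub-segment of $s_i$, use the resulting triangle from Corollary~\ref{cor:realtri}, and argue that the connecting map vanishes on $\Hom(S_i,-)$---matches the paper's argument (its case~(2)). Your choice to induct directly on $N=\Int_{\surfoi}(s_i,\ue)$ rather than on $l(\lwe)$ is a mild simplification; the paper uses $l(\lwe)$ as primary induction variable and your $N$ only as a secondary one. Two points, however, need attention.

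First, you are overcomplicating the bookkeeping. With $p$ chosen to be the \emph{closest} interior intersection to $Z$ along $s_i$ (so that the interior of $l$ avoids $\eta$, as Lemma~\ref{lem:decomp} requires---your proposal does not ensure this), the $\qv$-intersections are already additive:
\[
\qqInt(\wss_i^0,\lwa)+\qqInt(\wss_i^0,\lwb)=\qqInt(\wss_i^0,\lwe).
\]
Since $\lwa,\lwb$ follow $l\subset s_i$ near $Z$ and inherit their gradings from $\lwe$, the index identities \eqref{eq:hkkg}, \eqref{eq:hkkg1}, \eqref{eq:ksg} give that the two new decoration indices at $Z$ are $\ind_p^{\ZZ^2}(\wss_i^0,\lwe)$ and $\ind_p^{\ZZ^2}(\wss_i^0,\lwe)+\XX-1$; their contributions sum exactly to the lost $(1+\qv^{\XX-1})\qv^{\ind_p}$ term. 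So there is nothing to ``match'': it suffices to show the long exact sequence from $\Hom(S_i,-)$ degenerates, i.e.\ that $\Hom(S_i,\varphi(\lwa,\lwb))=0$.

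Second, your proposed route to that vanishing has a genuine gap. You plan to write bases of $\Hom^{\ZZ^2}(S_i,X_{\lwa})$ and $\Hom^{\ZZ^2}(S_i,X_{\lwb})$ as angle-morphisms and kill them one by one via Corollaries~\ref{cor:conterclock} and~\ref{cor:diff}. But your inductive hypothesis is only the numerical identity $\qqInt=\qdH$; it supplies no basis description. Proposition~\ref{cor:relfirst} gives angle-morphism bases only under $\Int_{\surfoi}=0$, which need not hold for $\lwa,\lwb$. The paper instead argues directly from the geometry at $Z$: the start-segments of $s_i,\ua,\ub$ at $Z$ are in counterclockwise order (Figure~\ref{fig:ab}), and then Corollary~\ref{cor:conterclock} together with the explicit string-model components of $\varphi(\ua,\ub)$ and the composition rule~\eqref{eq:3} yield $\Hom(X_{s_i},\varphi(\ua,\ub))=0$ without ever needing a basis of the Hom-spaces.
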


\begin{proof}
	Use the induction on $l(\lwe):=\Int_{\surfo}(\ac^{\ZZ^2},\lwe)$. For the case $l(\lwe)=1$, we have $\lwe\in(\dac)^{\ZZ}$. So $\Int_{\surfoi}(\wss^{m},\lwe)=0$. Then the formula \eqref{eq:int=dim} holds by Proposition~\ref{cor:relfirst}.
	
	Now suppose that the formula holds for $l(\lwe)<r$ for some $r\geq 2$. Consider the case $l(\lwe)=r$. Since the case that $\Int_{\surfoi}(\wss^{m},\lwe)=0$ is contained in  Proposition~\ref{cor:relfirst}, we assume that $\Int_{\surfoi}(\wss^{m},\lwe)\neq0$. There are the following two cases:
	\begin{enumerate}
	\item[(1)] If each endpoint of $\wss^{m}$ is an endpoint of $\lwe$, then by Lemma~\ref{lem:1}, the condition $\Int_{\surfoi}(\wss^{m},\lwe)\neq0$ implies that there is a decoration $Z$ which is not an endpoint of $\wss^{m}$ or $\lwe$ and which lives in the same $\ac$-polygon $P$ with an arc segment of $\eta$. Then by Lemma~\ref{lem:decomp}, there are $\lws,\lwt\in\wXACC(\surfo)$ such that $\lwe=\lwt\wedge\lws$, $l(\lwe)=l(\lws)+l(\lwt)$ and $\qqInt(\wss^{m},\lwe)=\qqInt(\wss^{m},\lws)+\qqInt(\wss^{m},\lwt)$.
	\item[(2)] If $\wss^{m}$ has an endpoint $Z$ which is not an endpoint of $\lwe$. Then consider a line segment $l\subset s$ from $Z$ to the closest intersection between $s$ and $\eta$, which decomposes $\lwe$ into $\lws$ and $\lwt$ with $l(\lwe)\leq l(\lws)+l(\lwt)\leq l(\lwe)+2$. So $\lwe=\lwt\wedge\lws$ and $\qqInt(\wss^{m},\lwe)=\qqInt(\wss^{m},\lws)+\qqInt(\wss^{m},\lwt)$.
	\end{enumerate}
	In both cases, by Corollary~\ref{cor:realtri}, we have a triangle
	$$    X_{\lwe}[\nu'']\xrightarrow{\varphi(\lwe,\overline{\lws})[-\nu'']} X_{\lws}\xrightarrow{\varphi(\lws,\lwt)}X_{\lwt}[\nu]
    \xrightarrow{\varphi(\overline{\lwt},\overline{\lwe})[\nu]}X_{\lwe}[\nu+\nu']
	$$
	where 	$\nu=\ind_{\sigma(0)}^{\ZZ^2}(\lws,\lwt), \nu'=\ind_{\tau(1)}^{\ZZ^2}(\overline{\lwt},\overline{\lwe}),\nu''=\ind_{\eta(0)}^{\ZZ^2}(\lwe,\overline{\lws}).$ Applying $\Hom(X_{\wss^{m}},-)$ to this triangle, we have an exact sequence, which implies that it suffices to show that the formula \eqref{eq:int=dim} holds for $\lws$ and $\lwt$, and
	$$\Hom(X_s,\varphi(\us,\ut))=0$$
	in the orbit category $\D_{fd}(\Gamma_\ac)/\<[1],\XX\>$.
	By construction, $\varphi(\us,\ut)$ is given by the morphism induced by an angle of the $\ac$-polygon containing $Z$. But for case (1) the curve $\gamma\in\ac$ corresponding to $s$ is not an edge of this polygon and for case (2) the starting segments of $s,\us,\ut$ are in the counterclockwise order (see Figure~\ref{fig:ab}). So by \eqref{eq:3} and Corollary~\ref{cor:conterclock}, respectively, we have that in both of these two cases,  $\Hom(X_s,\varphi(\us,\ut))=0$ holds.

	Now we only need to show that the formula \eqref{eq:int=dim} holds for $\lws$ and $\lwt$. In case (1), this follows from the inductive assumption. Next consider case (2). If both $l(\lws)$ and $l(\lwt)$ are smaller than $l(\lwe)$, this also follows from the inductive assumption. Otherwise, one of $l(\lws)$ and $l(\lwt)$ is bigger than or equals $l(\lwe)$, then the other one is not bigger then 2. In this case, we must have $\{l(\lws),l(\lwt)\}=\{2,l(\lwe)\}$ and both of $\lws$ and $\lwt$ has less intersections with $s$ in $\surfoi$ than $\lwe$. Use another induction on such an intersection number, one reduces the situation to the case that $\Int_{\surfoi}(\wss^{m},\lwe)=0$. This completes the proof.
	\begin{figure}[h]\centering
	\begin{tikzpicture}[scale=.7]
	\draw[cyan, very thick] (30:3) edge[bend right] (-45:4)(-45:4.5)
	(-3,0) edge[bend left=20](-50:4)(30:3.5)
	(-3,0) edge[bend right=15](35:3)
	(2.5,-1.5)node{$\eta$}(-1,-1)node{$\tau$}(-1,.5)node{$\sigma$};
	\draw[blue,very thick](0,-3)node[below]{$\gamma\in\ac$}to(0,3);
	\draw[red,thick] (0:-3) -- (0:3)node[right]{$s$}(1.8,.3)node[black]{{}}
	(2.1,0)node[black]{{$\bullet$}};\draw (0:-3)node[white]{$\bullet$}node[red]{$\circ$};
	\end{tikzpicture}
	\caption{The composition of $\sigma$ and $\tau$}\label{fig:ab}
	\end{figure}
\end{proof}

\begin{theorem}\label{cor:int=dim}
	For any $\lws\in\wXCA(\surfo)$ and any $\lwe\in\wXACC(\surfo)$, we have
	\begin{equation}\label{eq:int=dim3}
	\qqInt(\lws,\lwe)=\qdH (X_{\lws},X_{\lwe}).
	\end{equation}
\end{theorem}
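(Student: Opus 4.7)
The plan is to transport the general closed arc $\lws$ to a standard dual arc via the braid twist action, then invoke Lemma~\ref{lem:int=dim}. By Lemma~\ref{lem:CA BT}, write $\lws = b(\wss^m)$ for some $b \in \BT(\surfo)$ and $\wss^m \in (\dac)^{\ZZ}$. The braid twist $b$ is realized by a diffeomorphism of $\surfo$ fixing the decorations which lifts compatibly to $\log\surfo$; since double gradings on curves are by construction equivariant under this lift, $b$ preserves the bi-index at every intersection point. Consequently
$$\qqInt(\lws, \lwe) = \qqInt(b(\wss^m), b(b^{-1}(\lwe))) = \qqInt(\wss^m, b^{-1}(\lwe)).$$
Lemma~\ref{lem:int=dim}, applied to $\wss^m \in (\dac)^{\ZZ}$ and $b^{-1}(\lwe) \in \wXACC(\surfo)$, identifies the right hand side with $\qdH(X_{\wss^m}, X_{b^{-1}(\lwe)})$. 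On the categorical side, Proposition~\ref{cor:6.4.2} gives $X_{\lws} = \iota(b)(X_{\wss^m})$; if we can also establish $X_{\lwe} = \iota(b)(X_{b^{-1}(\lwe)})$, then applying the auto-equivalence $\iota(b)$ to both arguments of $\qdH$ yields $\qdH(X_{\wss^m}, X_{b^{-1}(\lwe)}) = \qdH(X_{\lws}, X_{\lwe})$, closing the loop.

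The main obstacle is therefore the extension of Proposition~\ref{cor:6.4.2} from $\wXCA(\surfo)$ to $\wXACC(\surfo)$: for every $b \in \BT(\surfo)$ and every $\lwe' \in \wXACC(\surfo)$, one must show $X_{b(\lwe')} = \iota(b)(X_{\lwe'})$. By Lemma~\ref{lem:4.2} it is enough to treat the generators $b = B_s$ and $b = B_s^{-1}$ with $s \in \udac$, and I would induct on $l(\lwe') = \Int_{\surfo}(\ac^\ZZ, \lwe')$. The base case $\Int_{\surfoi}(s, \lwe') = 0$ is delivered by Proposition~\ref{lem:rel1}. For the inductive step, I would choose a decoration $Z$ lying in the same $\ac$-polygon as an arc segment of $\lwe'$, apply Lemma~\ref{lem:decomp}(1)--(2) to decompose $\lwe' = \lwt \wedge \lws'$ into double graded admissible curves of strictly smaller length, and use Corollary~\ref{cor:realtri} to realize $X_{\lwe'}$ as the third vertex of a triangle built from $X_{\lws'}$ and $X_{\lwt}$. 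Applying the exact functor $\phi_{X_s}$ to this triangle, and noting that $B_s$ commutes with the extension operation $\wedge$, the inductive hypothesis on $\lws'$ and $\lwt$ identifies $\phi_{X_s}(X_{\lwe'})$ with $X_{B_s(\lwe')}$.

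The delicate subcase arises when $B_s(\lwt \wedge \lws')$ topologically degenerates into two components, which occurs precisely when the common basepoint of $\lws'$ and $\lwt$ is an endpoint of $s$; here I would adapt the octahedral argument from the last case of the proof of Proposition~\ref{cor:1} to split the relevant object into a direct sum and recover the identification. Once the extension of Proposition~\ref{cor:6.4.2} is in place, the four-step reduction outlined above delivers Theorem~\ref{cor:int=dim} immediately.
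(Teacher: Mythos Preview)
Your overall reduction is exactly the paper's: write $\lws=b(\wss^{m})$ via Lemma~\ref{lem:CA BT}, use braid-invariance of $\qqInt$, apply Lemma~\ref{lem:int=dim} to $\wss^{m}$ and $b^{-1}(\lwe)$, and transfer back through the auto-equivalence $\iota(b)$. You are in fact more careful than the paper at one point: the paper simply cites Proposition~\ref{cor:6.4.2} for the identification $X_{b^{-1}(\lwe)}=\iota(b)^{-1}(X_{\lwe})$, even though that proposition is stated only for $\lwe\in\wXCA(\surfo)$, not for $\lwe\in\wXACC(\surfo)$. You correctly isolate this as the missing ingredient.

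Your proposed proof of the extension, however, has a real gap that is different from the ``delicate subcase'' you flag. Suppose you decompose $\lwe'=\lwt\wedge\lws'$ and obtain the triangle of Corollary~\ref{cor:realtri}; after applying $\phi_{X_s}$ and using the inductive hypothesis you get a triangle on the pair $(X_{B_s(\lws')},X_{B_s(\lwt)})$ with third term $\phi_{X_s}(X_{\lwe'})$. Corollary~\ref{cor:realtri} applied to $B_s(\lwe')=B_s(\lwt)\wedge B_s(\lws')$ gives another triangle on the same pair with third term $X_{B_s(\lwe')}$. But two triangles sharing two vertices need not have isomorphic third vertices: you must show that the connecting morphism $\phi_{X_s}\bigl(\varphi(\lws',\lwt)\bigr)$ agrees (up to a nonzero scalar) with $\varphi\bigl(B_s(\lws'),B_s(\lwt)\bigr)$. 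This is automatic only when the relevant graded Hom space is one-dimensional, i.e.\ when $\lws'$ and $\lwt$ meet only at the single decoration $Z$; for a general $\lwe'\in\wXACC(\surfo)$ with interior self-intersections the pieces $\lws',\lwt$ inherit further intersections and the Hom space can be larger. A secondary issue is that Lemma~\ref{lem:decomp} does not assert that the pieces remain admissible in the $\wXACC$ case, so your induction is not literally over $\wXACC(\surfo)$. The paper's own proof does not address any of this; it treats Proposition~\ref{cor:6.4.2} as if it already covered admissible closed curves, so both arguments leave the same step to be filled in.
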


\begin{proof}
	By Lemma~\ref{lem:CA BT}, there is an $b\in\BT(\surfo)$ and $\wss^{m}\in\ac$ such that $\lws=b(\wss^{m})$. So we have
	$$\begin{array}{rcl}
	\qqInt(\lws,\lwe)&\xlongequal{\qquad}&\qqInt(\wss^{m},b^{-1}(\lwe)) \vspace{1ex}\\
	&\xlongequal{Lem.~\ref{lem:int=dim}}&\qdH(X_{\wss^{m}},X_{b^{-1}(\lwe)}) \vspace{1ex}\\
	&\xlongequal{Cor.~\ref{cor:6.4.2}}&\qdH(X_{\wss^{m}},\iota(b)^{-1}(X_{\lwe})) \vspace{1ex}\\
	&\xlongequal{\qquad}&\qdH(\iota(b)(X_{\wss^{m}}),X_{\lwe}) \vspace{1ex}\\
	&\xlongequal{Cor.~\ref{cor:6.4.2}}&\qdH(X_{b(\wss^{m})},X_{\lwe}) \vspace{1ex}\\
	&\xlongequal{\qquad}&\qdH(X_{\lws},X_{\lwe}).
	\end{array}$$
\end{proof}

%=========================================================
\section{General case}\label{sec:general}
%=========================================================
Now let us consider the general case, i.e.
show that all the results in Section~\ref{sec:int=dim} still hold when removing Assumption~\ref{ass}.
Here we summarize the results:
\begin{itemize}
  \item[(X1)] There is a bijection $X$ as \eqref{eq:X:} in Theorem~\ref{thm:X:}.
  \item[(X2)] $X$ induces a group isomorphism \eqref{eq:bt->st} as in Theorem~\ref{thm:st=bt}
  satisfying \eqref{eq:bt=st+}.
  \item[(X3)] Formula \eqref{eq:int=dim3} holds as in Theorem~\ref{cor:int=dim}.
\end{itemize}
We will refer these results as Property-X.
The strategy consists of the following two steps.
\begin{enumerate}
  \item For any graded marked surface $\surfo$, there is a graded marked surface $\surfo^+$
  obtained from $\surfo$ by adding marked points on its boundary components such that $\surfo^+$
  admits a full formal arc system $\ac^+$ satisfying Assumption~\ref{ass}.
  Thus, $\ac^+$ satisfies Property-X by Theorems~\ref{thm:X:},~\ref{thm:st=bt} and \ref{cor:int=dim}.
  \item $\surfo$ inherits Property-X from $\surfo^+$ by deleting extra marked points/decorations.
\end{enumerate}
%Note that this strategy is similar to the one in \cite{QQ} when dealing with the Calabi-Yau-3 case.
However, there is no corresponding results from cluster theory that we can imported.

\subsection{Step~$1^\circ$}

Let us show the claim by proving the following lemma.
\begin{lemma}
Let $(g,b,m=\sum_{i=1}^b m_i)$ be the numerical data of $\surf$, where $g$ is the genus of $\surf$, $b$ is the number of boundary components of $\surf$ and $m_i$ is the number of marked points in $\M$ on the $i$-th boundary component for any $1\leq i\leq b$.
If $m_i\ge2$ for any $1\leq i\leq b$ and $m_1\ge 4g$, then $\surfo$ admits a full formal arc system satisfying Assumption~\ref{ass}.
\end{lemma}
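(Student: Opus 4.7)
The plan is to give an explicit construction of $\ac$ and verify Assumption~\ref{ass} directly. The rough strategy is to first reduce $\surf$ to a disk by cutting along a small system of arcs, then cut up the disk so that each piece contains exactly one decoration, and finally check that Assumption~\ref{ass} survives the re-gluing.

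More precisely, using $m_1 \geq 4g$, I would pick $2g$ pairwise disjoint open arcs $\alpha_1,\dots,\alpha_{2g}$ with all $4g$ endpoints distinct on $\partial_1$, forming a standard handle-cutting system, so that $\surf \setminus \bigcup \alpha_i$ is a planar surface. Next, using $m_i \geq 2$ for each boundary component, I would choose $b-1$ further disjoint arcs $\beta_2,\dots,\beta_b$, each connecting $\partial_1$ to a distinct other boundary $\partial_i$, with all endpoints fresh (disjoint from the endpoints of the $\alpha_i$). Cutting $\surf$ along all the $\alpha_i$ and $\beta_j$ produces a disk $D$ whose boundary carries the original open marked points together with doubled copies of the cut endpoints, and whose interior still contains all $m$ decorations. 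Inside $D$ I would triangulate inductively: pick a decoration close to $\partial D$, draw a single arc cutting it off in a digon or triangle, and iterate until all $m$ decorations lie in distinct $\ac$-polygons. Re-gluing along the $\alpha_i$ and $\beta_j$ then produces a full formal arc system $\ac$ on $\surfo$.

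The main obstacle is verifying Assumption~\ref{ass}. The identifications of pairs of boundary edges on $\partial D$ risk creating either (i) a polygon with a self-folded edge, when two of its $D$-edges get identified under the gluing, or (ii) two polygons sharing more than one arc of $\ac$, when two pairs of $D$-edges get identified in a matching way. Both failures can be read off locally from the cut system $\{\alpha_i,\beta_j\}$ and the subsequent triangulation of $D$ near their endpoints. The bounds $m_1 \geq 4g$ and $m_i \geq 2$ give just enough marked points on each boundary to space out the endpoints of the cut arcs so that on $\partial D$ no single polygon touches two copies of the same cut arc, and no two polygons are separated by two identified pairs at once. The most delicate part of the write-up will be checking this separation case by case for the inductive triangulation of $D$; once that bookkeeping is done, Assumption~\ref{ass} follows directly, and the strict inequalities in the hypothesis are exactly what make the separation possible.
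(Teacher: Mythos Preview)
Your plan has a genuine gap: the inductive triangulation of the disk $D$ is not specified in a way that guarantees Assumption~\ref{ass} after re-gluing. You acknowledge that ``the most delicate part'' is checking that no polygon acquires a self-folded edge and no two polygons share more than one arc, but defer this to an unspecified case analysis. That is precisely the content of the lemma, not bookkeeping: your instruction ``pick a decoration close to $\partial D$, draw a single arc cutting it off'' gives no control over where the new arc lands relative to the two copies of each $\alpha_i$ or $\beta_j$ on $\partial D$, so nothing prevents a single polygon from touching both copies of some $\alpha_i$ (producing a self-folded edge), or two adjacent polygons from doing so (producing a doubly-shared arc). There is also a counting slip: asking the $\beta_j$ to land at marked points on $\partial_1$ disjoint from the $4g$ endpoints of the $\alpha_i$ requires $m_1 \ge 4g + (b-1)$, which the hypothesis does not give.

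The paper avoids both issues by constructing the arc system directly rather than by uncontrolled cutting and re-gluing. For $b=1$ it takes the standard $4g$-gon presentation of $\surfo$ with the identified vertex placed at a marked point, and exhibits a full formal arc system explicitly (Figure~\ref{fig:8}, left, drawn for $g=2$); the $\ac$-polygons are visible in the picture and one checks by inspection that none has a repeated edge and no two share more than one. For $b>1$ it argues by induction on $b$, inserting one boundary component at a time via the local move in Figure~\ref{fig:8}, right, which uses only the two marked points that $m_i\ge 2$ provides on the new component and manifestly preserves Assumption~\ref{ass}. The point is that the construction is explicit enough for the assumption to be verified at each step, rather than having to be salvaged after the fact.
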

\begin{proof}
If $b=1$, take a polygon-presentation of $\surfo$ where the vertex is one of the marked points.
Then the left picture in Figure~\ref{fig:8} shows a required full formal arc system (for the genus 2 case).
For $b>1$, we can perform adding a boundary component operation, cf. the right picture in Figure~\ref{fig:8},
to produce the required full formal arc system inductively.
\begin{figure}[h]\centering
\begin{tikzpicture}[scale=.5,rotate=22.5]
\foreach \j in {-1,...,4}
{\draw[gray!23,fill=gray!23]($(45*\j+45:1)+(0,-3)$) \nn to($(45*\j:1)+(0,-3)$)\nn to(0,-5);}

\foreach \j in {10,30,50,70}
{\draw[thick,green!\j!blue](4.5*\j:5) edge[->-=.5,>=stealth] (4.5*\j+45:5) edge[->-=.5,>=stealth] (4.5*\j-45:5);}

\foreach \j in {-1,...,5}
{\draw[thick,blue]($(45*\j:1)+(0,-3)$) to (45*\j:5)\nn;}

\draw[thick,fill=gray!23]($(-45:1)+(0,-3)$)to(0,-5)to($(-135:1)+(0,-3)$) ;

\foreach \j in {-1,...,4}
{\draw[thick]($(45*\j+45:1)+(0,-3)$) \nn to($(45*\j:1)+(0,-3)$)\nn (0,-5)\nn;}

\foreach \j in {-2,...,5}
{\draw[red]($(45*\j+22.5:.5)+(45*\j+22.5:2.5)+(0,-1.5)$)\ww;}
\end{tikzpicture}
\qquad
\begin{tikzpicture}[xscale=.4,yscale=.4]
\draw[thick,blue](-1,5)to[bend left=-15] (-5,0) to [bend left=45] (5,0)to [bend left=-15](1,5);

\draw[thick,blue](-5,0)to [bend left=30](-5,10)\nn;
\draw[thick,blue](5,0)to [bend left=-30](5,10)\nn;
\draw[thick,blue,dashed](-5,10)to [bend left](5,10);
\draw[very thick](-5,0)\nn to(5,0)\nn;
\draw[red] (0,1) \ww (0,3) \ww (0,8) \ww;
\draw[thick,fill=gray!23](0,5) circle (1) (-1,5)\nn (1,5)\nn;
\end{tikzpicture}
\caption{A full formal arc system on polygon-presentation of a genus 2 marked surface
and adding a boundary component}\label{fig:8}
\end{figure}
\end{proof}

Therefore, we can now suppose that $\surfo^+$ is obtained from $\surfo$ by adding marked points on its boundary components
(and correspondingly decorated points in its interior).
\subsection{Step~$2^\circ$}	

\begin{definition}
A \emph{slide} of a full formal arc system $\ac$, with respect to one of its arc $\wg=AB$ and a chosen endpoint $B$ of $\wg$,
is an operation by replacing $\wg$ in $\ac$ with $\wg'$ to form another full formal arc system $\ac'$.
Here, $\wg'=AC$ is a diagonal in the $\ac$-polygon $P$ on the right of $\gamma$ when going from $A$ to $B$,
where $C$ is the adjacent vertex in $P$ of $B$ other than $A$.
See the left picture in Figure~\ref{fig:slide}.
\end{definition}

\begin{figure}[h]\centering
\begin{tikzpicture}[scale=.5,rotate=0]
\draw[blue,thick] (0,8)\nn to (3,8)node[above]{$B$}\nn to (6,8)node[above]{$C$}\nn to (9,8)\nn
    (3,8)to(3,0) (6,3)node{$P$} (3,3)node[left]{$\wg$};
\draw[blue, thick, dashed](3,0)edge node[right]{$\wg'$}(6,8);
%\draw[red](7.5,4)to(4.5,9);
\draw[red,dashed](4.5,9)to(1,4);
\draw[red]%(7,5.5)node{$\eta$}
(1.5,5.5)node{$\wss'$};
\draw[thick,thick] (0,0)node[below]{$M_1$}\nn to (3,0)node[below]{$A$}\nn to (6,0)node[below]{$M_2$}\nn to (9,0)\nn;
\draw[white](7.5,4)\nn(1,4)\nn(4.5,9)\nn;
\draw[red](7.5,4)\ww(1,4)\ww(4.5,9)\ww;
\end{tikzpicture}
\qquad
\begin{tikzpicture}[scale=.5,rotate=0]
\draw[blue,thick] (0,8)\nn to (3,8)node[above]{$B$}\nn to (6,8)node[above]{$C$}\nn to (9,8)\nn
    (7,3)node{$P$} (9,6)node[right]{$\widehat{\gamma}$};
\draw[blue,thick] plot [smooth,tension=1] coordinates {(3,0)(9,6)(6,0)};
\draw[red](7.5,4)\ww;
\draw[thick,thick] (0,0)node[below]{$M_1$}\nn to (3,0)node[below]{$A$}\nn to (6,0)node[below]{$M_2$}\nn to (9,0)\nn;
\end{tikzpicture}
\caption{The slide and removing }\label{fig:slide}
\end{figure}

\begin{lemma}
Let $\ac$ be a full formal arc system of $\surfo^+$ satisfies Property-X. If $\ac'$ is a slide of $\ac$,
then $\ac'$ also satisfies Property-X.
\end{lemma}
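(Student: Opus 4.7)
The plan is to build an explicit triangle equivalence $F\colon \D_{fd}(\Gamma_\ac)\xrightarrow{\,\sim\,}\D_{fd}(\Gamma_{\ac'})$ that intertwines the two string models, and then to transport Property-X from $\ac$ to $\ac'$ across $F$. The geometric input is that a slide is a local modification: only the arc $\wg$ changes to $\wg'$ and, correspondingly, only the dual $\wss$ changes to the $\wss'$ depicted in Figure~\ref{fig:slide}, while every other arc of $\ac$ and its dual agree with those of $\ac'$.

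First I view the $\ac'$-duals $\dac'=\{\wss'_1,\dots,\wss'_n\}$ as closed arcs on $\surfo^+$ and feed them into the string model built from $\ac$, obtaining objects $X^{\ac}_{\wss'_i}\in\D_{fd}(\Gamma_\ac)$. Property-X for $\ac$ (Theorem~\ref{cor:int=dim}) gives
\[
\qdH\bigl(X^{\ac}_{\wss'_i},X^{\ac}_{\wss'_j}\bigr)=\qqInt(\wss'_i,\wss'_j),
\]
so each $X^{\ac}_{\wss'_i}$ is $\XX$-spherical and the bi-graded $\Hom$-structure of $T:=\bigoplus_i X^{\ac}_{\wss'_i}$ matches that of the simples over $\Gamma_{\ac'}$. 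A case analysis of positive arc segments lying in the single $\ac'$-polygon that differs from those of $\ac$ identifies $\RHom(T,T)$ with $\EE_{\ac'}$ as $\XX$-graded $A_\infty$-algebras, via Theorem~\ref{thm:ain}. The derived Morita theorem (Theorem~\ref{thm:dme}), combined with Corollary~\ref{cor:kd}, then produces
\[
F\colon \D_{fd}(\Gamma_\ac)=\langle T\rangle\xrightarrow{\,\sim\,}\per\RHom(T,T)\simeq\D_{fd}(\Gamma_{\ac'}),
\]
and by construction $F(X^{\ac}_{\wss'_i})=X^{\ac'}_{\wss'_i}$.

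Next I extend the equality $F\circ X^{\ac}=X^{\ac'}$ from the duals to all of $\wXCA(\surfo^+)$. By Lemma~\ref{lem:CA BT}, every $\lws\in\wXCA(\surfo^+)$ has the form $b((\wss'_i)^{m})$ for some $b\in\BT(\surfo^+)$, some index $i$, and some $m\in\ZZ$, and Proposition~\ref{cor:6.4.2} rewrites this as $X^{\ac}_{\lws}=\iota_\ac(b)(X^{\ac}_{(\wss'_i)^{m}})$. Because $F$ is a triangle equivalence and spherical twists are intrinsic to the triangulated category, $F$ intertwines the two spherical-twist actions; combined with the base case on duals this yields $F(X^{\ac}_\sigma)=X^{\ac'}_\sigma$ for every $\sigma$. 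All three parts of Property-X then transfer: the bijection (X1) for $\ac'$ is the composition $X^{\ac'}=F\circ X^{\ac}$; the group isomorphism (X2) follows because $\BT(\surfo^+)$ is intrinsic to $\surfo^+$ and $F$ identifies $\ST(\Gamma_\ac)$ with $\ST(\Gamma_{\ac'})$ compatibly with $\iota_\ac$ and $\iota_{\ac'}$; and the $\qv$-intersection formula (X3) is immediate since $F$ preserves bi-graded $\Hom$-spaces and hence their $\qv$-dimensions.

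The main obstacle is the identification of $\RHom(T,T)$ with $\EE_{\ac'}$ as $\XX$-graded $A_\infty$-algebras. While the underlying bi-graded vector space is already pinned down by (X3) for $\ac$, matching the higher $A_\infty$-products amounts to an explicit combinatorial check: one must describe how each positive arc segment for $\ac'$ lying in the changed polygon is realized as an ordered composition of positive arc segments for $\ac$, and then verify that the induced higher operations agree with those prescribed by Theorem~\ref{thm:ain} applied to $\Gamma_{\ac'}$. This computation is confined to the two $\ac$-polygons incident to $\wg$, so it is a bounded, finite case analysis, but it is where the bulk of the bookkeeping will live.
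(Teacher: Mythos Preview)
Your overall strategy coincides with the paper's: build a triangle equivalence $F$ between $\D_{fd}(\Gamma_\ac)$ and $\D_{fd}(\Gamma_{\ac'})$ via derived Morita theory applied to $T=\bigoplus_i X^{\ac}_{\wss'_i}$, show that $F$ intertwines the two string models, and then transport Property-X across $F$.

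Two points, however, differ from the paper and deserve attention.

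First, the step you single out as the ``main obstacle''---matching the $A_\infty$ structure on $\RHom(T,T)$ with that on $\EE_{\ac'}$---is not where any real work lives. Because the differential~\eqref{eq:diff} only involves length-two paths, the $A_\infty$ structure from Theorem~\ref{thm:ain} reduces to ordinary associative multiplication (this is exactly the remark preceding Proposition~\ref{prop:morsim}). Thus one only needs to match two honest $\ZZ^2$-graded algebras. The paper does this without any local case analysis: since the arcs in $\dac'$ have no interior intersections, Proposition~\ref{cor:relfirst} (whose proof does not use Assumption~\ref{ass}) supplies the \emph{same} explicit basis $\{\varphi(-,-)\}$ for both $\Hom^{\ZZ^2}(X(\dac'),X(\dac'))$ and $\Hom^{\ZZ^2}(X'(\dac'),X'(\dac'))$, and the composition on each side is described purely geometrically by Proposition~\ref{prop:clock} and Corollaries~\ref{cor:conterclock},~\ref{cor:diff}. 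The two algebras are therefore canonically isomorphic, with no bookkeeping in the changed polygon required.

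Second, your extension of $F\circ X^{\ac}=X^{\ac'}$ from the dual arcs to all of $\wXCA(\surfo^+)$ appeals to Lemma~\ref{lem:CA BT} (for $\ac'$) and Proposition~\ref{cor:6.4.2} (for $\ac$). Both were proved in Section~\ref{sec:int=dim} \emph{under Assumption~\ref{ass}}, and the equivariance $X_{b(\lwe)}=\iota(b)(X_{\lwe})$ of Proposition~\ref{cor:6.4.2} is not among (X1)--(X3), nor does it follow formally from them: from (X2) alone one only deduces $\phi_{X_{b(\eta)}}=\phi_{\iota(b)(X_\eta)}$, which pins the object down only up to an unknown shift. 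Since the lemma is to be applied iteratively to slid arc systems that need not satisfy Assumption~\ref{ass}, this is a genuine gap. Moreover, your argument only reaches $\wXCA$, whereas (X3) requires the intertwining for all $\lwe\in\wXACC$. The paper instead claims $F(X'_{\lwe})=X_{\lwe}$ for every $\lwe\in\wXACC(\surfo^+)$ directly ``by the constructions of $X$ and $X'$'': under the Morita equivalence, $X'_{\lwe}$ is a twisted complex of the simples $S'_i$ with differential given by the arc-segment morphisms of Construction~\ref{con:string} relative to $\ac'$, and $F$ sends it to the corresponding twisted complex of the $X_{\wss'_i}$ in $\D_{fd}(\Gamma_\ac)$, which is identified with $X_{\lwe}$ via the triangles of Proposition~\ref{prop:tri1}.
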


\begin{proof}
The dual $\acps=(\dac\cap\acps)\cup\{\wss'\}$, where $\wss'$ is the dual of $BC$ with respect to $\dac$.
Denote by $X$ and $X'$ the maps defined in Construction~\ref{con:string} from $\wXACC(\surfo)$ to $\D_{fd}(\Gamma_{\ac})$ and $\D_{fd}(\Gamma_{\ac'})$, respectively.
Let $\SS'$ be the direct sum of the simple $\Gamma_{\ac'}$-modules. Then
$$\SS'=X'({\acps}):=\bigoplus_{\ws\in\acps}X'_{\ws^0}$$
Denote by
$$X({\acps})=\bigoplus_{\ws\in\acps}X_{\ws^0}.$$
Since $\Int_{(\surfo^+)^\circ}(\ws_1^{m_1},\ws_2^{m_2})=0$ for any $\ws_1,\ws_2\in\acps$ and $m_1,m_2\in\ZZ$, by Proposition~\ref{cor:relfirst}, both of the $\ZZ^2$-graded algebras $\Hom^{\ZZ^2}(\SS',\SS'):=\Hom^{\ZZ^2}(X'({\acps}),X'({\acps}))$ and $\Hom^{\ZZ^2}(X({\acps}),X({\acps}))$ have a basis of the form $\varphi(-,-)$,
whose compositions are give by the formulas in Proposition~\ref{prop:clock} and Corollary~\ref{cor:conterclock} and Corollary~\ref{cor:diff}. Hence we have a canonical isomorphism of $\ZZ^2$-graded algebras $$\Hom^{\ZZ^2}(\SS',\SS')\cong\Hom(X({\acps}),X({\acps})),$$
which induces a triangle equivalence
$$F: \D_{fd}(\Gamma_{\ac'})\to\D_{fd}(\Gamma_{\ac}).$$
By the constructions of $X$ and $X'$,
we have $F(X'_{\lwe})=X_{\lwe}$ for any $\lwe\in\wXACC(\surfo)$.
This implies that $\ac'$ also satisfies Property-X.
\end{proof}

Let $\ac^+$ be a full formal arc system of $\surfo^+$ satisfying Assumption~\ref{ass}. Thus, $\ac^+$ satisfies Property-X. Let $A$ be a marked point
and $M_1, M_2$ be the adjacent marked points of $A$.
Suppose that $M_i\neq A$ and the boundary arc $AM_2$ is in an $\ac^+$-polygon $P$,
which contains the open arc $\wg=AB$ as in the left picture of Figure~\ref{fig:slide}.
Now repeatedly using the operation slide to $\ac^+$ with respect to $\wg$ and the endpoint other than $A$,
we will turn $P$ into a digon, whose edges are $AM_2$ and $\widehat{\gamma}$, as shown in the right picture of Figure~\ref{fig:slide}.
Denote the resulting full formal arc system by $\ac_0$.
So the DMS $\surfo^-$ obtained from $\surfo^+$ by removing $A$ and a decoration
can be also obtained from $\surfo^+$ by cutting along $\widehat{\gamma}$ and removing $A$.
Notice that $\surfo^-$ inherits a full formal arc system from $\ac_0$, which is denoted by $\ac^-$.

\begin{lemma}
$\surfo^-$ and $\ac^-$ satisfies Property-X.
\end{lemma}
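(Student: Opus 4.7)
The plan is to realize $\D_{fd}(\Gamma_{\ac^-})$ as a full triangulated subcategory $\mathcal{N}\subset\D_{fd}(\Gamma_{\ac_0})$ in a string-preserving manner, and then to transport Property-X from $\ac_0$ to $\ac^-$. Under the inclusion $\surfo^-\hookrightarrow\surfo^+$ that inverts the cut-and-remove operation excising the digon $P$ together with its marked point $A$ and its unique decoration $Z\in P$, one has $\ac^-=\ac_0\setminus\{\widehat\gamma\}$, while $\wXACC(\surfo^-)$ is identified with the subset of $\wXACC(\surfo^+)$ of double graded curves whose underlying admissible closed curve is disjoint from the interior of $P$. Equivalently, these are the curves that neither cross $\widehat\gamma$ nor have $Z$ as endpoint, since an endpoint at $Z$ would force a crossing of $\widehat\gamma$ (the only internal edge of $P$); and $\acps^-$ is obtained from $\acps_0$ by removing the dual of $\widehat\gamma$, which is a loop at $Z$.

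Set $\SS_{\ac^-}=\bigoplus_{\wg_i\in\ac^-}S_i$ and $\mathcal{N}:=\<\SS_{\ac^-}\>\subset\D_{fd}(\Gamma_{\ac_0})$. By Theorem~\ref{thm:dme} and Theorem~\ref{thm:ain}, one has $\mathcal{N}\simeq\per(\Ext^{\ZZ^2}_{\Gamma_{\ac_0}}(\SS_{\ac^-},\SS_{\ac^-}))$ and $\D_{fd}(\Gamma_{\ac^-})\simeq\per(\EE_{\ac^-})$. Both of these $\XX$-graded $A_\infty$-algebras share the same underlying $\ZZ^2$-graded vector space---the span of the arrows of $Q_{\ac_0}$ between vertices of $\ac^-$ together with the trivial paths at those vertices, which is in natural bijection with the positive arc segments in the $\ac^-$-polygons of $\surfo^-$---and the same $A_\infty$-operations: every factorization of $d_{\ac_0}$ whose factors and output are all supported on $\ac^-$ corresponds, via the polygon surgery replacing the $\ac_0$-polygon $P'$ adjacent to $\widehat\gamma$ with the $\ac^-$-polygon $P''\subset\surfo^-$ (obtained from $P'$ by turning the edge $\widehat\gamma$ into boundary), to a factorization of $d_{\ac^-}$, and conversely. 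This produces a triangle equivalence $F\colon \D_{fd}(\Gamma_{\ac^-})\xrightarrow{\sim}\mathcal{N}$, and direct inspection of Construction~\ref{con:string} gives $F(X^{\ac^-}_\lwe)=X^{\ac_0}_\lwe$ for every $\lwe\in\wXACC(\surfo^-)$.

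Property-X for $\ac^-$ then follows from Property-X for $\ac_0$. For (X1), the bijection of Theorem~\ref{thm:X:} for $\ac_0$ restricts to $\wXCA(\surfo^-)\subset\wXCA(\surfo^+)$ on one side and, via $F$, to $\Sphzz(\Gamma_{\ac^-})\cong\Sphzz(\Gamma_{\ac_0})\cap\mathcal{N}$ on the other. For (X2), the natural inclusion $\BT(\surfo^-)\hookrightarrow\BT(\surfo^+)$ corresponds, under $\iota^+$, to the inclusion $\ST(\Gamma_{\ac^-})\hookrightarrow\ST(\Gamma_{\ac_0})$ viewed (via $F$) as autoequivalences preserving $\mathcal{N}$, and the formula $\iota^-(B^\varepsilon_\eta)=\phi^\varepsilon_{X_\eta}$ for $\eta\in\CA(\surfo^-)$ is inherited from the same formula on $\surfo^+$. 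For (X3), $\qqInt(\lws,\lwe)$ is unchanged under $\surfo^-\hookrightarrow\surfo^+$ (intersections being local) and $\qdH(X^{\ac^-}_\lws,X^{\ac^-}_\lwe)=\qdH(X^{\ac_0}_\lws,X^{\ac_0}_\lwe)$ by full-faithfulness of $F$, so \eqref{eq:int=dim3} for $\ac^-$ follows from Theorem~\ref{cor:int=dim} for $\ac_0$.

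The hardest step will be the $A_\infty$-algebra identification in the second paragraph: one must rule out spurious higher operations on $\Ext^{\ZZ^2}_{\Gamma_{\ac_0}}(\SS_{\ac^-},\SS_{\ac^-})$ arising from factorizations of $d_{\ac_0}$ that pass through the deleted vertex $\widehat\gamma$. This reduces to a local analysis at the digon $P$: by Remark~\ref{rmk:cy}, $P$ contributes to $\Gamma_{\ac_0}$ only the single loop-type arrow $\widehat\gamma\to\widehat\gamma$ of bi-degree $1-\XX$, whose interaction with the arrows supported on $P'$ precisely mirrors the combinatorics of the loop-type arc segments around the decoration of $P''\subset\surfo^-$, so no additional operation is introduced.
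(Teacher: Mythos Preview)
Your proposal is correct and takes essentially the same approach as the paper: realize $\D_{fd}(\Gamma_{\ac^-})$ as the thick subcategory of $\D_{fd}(\Gamma_{\ac_0})$ generated by the simples indexed by $\ac^-$, and transport Property-X through this identification. The paper's own proof is much terser, simply recording that $Q_{\ac^-}$ is a full subquiver, $\Gamma_{\ac^-}$ a dg subalgebra, and $\D_{fd}(\Gamma_{\ac^-})$ the corresponding thick subcategory; you instead work on the Koszul-dual side via $\EE_{\ac}$ and Theorem~\ref{thm:ain}, which is an equivalent route and arguably cleaner (note your final paragraph is slightly overcautious: by Proposition~\ref{prop:morsim} the $A_\infty$ structure on $\EE_{\ac_0}$ is already strictly associative, so no higher operations can appear on the idempotent truncation).
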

\begin{proof}
This is because the corresponding dg $\XX$-graded algebras and associated categories satisfy the following relations:
\begin{itemize}
  \item $Q_{\ac^-}$ is a full subquiver of $Q_{\ac^+}$;
  \item $\Gamma_{\ac^-}$ is a (dg $\XX$-graded) subalgebra of $\Gamma_{\ac^+}$;
  \item $\D_{fd}(\Gamma_{\ac^-})$ is subcategory of $\D_{fd}(\Gamma_{\ac^+})$,
  generated (as a thick subcategory) by the simples corresponding to the vertices of $Q_{\ac^-}$.
\end{itemize}

By repeatedly deleting points, we can recover $\surfo$ from $\surfo^+$ with Property-X.
\end{proof}

\subsection{Final statement}
Use the lemma above and induction on the marked points we add to obtain $\surfo^+$ from $\surfo$,
we have the following statement.

\begin{theorem}\label{thm:final}
Given a graded DMS $\DMS$, there is a full formal arc system $\ac$, satisfying Property-X.
\end{theorem}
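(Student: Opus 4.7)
The plan is the two-step procedure sketched at the start of Section~\ref{sec:general}. First, I would enlarge $\surf$ to a graded marked surface $\surf^+$ by adding enough open marked points (and correspondingly enough decorations) so that the numerical data $(g, b, m = \sum m_i)$ of $\surf^+$ satisfies $m_i \geq 2$ for all $i$ and $m_1 \geq 4g$. By the first lemma of Step~$1^\circ$, the resulting graded DMS $\surfo^+$ then carries a full formal arc system $\ac^+$ meeting Assumption~\ref{ass}, obtained by taking a $4g$-gon polygon-presentation for the genus-$g$ surface with one boundary component (left picture of Figure~\ref{fig:8}) and then attaching the remaining $b-1$ boundary components inductively via the add-a-boundary-component move (right picture of Figure~\ref{fig:8}). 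Theorems~\ref{thm:X:}, \ref{thm:st=bt} and \ref{cor:int=dim} then give Property-X for $(\surfo^+, \ac^+)$.

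The second step is to transfer Property-X from $\surfo^+$ back to $\surfo$, one extra marked point at a time. For each extra open marked point $A$ (and paired decoration $Z$) that I wish to delete, I would: (i) use a finite sequence of slides of the current arc system along the arc incident to $A$ opposite to the boundary segment $AM_2$, transforming the $\ac$-polygon containing $Z$ into a digon bounded by a boundary edge and a single interior arc $\widehat{\gamma}$; the slide-invariance lemma ensures that the resulting arc system $\ac_0$ still satisfies Property-X. Then (ii) cut along $\widehat{\gamma}$ and remove $A$ together with $Z$ to produce the smaller graded DMS $\surfo^-$ with induced arc system $\ac^-$. The removal lemma, which uses that $Q_{\ac^-}$ is a full subquiver of $Q_{\ac_0}$, $\Gamma_{\ac^-}$ a dg $\XX$-graded subalgebra of $\Gamma_{\ac_0}$, and $\D_{fd}(\Gamma_{\ac^-})$ the thick subcategory of $\D_{fd}(\Gamma_{\ac_0})$ generated by the simples corresponding to the surviving vertices, then transfers Property-X to $(\surfo^-, \ac^-)$.

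Iterating step (ii) once for every extra marked point and decoration introduced in the first step, we eventually recover $(\surfo, \ac)$ with Property-X, completing the proof. The main obstacle is the bookkeeping inside the two lemmas of Section~\ref{sec:general}: one must check that both slides and deletions are genuinely compatible with the string-model construction, i.e.\ that under the induced triangle equivalence (for slides) and thick embedding (for deletions) one has $F(X'_{\lwe}) = X_{\lwe}$ for \emph{every} double graded admissible closed curve $\lwe$, not just for the duals. This reduces to matching the $\varphi(-,-)$-bases of the morphism spaces between duals via Proposition~\ref{cor:relfirst}, Proposition~\ref{prop:clock} and Corollary~\ref{cor:conterclock}; once this object-level identification is in place, the isomorphism $\iota\colon \BT(\surfo) \cong \ST(\Gamma_\ac)$ and the $\qv$-intersection formula descend automatically from the corresponding statements on $\surfo^+$.
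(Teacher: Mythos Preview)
Your proposal is correct and follows exactly the two-step strategy the paper uses: the paper's own proof of Theorem~\ref{thm:final} is a single sentence invoking the lemmas of Step~$1^\circ$ and Step~$2^\circ$ together with induction on the number of extra marked points, and you have spelled out precisely this induction (including the slide-then-remove mechanism) in more detail than the paper does. One minor wording slip: when you write ``iterating step~(ii)'' you of course mean iterating the combined slide-and-remove procedure (i)+(ii), since each deletion must be preceded by the slides that turn the relevant polygon into a digon.
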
		

%\New{
%\begin{corollary}
%Given a graded DMS, there is a f.f.a.s $\ac$ satisfying that the map
%$$X\colon \eta^m\mapsto X(\eta^m)$$
%gives a bijection from the set of closed arcs on $\log\surfo$, whose underlying $\eta$ is in $\CA(\surfo)$,
%to $\Sph(\Gamma_\ac)[\ZZ\oplus\ZZ\XX]$.
%Moreover, for any closed arcs $\eta_1^{m'},\eta_2^{m''}$, we have
%$$
%    2\Int(\eta_1^{m'},\eta_2^{m''})=\dim\bigoplus_{n,m\in\ZZ}(X_{\eta_1^{m'}},X_{\eta_2^{m''}}[n+m\XX]).
%$$
%\end{corollary}}

%=========================================================
\section{Topological realization of Lagrangian immersion}\label{sec:Lag}
%Recall from Definition~\ref{def:f.f.a.s.} and Definition~\ref{def:dual.a.s.}
%that we have a full formal arc system $\ac$ on a graded DMS $\DMS$
%with its dual arc system $\dac$.
%=========================================================
\subsection{Derived categories of gentle algebras and Lagrangian immersion}
%=========================================================

Recall from Remark~\ref{rmk:cy} that $Q_\ac^0$ be the $\XX$-degree zero part of $Q_\ac$ with induced differential $d_\ac^0$. Then we have a dg algebra $\Gamma_\ac^0=(\k Q_\ac^0,d_\ac^0)$, which is the $\XX$-degree zero part of $\Gamma_\ac$. Denote by $\D_{fd}(\Gamma_\ac^0)$ the bounded/finite-dimensional derived category of $\Gamma_\ac^0$.
Note that, $\D_{fd}(\Gamma_\ac^0)$ is in fact (triangle equivalent to) the topological Fukaya category introduced in \cite{HKK}.

Let $Q$ be a graded quiver (i.e. its arrows are $\ZZ$-graded) and $I$ a set of paths of length at least 2, such that $\k Q/\<I\>$ is finite dimensional, where $\<I\>$ is the ideal generated by $I$. The pair $(Q,I)$ is called \emph{gentle} provided that the following hold.
\begin{enumerate}
	\item for any vertex $i\in Q_0$, there are at most two arrows in and at most two arrows out;
	\item for any arrow $\alpha\in Q_1$, there is at most one arrow $\beta\in Q_1$ with $\alpha\beta\in I$ and at most one arrow $\gamma\in Q_1$ with $\alpha\gamma\notin I$; and there is at most one arrow $\beta'\in Q_1$ with $\beta\alpha\in I$ and at most one arrow $\gamma'\in Q_1$ with $\gamma\alpha\notin I$.
\end{enumerate}
A \emph{graded gentle algebra} is (Morita equivalent to) the algebra $\k Q/I$ for a graded gentle pair $(Q,I)$.

\begin{lemma}[\cite{OPS,O}]
	$H^*(\Gamma_\ac^0)$ is a graded gentle algebra and there is a triangle equivalence
	$\D_{fd}(\Gamma_\ac^0)\simeq\D_{fd}(H^*(\Gamma_\ac^0))$.
\end{lemma}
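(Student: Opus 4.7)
The plan is to argue in two stages: first extract the combinatorial structure of $H^*(\Gamma_\ac^0)$ from the construction of $(Q_\ac, d_\ac)$, verifying that it is a graded gentle pair; second derive the triangle equivalence via formality of $\Gamma_\ac^0$. The arrows of $Q_\ac^0$ can be read off from Remark~\ref{rmk:cy}: they are the $b_{[\rho]}$ with $|b_{[\rho]}|_2 = 0$, namely exactly one representative from each dual pair of non-loop-type positive arc segments (loop-type arrows have bi-degree $1-\XX$ and are excluded). Locally, at each vertex $\wg_i \in \ac$, the arrows in and out correspond to the two $\ac$-polygons incident to $\wg_i$, each polygon contributing at most one arrow in and at most one arrow out; this gives the ``at most two in, at most two out'' gentle condition. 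The differential $d_\ac^0$ is the restriction of \eqref{eq:diff} to those splittings $[\rho] = [\rho_1]\cdot[\rho_2]$ with both factors non-loop-type, and its image is spanned by length-two paths that become relations in $A_\ac := H^*(\Gamma_\ac^0) = \k Q_\ac^0 / \<I_\ac\>$, where $I_\ac$ collects such length-two compositions. The remaining gentle axioms on composability and on the number of zero and nonzero continuations of each arrow are then verified by a direct case analysis in the two adjacent polygons at each vertex of $Q_\ac^0$.

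For the triangle equivalence, I would invoke formality. By Theorem~\ref{thm:ain}, the $A_\infty$-structure on $\Ext^{\ZZ^2}(\SS,\SS)$ coming from $(\k Q_\ac, d_\ac)$ has $m_r(\pi_{a_1}, \dots, \pi_{a_r}) \neq 0$ only when $a_1\cdots a_r$ appears in $d_\ac(b)$ for some arrow $b$; but \eqref{eq:diff} produces only length-two outputs, hence $m_r = 0$ for all $r \neq 2$. The same formula applies to $\Gamma_\ac^0$, so the Koszul-dual Ext-algebra $\Ext^\ZZ_{\Gamma_\ac^0}(\SS,\SS)$ carries only $m_2$ and is canonically isomorphic to $A_\ac$. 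Applying Corollary~\ref{cor:kd} (in its ordinary $\ZZ$-graded incarnation) to $\Gamma_\ac^0$ then gives $\D_{fd}(\Gamma_\ac^0) \simeq \per(A_\ac)$, and since $A_\ac$ is finite-dimensional and the equivalence identifies finite-dimensional objects on both sides, one concludes $\D_{fd}(\Gamma_\ac^0) \simeq \D_{fd}(A_\ac)$.

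The main obstacle lies in the bookkeeping. On the combinatorial side, one must check that the global ``one side per decoration'' choice forced by the $\XX$-grading produces exactly the quiver of the HKK/OPS/LP graded gentle algebra of the partial triangulation $\ac$, which requires careful tracking of the compatibility between $\Lambda_1$, the cut $\cut$, and the orientations of the $\ac$-polygons. On the derived side, the delicate point is ensuring that the restriction of the $A_\infty$-structure on the bigraded Ext-algebra to the $\XX$-degree zero part really computes the Ext-algebra of the simples over $\Gamma_\ac^0$, and not a twisted or enlarged variant --- this is precisely the input to be imported from \cite{OPS,O}.
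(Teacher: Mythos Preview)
The paper gives no proof of this lemma; it is imported wholesale from \cite{OPS,O}, so there is no argument to compare against. Your sketch, however, has two concrete gaps.

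First, $Q_\ac^0$ is not the gentle quiver, and $H^*(\Gamma_\ac^0)$ is not $\k Q_\ac^0/\langle I_\ac\rangle$. The quiver $Q_\ac^0$ carries an arrow $b_{[\rho]}$ for \emph{every} non-trivial, non-loop-type class of $\XX$-degree zero, not only the minimal ones. In an $\ac$-polygon with three arcs $\gamma_1,\gamma_2,\gamma_3$ and one boundary edge, this contributes arrows $b_{12},b_{23},b_{13}$ all in the same orientation, so vertex~$1$ already has two outgoing arrows from this single polygon; the ``at most two in, at most two out'' count fails for $Q_\ac^0$ itself. The arrow $b_{13}$ is not a cycle (by \eqref{eq:diff}, $d_\ac^0(b_{13})=\pm b_{12}b_{23}\neq 0$) and does not survive to cohomology. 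The correct gentle quiver has only the \emph{minimal} arrows --- those $b_{[\rho]}$ with $[\rho]$ indecomposable --- and the relations are the length-two composites of consecutive minimal arrows within one polygon, which are boundaries.

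Second, your route to the triangle equivalence conflates two different algebras. The formality you extract from Theorem~\ref{thm:ain} and Corollary~\ref{cor:kd} concerns the Ext-algebra $\EE_\ac^0=\Ext^\ZZ_{\Gamma_\ac^0}(\SS^0,\SS^0)$, which is Koszul dual to $H^*(\Gamma_\ac^0)$, not equal to it. In the three-arc polygon above, $\pi_{b_{23}}\circ\pi_{b_{12}}=\pi_{b_{13}}\neq 0$ in $\EE_\ac^0$, whereas $b_{12}b_{23}=0$ in $H^*(\Gamma_\ac^0)$. So the equivalence $\D_{fd}(\Gamma_\ac^0)\simeq\per(\EE_\ac^0)$ you obtain is not the one claimed. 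What is actually needed is formality of $\Gamma_\ac^0$ \emph{as a dg algebra}: an explicit quasi-isomorphism $\Gamma_\ac^0\to H^*(\Gamma_\ac^0)$ sending minimal arrows to themselves and decomposable arrows to zero. That (together with the identification of $H^*$ as gentle) is precisely the content borrowed from \cite{OPS,O}. Your final step $\per\simeq\D_{fd}$ would also require care, since graded gentle algebras need not have finite global dimension.
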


Recall that $S_i,1\leq i\leq n$ are the simple $\Gamma_\ac$-modules corresponding to $\wg_i$. Let $S_i^0,1\leq i\leq n$ be the simple $\Gamma^0_\ac$-modules corresponding to $\wg_i$.

By \cite[Lem.~4.4]{K8}, there is a Lagrangian immersion (cf. \cite[Def.~7.2]{KQ})
\begin{equation}\label{eq:ic}
  \ic\colon \D_{fd}(\Gamma_\ac^0)\to\D_{fd}(\Gamma_\ac),
\end{equation}
sending $S_i^0$ to $S_i$ and such that there is a canonical isomorphism
\[
	\RHom_{\D_{fd}(\Gamma_\ac) } \big(\ic(X_1),\ic(X_2)\big)
	\cong\RHom_{ \D_{fd}(\Gamma_\ac^0) } (X_1,X_2)\oplus
	\RHom_{ \D_{fd}(\Gamma_\ac^0) } \big(X_2,X_1\big)^*[-\XX],
\]
for any pair of objects $X_1, X_2$ in $\D_{fd}(\Gamma_\ac^0)$. We denote by $\EE^0_\ac=\Ext^{\ZZ}(\SS^0,\SS^0)$, where $\SS^0=\oplus_{i=1}^nS_i^0$.

\begin{proposition}
	Let $\lws\in\wXACC(\surfo)$. Then $X_{\lws}$ is in $\ic( \D_{fd}(\Gamma_\ac^0) )$ if and only if $\lws$ is in the zero-sheet $\surfo^0$ of $\log\surfo$.
\end{proposition}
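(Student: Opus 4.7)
The plan is to establish each implication separately by tracking the $\XX$-grading through Construction~\ref{con:string}.

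For the forward direction, suppose $\lws$ lies entirely in $\surfo^0$. Then every intersection $V_j$ of $\lws$ with $\ac^\ZZ$ is automatically an intersection with $\wg^0_{k_j}\in\ac^0$; since both lifts sit in sheet $0$ at $V_j$, the definition of $\sind$ gives $\sind_{V_j}(\wg^0_{k_j},\lws)=0$, so each bi-grading $\chi_j=-\ind^{\ZZ^2}_{V_j}(\wg^0_{k_j},\lws)$ lies in $\ZZ$. Similarly every arc segment $\lws_{i-1,i}$ sits in $\surfo^0$, so by \eqref{eq:2} the associated arrow $b_{[\rho]}$ has $\XX$-degree zero, i.e.\ lies in the subquiver $Q^0_\ac$ of Remark~\ref{rmk:cy}, and the morphism $\pi_{\lws_{i-1,i}}$ is realised in the subalgebra $\EE^0_\ac$. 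Hence the string $\omega(\lws)$ can be read as a string over $\Gamma^0_\ac$, defining an object $X^0_{\lws}\in\D_{fd}(\Gamma^0_\ac)$; by naturality of the string construction under the inclusion $\Gamma^0_\ac\hookrightarrow\Gamma_\ac$ and the fact that $\ic(S^0_i)=S_i$, one has $\ic(X^0_{\lws})\cong X_{\lws}$.

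For the backward direction, the idea is that the $\XX$-grading of the simple composition factors of $X_{\lws}$ records the sheet index $m_j$ of each intersection $V_j$. Concretely, Construction~\ref{con:string} exhibits a simple filtration of $X_{\lws}$ whose sub-quotients are $S_{k_j}[\chi_j]$ with $\chi_j^{(2)}=m_j$, while any object in $\ic(\D_{fd}(\Gamma^0_\ac))$ (being built from $\{\ic(S^0_i)=S_i\}$ via cones and the ordinary shift $[1]$) has a simple filtration with sub-quotients of the form $S_i[\ii]$, $\ii\in\ZZ$. Once one can force $m_j=0$ for every $j$, the proof concludes as follows: the arc segment $\lws_{i-1,i}$ lies in a single $\ac$-polygon, which contains exactly one cut arc, and a single cut crossing changes the sheet by $\pm1$; hence a segment with both endpoints in sheet $0$ cannot cross the cut, so $\lws$ stays in $\surfo^0$ throughout.

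The main obstacle is justifying the forcing $m_j=0$, because simple filtrations in a triangulated category are not unique. I would resolve this by combining the Lagrangian immersion formula
\[
\Ext^{\ZZ^2}(\ic(A),\ic(B))\cong\Ext^{\ZZ}(A,B)\oplus\Ext^{\ZZ}(B,A)^*[-\XX]
\]
with the $\qv$-intersection identity $\qdH(S_i,X_{\lws})=\qqInt(\wss_i^0,\lws)$ from Theorem~\ref{cor:int=dim}. The first formula restricts the $\XX$-exponents of $\qdH(S_i,X_{\lws})$ to lie in $\{0,1\}$ (the two terms of the decomposition contribute in $\XX$-gradings $0$ and $\XX$ respectively). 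The second formula, together with the bi-index computation $\sind_p(\wss_i^0,\lws)=-m_p$ and the $(1+\qv^{\XX-1})$-factor in \eqref{eq:q-int}, shows that each intersection $p$ of $\wss_i^0$ with $\lws$ contributes $\XX$-exponents $\{-m_p,1-m_p\}$, forcing $m_p=0$. Running the same argument with $\wss_i^m$ in place of $\wss_i^0$, and using $\qqInt(\wss_i^m,\lws)=\qv^{m\XX}\qqInt(\wss_i^0,\lws)$, rules out intersections of $\lws$ with $\ac^m$ for any $m\ne 0$. This pins every $m_j$ to zero and finishes the proof.
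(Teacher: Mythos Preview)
Your forward direction is correct and matches the paper's reasoning: once every $\chi_j\in\ZZ$ and every arc segment induces a morphism of $\XX$-degree~$0$, the string $\omega(\lws)$ is visibly a string over $\EE^0_\ac$, so $X_{\lws}$ lies in the thick subcategory generated by the $S_i$, which is $\ic(\D_{fd}(\Gamma^0_\ac))$.

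For the backward direction the paper takes a much shorter route than you do. It simply observes that $\ic(\D_{fd}(\Gamma^0_\ac))$ is the triangulated subcategory generated by the simples $S_i$ (closed under $[1]$ but not under $[\XX]$), and that $X_{\lws}$ lies there if and only if every $\chi_j\in\ZZ$, which in turn is equivalent to $\lws$ sitting in the zero sheet. Your concern about non-uniqueness of simple filtrations is legitimate, but it is dispelled by a far lighter tool than Theorem~\ref{cor:int=dim}: Lemma~\ref{lem:5.7} gives
\[
\qdH(\Gamma^i_\ac,X_{\lws})=\qqInt(\wg^0_i,\lws),
\]
and the right-hand side is exactly $\sum_j \qv^{-\chi_{j}}$ over those $j$ with $k_j=i$. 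Any object of $\ic(\D_{fd}(\Gamma^0_\ac))$ has total homology concentrated in $\XX$-degree~$0$, so the left-hand side has no non-trivial $\XX$-exponents; hence every $m_j=0$. This is the honest content behind the paper's terse sentence, and it avoids filtrations altogether.

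Your proposed detour via Theorem~\ref{cor:int=dim} and the Lagrangian immersion formula has two problems. First, Theorem~\ref{cor:int=dim} requires one of the two curves to lie in $\wXCA(\surfo)$, but in general neither $\wss^0_i$ (which may have coinciding endpoints once Assumption~\ref{ass} is dropped) nor the given $\lws\in\wXACC(\surfo)$ satisfies this. Second, and more seriously, the argument you sketch establishes that $\lws$ is in sheet~$0$ at its intersections with the \emph{dual} arcs $\wss_i$, whereas what you need is sheet~$0$ at the points $V_j$, i.e.\ at the intersections with the $\wg_{k_j}$. Your final sentence conflates the two: ``rules out intersections of $\lws$ with $\ac^m$'' does not follow from information about intersections with $(\dac)^\ZZ$, since an arc segment of $\lws$ can cross the cut (and hence change sheet) without crossing any $\wss_i$ in between. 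Replacing $\wss^0_i$ by $\wg^0_i$ via Lemma~\ref{lem:5.7} fixes both issues at once.
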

\begin{proof}
The curve $\lws$ is in the zero-sheet if and only if in the string $\omega(\lws)$ in \eqref{eq:string}, each $\chi_i=-\ind_{V_j}^{\ZZ^2}(\wg_{k_j}^0,\lws)=\ii_i$ for some $\ii_i\in\ZZ$. This is equivalent to that $X_{\lws}$ is in the triangulated subcategory of $\D_{fd}(\Gamma_\ac)$ generated by the simples $S_i,1\leq i\leq n$, which is $\ic(\D_{fd}(\Gamma^0_\ac))$.
\end{proof}

\begin{definition}
	We call a curve in $\wXACC(\surfo)$ a \emph{zero-level} curve if it is in the zero-sheet $\surfo^0$ of $\log\surfo$. Denote by $\ZL(\log\surfo)$ the subset of $\wXACC(\surfo)$ consisting of zero-level curves.
\end{definition}
%Note that $\ZL(\log\surfo)$ is a subset of $\wXACC(\surfo)$.

%=========================================================
\subsection{Dual arc systems on graded marked surfaces}
%=========================================================
Recall that there is a set $\M$ of open marked points and a set $\Y$ of closed marked points on the surface $\surf$.

\begin{definition}\label{def:type}
	We have the following types of curves in $\surf$ (compared with those in $\surfo$ in Definition~\ref{not:CA}).
	\begin{itemize}
		\item A curve $c$ is called \emph{open} (resp. \emph{closed}) if $c(0)$ and $c(1)$ are open (resp. closed) marked points, i.e., in $\M$ (resp. $\Y$).
		\item An \emph{open arc} (resp. \emph{closed arc}) is an open (resp. closed) curve without self-intersections in $\surf\setminus\partial\surf$. We call two open/closed arcs do not cross each other if they do not have intersections in $\surf\setminus\partial\surf$.
%		\item A \emph{closed arc} is a closed curve without self-intersections in $\surfoi$ and whose two endpoints are not the same decoration.
%		\item A closed curve is called \emph{admissible} if it does not cut out a once-decorated monogon by one of its self-intersections in $\surfoi$. See the upper right picture in Figure~\ref{fig:ext.} for the failure of the condition.
	\end{itemize}
	We always consider open/closed curves up to taking inverse and homotopy relative to endpoints.
\end{definition}
%An \emph{open} (resp. \emph{closed}) \emph{arcs} in $\surf$ is a curve whose endpoints are in $\M$ (resp. $\Y$) and without self-intersections in $\surf\setminus\partial\surf$.

Denote by $\wCC(\surf)$ (resp. $\wOC(\surf)$) the set of graded closed (resp. open) curves in $\surf$, and by $\wCA(\surf)$ (resp. $\wOA(\surf)$) the set of graded closed (resp. open) arcs in $\surf$.

%We have the notion of open/closed full formal arc systems of $(\surf,\lambda)$ as follows.

\begin{definition}
	An open (resp. closed) full formal arc system $\ac$ (resp. $\ac^*$) on $\surf^\lambda$ is a collection of pairwise non-crossing graded open (resp. closed) arcs, such that it divides $\surf$ into $\ac$-polygons (resp. $\ac^*$-polygons), each of which contains exactly one open (resp. closed) boundary segment. Here, an open (resp. closed) boundary segment is a connected component of of $\partial\surf\setminus\M$ (resp. $\partial\surf\setminus\Y$).
\end{definition}

There is a natural operation (with respect to the cut $\cut$)
\begin{equation}
\ex:\wCC(\surf)\to\ZL(\log\surfo)
\end{equation}
sending a graded closed curve $\ws$ to a zero-level curve $\ex(\ws)$
by pulling its both endpoints on $\Y$ to $\Delta$ via the corresponding curves in the cut $\cut$
(see Figure~\ref{fig:2ca} where the greed one is sent to the red one) with that $\ex(\ws)$ inherits the grading from $\ws$. This (the grading) is well-defined because $\ex(\ws)$ does not cross the cut $\cut$ and the grading $\Lambda$ of $\surfo$ is compatible with the grading $\lambda$ of $\surf$ and the cut $\cut$.
It is easy to see that $\ex$ is an bijection. It follows that $\ex$ preserves intersection indices.

\begin{figure}[h]\centering
    \begin{tikzpicture}[xscale=.75,yscale=.6]\clip(-7,-4)rectangle(9,2.5);
    \draw[Emerald!50,very thick]
        (-3,-3).. controls +(90:1) and +(180:.5) .. (-2,-.5)
        to(2+2,-.5).. controls +(0:.5) and +(90:1) .. (3+2,-3);
    \draw[blue,very thick](0,1)to(0,-1)(0+2,1)to(0+2,-1) (-4,3)to(-2,3) (2+2,3)to(4+2,3);
    \draw[dashed,blue](2,1)to[bend left=30](2+2,3)(0,1)to[bend left=-30](-2,3);
    \draw[dashed,blue](2,-1)to[bend left=-30](2+2,-3)(0,-1)to[bend left=30](-2,-3);
    \draw[dashed,blue,thin](4+2,3)to[bend left=90](4+2,-3);
    \draw[dashed,blue,thin](-4,3)to[bend left=-90](-4,-3);
    \draw[very thick](2+2,-3)\nn to(4+2,-3)\nn(-2,-3)\nn to(-4,-3)\nn;
    \draw[gray!99,very thick, dashed](3+2,0)to(3+2,-3)(-3,0)to(-3,-3);
    \draw[red,very thick](-3,0)to(3+2,0)node[white]{$\bullet$} \ww node[above]{$Z_2$};
    \draw[red](3+2,-3)node[white]{$\bullet$} \ww node[Emerald,below]{$Y_2$};
    \draw[red](-3,0)node[white]{$\bullet$} \ww node[above]{$Z_1$};
    \draw[red](-3,-3)node[white]{$\bullet$} \ww node[Emerald,below]{$Y_1$};
    \end{tikzpicture}
\caption{Closed arcs on $\surfo$ (in red) and in $\surf$ (in green)}\label{fig:2ca}
\end{figure}

The data $(\ac,\dac,\cut)$ of $\DMS$ one-one corresponds to the data
$(\ac,\ac^*)$ of $\surf^\lambda$ as follows:
\begin{itemize}
%  \item the underlying graded marked surface $(\surf,\lambda)$;
  \item the induced full formal arc system $\ac$ on $\surf$ (when forgetting the decoration set $\Delta$ and
  \item the dual arc system $\ac^*=\ex^{-1}(\dac)$ on $\surf$.
\end{itemize}
See Figure~\ref{fig:dac}, where
\begin{itemize}
	\item $\ac$ consists of blue arcs;
	\item $\dac$ consists of red ones,
	\item $\cut$ consists of dashed gray one and
	\item $\ac^*$ consists of green ones.
\end{itemize}
\begin{figure}[h]\centering
	\begin{tikzpicture}[scale=-.5]
	\foreach \j in {2,0,3}{
		\draw[red,very thick](0,0)to(90*\j:5);}
	\foreach \j in {1,2,0,3}{
		\draw[blue,very thick](90*\j+20:4)to(90*\j-20:4);
		\draw[dashed,blue,thin](90*\j+20:4)to[bend left=-15](90*\j-20+90:4);}
	\draw[very thick](90+20:4)\nn node[below right]{$M$} to(90-20:4)\nn node[below left]{$M'$};
	\draw[gray!99,very thick, dashed](0,0)to($(90+20:4)!.5!(90-20:4)$);
	\draw[red](0,0)node[white]{$\bullet$} \ww;
	\draw[red](0,3.75)node[white]{$\bullet$} \ww node[below]{$Y$};
	\end{tikzpicture}\qquad
	\begin{tikzpicture}[scale=-.5]
	\draw[Emerald,very thick](-90:5) to (0,3.75)
	(0:5) to [bend left=45] (0,3.75) (180:5) to[bend left=-45] (0,3.75);
	\foreach \j in {1,2,0,3}{
		\draw[blue,very thick](90*\j+20:4)to(90*\j-20:4);
		\draw[dashed,blue,thin](90*\j+20:4)to[bend left=-15](90*\j-20+90:4);}
	\draw[very thick](90+20:4)\nn node[below right]{$M$} to(90-20:4)\nn node[below left]{$M'$};
	\draw[gray!99,very thick, dashed](0,0)to($(90+20:4)!.5!(90-20:4)$);
	\draw[red](0,3.75)node[white]{$\bullet$} \ww node[below]{$Y$};
	\end{tikzpicture}
	\caption{The dual arc systems $\dac$ (left picture with compatible cut $\cut$) and $\ac^*$ (right picture)}\label{fig:dac}
\end{figure}

Conversely, $(\ac,\ac^*)$ also determines $(\ac,\dac,\cut)$ in the sense that
there is exactly one decorated in each $\ac$-polygon;
thus the cut (paring of $\Y$ and $\Delta$) is determined uniquely;
and then $\dac=\ex(\ac^*)$.
Moreover, $\ac^*$ is in fact a closed full formal arc system of $(\surf,\lambda)$.

%=========================================================
\subsection{Topological realization}
%=========================================================
There is an analogous string model for $(\surf,\lambda)$ as follows:
	\begin{itemize}
		\item each positive arc segment in an $\ac$-polygon of $\surf$ corresponds to an arrow in $Q_\ac^0$, which induces a $\ZZ$-graded morphism between the simples $S^0_i,1\leq i\leq n$;
		\item each $\ws\in\wCC(\surf)$ gives a string $\omega(\ws)$, which is a sequence of morphisms of degree 1 between  shifts of simples induced by the arc segments of $\ws$ divided by $\ac$;
		\item define $X^{0}_{\ws}$ to be $X^{0}_{\omega(\ws)}$, which is the dg module of $\Gamma_\ac^0$ associated to the string $\omega(\ws)$.
	\end{itemize}
Thus, we get a map
\begin{gather}\label{eq:X0}
    X^{0}\colon\wCC(\surf)\to\D_{fd}(\Gamma_\ac^0).
\end{gather}

The operation $\ex$ can be thought as
a topological realization of the Lagrangian immersion $\ic$, in the sense that
we have the following.
\begin{theorem}\label{thm:L}
	For any $\ws\in\wCC(\surf)$, we have
\begin{gather}\label{eq:ic}
    \ic(X^{0}_{\ws})=X_{\ex(\ws)} .
\end{gather}
\end{theorem}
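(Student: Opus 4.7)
The approach is to unfold both string models on the common topological picture and recognize them as the same string after the identification of simple modules $S^0_i \leftrightarrow S_i$. First I would observe that $\ex(\ws)$ lies entirely in the zero-sheet $\surfo^0$ of $\log\surfo$, because $\ws$ is disjoint from $\cut$ except at its endpoints in $\Y$ and the added pieces run along $\cut$ from $\Y$ to $\Tri$ without crossing it. Consequently, by the proposition preceding the theorem, $X_{\ex(\ws)}$ lies in $\ic(\D_{fd}(\Gamma^0_\ac))$, which provides the first consistency check and shows that it is meaningful to ask whether $X^0_\ws$ is the pre-image.

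Next I would match the two strings term by term. Since $\cut \cap \ac = \varnothing$, the intersections $V_0, \ldots, V_p$ of $\ws$ with $\ac$ coincide, as an ordered set of points, with the intersections of $\ex(\ws)$ with $\ac^\ZZ$ (all landing in the zero sheet $\ac^0$). Outside small neighbourhoods of the endpoints, $\ws$ and $\ex(\ws)$ agree as curves, so the middle arc segments $\ws_{i-1,i}$ and $\ex(\ws)_{i-1,i}$ for $1 \le i \le p$ sit in the same $\ac$-polygons with identical signs, giving literally the same morphisms $f_i$ in both strings (under $S^0_i \leftrightarrow S_i$). At each endpoint the opening segment is lengthened by a piece of $\cut$ inside a single $\ac$-polygon, exiting through the same edge, so the labels $\wg_{k_0}, \wg_{k_p}$ also agree. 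For the shifts, since $\ex(\ws)$ is contained in $\surfo^0$ and $\Lambda_1$ restricts to $\lambda$ on $\surf\setminus\cut$, the $\XX$-component of each $\chi_j = -\ind^{\ZZ^2}_{V_j}(\wg^0_{k_j}, \ex(\ws))$ vanishes and its $\ZZ$-component equals the shift appearing in $\omega(\ws)$. Hence $\omega(\ex(\ws))$ is the image of $\omega(\ws)$ under the functor induced by $S^0_i \mapsto S_i$, and \eqref{eq:ic} follows from the construction of $X_{(-)}$ as an iterated cone of the morphisms $f_i$, together with the fact that $\ic$ preserves cones and is determined by this assignment on simples.

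The main obstacle I anticipate is the careful endpoint bookkeeping: one must check that the sign of the starting/ending segment (dictated by on which side the distinguished point of the $\ac$-polygon lies) transforms correctly between the $\surf$-model, where the distinguished point is a closed boundary marked point in $\Y$, and the $\surfo$-model, where it is a decoration in $\Tri$; and that the bi-index formula \eqref{eq:ks} applied at the newly created endpoints in $\Tri$ does not introduce a spurious $\XX$-shift. Both issues reduce to the compatibility $\Lambda_1|_{\surf \setminus \cut} = \lambda$ built into the graded DMS and the fact that the extension stays in one $\ac$-polygon, but they deserve a case-by-case verification in the style of Figures~\ref{fig:cases} and \ref{fig:intersects} to ensure no hidden sign or degree discrepancy arises.
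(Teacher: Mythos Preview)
Your approach is correct and is essentially an expanded version of the paper's own three-sentence proof, which simply observes that since $\ex(\ws)$ avoids the cut $\cut$, its string model involves no $\XX$-nontrivial arrows and therefore coincides with the string model for $\ws$. Your anticipated obstacle about endpoint bookkeeping is in fact a non-issue: by Construction~\ref{con:string} the string $\omega(\lws)$ uses only the internal segments $\lws_{0,1},\dots,\lws_{p-1,p}$ and the indices at $V_0,\dots,V_p$, so the modified tail pieces $\lws_{-1,0}$ and $\lws_{p,p+1}$ (which is exactly where $\ex$ acts) contribute nothing to the string data.
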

\begin{proof}
	Since $\ex(\ws)$ does not cross the cut $\cut$, the string model for it does not contain any morphism whose associated arrow is not $\XX$-free. So this model is the same as the string model for $\ws$. Hence we are done.
\end{proof}

\begin{remark}
Note that the right hand side of \eqref{eq:ic} involves $\cut$
while the left hand side does not.
This is because although the existence of $\ic$ in \eqref{eq:ic} is in general setting,
the construction of grading of $\Gamma_\ac$ depends on the choice of the cut $\cut$.
Thus in our setting $\ic=\ic_{\cut}$ involves $\cut$ implicitly.
\end{remark}

%Let $\wCA(\surf)$ be the set of graded closed arc in $\surf$ and recall that $\wCC(\surf)$
%is the set of graded closed curve on $\surf$.
An immediate corollary is that the following $\ZZ$-graded $\qv$-intersection formula
from Corollary~\ref{cor:int=dim}, that non-zero $\XX$-degree terms all vanish.

\begin{corollary}\label{cor:int=dim2}
For any $\ws\in\wCA(\surf)$ and any $\we\in\wCC(\surf)$, we have
\begin{equation}\label{eq:int=dim2}
\qqInt(\ws,\we)=\qdH(X^0_{\ws},X^0_{\we}).
\end{equation}
\end{corollary}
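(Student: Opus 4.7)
My plan is to deduce \eqref{eq:int=dim2} from Corollary~\ref{cor:int=dim} by transporting both curves to $\log\surfo$ via the bijection $\ex$, and then projecting onto the $\XX$-degree zero part of both sides of the resulting $\ZZ^2$-graded identity.

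First, since $\ws\in\wCA(\surf)$ and $\we\in\wCC(\surf)$, the images $\ex(\ws)\in\wXCA(\surfo)$ and $\ex(\we)\in\wXACC(\surfo)$ are zero-level. Combining Theorem~\ref{thm:L} (which identifies $X_{\ex(?)}=\ic(X^0_?)$) with Corollary~\ref{cor:int=dim} yields
$$\qqInt\bigl(\ex(\ws),\ex(\we)\bigr) \;=\; \qdH\bigl(\ic(X^0_\ws),\,\ic(X^0_\we)\bigr)$$
as an identity in $\ZZ[\qv^{\pm 1},\qv^{\pm\XX}]$. On the algebraic side, the defining isomorphism of the Lagrangian immersion
$$\RHom(\ic Y_1,\ic Y_2)\cong\RHom(Y_1,Y_2)\oplus\RHom(Y_2,Y_1)^*[-\XX],$$
together with the fact that objects of $\D_{fd}(\Gamma_\ac^0)$ are concentrated in $\XX$-degree zero, shows that the first summand contributes exclusively in $\XX$-degree zero while the second contributes exclusively in $\XX$-degree one. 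Hence the $\XX$-degree zero part of the right-hand side equals $\qdH(X^0_\ws,X^0_\we)$.

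On the geometric side, the zero-level condition forces every interior intersection of $\ex(\ws),\ex(\we)$ in $\surfoi$ to occur in the zero sheet (so $\sii=0$), contributing $(1+\qv^{\XX-1})\sum_\ii\qv^\ii\Int^\ii_{\surf^\circ}(\ws,\we)$ whose $\XX$-degree zero part is $\sum_\ii\qv^\ii\Int^\ii_{\surf^\circ}(\ws,\we)$. Intersections at $\Tri$ correspond bijectively (via $\cut$) to common endpoints of $\ws,\we$ at $\Y$; at the resulting decoration $Z$, both curves approach from the $Y$-side of the cut, so the naturally inherited clockwise angle does not cross $\cut$, forcing $\sii=0$ (using \eqref{eq:hkkg1} for the $\sii=1$ counterpart from the reversed ordering). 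Thus the $\XX$-degree zero contribution at $\Tri$ recovers exactly $\sum_\ii\qv^\ii\Int^\ii_\Y(\ws,\we)$, and summing shows that the $\XX$-degree zero part of $\qqInt(\ex(\ws),\ex(\we))$ equals the natural $\qv$-intersection $\qqInt(\ws,\we)$ on $\surf$.

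Extracting the $\XX$-degree zero part from both sides of the displayed identity then yields \eqref{eq:int=dim2}. The main obstacle is the geometric analysis at decorations: verifying that, under the correspondence between common endpoints at $\Y$ and intersections at $\Tri$ induced by $\ex$, the cut-crossing convention defining the $\sii$-grading precisely isolates the contribution that assembles into the intrinsic $\qv$-intersection on $\surf$ in $\XX$-degree zero.
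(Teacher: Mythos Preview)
Your approach is correct and is essentially the same as the paper's: the paper states only that this is ``an immediate corollary [of Corollary~\ref{cor:int=dim}], that non-zero $\XX$-degree terms all vanish,'' and you have supplied the details behind that sentence by transporting via $\ex$, invoking Theorem~\ref{thm:L}, and projecting to $\XX$-degree zero. Your observation that the $\XX$-degree~$1$ terms do not literally vanish but rather assemble (via \eqref{eq:hkkg1} on the geometric side and the $[-\XX]$-shifted dual summand of the Lagrangian immersion on the algebraic side) into the Serre-dual identity is a useful clarification of the paper's terse phrasing.
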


We can generalize the formula above to the case when $\ws,\we$ are both graded closed curves
(recall that the difference is that an arc in $\wCA(\surf)$ has no interior self-intersections).
But we leave this small refinement to \cite{QZZ},
where we will prove such a formula for graded skew gentle algebras.

%=========================================================
\subsection{Further study: Koszul duality via graph duality}
%=========================================================
One can construct a string model for $\wOC(\surf)$,
by taking the closed full formal arc system $\ac^*$ as reference.
Namely, there is a map
\begin{gather}\label{eq:rho}
    \rho\colon \wOC(\surf)\to\per(\Gamma_{\ac}^0),
\end{gather}
sending open arcs $\wg_i$ in $\ac$ to the indecomposable projective modules of $H^*(\Gamma_\ac^0)$.
People expect that \eqref{eq:rho} and \eqref{eq:X0} are topological realizations of Koszul duality,
in the sense that
\begin{gather}\label{eq:id2}
    \qqInt(\wg,\ws)=\qdH(\rho(\wg),X^0(\ws)),
\end{gather}
for any $\wg\in\wOA(\surf), \ws\in\wCC(\surf)$.
In particular, given a silting set $\{\rho(\wg_j)\}$,
the corresponding curves $\{\wg_j\}$ will form a full formal arc system $\ac_0$
with dual full formal arc system $\ac_0^*=\{\ws_j\}$, satisfying that
$\{ X^0(\ws_j) \}$ is the set of simples of the heart dual to $\{\rho(\wg_j)\}$.
The set $\{ X^0(\ws_j) \}$ is also known as a simple minded collection.
Such type of results for Calabi-Yau-3 version of $\per(\Gamma_\ac)$ and $\D_{fd}(\Gamma_\ac)$ can be found in
Table 1 in either \cite{QQ} or \cite{QQ2} with corresponding formula \eqref{eq:id2} in \cite{QZ2}.

We plan to further investigate this for derived categories of any graded gentle algebras
(without restriction of finite dimension or finite global dimension) in \cite{QZ4}.

%=========================================================

%=========================================================

\begin{thebibliography}{99}
%=========================================================
\newcommand{\au}[1]{\textrm{#1},}
\newcommand{\ti}[1]{\textrm{#1},}
\newcommand{\jo}[1]{\textit{#1}}
\newcommand{\vo}[1]{\textbf{#1}}
\newcommand{\yr}[1]{(#1)}
\newcommand{\pp}[2]{#1--#2.}
\newcommand{\arxiv}[1]{\href{http://arxiv.org/abs/#1}{arXiv:#1}}

\bibitem{BZ}
\au{T.~Br\"{u}stle, \and J.~Zhang}
  \ti{On the cluster category of a marked surface without punctures}
  \emph{Algebra Number Theory} \vo{5} (2011) 529--566.
%
\bibitem{BaS}
\au{Karin Baur, \and Raquel Coelho Sim\~{o}es}
\ti{A geometric model for the module category of a gentle algebra}
\jo{Int. Math. Res. Not.}
rnz150, https://doi.org/10.1093/imrn/rnz150
%
\bibitem{BS}
\au{T.~Bridgeland, \and I.~Smith}
\ti{Quadratic differentials as stability conditions}
\jo{Publ. Math. Inst. Hautes \'{E}tudes Sci.}
\vo{121} \yr{2015} \pp{155}{278}
(\arxiv{1302.7030})	
		
\bibitem{FST}
\au{S.~Fomin, M.~Shapiro \and D.~Thurston}
\ti{Cluster algebras and triangulated surfaces, part I: Cluster complexes}
\jo{Acta Math.} \vo{201} \yr{2008} \pp{83}{146}
(\href{http://arxiv.org/abs/math/0608367}{arXiv:0608367})
		
\bibitem{HKK}
\au{F.~Haiden, L.~Katzarkov and M.~Kontsevich}
\ti{Flat surfaces and stability structures}
\jo{Publ. Math. Inst. Hautes \'{E}tudes Sci.} \vo{126} \yr{2017} \pp{247}{318}
(\arxiv{1409.8611}).


\bibitem{IQ1}
\au{A.~Ikeda \and Y.~Qiu}
$q$-Stability conditions on Calabi-Yau-$\XX$ categories,
\arxiv{1807.00469}.

\bibitem{IQ2}
\au{A.~Ikeda \and Y.~Qiu}
$q$-Stability conditions via $q$-quadratic differentials for Calabi-Yau-$\XX$ categories,
\arxiv{1812.00010}.
		
\bibitem{Ke94}
\au{B.~Keller}
\ti{Deriving DG categories}
\jo{Ann. Sci. \'{E}ole Norm. Sup.} \vo{4} \yr{1994} \pp{63}{102}
		
\bibitem{Ke2}
\au{B.~Keller}
\ti{On differential graded categories}
\jo{International Congress of Mathematicians. Vol. II, 151--190, Eur. Math. Soc., Z\"{u}rich} \yr{2006}
(\arxiv{arxiv:0601185}).
%			
\bibitem{K8}
B.~Keller,
\ti{Deformed Calabi-Yau completions. With an appendix of Michel Van den Bergh}
\jo{J. Reine Angew. Math.} \vo{654} \yr{2011} \pp{125}{180}
(\arxiv{0908.3499})
%	
%\bibitem{Ke1}
%B.~Keller,
%A-infinity algebras, modules and functor categories,
%\href{https://arxiv.org/abs/math/0510508}{arXiv:0510508}.
%
\bibitem{KY}
\au{B.~Keller \and D.~Yang}
\ti{Derived equivalences from mutations of quivers with potential}
\jo{Adv. Math.} \vo{226} \yr{2011} \pp{2118}{2168}
(\arxiv{0906.0761})
%
\bibitem{KQ}
\au{A.~King \and Y.~Qiu}
\ti{Exchange graphs and Ext quivers}
\jo{Adv. Math.} \vo{285} \yr{2015} \pp{1106}{1154}
(\arxiv{1109.2924})

\bibitem{KQ1}
  \au{A.~King \and Y.~Qiu}
  \ti{Cluster exchange groupoids and framed quadratic differentials},
  \jo{Invent. Math.} \vo{220} \yr{2020} \pp{479}{523}
  (\arxiv{1805.00030})	

\bibitem{KS}
\au{M.~Khovanov \and P.~Seidel}
\ti{Quivers, floer cohomology and braid group actions}
\jo{J. Amer. Math. Soc.} \vo{15} \yr{2002} \pp{203}{271}
(\href{https://arxiv.org/abs/math/0006056}{arXiv:0006056})


\bibitem{LP}
\au{Y.~Lekili \and A.~Polishchuk}
\ti{Derived equivalences of gentle algebras via Fukaya categories}
\jo{Math. Ann.} \vo{376} \yr{2020} \pp{187}{255}
(\arxiv{1801.06370})
				
\bibitem{QQ}
\au{Y.~Qiu}
\ti{Decorated marked surfaces: Spherical twists versus braid twists}
\jo{Math. Ann.} \vo{365} \yr{2016} \pp{595}{633}
(\arxiv{1407.0806})
		
\bibitem{QQ2}
Y.~Qiu.
Decorated marked surfaces (part B): topological realizations.
\emph{Math. Z. } \vo{288} \yr{2018)} \pp{39}{53}
		
\bibitem{QZ2}
  \au{Y.~Qiu \and Y.~Zhou}
  \ti{Decorated marked surfaces II: Intersection numbers and dimensions of Homs}
  \jo{Trans. Amer. Math. Soc.} \vo{372} \yr{2019} \pp{635}{660}
  (\arxiv{1411.4003})

\bibitem{QZ4}
\au{Z.~Li, Y.~Qiu \and Y.~Zhou}
Ph.D supervision project of Z.~Li.

\bibitem{QZZ}
\au{Y.~Qiu, Y.~Zhou and C.~Zhang}
\ti{A geometric model of the perfect category of a graded skewed gentle algebra}
in preparation.
			
\bibitem{ST}
\au{P.~Seidel \and R.~Thomas}
\ti{Braid group actions on derived categories of coherent sheaves}
\jo{Duke Math. J.} \vo{108} \yr{2001} \pp{37}{108}
(\href{https://arxiv.org/abs/math/0001043}{arXiv:0001043})

\bibitem{S}
\au{I.~Smith}
\ti{Quiver algebras and Fukaya categories}
\jo{Geom. Topol.} \vo{19} \yr{2015} \pp{2557}{2617}
(\arxiv{1309.0452})

\bibitem{OPS}
\au{S.~Opper, P.-G.~Plamondon \and S.~Schroll}
\ti{A geometric model for the derived category of gentle algebras}
\arxiv{1801.09659}.

\bibitem{O}
\au{S.~Oppermann}
\ti{Quivers for silting mutation}
\jo{Adv. Math.} \vo{307} \yr{2017} \pp{684}{714}
(\arxiv{1504.02617})

\bibitem{ZZZ}
\au{J.~Zhang, Y.~Zhou and B.~Zhu}
\ti{Cotorsion pairs in the cluster category of a marked surface}
\jo{J. Algebra} \vo{391} \yr{2013} \pp{209}{226}
(\arxiv{1205.1504})
		
\end{thebibliography}
\end{document}